\def\eps{{\varepsilon}}
\def\Bbb E{\mathbb{E}}
\def\Bbb R{\mathbb{R}}
\newtheorem{corollary}{Corollary}[section]
\makeatletter \@addtoreset{equation}{section}
\newtheorem{lemma}{Lemma}[section]
\newtheorem{theorem}{Theorem}[section]
\newtheorem{proposition}{Proposition}[section]
\newtheorem{remark}{Remark}[section]
\def\@fnsymbol#1{\ensuremath{\ifcase#1\or * \or \mathsection \or ** \else\@ctrerr\fi}}
\font\tencmmib=cmmib10 \skewchar\tencmmib '60
\font\tenmsb=msbm10 
\def\Bbb#1{\hbox{\tenmsb#1}}
\def\lessim{\ \lower4pt\hbox{$
\buildrel{\displaystyle <}\over\sim$}\ }
\def\gessim{\ \lower4pt\hbox{$\buildrel{\displaystyle >}
\over\sim$}\ }
\def\eps{\varepsilon}
\def\go0{\to 0}
\def\leftitem#1{\item{\hbox to\parindent{\enspace#1\hfill}}}
\def\sg{\sigma}
\def\sg2{\sigma^2}
\def\__{_{\infty}}
\def \thetah{\hat{\theta}}
\numberwithin{equation}{section} 
\newcommand{\1}{{\rm 1}\kern-0.24em{\rm I}}
\begin{document}

\begin{frontmatter}
\title{Estimation of smooth functionals in high-dimensional models: bootstrap chains and Gaussian approximation}
\runtitle{Estimation of smooth functionals}

\begin{aug}
\author{\fnms{Vladimir} \snm{Koltchinskii}\thanksref{t1}\ead[label=e1]{vlad@math.gatech.edu}} 
\thankstext{t1}{Supported in part by NSF grants DMS-1810958 and DMS-2113121}
\runauthor{V. Koltchinskii}

\affiliation{Georgia Institute of Technology\thanksmark{m1}}

\address{School of Mathematics\\
Georgia Institute of Technology\\
Atlanta, GA 30332-0160\\
\printead{e1}\\
}
\end{aug}
\vspace{0.2cm}
{\small \today}
\vspace{0.2cm}

\begin{abstract}
Let $X^{(n)}$ be an observation sampled from a distribution $P_{\theta}^{(n)}$
with an unknown parameter $\theta,$ $\theta$ being a vector in a Banach space $E$
(most often, a high-dimensional space of dimension $d$). 
We study the problem of estimation of $f(\theta)$ 
for a functional $f:E\mapsto {\mathbb R}$ of some smoothness $s>0$ based on an observation $X^{(n)}\sim P_{\theta}^{(n)}.$
Assuming that there exists an estimator $\hat \theta_n=\hat \theta_n(X^{(n)})$ of parameter $\theta$ 
such that $\sqrt{n}(\hat \theta_n-\theta)$ is sufficiently close in distribution to a mean zero Gaussian
random vector in $E,$ we construct a functional $g:E\mapsto {\mathbb R}$ such that $g(\hat \theta_n)$ is an asymptotically normal estimator of $f(\theta)$ with $\sqrt{n}$ rate
provided that $s>\frac{1}{1-\alpha}$ and $d\leq n^{\alpha}$ for some $\alpha\in (0,1).$
We also derive general upper bounds on Orlicz norm error rates for estimator $g(\hat \theta)$ depending on smoothness $s,$ dimension $d,$ sample size $n$ and the accuracy of normal  
approximation of $\sqrt{n}(\hat \theta_n-\theta).$ In particular, this approach yields asymptotically efficient estimators in 
high-dimensional log-concave exponential models. 
\end{abstract}

\begin{keyword}[class=AMS]
\kwd[Primary ]{62H12} \kwd[; secondary ]{62G20, 62H25, 60B20}
\end{keyword}

\begin{keyword}
\kwd{Efficiency} \kwd{Smooth functionals} \kwd{Bootstrap chain} 
\kwd{Concentration inequalities} \kwd{Normal approximation} 
\end{keyword}

\end{frontmatter}


\newpage


\section{Introduction}

The problem of estimation of a smooth functional $f(\theta)$ of parameter $\theta$
of a high-dimensional statistical model will be studied in this paper in the case when there 
exists an estimator $\hat \theta$ of $\theta$ for which normal approximation holds 
as both the dimension $d$ and the sample size are reasonably large. 

Estimation of functionals of parameters of non-parametric and, more recently, high-dimensional 
statistical models has been studied by many authors since the 70s
\cite{Levit_1, Levit_2, Ibragimov, Bickel_Ritov, Ibragimov_Khasm_Nemirov,
Girko, Girko-1, Donoho_1, Donoho_2, Donoho_Nussbaum, Nemirovski_1990, BKRW,  Birge, Laurent, Lepski, 
Nemirovski, Cai_Low_2005a, Cai_Low_2005b, Klemela, Robins, van der Vaart, C_C_Tsybakov, Robins_1, Han, Mukherjee}. Most of the results have been obtained for special statistical models 
(Gaussian sequence model, Gaussian white noise model, density estimation model) and special functionals (linear and quadratic 
functionals, norms in classical Banach spaces, certain classes of integral functionals of unknown density). 
Estimation of general smooth functionals was studied in \cite{Ibragimov_Khasm_Nemirov, Nemirovski_1990, Nemirovski} for the model of an unknown infinite-dimensional function (signal) 
observed in a Gaussian white noise. Sharp thresholds on the smoothness 
of the functional depending on the complexity (smoothness) of the signal that guarantee 
efficient estimation of the functional were studied in these papers. 

Our approach is based on a bias reduction method that goes back to the idea of iterated bootstrap (see \cite{Hall_1, Hall}). This method  
has been recently studied in the case of high-dimensional normal models (see \cite{Koltchinskii_2017, Koltchinskii_2018, Koltchinskii_Zhilova, Koltchinskii_Zhilova_19}). In particular, it was shown that it yields efficient estimation 
of functionals of smoothness $s$ of unknown mean and covariance with parametric $\sqrt{n}$ convergence rate  provided that $s>\frac{1}{1-\alpha}$ and $d\leq n^{\alpha}$ for some $\alpha\in (0,1),$ $d$ being the dimension 
of the space. Moreover, the smoothness threshold $\frac{1}{1-\alpha}$ is sharp in the 
sense that for $s<\frac{1}{1-\alpha}$ the minimax optimal convergence rate is slower than $\sqrt{n}.$   
Our goal is to extend some of these results to more general high-dimensional models under an assumption 
that the model admits statistical estimators of unknown parameter for which normal approximation holds for large $n$ and sufficiently high dimension of the parameter.

\subsection{Bias reduction}
\label{sec:bias_reduce}

Let $X^{(n)}$ be an observation sampled from a probability distribution $P_{\theta}^{(n)}$ 
in a measurable space $(S^{(n)},{\mathcal A}^{(n)})$ with unknown parameter $\theta\in T.$
A particular example of interest is $X^{(n)}=(X_1,\dots, X_n),$ where 
$X_1,\dots, X_n$ are i.i.d. observations in a measurable spaces $(S,{\mathcal A}).$
It will be assumed in what follows that the parameter space $T$ is an open subset 
of a separable Banach space $E$ (which could be a high-dimensional or 
infinite-dimensional space). 
Let $\hat \theta = \hat \theta_n=\hat \theta (X^{(n)})\in T$ be an estimator
of $\theta$ based on the observation $X^{(n)}.$ 
We will be especially interested 
in estimators $\hat \theta$ that could be approximated in distribution by a Gaussian 
random vector in $E$ (whose distribution, of course, depends on unknown parameter 
$\theta\in T$ provided that $X^{(n)}\sim P_{\theta}^{(n)}$). More precisely, it will 
be assumed in what follows that, for all $\theta \in T$ (or in properly chosen subsets of $T$),
$\sqrt{n}(\hat \theta-\theta)$ is close in distribution to a mean zero Gaussian random vector 
$\xi(\theta)$ in $E.$ In Section \ref{sec:Main results}, it will be described more precisely 
in which sense this approximation should hold. 

Given a smooth functional $f:T \mapsto {\mathbb R},$ our main goal is to construct 
an estimator of $f(\theta)$ based on $X^{(n)}.$ It is well known that in high-dimensional 
and infinite-dimensional models the plug-in estimator $f(\hat \theta)$ is often sub-optimal 
even when the base estimator $\hat \theta$ is optimal. This is largely due to the fact that 
for non-linear functionals $f$ the plug-in estimator $f(\hat \theta)$ has a large bias even when 
$\hat \theta$ is unbiased, or has a small bias. Thus, the bias reduction becomes a crucial part
of the design of estimators of $f(\theta)$ with optimal error rates. 
To construct an unbiased estimator of $f(\theta)$ (which is not always possible)
one has to solve an integral equation ${\mathcal T} g=f$ for the following 
integral operator:
\begin{align}
\label{define_T}
({\mathcal T} g)(\theta):= {\mathbb E}_{\theta} g(\hat \theta) = \int_{T} g(t)P(\theta;dt), \theta \in T,
\end{align}
where 
\begin{align}
\label{define_Markov_kernel}
P(\theta;A) = {\mathbb P}_{\theta}\{\hat \theta\in A\}, A\subset T
\end{align}
is a Markov kernel on the parameter space $T$ (the distribution 
of estimator $\hat \theta$). Recall that, by the definition of Markov kernel,  
it is assumed that $T\ni\theta\mapsto P(\theta;A)$ is a Borel measurable function 
for all Borel subsets $A\subset T.$

Note that ${\mathcal T} f$ is well defined for all functions $f\in L_{\infty}(T)$
and, moreover, ${\mathcal T}: L_{\infty}(T)\mapsto L_{\infty}(T)$ is a 
contraction.  
Most often, we will deal with operator ${\mathcal T}$ acting on uniformly bounded Lipschitz functions
(or even on sufficiently smooth functions).

Finding an estimator of $f(\theta)$ with a small 
bias then reduces to an approximate solution of equation ${\mathcal T} g=f.$ 
If ${\mathcal B}:= {\mathcal T}-{\mathcal I}$ is a ``small operator" 
(which is the case when the estimator $\hat \theta$ is ``close" to $\theta$ with a high probability),
then the solution of this equation could be written (at least, formally) as the sum 
of Neumann series 
\begin{align*}
g=({\mathcal I}+{\mathcal B})^{-1}f = f-{\mathcal B}f+{\mathcal B}^2 f - {\mathcal B}^3 f + \dots 
\end{align*}
and one can try to use the following function $f_k(\theta)$ (with a properly chosen $k$),
\begin{align*}
f_k(\theta):= \sum_{j=0}^k (-1)^j ({\mathcal B}^j f)(\theta),
\end{align*}
as an approximate solution of equation ${\mathcal T}g=f.$ This yields an estimator 
$f_k(\hat \theta)$ with a reduced bias 
\begin{align*}
{\mathbb E}_{\theta} f_k(\hat \theta)- f(\theta) = (-1)^k ({\mathcal B}^{k+1} f)(\theta), \theta\in T.
\end{align*} 
Another way to look at this bias reduction procedure is to observe that the bias of the plug-in 
estimator $f(\hat \theta)$ is equal to 
\begin{align*}
{\mathbb E}_{\theta} f(\hat \theta)-f(\theta) = ({\mathcal T} f)(\theta)-f(\theta) = ({\mathcal B}f)(\theta), \theta \in
T.
\end{align*}
To reduce the bias of $f(\hat \theta),$ one could subtract from it the plug-in estimator of the function 
$({\mathcal B}f)(\theta)$ yielding the estimator $f_1(\hat \theta)= f(\hat \theta)- ({\mathcal B} f)(\hat \theta).$
The bias of $f_1(\hat \theta)$ is equal to $-({\mathcal B}^2 f)(\theta).$ To further reduce the bias, we have 
to add its plug-in estimator $({\mathcal B}^2 f)(\hat \theta)$ yielding the estimator 
$f_2(\hat \theta)= f(\hat \theta)- ({\mathcal B} f)(\hat \theta)+({\mathcal B}^2 f)(\hat \theta),$
and so on. 

This higher order bias reduction method has been studied in \cite{Koltchinskii_2017, Koltchinskii_2018,
Koltchinskii_Zhilova, Koltchinskii_Zhilova_19} in the case of various high-dimensional 
normal models and in \cite{Jiao} in the case of the classical binomial model. In particular,
the approach to the analysis of this method initiated in  \cite{Koltchinskii_2017, Koltchinskii_2018} and further developed in \cite{Koltchinskii_Zhilova_19}
is based on the derivation of integral representation formulas for functions $({\mathcal B}^k f)(\theta)$ in terms of so called 
smooth random homotopies. These formulas provide a way to obtain sharp bounds on the bias of estimator $f_k(\hat \theta)$
and to establish smoothness properties of functions $f_k$ needed to develop concentration inequalities for this estimator
(see Section \ref{sec:boostrap_chain} for more details).  However, the construction of random 
homotopies for a given estimator $\hat \theta$ relies on certain coupling techniques. In particular,
it is based on the existence of a smooth stochastic process $G(\theta), \theta \in \Theta$ with values in $\Theta$
such that $G(\theta)\overset{d}{=} \hat \theta (X^{(n)}), X^{(n)}\sim P_{\theta}.$ The bounds on the bias of estimator
$f_k(\hat \theta)$ obtained in \cite{Koltchinskii_Zhilova_19} rely on the existence of such a coupling and the H\"older norms of process $G$
are involved in these bounds. Such a coupling trivially exists in the case of random shift models \cite{Koltchinskii_Zhilova, Koltchinskii_Zhilova_2020} and it is easy to construct in the case of general Gaussian models \cite{Koltchinskii_Zhilova_19} as well as some other 
exponential transformation families. However, it is much harder to develop smooth random homotopies for MLE and other relevant estimators in the case of more general high-dimensional parametric models.     
A possible approach could rely on general coupling methods developed in the literature such as optimal transport maps and Moser's coupling (see, e.g., \cite{Villani}).
However, the bounds on H\"older norms for such coupling maps with explicit dependence on the dimension have not been developed in the literature and 
their development leads to difficult questions concerning smoothness of solutions of PDEs (in particular, Monge-Amp\`ere and Poisson type equations) in high dimensions. 
Another serious difficulty is the need to develop tight concentration bounds for estimators $f_k(\hat \theta)$ that are also not readily available for general high-dimensional models 
(with Gaussian, log-concave and some closely related models being exceptions). Due to these difficulties, the higher order bias reduction method described above has been so far fully studied only 
in the case of Gaussian models as well as some random shift models with Poincar\'e type noise \cite{Koltchinskii_Zhilova_19}. 

In this paper, we study the problem under an additional assumption that the estimator $\hat \theta$ admits sufficiently accurate normal approximation. 
More precisely, we assume that $\sqrt{n}(\hat \theta- \theta)$ can be approximated in distribution by a Gaussian r.v. $\xi(\theta)$ in $E.$ This assumption allows us to define an approximating Gaussian 
model, an ``estimator" $\tilde \theta=\theta + \frac{\xi(\theta)}{\sqrt{n}}$ of parameter $\theta$ for this model and the corresponding operators $\tilde {\mathcal T}, \tilde{\mathcal B}$ and functions $\tilde f_k, k\geq 0.$
We show that functions $\tilde f_k$ provide a reasonable approximation of functions $f_k$ and one can reduce the bounds on estimator $f_k(\hat \theta)$ of $f(\theta)$ to the bounds on estimator $\tilde f_k(\tilde \theta)$
in the corresponding approximating Gaussian model. This approach allows us to circumvent the difficulties with the direct analysis 
of estimator $f_k(\hat \theta)$ since both the technique of random homotopies and concentration inequalities are applicable to the approximating model. 
As a result, we prove \it ``reduction theorems"
\rm (stated in Section \ref{sec:Main results})
showing that the risk bounds and normal approximation properties established earlier in the Gaussian case hold also for general models, provided that the normal approximation of estimator $\hat \theta$ is sufficiently accurate.

\subsection{Smoothness classes and distances between random variables}

Let $F$ be a Banach space and let $U\subset E.$ For a function $g:U\mapsto F,$
denote 
\begin{align*}
\|g\|_{L_{\infty}(U)}:= \sup_{x\in U}\|g(x)\|,
\end{align*}
\begin{align*}
\|g\|_{{\rm Lip}(U)}:= \sup_{x,x'\in U, x\neq x'}\frac{\|g(x)-g(x')\|}{\|x-x'\|}
\end{align*}
and, for $\rho \in (0,1],$
\begin{align*}
\|g\|_{{\rm Lip}_{\rho}(U)}:= \sup_{x,x'\in U, x\neq x'}\frac{\|g(x)-g(x')\|}{\|x-x'\|^{\rho}}.
\end{align*}
We will now introduce H\"older spaces $C^{s}(U;F)$ of functions of smoothness $s>0$ from an open subset $U\subset E$ into a Banach space $F$ (most often, either $F={\mathbb R},$  or $F=E$). 
Given a function $g:U\mapsto F,$ let $g^{(j)}$ denote its Fr\'echet derivative of order 
$j$ (in particular, $g^{(0)}=g$). Note that, for all $x\in U,$ $g^{(j)}(x)$ is a symmetric bounded $j$-linear 
form (with values in $F$). For such forms $M[u_1,\dots, u_j], u_1,\dots, u_j\in E,$ we will 
use the operator norm 
\begin{align*}
\|M\|:= \sup_{\|u_1\|\leq 1,\dots, \|u_j\|\leq 1}\|M[u_1,\dots, u_j]\|
\end{align*}
and $g^{(j)}$ will be always viewed as a mapping from $U$ into the space of symmetric bounded 
$j$-linear forms equipped with the operator norm.  
Let $s=m+\rho,$ $m\geq 0, \rho \in (0,1].$ For an $m$-times Fr\'echet differentiable 
function $g$ from $U$ into $F,$ define 
\begin{align*}
&
\|g\|_{C^s(U;F)} := 
\max\Bigl(\|g\|_{L_{\infty}(U)}, 
\max_{0\leq j\leq m-1} \|g^{(j)}\|_{{\rm Lip}(U)},
\|g^{(m)}\|_{{\rm Lip}_{\rho}(U)}
\Bigr).
\end{align*}
The space $C^s(U,F)$ is then defined as the set of all $m$-times Fr\'echet differentiable 
functions $g$ from $U$ into $F$ such that $\|g\|_{C^s(U,F)}<\infty.$ When the space $F$
is clear from the context (in particular, when $F={\mathbb R}$), we will write simply 
$C^s(U)$ and $\|\cdot \|_{C^s(U)}$ instead of $C^s(U,F)$ and $\|\cdot \|_{C^s(U,F)}.$

\begin{remark}
\normalfont
The definition of the space $C^s(U)$ used here is not quite standard.
In particular, the space $C^1(U)$ consists of all uniformly bounded
Lipschitz functions in $U$ rather than continuously differentiable 
functions. 
Note also that, for a $j$ times Fr\'echet differentiable function $g,$ 
$
\|g^{(j)}\|_{L_{\infty}(U)}\leq 
\|g^{(j-1)}\|_{{\rm Lip}(U)},
$
with the equality holding when $U$ is convex
(which would lead to a more standard definition of H\"older norms). 
\end{remark}

We will also use the following notation. Let $s=m+\rho,$ $m\geq 0, \rho \in (0,1].$
For $l=0,\dots, m,$ denote 
\begin{align*}
&
\|g\|_{C^{l,s}(U;F)} := 
\max\Bigl( 
\max_{l\leq j\leq m-1} \|g^{(j)}\|_{{\rm Lip}(U)},
\|g^{(m)}\|_{{\rm Lip}_{\rho}(U)}
\Bigr).
\end{align*}
and let $C^{l,s}(U;F)$ be the set of all $m$-times Fr\'echet differentiable 
functions $g$ from $U$ into $F$ such that $\|g\|_{C^{l,s}(U,F)}<\infty.$
In particular, $\|\cdot\|_{C^{0,1}(U)}= \|\cdot\|_{{\rm Lip}(U)}.$

\begin{remark}
\normalfont	
Note that, by McShane-Whitney extension theorem, any Lipschitz 
function $g:U\mapsto {\mathbb R}$ 
could be extended to a Lipschitz function defined on the whole space $E$
with preservation of its Lipschitz norm $\|g\|_{{\rm Lip}(U)}$ (in fact, this theorem 
applies to general metric spaces, not just to Banach spaces). 
Moreover, any function $g\in C^{1}(U)$ (a uniformly bounded Lipschitz 
function) could be extended to the whole space $E$ with preservation of 
its $C^{1}$-norm.
In what follows, it will be convenient to assume that bounded Lipschitz functions (in particular, functions 
from the space $C^s(U)$ for $s\geq 1$) and Lipschitz functions (in particular, functions from 
the space $C^{0,s}(U)$ for $s\geq 1$)
are indeed extended 
to the whole space this way. Similarly, any function from space $C^s(U), U\subset E, s\in (0,1]$
could be extended to the whole space $E$ with preservation of its norm (again by the application of 
McShane-Whitney extension theorem to the metric space $(E,d),$ $d(x,y):=\|x-y\|^s, x,y\in E$).
Note that the problem of extension of smooth functions (from space $C^s(U)$ with $s>1$)
to the whole space with preservation of the norm is much more complicated and such extensions do not always exist in general Banach spaces.  
\end{remark}

We will need to quantify the accuracy of normal approximation for random variable 
$\sqrt{n}(\hat \theta-\theta)$ by 
$\xi(\theta)$  (as well as for other random variables), 
and, for this purpose, we will introduce below certain distances between distributions of random variables. 

Let $\eta_1, \eta_2$ be random variables defined on a probability space $(\Omega,\Sigma,{\mathbb P})$ with values in a measurable space $(S,{\mathcal A}),$ and 
let ${\mathcal F}$ be a set of measurable functions on $S.$
Define 
\begin{align*}
\Delta_{\mathcal F}(\eta_1,\eta_2) := \sup_{f\in {\mathcal F}}|{\mathbb E}f(\eta_1)-{\mathbb E}f(\eta_2)|.
\end{align*}

\begin{remark}
	\normalfont
Note that, in fact, $\Delta_{\mathcal F}(\eta_1,\eta_2)$ is a distance between the laws ${\mathcal L}(\eta_1), {\mathcal L}(\eta_2)$ of random variables $\eta_1,\eta_2$ (so, it does not matter whether 
$\eta_1, \eta_2$ are defined on the same probability space or not; however, it is always possible 
to assume that they are and it will be convenient for our purposes).
\end{remark}

Let now $\psi:{\mathbb R}\mapsto {\mathbb R}_+$ be an even convex function with $\psi(0)=0$
and such that $\psi$ is increasing in ${\mathbb R}_+.$  The Orlicz $\psi$-norm of real valued r.v. $\zeta$ is defined as 
\begin{align*} 
\|\zeta\|_{\psi}:= 
\inf\Bigl\{c>0: {\mathbb E}\psi\Bigl(\frac{|\zeta|}{c}\Bigr)\leq 1\Bigr\}.
\end{align*}
Denote $L_{\psi}({\mathbb P}):=\{\zeta: \|\zeta\|_{\psi}<+\infty\}.$
We will also write $\|\zeta\|_{L_{\psi}({\mathbb P})}=\|\zeta\|_{\psi}$
(to emphasize the dependence of the Orlicz norm on the underlying probability measure ${\mathbb P}$).
If $\psi(u):= |u|^p, u\in {\mathbb R}, p\geq 1,$ then $\|\cdot \|_{\psi}=\|\cdot \|_{L_p}$
and $L_{\psi}({\mathbb P})=L_p({\mathbb P}).$ Other common choices of $\psi$ are 
$\psi_1(u)=e^{|u|}-1$ (subexponential Orlicz norm) and $\psi_2(u)=e^{u^2}-1$ (subgaussian 
Orlicz norm). 

We will need another distance between random variables $\eta_1,\eta_2$ in a space $(S,{\mathcal A})$
defined as follows:
\begin{align*}
\Delta_{{\mathcal F},\psi}(\eta_1,\eta_2) := \sup_{f\in {\mathcal F}}|\|f(\eta_1)\|_{\psi}-\|f(\eta_2)\|_{\psi}|.
\end{align*}
If $(S,d)$ is a metric space, one can also define the following Wasserstein type distance:
\begin{align*}
W_{\psi}(\eta_1,\eta_2) := \inf\Bigl\{\|d(\eta_1', \eta_2')\|_{\psi}: \eta_1'\overset{d}{=} \eta_1, 
\eta_2'\overset{d}{=} \eta_2\Bigr\}, 
\end{align*}  
where the infimum is taken over all random variables $\eta_1', \eta_2'$ on $(\Omega,\Sigma,{\mathbb P})$ such that $ \eta_1'$ has the same distribution as $\eta_1$ and $\eta_2'$ has the same distribution 
as $\eta_2.$ If $\psi(u)=|u|^p$ this becomes a usual definition of the Wasserstein $p$-distance
$W_p.$ For this choice of $\psi,$ we also use the notation $\Delta_{{\mathcal F},p}$ instead of 
$\Delta_{{\mathcal F},\psi}.$

We will also use the notations $\Delta_{{\mathcal F},{\mathbb P}}(\eta_1,\eta_2),$
$\Delta_{{\mathcal F},\psi, {\mathbb P}}(\eta_1,\eta_2)$ and $W_{\psi,{\mathbb P}}(\eta_1,\eta_2)$
whenever it is needed to emphasize the dependence of these distances on ${\mathbb P}.$

Since, for $\eta_1'\overset{d}{=}\eta_1, \eta_2'\overset{d}{=}\eta_2,$ 
\begin{align*}
|\|f(\eta_1)\|_{\psi}-\|f(\eta_2)\|_{\psi}|= |\|f(\eta_1')\|_{\psi}-\|f(\eta_2')\|_{\psi}|
\leq \|f(\eta_1')-f(\eta_2')\|_{\psi},
\end{align*}
we could conclude that 
\begin{align}
\label{Delta_psi_W_psi_comp}
\Delta_{{\mathcal F},\psi}(\eta_1,\eta_2) \leq \sup_{f\in {\mathcal F}}W_{\psi}(f(\eta_1), f(\eta_2))
\end{align}
(for r.v. $\eta_1,\eta_2$ with values in an arbitrary measurable space $S$). If $(S,d)$
is a complete separable metric space and ${\mathcal F}$ is the set of all contractions (Lipschitz functions on $S$ with constant $1$), then, for all $f\in {\mathcal F},$ $|f(\eta_1)-f(\eta_2)|\leq d(\eta_1,\eta_2)$
implying that 
\begin{align}
\label{Delta_F_psi_W_psi}
\Delta_{{\mathcal F},\psi}(\eta_1,\eta_2) \leq \sup_{f\in {\mathcal F}}W_{\psi}(f(\eta_1), f(\eta_2))
\leq W_{\psi}(\eta_1,\eta_2).
\end{align} 
Note also that for $\psi(u)=|u|$ (the $L_1$-norm), we have  
\begin{align*}
\Delta_{{\mathcal F},1}(\eta_1,\eta_2)\leq W_1(\eta_1,\eta_2)= \Delta_{{\mathcal F}}(\eta_1,\eta_2),
\end{align*}
where ${\mathcal F}$ is the set of all real valued contractions on $S$
(follows from Kantorovich-Rubinstein duality).

Most often, we will deal with random variables in a Banach space $F$ (in particular, $F=E$ and 
$F={\mathbb R}$) and the set ${\mathcal F}$ will usually be a H\"older ball of certain smoothness, 
such as ${\mathcal F}=\{f: \|f\|_{C^s(E)}\leq 1\}$ for $s>0,$ or ${\mathcal F}:=\{f: \|f\|_{C^s(U)}\leq 1\},$
or ${\mathcal F}:=\{f: \|f\|_{C^{l,s}(U)}\leq 1\}$ for some $0\leq l<s$ and for $U\subset F.$ In particular, we will use the notations 
$$\Delta_s(\eta_1,\eta_2)= \Delta_{{\mathcal F}}(\eta_1, \eta_2)\ {\rm and}\ \Delta_{s,\psi}(\eta_1,\eta_2)= \Delta_{{\mathcal F},\psi}(\eta_1, \eta_2)$$ 
for ${\mathcal F}=\{f: \|f\|_{C^s(E)}\leq 1\}.$ 

Other distances that will be used in the future include:
\begin{itemize}
\item Kolmogorov's distance between random variables $\eta_1, \eta_2$ in ${\mathbb R}$ (more precisely, between their laws ${\mathcal L}(\eta_1), {\mathcal L}(\eta_2)$) defined as 
\begin{align*}
d_K(\eta_1,\eta_2):= \sup_{x\in {\mathbb R}} |{\mathbb P}\{\eta_1\leq x\}- {\mathbb P}\{\eta_2\leq x\}|= \Delta_{{\mathcal F}}(\eta_1,\eta_2),
\end{align*}
where ${\mathcal F}:=\{I_{(-\infty, x]}:x\in {\mathbb R}\}.$
\item For $s=k+\rho,\ k\geq 0, \rho\in (0,1]$ and random variables $\eta_1, \eta_2$ in a Banach space $E,$ let
\begin{align}
\label{Zolotarev}
&
\zeta_s(\eta_1,\eta_2):= 
\sup_{\|f^{(k)}\|_{{\rm Lip}_{\rho}(E)}\leq 1} 
|{\mathbb E}f(\eta_1)-{\mathbb E}f(\eta_2)|
= \Delta_{{\mathcal F}}(\eta_1,\eta_2),
\end{align}
where ${\mathcal F}:=\{f: \|f^{(k)}\|_{{\rm Lip}_{\rho}(E)}\leq 1\}.$
Note that, for $s=1,$ $\zeta_1(\eta_1,\eta_2)=W_1(\eta_1,\eta_2).$ 
\end{itemize}

Finally, in a statistical framework, we have to deal with a family of probability measures 
${\mathbb P}_{\theta}, \theta \in \Theta$ (that generates different distributions of the data)
and we will use uniform versions of the distances defined above:
\begin{align*}
\Delta_{{\mathcal F},\Theta}(\eta_1,\eta_2):= \sup_{\theta\in \Theta}\Delta_{{\mathcal F},{\mathbb P}_{\theta}}(\eta_1,\eta_2),
\end{align*}
\begin{align*}
\Delta_{{\mathcal F},\psi, \Theta}(\eta_1,\eta_2):= \sup_{\theta\in \Theta}\Delta_{{\mathcal F},\psi, {\mathbb P}_{\theta}}(\eta_1,\eta_2)
\end{align*}
and 
\begin{align*}
W_{\psi, \Theta}(\eta_1,\eta_2):=\sup_{\theta\in \Theta} W_{\psi,{\mathbb P}_{\theta}}(\eta_1,\eta_2).
\end{align*}
We will also use the distance 
\begin{align*}
\Delta_{{\mathcal F},\psi, \Theta}^{+}(\eta_1,\eta_2):= 
\Delta_{{\mathcal F},\Theta}(\eta_1,\eta_2)+
\Delta_{{\mathcal F},\psi, \Theta}(\eta_1,\eta_2).
\end{align*}

Throughout the paper, the following notations will be used. For real non-negative 
variables $A,B,$ $A\lesssim B$ means that there exists a universal constant $C>0$
such that $A\leq CB,$ $A\gtrsim B$ means that $B\lesssim A$ and $A\asymp B$
means that $A\lesssim B$ and $B\lesssim A.$ If constant $C$ in the above inequalities 
depends on additional parameters, we will provide the corresponding relationships with subscripts:
say, $A\lesssim_{s,\psi} B$ means that $A\leq C B$ with $C=C_{s,\psi}>0$ depending on $s$ and $\psi.$

\section{Main results}
\label{sec:Main results}

In this section, we study the error rates of estimator $f_k(\hat \theta)$ depending on the smoothness of functional $f$ and show that they coincide with the rates known to be optimal in the Gaussian case provided that normal approximation error for $\hat \theta$ is negligible.   

In \cite{Koltchinskii_Zhilova}, the following Gaussian shift model
\begin{align*}
X^{(n)}= \theta + \frac{\xi}{\sqrt{n}}, \theta \in E
\end{align*}
was studied, where $\xi$ is a Gaussian r.v. in $E$ with mean zero and covariance operator 
$\Sigma.$ 
It was assumed that $\theta$ is an unknown parameter and $\Sigma$ is known and the goal is to estimate $f(\theta)$ for a given smooth functional $f.$ The complexity of this estimation problem could be characterized by two parameters: the ``weak variance" of the noise $\xi,$
$\|\Sigma\|= \sup_{\|u\|\leq 1} {\mathbb E} \langle \xi, u\rangle^2,$ and its ``strong variance" ${\mathbb E}\|\xi\|^2 = {\mathbb E}\sup_{\|u\|\leq 1} \langle \xi,u\rangle^2.$ Note that, in the case of Euclidean space $E={\mathbb R}^d$ and $\xi\sim N(0, \sigma^2 I_d),$
$\|\Sigma\|=\sigma^2$ and ${\mathbb E}\|\xi\|^2=\sigma^2 d.$

The following result was proved.

\begin{theorem}
\label{Gaussian_shift_th}
Let $s>0.$ For $s\in (0,1],$ set $k:=0$ and for $s>1,$ let $s=k+1+\rho$ for some $k\geq 0$ and $\rho\in (0,1].$ Let $\hat \theta = \hat \theta(X^{(n)})=X^{(n)}.$
Then 
\begin{align*}
\sup_{\|f\|_{C^s(E)}\leq 1}\sup_{\theta \in E}\| f_k(\hat \theta)-f(\theta)\|_{L_2({\mathbb P}_{\theta})}
\lesssim_s \biggl(\frac{\|\Sigma\|^{1/2}}{n^{1/2}}\bigvee \biggl(\sqrt{\frac{{\mathbb E}\|\xi\|^2}{n}}\biggr)^s\biggr)
\bigwedge 1.
\end{align*}	
\end{theorem}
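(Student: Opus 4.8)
The plan is to split the error of $f_k(\hat\theta)$ into a random fluctuation and a deterministic bias,
\begin{align*}
f_k(\hat\theta) - f(\theta) = \bigl(f_k(\hat\theta) - \mathbb{E}_\theta f_k(\hat\theta)\bigr) + (-1)^k(\mathcal{B}^{k+1}f)(\theta),
\end{align*}
where the identity $\mathbb{E}_\theta f_k(\hat\theta) = (\mathcal{T}f_k)(\theta) = f(\theta) + (-1)^k(\mathcal{B}^{k+1}f)(\theta)$ follows by writing $\mathcal{T}=\mathcal{I}+\mathcal{B}$ and telescoping the partial Neumann sum defining $f_k$. It then suffices to bound $\sup_{\|f\|_{C^s(E)}\le 1}\sup_{\theta\in E}\|f_k(\hat\theta) - \mathbb{E}_\theta f_k(\hat\theta)\|_{L_2(\mathbb{P}_\theta)}$ and $\sup_{\|f\|_{C^s(E)}\le 1}\|\mathcal{B}^{k+1}f\|_{L_\infty(E)}$. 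The factor $\wedge 1$ comes for free since $\mathcal{T}$ is an $L_\infty$-contraction, so both $\|f_k\|_{L_\infty(E)}$ and $\|\mathcal{B}^{k+1}f\|_{L_\infty(E)}$ are $\lesssim_s\|f\|_{L_\infty(E)}\le 1$.

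For the bias I would use that here $\mathcal{T}g(\theta)=\mathbb{E}\,g(\theta+h)$ with $h:=\xi/\sqrt{n}$, which gives the averaged finite-difference representation
\begin{align*}
(\mathcal{B}^{k+1}f)(\theta)=\mathbb{E}\bigl[\Delta_{h_1}\cdots\Delta_{h_{k+1}}f(\theta)\bigr],\qquad \Delta_h g(x):=g(x+h)-g(x),
\end{align*}
with $h_1,\dots,h_{k+1}$ i.i.d.\ copies of $h$. When $s>1$, write $s=(k+1)+\rho$, so $f$ is $(k+1)$-times Fr\'echet differentiable with $f^{(k+1)}\in\mathrm{Lip}_\rho$; the integral form of Taylor's formula gives $\Delta_{h_1}\cdots\Delta_{h_{k+1}}f(\theta)=\int_{[0,1]^{k+1}}f^{(k+1)}(\theta+\sum_i t_i h_i)[h_1,\dots,h_{k+1}]\,dt_1\cdots dt_{k+1}$, and, since the $h_i$ are independent and centered, $\mathbb{E}\bigl(f^{(k+1)}(\theta)[h_1,\dots,h_{k+1}]\bigr)=0$, so this term may be subtracted before taking the expectation. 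The Hölder bound on $f^{(k+1)}$ together with $|[h_1,\dots,h_{k+1}]|\le\prod_i\|h_i\|$ then yield $\|\mathcal{B}^{k+1}f\|_{L_\infty(E)}\lesssim_s\|f\|_{C^s(E)}\,\mathbb{E}\bigl[(\sum_i\|h_i\|)^\rho\prod_i\|h_i\|\bigr]$, and this expectation is $\lesssim_s(\mathbb{E}\|h\|^2)^{s/2}=(\mathbb{E}\|\xi\|^2/n)^{s/2}$ by Hölder's inequality (using subadditivity of $x\mapsto x^\rho$ and that each individual moment of $\|h\|$ of order at most $2$ is controlled by $(\mathbb{E}\|h\|^2)^{1/2}$). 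For $s\le 1$ (where $k=0$) no differentiability is needed: $|(\mathcal{B}f)(\theta)|=|\mathbb{E}(f(\theta+h)-f(\theta))|\le\|f\|_{C^s(E)}\mathbb{E}\|h\|^s\le\|f\|_{C^s(E)}(\mathbb{E}\|\xi\|^2/n)^{s/2}$.

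For the fluctuation, when $s\ge 1$ I would first note that $\mathcal{T}$ is also a contraction for the Lipschitz seminorm, so $\|\mathcal{B}^jf\|_{\mathrm{Lip}(E)}\le 2^j\|f\|_{\mathrm{Lip}(E)}$ and hence $\|f_k\|_{\mathrm{Lip}(E)}\lesssim_s\|f\|_{C^s(E)}$; then $\xi\mapsto f_k(\theta+\xi/\sqrt{n})$ is Lipschitz in $\xi$ with constant $\lesssim_s\|f\|_{C^s(E)}/\sqrt{n}$, and the Gaussian concentration inequality bounds its variance by a constant times $\|f\|_{C^s(E)}^2\sigma_*^2/n$, where $\sigma_*^2=\sup_{\|u^*\|_{E^*}\le 1}\mathbb{E}\langle u^*,\xi\rangle^2=\|\Sigma\|$; this produces the term $\|\Sigma\|^{1/2}/n^{1/2}$, and it is essential to use this weak variance rather than the crude $\mathbb{E}\|\xi\|^2$. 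For $s<1$ (where $f_k=f$ need not be Lipschitz) I would instead estimate $\mathrm{Var}_\theta(f(\hat\theta))\le\tfrac12\mathbb{E}|f(\theta+h)-f(\theta+h')|^2\le\tfrac12\|f\|_{C^s(E)}^2\mathbb{E}\|h-h'\|^{2s}$ for an independent copy $h'$ of $h$, and since $h-h'\overset{d}{=}\sqrt{2}\,h$ and $\mathbb{E}\|h\|^{2s}\le(\mathbb{E}\|h\|^2)^s$ this is $\lesssim\|f\|_{C^s(E)}^2(\mathbb{E}\|\xi\|^2/n)^s$, of the same order as the (then dominant) bias term. Combining the two bounds and using $a+b\le 2(a\vee b)$ gives the assertion. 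The main obstacle is the bias estimate: extracting the sharp exponent $s$ from mere $C^s$-smoothness, which rests on the finite-difference representation of $\mathcal{B}^{k+1}$ and on the cancellation of the top-order multilinear term against the centered Gaussian $h$; the fluctuation part is routine once the correct weak-variance form of Gaussian concentration is invoked.
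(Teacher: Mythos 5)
Your argument is correct, but it takes a genuinely different and more elementary route than the one this paper uses. Here the statement (due to Koltchinskii and Zhilova) is recovered as the Gaussian-shift specialization of the general machinery: the error $f_k(\hat\theta)-f(\theta)$ is centered at the linear term $n^{-1/2}\langle f'(\theta),\xi\rangle$ rather than at its mean, the first-order Taylor remainder of $\tilde f_{\delta,k}$ is controlled by the Maurey--Pisier concentration bound (Theorem \ref{conc_main}), the bias and the H\"older norms of ${\mathcal B}^j f$ are controlled through the Fa\`a di Bruno/random-homotopy operator calculus (Proposition \ref{prop_summary} and the lemmas of Section \ref{Gaussian_approx}), and one then notes that for the shift model the normal-approximation distances vanish and ${\frak d}_{\xi}(s-1)={\mathbb E}\|\xi\|^2$, so Theorem \ref{risk_efficient_psi} with $\psi(u)=u^2$ gives the claim. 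You instead exploit two features that are special to a shift kernel with $\theta$-independent noise: (i) the bootstrap chain is a random walk with i.i.d.\ increments, so ${\mathcal B}^{k+1}f$ is an expected iterated finite difference; its integral Taylor representation, combined with the cancellation ${\mathbb E}f^{(k+1)}(\theta)[h_1,\dots,h_{k+1}]=0$ for independent centered $h_i$, yields the sharp exponent $s$ by elementary moment bounds (your ``H\"older'' step is really independence plus Lyapunov's inequality, but as written it is correct since $1+\rho\le 2$); and (ii) ${\mathcal T}$ is a contraction for the Lipschitz seminorm, so $f_k$ is Lipschitz with an $s$-dependent constant and the fluctuation around the mean follows from a single application of Gaussian concentration with the weak variance $\|\Sigma\|$. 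What your route buys is brevity and self-containedness for the $L_2$ upper bound; what the paper's route buys is the finer conclusions (the exact limiting variance $\sigma_f^2(\theta)$, normal approximation, general Orlicz losses) and, crucially, applicability when the Gaussian noise $\xi(\theta)$ depends on $\theta$ or when $\hat\theta$ is only approximately Gaussian, where neither the iterated-difference formula for ${\mathcal B}^{k+1}$ nor the Lipschitz contraction of ${\mathcal T}$ is available --- which is precisely why the heavier apparatus of ${\frak d}_{\xi}$, bootstrap-chain approximation (theorems \ref{hat^k_tilde^k_hat_tilde_AAA} and \ref{uniform_bd}) and Taylor-remainder concentration is introduced.
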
	

Note that the term $\frac{\|\Sigma\|^{1/2}}{\sqrt{n}}$ of the error bound of Theorem \ref{Gaussian_shift_th} 
controls the concentration of estimator $f_k(\hat \theta)$ around its expectation whereas 
the term $\Bigl(\sqrt{\frac{{\mathbb E}\|\xi\|^2}{n}}\Bigr)^s$ controls the bias of this estimator. 
Moreover, it  was also shown in \cite{Koltchinskii_Zhilova} that, for $E={\mathbb R}^d$ equipped with 
the standard Euclidean norm and 
$\xi\sim N(0, \sigma^2 I_d),$
\begin{align*}
	&
	\nonumber
	\sup_{\|f\|_{C^s({\mathbb R}^d)}\leq 1}\inf_{T}\sup_{\theta\in {\mathbb R}^d}
	\|T(X^{(n)})-f(\theta)\|_{L_{2}({\mathbb P}_{\theta})}
	\asymp 
	\biggl(\frac{\|\Sigma\|^{1/2}}{n^{1/2}}
	\bigvee \biggl(\sqrt{\frac{{\mathbb E}\|\xi\|^2}{n}}\biggr)^s
	\biggr)\bigwedge 1,
\end{align*}	
where the infimum is taken over all estimators $T(X^{(n)}),$
implying the minimax optimality of the $L_2$ error rates in the case of Gaussian shift model 
in the Euclidean space $E={\mathbb R}^d.$

Note that the convergence rate is of the order $O(n^{-1/2})$ if 
$\|\Sigma\|\lesssim 1,$ ${\mathbb E}\|\xi\|^2\lesssim n^{\alpha}$ for $\alpha \in (0,1)$
and $s\geq \frac{1}{1-\alpha}$ and it is slower than $n^{-1/2}$ if $s<\frac{1}{1-\alpha}.$
For $s>\frac{1}{1-\alpha},$ it was proved in \cite{Koltchinskii_Zhilova} that 
$\sqrt{n}(f_k(\hat \theta)-f(\theta))$ could be approximated in distribution by $\sigma_f(\theta)Z, Z\sim N(0,1)$ as $n\to \infty,$ where $\sigma_f^2(\theta):= \langle \Sigma f'(\theta), f'(\theta)\rangle,$ 
and, moreover, it was shown that $f_k(\hat \theta)$ is an asymptotically efficient estimator.

We will try to extend some of these results to general models and general estimators 
$\hat \theta$ for which Gaussian approximation holds. 

\subsection{Bounds on $L_{\psi}$-errors and normal approximation of $f_k(\hat \theta)$}

To describe Gaussian approximation property for estimator $\hat \theta$ more precisely, let 
\begin{align*}
G(\theta) := \theta+ \frac{\xi(\theta)}{\sqrt{n}}, \theta \in T, 
\end{align*}
where $\xi :T \mapsto E$ is a Gaussian stochastic process. 
In what follows, $\tilde \theta := G(\theta), \theta \in T$ will be viewed as a Gaussian approximation 
of estimator $\hat \theta.$ 
In other words, the estimator $\hat \theta$ in the initial model is approximated 
by the ``estimator" $\tilde \theta$ in a Gaussian shift model with unknown parameter 
$\theta$ and small Gaussian noise $\frac{\xi (\theta)}{\sqrt{n}}.$ 
For simplicity, we also assume that ${\mathbb E} \xi(\theta)=0, \theta\in T$
and let $\Sigma(\theta)$ denote the covariance operator of random variable $\xi(\theta).$
As a typical example, consider the case when $E:={\mathbb R}^d$ and $\xi(\theta):= A(\theta)Z,
Z\sim N(0,I_d),$ where $A(\theta):{\mathbb R}^d\mapsto {\mathbb R}^d$ is a bounded 
linear operator. Even more specifically, when $X^{(n)}=(X_1,\dots, X_n),$ $X_1,\dots, X_n$
being i.i.d. $\sim P_{\theta}, \theta \in T\subset {\mathbb R}^d,$ one can think of 
the maximum likelihood estimator $\hat \theta$ and $A(\theta)=I(\theta)^{-1/2},$
where $I(\theta)$ is the Fisher information matrix (since in the case of regular statistical 
models $\sqrt{n}(\hat \theta-\theta)$ is close in distribution to $I(\theta)^{-1/2} Z$).

In the results stated below, the $L_{\psi}$-error of estimator $f_k(\hat \theta)$ and 
its normal approximation will be controlled uniformly in a subset $\Theta$ of parameter 
space $T.$ It will be assumed that $\xi(\theta)$ is bounded or even sufficiently smooth in 
a small neighborhood 
\begin{align*}
	\Theta_{\delta}:= \{\theta\in E: {\rm dist}(\theta;\Theta)<\delta\}\subset T
\end{align*}
of set $\Theta$ for some $\delta>0,$ and, moreover, that the normal approximation of $\hat \theta$
by $\tilde \theta,$ or of $\sqrt{n}(\hat \theta-\theta)$ by $\xi(\theta)$ holds in proper distances 
uniformly in $\theta \in \Theta_{\delta}.$ 
The behavior of the process $\xi(\theta)$ outside of $\Theta_{\delta}$ 
will be of no importance for us, and,
without loss of generality, we can and will set  
$\xi(\theta):=0, \theta\in E\setminus \Theta_{\delta}.$
With this definition, we still have 
that $\|\xi\|_{L_{\infty}(E)}=\|\xi\|_{L_{\infty}(\Theta_{\delta})}.$

In what follows, we will deal with loss functions $\psi:{\mathbb R}\mapsto {\mathbb R}_+.$
It will be assumed that $\psi$ is convex with $\psi(0)=0.$ Moreover, $\psi $ is even, increasing on ${\mathbb R}_+$ and 
satisfies the condition 
\begin{align*} 
	c' u \leq \psi (u)\leq c'' \psi_1(u), u\geq 0
\end{align*}
with some constants $c', c''>0,$
where $\psi_1(u)=e^u-1.$ Let $\Psi$ be the set of such loss functions. 
Given $\psi\in \Psi,$ denote 
\begin{align*}
	\tilde \psi (u):= \frac{1}{\psi^{-1}\Bigl(\frac{1}{u}\Bigr)}, u\geq 0.
\end{align*}
For instance, in the case of $\psi(u)=|u|^p, p\geq 1,$ we have $\tilde \psi(u)= u^{1/p}, u\geq 0,$
and in the case of $\psi(u)=\psi_1(u)=e^{|u|}-1,$ we have $\tilde \psi(u)=\frac{1}{\log(1+\frac{1}{u})}, u\geq 0.$

For $\psi\in \Psi,$ we will study Orlicz norm error rates $\|f_k(\hat \theta)-f(\theta)\|_{L_{\psi}({\mathbb P}_{\theta})}$ of estimator $f_k(\hat \theta)$ depending on the smoothness of functional $f.$ We will also study normal approximation of r.v. $\sqrt{n}(f_{k}(\hat \theta)-f(\theta)).$
The choice of $k$ depends on the degree of smoothness of functional $f.$
Namely, if $f$ is $C^s$-smooth with $s=k+1+\rho,$ $k\geq 0, \rho\in (0,1],$
we will use estimator $f_k(\hat \theta).$ Note that, for $k=0,$ we have $f_0=f$ and one can use 
a standard plug-in estimator $f(\hat \theta)$ for all $s\in (0,2].$ First, we will state the results in 
this simple case. 

Given $\Theta\subset T,$ denote 
\begin{align*}
{\frak v}_{\xi}(\Theta):= \sup_{\theta\in \Theta}{\mathbb E}\|\xi(\theta)\|^2.	
\end{align*}

\begin{theorem}
\label{simple_case_sleq2_a}	
Let $\Theta\subset T$ be an open subset and let $\psi\in \Psi.$
The following statements hold:\\
(i) For all $s\in (0,1],$ 
\begin{align}
\label{bd_for_sleq1}	
\sup_{\|f\|_{C^s(E)}\leq 1}\sup_{\theta\in \Theta}\|f(\hat \theta)-f(\theta)\|_{L_{\psi}({\mathbb P}_{\theta})}
\lesssim_{s,\psi} \biggl(\sqrt{\frac{{\frak v}_{\xi}(\Theta)}{n}}\Biggr)^s + 
\Delta_{{\mathcal H},\psi,\Theta} (\hat \theta,\tilde \theta),
\end{align}	
where ${\mathcal H}:=\{g: \|g\|_{C^s(E)}\leq 1\}.$
\\
(ii) Let $\delta>0$ be such that $\Theta_{\delta}\subset T.$ 
For $s=1+\rho$ with $\rho\in (0,1],$ there exists a constant $c_s\in (0,1)$
such that 
\begin{align}
\label{bd_for_s>1}	
	&
	\nonumber
	\sup_{\|f\|_{C^s(\Theta_{\delta})}\leq 1}\sup_{\theta\in \Theta}
	\|f(\hat \theta)-f(\theta)\|_{L_{\psi}({\mathbb P}_{\theta})}
	\\
	&
	\nonumber
	\lesssim_{s,\psi}
	\biggl[\frac{\|\Sigma\|^{1/2}_{L_{\infty}(\Theta)}}{n^{1/2}}+
	\biggl(\sqrt{\frac{{\frak v}_{\xi}(\Theta)}{n}}\biggr)^s 
	+\Delta_{{\mathcal H},\psi, \Theta_{\delta}}^{+}(\hat \theta,\tilde \theta)
	\\
	&
	\ \ \ \ \ \ \ \ 
	+
	\sup_{\theta\in \Theta}	\tilde \psi^{1/2} \Bigl({\mathbb P}\{\|\xi(\theta)\|\geq c_s\delta \sqrt{n}\}\Bigr)\biggr]\bigwedge 1,
\end{align}	
where ${\mathcal H}:=\{g: \|g\|_{C^s(\Theta_{c_s\delta})}\leq 1\}.$
\end{theorem}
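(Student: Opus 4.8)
The plan is to bound the error $\|f(\hat\theta)-f(\theta)\|_{L_\psi(\mathbb P_\theta)}$ by passing through the Gaussian surrogate $\tilde\theta=G(\theta)$ and exploiting the known bounds for the Gaussian shift model together with the normal-approximation distances $\Delta_{\mathcal H,\psi,\Theta}$ and $\Delta_{\mathcal H,\psi,\Theta}^+$. The first step is the triangle-type decomposition
\begin{align*}
\|f(\hat\theta)-f(\theta)\|_{L_\psi(\mathbb P_\theta)}
\leq \bigl|\|f(\hat\theta)-f(\theta)\|_{L_\psi(\mathbb P_\theta)}-\|f(\tilde\theta)-f(\theta)\|_{L_\psi(\mathbb P_\theta)}\bigr|
+\|f(\tilde\theta)-f(\theta)\|_{L_\psi(\mathbb P_\theta)}.
\end{align*}
For part (i), $s\in(0,1]$: after the McShane--Whitney extension of $f$ to all of $E$ with preserved $C^s$-norm, the function $t\mapsto f(t)-f(\theta)$ evaluated at a random argument is, up to rescaling by $\|f\|_{C^s(E)}\le 1$, an element of the class $\mathcal H=\{g:\|g\|_{C^s(E)}\le 1\}$ (shifting by the constant $f(\theta)$ costs at most a bounded additive term, which one absorbs — or one works with the class of differences directly). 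Hence the first term on the right is at most $\Delta_{\mathcal H,\psi,\Theta}(\hat\theta,\tilde\theta)$ by the very definition of that distance. The second term is a pure Gaussian-shift computation: $f(\tilde\theta)-f(\theta)=f(\theta+\xi(\theta)/\sqrt n)-f(\theta)$, and Hölder-$s$ continuity gives $|f(\tilde\theta)-f(\theta)|\le \|\xi(\theta)\|^s/n^{s/2}$; taking the $L_\psi$-norm and using $\psi\le c''\psi_1$ (so that $\|\,\cdot\,\|_\psi\lesssim_\psi\|\,\cdot\,\|_{\psi_1}$) together with a standard Gaussian concentration bound on $\|\xi(\theta)\|$ (Gaussian random vectors in a Banach space have $\|\,\|\xi(\theta)\|\,\|_{\psi_2}\lesssim(\mathbb E\|\xi(\theta)\|^2)^{1/2}$, hence $\|\,\|\xi(\theta)\|^s\,\|_{\psi_1}\lesssim_s(\mathbb E\|\xi(\theta)\|^2)^{s/2}$ for $s\le 2$) yields the bound $\lesssim_{s,\psi}(\mathfrak v_\xi(\Theta)/n)^{s/2}$ after taking $\sup_{\theta\in\Theta}$. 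This proves (i).

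For part (ii), $s=1+\rho$, the difficulty is that now $f$ is only defined and $C^s$-smooth on $\Theta_\delta$, not on all of $E$, and the argument $\hat\theta$ (or even $\tilde\theta$) may occasionally fall outside $\Theta_\delta$, where $f$ is not controlled — indeed $\xi(\theta)$ was set to $0$ outside $\Theta_\delta$, but $\hat\theta$ has no such truncation. The plan is to split on the event $\{\tilde\theta\in\Theta_{c_s\delta}\}$ and its complement (and similarly, via the normal approximation, for $\hat\theta$). On the good event one can replace $f$ by a global $C^s$-extension $\bar f$ that agrees with $f$ on $\Theta_{c_s\delta}$, has $\|\bar f\|_{C^s(E)}\lesssim_s\|f\|_{C^s(\Theta_\delta)}$, and is, say, bounded; such an extension of a $C^{1+\rho}$ function from an open set to the whole Banach space with controlled norm is available when the ``inner'' set is shrunk by a fixed factor $c_s<1$ (this is the role of $c_s$ — one uses a smooth cutoff supported in $\Theta_\delta$ and equal to $1$ on $\Theta_{c_s\delta}$, whose derivatives cost powers of $1/\delta$, absorbed into the $\wedge 1$ since the whole bound is trivially $\le\text{const}$). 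For $\bar f$ one runs the part-(i) argument but now also needs the first-order term: writing $\bar f(\theta+\xi(\theta)/\sqrt n)-\bar f(\theta)=\langle \bar f'(\theta),\xi(\theta)\rangle/\sqrt n+O(\|\xi(\theta)\|^{1+\rho}/n^{(1+\rho)/2})$, the linear term has $L_\psi$-norm $\lesssim_\psi\|\Sigma(\theta)\|^{1/2}/n^{1/2}\le\|\Sigma\|_{L_\infty(\Theta)}^{1/2}/n^{1/2}$ (it is a mean-zero Gaussian with variance $\langle\Sigma(\theta)\bar f'(\theta),\bar f'(\theta)\rangle\le\|\Sigma(\theta)\|\|\bar f'(\theta)\|^2$), and the remainder gives $(\mathfrak v_\xi(\Theta)/n)^{s/2}$ as before; this explains the first two terms in the bound. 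The passage from $\bar f(\hat\theta)$ to $\bar f(\tilde\theta)$ uses $\Delta_{\mathcal H,\psi,\Theta_\delta}$ with $\mathcal H=\{g:\|g\|_{C^s(\Theta_{c_s\delta})}\le 1\}$, while the passage that controls probabilities of the bad events (needed because on $\{\hat\theta\notin\Theta_\delta\}$ we only know $f(\hat\theta)$ through its Lipschitz extension, contributing a term proportional to the $\psi$-size of the bad-event indicator) uses both pieces of $\Delta^+$ and the $\Delta_{\mathcal H,\Theta_\delta}$ (expectation-type) component to transfer $\mathbb P\{\|\xi(\theta)\|\ge c_s\delta\sqrt n\}$ to a bound on $\mathbb P\{\hat\theta\notin\Theta_{c_s\delta}\}$; the quantity $\tilde\psi^{1/2}(\mathbb P\{\cdots\})$ is exactly the $L_\psi$-norm of the indicator of an event of that probability (since $\|\mathbf 1_A\|_\psi=1/\psi^{-1}(1/\mathbb P(A))=\tilde\psi(\mathbb P(A))$, and here it enters through a Cauchy--Schwarz split of the error on the bad event, producing the square root).

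The main obstacle I anticipate is the careful bookkeeping in part (ii) around the boundary: reconciling the three different ``inner'' sets $\Theta\subset\Theta_{c_s\delta}\subset\Theta_\delta$, choosing the cutoff and the constant $c_s$ so that (a) the global extension $\bar f$ exists with norm comparable to $\|f\|_{C^s(\Theta_\delta)}$, (b) on the event $\{\|\xi(\theta)\|< c_s\delta\sqrt n\}$ one has $\tilde\theta\in\Theta_{c_s\delta}\subset\Theta_\delta$ so $\bar f=f$ there, and (c) the normal-approximation distances, which are stated for classes on $\Theta_{c_s\delta}$ or $\Theta_\delta$, actually apply to the functions that arise. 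A secondary technical point is justifying that $\|\,\cdot\,\|_\psi$ behaves well under the splittings (convexity of $\psi$ gives $\|\,\cdot\,\|_\psi$ a quasi-triangle inequality with constant depending only on $\psi$, which is where the $\lesssim_{s,\psi}$ rather than a clean constant comes from), and that for $s\le 2$ the Gaussian moment bound $\|\,\|\xi(\theta)\|^s\,\|_{\psi_1}\lesssim_s(\mathbb E\|\xi(\theta)\|^2)^{s/2}$ holds uniformly — this follows from the Gaussian concentration of the norm but should be stated as a lemma. Everything else is a routine combination of Hölder continuity, Gaussian tail bounds, and the definitions of the distances $\Delta_{\mathcal H,\psi,\Theta}$ and $\Delta_{\mathcal H,\psi,\Theta}^+$.
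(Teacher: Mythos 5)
Part (i) of your proposal is correct and is essentially the paper's own (much simplified) route: the split into the Orlicz-norm comparison term, bounded by $2\Delta_{\mathcal H,\psi,\Theta}(\hat\theta,\tilde\theta)$ after noting $\|f-f(\theta)\|_{C^s(E)}\leq 2$, plus the purely Gaussian term handled by H\"older continuity together with the bound $\bigl\|\|\xi(\theta)\|\bigr\|_{\psi_2}\lesssim \bigl({\mathbb E}\|\xi(\theta)\|^2\bigr)^{1/2}$ coming from Gaussian concentration and $\psi\lesssim\psi_1$.

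Part (ii), however, has a genuine gap at its central step: the construction of a global $C^{1+\rho}$ extension $\bar f$ of $f$ by multiplying with a smooth cutoff equal to $1$ on $\Theta_{c_s\delta}$ and supported in $\Theta_{\delta}$. In a general separable Banach space $E$ such cutoffs need not exist: the natural candidate built from ${\rm dist}(\cdot,\Theta)$ is only Lipschitz, and spaces such as $\ell_1$ or $C[0,1]$ admit no Fr\'echet-differentiable bump functions at all, so no $C^{1,\rho}$ extension with controlled norm can be produced this way; the paper explicitly flags this obstruction (for $s>1$ only the McShane--Whitney Lipschitz extension is available). Moreover, even where such a cutoff does exist (e.g.\ in Hilbert space), its derivatives scale like $\delta^{-1}$, so $\|\bar f\|_{C^s(E)}\lesssim \delta^{-s}$, and this loss cannot be ``absorbed into the $\wedge 1$'': the constant in \eqref{bd_for_s>1} is $\lesssim_{s,\psi}$, independent of $\delta$, and capping the whole bound at $1$ does not allow you to inflate the individual terms by $\delta^{-s}$.

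The way around this --- and the paper's route, of which this theorem is the $k=0$ instance of the proof of Theorem \ref{Main_Th_AA} --- is to leave $f$ alone (extended only as a bounded Lipschitz function) and instead truncate the Gaussian noise: replace $\tilde\theta$ by $\tilde\theta_{\delta}:=\theta+n^{-1/2}\xi(\theta)I\bigl(\|\xi(\theta)\|<c_s\delta\sqrt n\bigr)\in\Theta_{c_s\delta}$, so that the expansion $f(\tilde\theta_{\delta})-f(\theta)=n^{-1/2}\langle f'(\theta),\xi_{\delta}(\theta)\rangle+S_f(\theta,n^{-1/2}\xi_{\delta}(\theta))$ only ever evaluates $f$ and the H\"older seminorm of $f'$ inside $\Theta_{c_s\delta}$ (where they are bounded by $1$, with no $\delta$-loss, since the segment from $\theta$ to $\tilde\theta_{\delta}$ stays in $\Theta_{c_s\delta}$), while the discrepancies $f(\tilde\theta)-f(\tilde\theta_{\delta})$ and $n^{-1/2}\langle f'(\theta),\xi(\theta)-\xi_{\delta}(\theta)\rangle$ are handled by a Cauchy--Schwarz splitting in the Orlicz norm of the indicator $I(\|\xi(\theta)\|\geq c_s\delta\sqrt n)$ --- exactly the source of the term $\tilde\psi^{1/2}\bigl({\mathbb P}\{\|\xi(\theta)\|\geq c_s\delta\sqrt n\}\bigr)$. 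The passage from $\hat\theta$ to $\tilde\theta$ then uses only the distances $\Delta_{\mathcal H,\Theta_{\delta}}$ and $\Delta_{\mathcal H,\psi,\Theta_{\delta}}$ over $\mathcal H=\{g:\|g\|_{C^s(\Theta_{c_s\delta})}\leq 1\}$; in particular, since $f_0=f$ is globally Lipschitz after extension, no bound on ${\mathbb P}_{\theta}\{\hat\theta\notin\Theta_{c_s\delta}\}$ is needed, so your attempt to transfer the Gaussian tail to an exit probability of $\hat\theta$ via $\Delta^{+}$ is both unnecessary and not justified as stated (indicators of such events do not belong to the smooth class over which $\Delta^{+}$ is defined).
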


\begin{remark}
\label{rem_on_psi}
	\normalfont	
	For $\Theta=T=E,$ bound \eqref{bd_for_s>1} of Theorem \ref{simple_case_sleq2_a} simplifies as follows:
	\begin{align*}
		&
		\nonumber
		\sup_{\|f\|_{C^s(E)}\leq 1}\sup_{\theta\in E}
		\|f(\hat \theta)-f(\theta)\|_{L_{\psi}({\mathbb P}_{\theta})}
		\lesssim_{s, \psi} 
		\biggl[\frac{\|\Sigma\|^{1/2}_{L_{\infty}(E)}}{n^{1/2}}
		+
		\biggl(\sqrt{\frac{{\frak v}_{\xi}(E)}{n}}\biggr)^s
		+\Delta_{{\mathcal H},\psi, E}^{+}(\hat \theta,\tilde \theta)\biggr]\bigwedge 1,
	\end{align*}		
	where ${\mathcal H}:=\{g: \|g\|_{C^s(E)}\leq 1\}.$
	
In the general case, there are additional terms in the bounds depending on tail probabilities
of $\|\xi(\theta)\|.$ Note that under the assumption that ${\frak v}_{\xi}(E)\leq c_1'\delta^2 n$
for small enough constant $c_1'>0,$ it easily follows from the Gaussian concentration inequality
that 
\begin{align*}
	{\mathbb P}\{\|\xi(\theta)\|\geq c_s\delta \sqrt{n}\}\leq \exp\biggl\{-\frac{c_1'' \delta^2 n}{\|\Sigma\|_{L_{\infty}(\Theta)}}\biggr\}, \theta\in \Theta.
\end{align*}
Since for $\psi \in \Psi,$ $\psi(u)\lesssim \psi_1(u), u\geq 0$ and $\tilde \psi_1(u)=\frac{1}{\log(1+\frac{1}{u})},$ it is easy to conclude that 
\begin{align*}
	\sup_{\theta\in \Theta}	\tilde \psi^{1/2} \Bigl({\mathbb P}\{\|\xi(\theta)\|\geq c_s\delta \sqrt{n}\}\Bigr)
	\lesssim \frac{1}{\delta}\frac{\|\Sigma\|_{L_{\infty}(\Theta)}^{1/2}}{n^{1/2}}.	
\end{align*}
Thus, in the worst case, the additional term in bound \eqref{bd_for_s>1}	
is of the same order (up to a factor $\frac{1}{\delta}$) as the term  $\frac{\|\Sigma\|_{L_{\infty}(\Theta)}^{1/2}}{n^{1/2}}$ present in the optimal 
bounds in the Gaussian case. For slower growing losses, this additional term becomes 
negligible. For instance, for the loss $\psi(u)=u^p, u>0, p\geq 1,$ it is dominated by 
$\exp\biggl\{-\frac{c''}{p}\frac{\delta^2 n}{\|\Sigma\|_{L_{\infty}(\Theta)}}\biggr\}$ for some 
constant $c''>0,$ so, it decays exponentially fast as $n\to\infty.$ 
Note that constants $c_1', c_1'', c''$ might depend on $s.$			
\end{remark}

The next result provides bounds on normal approximation of the error $f(\hat \theta)-f(\theta)$
for functionals $f$ of smoothness $s\in (1,2].$
Recall that for a Fr\'echet differentiable functional $f,$ 
\begin{align*}
	\sigma_{f}^2(\theta):= \langle \Sigma(\theta)f'(\theta),f'(\theta)\rangle. 
\end{align*}

\begin{theorem}
\label{simple_case_s<2_B}	
Let $s=1+\rho$ with $\rho\in (0,1]$ and let $\delta>0.$
Let $\Theta$ be a subset of $E$ such that $\Theta_{\delta}\subset T.$ 
Suppose, for some sufficiently 
small constant $c_1>0,$
$
{\frak v}_{\xi}(\Theta)\leq c_1 n.
$
Then,  for some constant $c_s\in (0,1),$ the 
following bounds hold.

(i) For all $\psi\in \Psi,$ 	
	\begin{align}
\label{bd_psi_s_in_1_2}		
	&
	\nonumber
	\sup_{\|f\|_{C^s(\Theta_{\delta})}\leq 1}\sup_{\theta\in \Theta}
	\Bigl|\|f(\hat \theta)-f(\theta)\|_{L_\psi({\mathbb P}_{\theta})}-n^{-1/2}\sigma_f(\theta)\|Z\|_{L_{\psi}({\mathbb P})}\Bigr|
	\\
	&
	\lesssim_{s, \psi}  
\biggl(\sqrt{\frac{{\frak v}_{\xi}(\Theta)}{n}}\biggr)^s 
+\Delta_{{\mathcal H},\psi, \Theta_{\delta}}^{+}(\hat \theta,\tilde \theta)
+\sup_{\theta\in \Theta}	\tilde \psi^{1/2} \Bigl({\mathbb P}\{\|\xi(\theta)\|\geq c_s\delta \sqrt{n}\}\Bigr),
\end{align}
where ${\mathcal H}:=\{g: \|g\|_{C^s(\Theta_{c_s \delta})}\leq 1\}.$

(ii) For all $s'\in [1,s],$ 
\footnote{Here and in what follows, $U_r:=\{x\in E: \|x\|< r\}.$}
\begin{align}
\label{bd_Delta_s'_s_in_1_2}	
	&
	\nonumber
	\sup_{\|f\|_{C^s(\Theta_{\delta})}\leq 1}
	\sup_{\theta\in \Theta}\Delta_{s'}(\sqrt{n}(f(\hat \theta)-f(\theta)), \sigma_{f}(\theta)Z)
	\\
	&
	\lesssim_{s} 
	\biggl[
	\sqrt{n}\biggl(\sqrt{\frac{{\frak v}_{\xi}(\Theta)}{n}}\biggr)^s 
	+\Delta_{\mathcal F, \Theta_{\delta}}(\sqrt{n}(\hat \theta -\theta), \xi(\theta)) 	
	+
	\sqrt{n}
	\sup_{\theta\in \Theta} {\mathbb P}^{1/4}\{\|\xi(\theta)\|\geq c_s\delta \sqrt{n}\}
	\biggr],
\end{align}
where ${\mathcal F}:= \{g: \|g\|_{C^{0,s'}(U_{c_s\delta\sqrt{n}})}\leq 1\}.$
\end{theorem}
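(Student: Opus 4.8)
The plan is to decompose the error $\sqrt{n}(f_k(\hat\theta)-f(\theta))$ (here $k=0$, so $f_k=f$) into a leading linear term and a remainder, transfer the estimate from $\hat\theta$ to the Gaussian surrogate $\tilde\theta=G(\theta)$ using the distances $\Delta_{\mathcal H,\psi,\Theta_\delta}^+$ and $\Delta_{\mathcal F,\Theta_\delta}$, and then control the remaining pieces inside the Gaussian shift model, where the exact computations of $\sigma_f(\theta)$ and the distribution $\sigma_f(\theta)Z$ become available. The natural starting point is the first-order Taylor expansion
\begin{align*}
f(\hat\theta)-f(\theta)=\langle f'(\theta),\hat\theta-\theta\rangle+R(\theta,\hat\theta),\qquad
|R(\theta,\hat\theta)|\le \|f\|_{C^s(\Theta_\delta)}\|\hat\theta-\theta\|^{s},
\end{align*}
valid on the event $\{\hat\theta\in\Theta_\delta\}$ (with $s=1+\rho$), and similarly for $\tilde\theta=\theta+\xi(\theta)/\sqrt n$. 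The event $\{\hat\theta\notin\Theta_\delta\}$ (resp. $\{\|\xi(\theta)\|\ge c_s\delta\sqrt n\}$) is handled by truncation: its contribution to any Orlicz norm is $\lesssim\tilde\psi^{1/2}$ of its probability (for part (i)) and to $\Delta_{s'}$ is $\lesssim\sqrt n\,{\mathbb P}^{1/4}$ of its probability (for part (ii), via a Cauchy–Schwarz / Markov-type argument on the test functions in $C^{0,s'}$), which produces the last terms in \eqref{bd_psi_s_in_1_2} and \eqref{bd_Delta_s'_s_in_1_2}. One also needs to pass from $\|f\|_{C^s(\Theta_\delta)}\le 1$ to test functions supported near $\Theta$: restricting to the shrunken neighborhood $\Theta_{c_s\delta}$ is exactly what lets the McShane–Whitney extensions (Remark on extensions) of the increments be used as admissible elements of $\mathcal H$ or $\mathcal F$ while keeping $c_s\delta\sqrt n$ as the radius of the ball $U_{c_s\delta\sqrt n}$ in part (ii).

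For part (i): after Taylor expansion, $\|f(\hat\theta)-f(\theta)\|_{L_\psi}$ is compared to $\|\langle f'(\theta),\hat\theta-\theta\rangle\|_{L_\psi}$ with error $\le\|R\|_{L_\psi}\lesssim n^{-s/2}\mathbb E^{1/?}\|\sqrt n(\hat\theta-\theta)\|^{s}$; controlling $\mathbb E\|\sqrt n(\hat\theta-\theta)\|^{s}$ by ${\frak v}_\xi(\Theta)^{s/2}$ up to the approximation error requires comparing moments of $\|\sqrt n(\hat\theta-\theta)\|$ and $\|\xi(\theta)\|$ — this is where $\Delta_{\mathcal H,\psi,\Theta_\delta}$ and the assumption ${\frak v}_\xi(\Theta)\le c_1 n$ enter, the latter guaranteeing $\hat\theta$ stays in $\Theta_\delta$ with overwhelming probability. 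The linear term $\langle f'(\theta),\hat\theta-\theta\rangle$ is then compared to $\langle f'(\theta),\tilde\theta-\theta\rangle=n^{-1/2}\langle f'(\theta),\xi(\theta)\rangle$ using $\Delta_{\mathcal H,\psi,\Theta_\delta}$ applied to the linear (hence Lipschitz, hence $C^s$) functional $t\mapsto\langle f'(\theta),t\rangle$ suitably extended, and finally $\|\langle f'(\theta),\xi(\theta)\rangle\|_{L_\psi}=\sigma_f(\theta)\|Z\|_{L_\psi}$ exactly since $\langle f'(\theta),\xi(\theta)\rangle$ is centered Gaussian with variance $\sigma_f^2(\theta)$. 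Collecting the errors: the cubic-type remainder gives $(\sqrt{{\frak v}_\xi(\Theta)/n})^s$; the cross term between $\|\Sigma\|^{1/2}_{L_\infty(\Theta)}n^{-1/2}$ (the size of the linear part) and the H\"older remainder gives the middle term $\|\Sigma\|^{1/2}_{L_\infty(\Theta)}n^{-1/2}(\sqrt{{\frak v}_\xi(\Theta)/n})^\rho$; the transfer gives $\Delta_{\mathcal H,\psi,\Theta_\delta}^+$; truncation gives the tail term. This is precisely \eqref{bd_psi_s_in_1_2}.

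For part (ii): fix a test function $g$ with $\|g\|_{C^{0,s'}(U_{c_s\delta\sqrt n})}\le1$, $s'\in[1,s]$, and write
\begin{align*}
\mathbb E g(\sqrt n(f(\hat\theta)-f(\theta)))-\mathbb E g(\sigma_f(\theta)Z)
\end{align*}
as a chain: (a) replace $\sqrt n(f(\hat\theta)-f(\theta))$ by the linear part $\langle f'(\theta),\sqrt n(\hat\theta-\theta)\rangle$, paying $\|g\|_{{\rm Lip}(U)}$ times the $L^1$-size of the remainder, i.e. $\lesssim\sqrt n\,(\sqrt{{\frak v}_\xi(\Theta)/n})^s$ plus a cross term $\lesssim\|\Sigma\|^{1/2}_{L_\infty(E)}(\sqrt{{\frak v}_\xi(\Theta)/n})^\rho$ (the latter because on the relevant event $\|\sqrt n(\hat\theta-\theta)\|$ is $O(\|\Sigma\|^{1/2})$ in the directions that matter); (b) replace $\sqrt n(\hat\theta-\theta)$ by $\xi(\theta)$ inside $g(\langle f'(\theta),\cdot\rangle)$, which is an element of $\mathcal F=\{\|\cdot\|_{C^{0,s'}(U_{c_s\delta\sqrt n})}\le1\}$ up to a constant, paying $\Delta_{\mathcal F,\Theta_\delta}(\sqrt n(\hat\theta-\theta),\xi(\theta))$; (c) note $\langle f'(\theta),\xi(\theta)\rangle\overset d=\sigma_f(\theta)Z$ exactly, so this step is free; and (d) account for the truncation events via the $\sqrt n\,{\mathbb P}^{1/4}$ term. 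Taking $\sup$ over $g$, $f$, $\theta$ yields \eqref{bd_Delta_s'_s_in_1_2}. The main obstacle — in both parts, but sharper in (ii) — is the bookkeeping around truncation and extension: one must ensure that the increments $t\mapsto f(t)-f(\theta)-\langle f'(\theta),t-\theta\rangle$ and the composed functions $g(\langle f'(\theta),\sqrt n(\cdot-\theta)\rangle)$ have the claimed $C^s$/$C^{0,s'}$ norms on the right neighborhoods (this forces the shrinkage $\delta\mapsto c_s\delta$ and dictates the radius $c_s\delta\sqrt n$ of $U$), and that the choice of the exponent $1/4$ versus $1/2$ in the tail terms is consistent with the Orlicz vs.\ bounded-test-function setting. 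Everything else is a routine combination of Taylor's theorem, the triangle inequality for the distances defined in the previous section, and the exact Gaussian computation $\sigma_f^2(\theta)=\langle\Sigma(\theta)f'(\theta),f'(\theta)\rangle$.
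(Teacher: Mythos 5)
Your overall philosophy (linearize, transfer to the Gaussian surrogate, exact Gaussian identity, truncation) is the right one, but the order in which you do the two key operations creates a genuine gap, most visibly in part (i). You Taylor-expand at $\hat\theta$ first and then try to (a) bound the Orlicz norm of the remainder $R(\theta,\hat\theta)$ by $(\sqrt{{\frak v}_{\xi}(\Theta)/n})^{s}$ and (b) transfer the linear term $\langle f'(\theta),\hat\theta-\theta\rangle$ to $n^{-1/2}\langle f'(\theta),\xi(\theta)\rangle$ via $\Delta_{{\mathcal H},\psi,\Theta_{\delta}}$. Neither step is available at the stated cost: the theorem assumes nothing about moments or tails of $\|\hat\theta-\theta\|$ under ${\mathbb P}_{\theta}$, and the only tool for comparing the two laws is $\Delta^{+}_{{\mathcal H},\psi,\Theta_{\delta}}$, whose test class ${\mathcal H}$ consists of \emph{uniformly bounded} $C^{s}$ functions (the $C^{s}$ norm contains the sup-norm). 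Such a distance cannot control ${\mathbb E}_{\theta}\|\sqrt{n}(\hat\theta-\theta)\|^{s}$ or $\|\,\|\hat\theta-\theta\|^{s}\|_{L_{\psi}}$, and it cannot transfer the unbounded linear functional $t\mapsto\langle f'(\theta),t-\theta\rangle$ (nor the remainder $R(\theta,\cdot)$, which is also unbounded on $\Theta_{c_s\delta}$ when $\Theta$ is unbounded, e.g.\ $\Theta=E$) at $O(1)$ cost. The paper avoids this by never splitting on the $\hat\theta$ side: the intact functional $f-f(\theta)$, which lies in $2{\mathcal H}$, is transferred as a whole, paying exactly $\Delta^{+}_{{\mathcal H},\psi,\Theta_{\delta}}(\hat\theta,\tilde\theta)$; the truncation $\tilde\theta\mapsto\tilde\theta_{\delta}$ costs the $\tilde\psi^{1/2}$ tail term; and only then is the Taylor expansion performed, at the Gaussian point $\tilde\theta_{\delta}$, where all moments of $\|\xi_{\delta}(\theta)\|$ are controlled by ${\frak v}_{\xi}(\Theta)$ and where the Maurey--Pisier concentration argument of Theorem \ref{conc_main} produces both the $(\sqrt{{\frak v}_{\xi}(\Theta)/n})^{s}$ term and the $n^{-1/2}\|\Sigma\|^{1/2}(\sqrt{{\frak v}_{\xi}(\Theta)/n})^{\rho}$ term (your description of the latter as a ``cross term'' is not how it arises, though here it is harmless since $\|\Sigma\|_{L_{\infty}(\Theta)}\leq{\frak v}_{\xi}(\Theta)$ makes it dominated).

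In part (ii) your chain is closer to repairable, because there the transfer class ${\mathcal F}=\{g:\|g\|_{C^{0,s'}(U_{c_s\delta\sqrt n})}\leq1\}$ imposes no boundedness and admits Lipschitz functions of the rescaled error, but step (a) still assumes that the $L^{1}$ size of the remainder at $\hat\theta$ is $\lesssim\sqrt{n}(\sqrt{{\frak v}_{\xi}(\Theta)/n})^{s}$, which again is not a hypothesis; that quantity itself has to be transferred to the Gaussian side (or, as in the paper, one transfers the single composed test function $x\mapsto\varphi\bigl(\sqrt{n}(f(\theta+n^{-1/2}x)-f(\theta))\bigr)$, whose $C^{0,s'}(U_{c_s\delta\sqrt n})$ norm is $O(1)$ by Lemma \ref{lemma_g_varphi} and Proposition \ref{prop_Delta_H_Delta_F}, and does the expansion only for $\xi_{\delta}(\theta)$). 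So the missing idea is precisely the one the distances were designed for: keep the functional of $\hat\theta$ bounded and smooth (or homogeneous of the rescaled variable) while crossing from $\hat\theta$ to $\tilde\theta$, and perform all Taylor/moment/concentration estimates exclusively in the Gaussian model.
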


\begin{remark}
\normalfont 	
For $\Theta=T=E,$ 
under condition ${\frak v}_{\xi}(E)\leq c_1 n$ for a small enough constant $c_1>0,$  	
the bounds of Theorem \ref{simple_case_s<2_B} simplify as follows:
\begin{align*}
	&
	\nonumber
	\sup_{\|f\|_{C^s(E)}\leq 1}\sup_{\theta\in E}
	\Bigl|\|f(\hat \theta)-f(\theta)\|_{L_\psi({\mathbb P}_{\theta})}-n^{-1/2}\sigma_f(\theta)\|Z\|_{L_{\psi}({\mathbb P})}\Bigr|
	\\
	&
	\nonumber
	\lesssim_{s, \psi} 
	\biggl(\sqrt{\frac{{\frak v}_{\xi}(E)}{n}}\biggr)^s 
	+\Delta_{{\mathcal H},\psi, E}^{+}(\hat \theta,\tilde \theta)
\end{align*}
with ${\mathcal H}:=\{h: \|h\|_{C^s(E)}\leq 1\},$ and 
\begin{align*}
	&
	\nonumber
	\sup_{\|f\|_{C^s(E)}\leq 1}\sup_{\theta \in E}\Delta_{s'}(\sqrt{n}(f(\hat \theta)-f(\theta)), \sigma_{f}(\theta)Z)
	\\
	&
	\lesssim_{s} 
	\biggl[
	\sqrt{n}\biggl(\sqrt{\frac{{\frak v}_{\xi}(E)}{n}}\biggr)^s 
	+\Delta_{\mathcal F, E}(\sqrt{n}(\hat \theta -\theta), \xi(\theta))
	\biggr],
\end{align*}
where ${\mathcal F}:= \{g: \|g\|_{C^{0,s'}(E)}\leq 1\}.$
\end{remark}

The problem becomes much more difficult in the case when $s=k+1+\rho>2$ 
($k\geq 1, \rho\in (0,1]$).
In this case, $f_k(\hat \theta)$ is no longer a standard plug-in estimator and a non-trivial 
analysis of its bias is needed (see Section \ref{sec:boostrap_chain}).
This analysis requires some smoothness assumptions on the Gaussian stochastic process $\xi(\theta).$  Namely, instead of quantity $\frak{v}_{\xi}(\Theta),$ we will use 
such quantities as
\begin{align*} 
{\frak d}_{\xi}(\Theta; s):= {\mathbb E}\|\xi\|_{C^s(\Theta)}^2.
\end{align*}
Note that, if ${\frak d}_{\xi}(\Theta;s)<\infty,$ then, for all $p\geq 1,$ 
\begin{align*}
	{\mathbb E}^{1/p}\|\xi\|^p_{C^s(\Theta)}\lesssim_p \sqrt{{\frak d}_{\xi}(\Theta;s)},
\end{align*}
which easily follows from Gaussian concentration. 
Note also that, if $\xi(\theta)= A(\theta) Z,$ where $Z$ is a given Gaussian vector 
in $E$ and $\Theta\ni \theta \mapsto A(\theta)\in L(E)$ is a $C^s$ function with  
values in the space $L(E)$ of bounded linear operators in $E,$ then 
\begin{align*}
	{\frak d}_{\xi}(\Theta;s) \leq \|A\|_{C^s(\Theta)}^2 {\mathbb E}\|Z\|^2.
\end{align*}
In particular, if $E={\mathbb R}^d$ (equipped with the Euclidean norm) and $Z\sim N(0,I_d),$
then 
\begin{align}
\label{bound_d_xi_d}
	{\frak d}_{\xi}(\Theta;s) \leq \|A\|_{C^s(\Theta)}^2 d.
\end{align}
In such cases, the conditions in term of ${\frak d}_{\xi}(\Theta;s)$ can be reduced to smoothness 
assumptions on the ``scaling" operator $A(\theta)$ (which is related to regularity properties of covariance $\Sigma(\theta)$ as a function of $\theta$).

If $\Theta=T=E,$ we will use the notation 
$
{\frak d}_{\xi}(s):= {\frak d}_{\xi}(E;s).
$
In what follows, such quantities will be used as complexity parameters 
in our problem.

We are now ready to state the main results of the paper.
The next theorem provides a bound on the $L_{\psi}$-error of estimator $f_k(\hat \theta)$
for $\psi\in \Psi.$ Recall that it is assumed that $\xi(\theta)=0$ outside of the neighborhood 
$\Theta_{\delta}$ specified in the theorems.

\begin{theorem}
	\label{risk_efficient_psi}
	Let $\Theta \subset T,$ let $\delta>0$
	and let $s=k+1+\rho$ with $k\geq 1$ and $\rho\in (0,1].$  
	Suppose that $\Theta_{\delta}\subset T.$ 
	Then, for all $\psi\in \Psi$ and for some constant $c_s\in (0,1),$
\begin{align}
\label{psi_error_bdd}
&
\nonumber
\sup_{\|f\|_{C^s(\Theta_{\delta})}\leq 1}\sup_{\theta\in \Theta}
\|f_k(\hat \theta)-f(\theta)\|_{L_{\psi}({\mathbb P}_{\theta})}
\\
&
\nonumber
\lesssim_s
\biggl[\frac{\|\Sigma\|^{1/2}_{L_{\infty}(E)}}{n^{1/2}}+
\biggl(\sqrt{\frac{{\frak d}_{\xi}(\Theta_{\delta};s-1)}{n}}\biggr)^s 
+\Delta_{{\mathcal H},\psi, \Theta_{\delta}}^{+}(\hat \theta,\tilde \theta)
\\
&
+
\sup_{\theta\in \Theta_{\delta}}\tilde \psi\Bigl({\mathbb P}_{\theta}\{\|\hat \theta-\theta\|\geq c_s\delta\}\Bigr) + 
\tilde \psi^{1/2}\Bigl({\mathbb P}\{\|\xi\|_{L_{\infty}(E)}\geq c_s\delta \sqrt{n}\}\Bigr)\biggr]\bigwedge 1,
\end{align}	
where ${\mathcal H}:=\{g: \|g\|_{C^s(\Theta_{c_s\delta})}\leq 1\}.$
\end{theorem}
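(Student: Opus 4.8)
The estimator $f_k$ is the bias--reduced estimator attached to the \emph{idealized Gaussian experiment} $\tilde\theta=G(\theta)=\theta+\xi(\theta)/\sqrt n$: writing $\mathcal T_0 g(\theta):={\mathbb E}g(G(\theta))$ and $\mathcal B_0:=\mathcal T_0-\mathcal I$, it is (up to the bootstrap--chain implementation of Section \ref{sec:boostrap_chain}) of the form $f_k=\sum_{j=0}^k(-1)^j\mathcal B_0^{j}f$, so that $\mathcal T_0 f_k=f+(-1)^k\mathcal B_0^{k+1}f$. The plan is to pass from the true experiment to this idealized one, paying the normal approximation distances, and then to bound the idealized bias $\mathcal B_0^{k+1}f$ and the Gaussian fluctuations of $f_k(\tilde\theta)$ separately. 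We may assume that ${\frak d}_{\xi}(\Theta_{\delta};s-1)\le c\,n$ for a small constant $c=c(s)>0$: otherwise $\bigl(\sqrt{{\frak d}_{\xi}(\Theta_{\delta};s-1)/n}\bigr)^{s}\gtrsim_s 1$, and since $\mathcal T$ (hence $\mathcal B$ and $\mathcal B_0$) maps $L_{\infty}$ into itself and $\|f\|_{C^s}\le1$ forces $\|f\|_{L_{\infty}}\le1$, we have $\|f_k\|_{L_{\infty}}\lesssim_s 1$ and the left--hand side is $\lesssim_s 1$, so the bound holds after taking $\bigwedge1$. Under this smallness condition the analysis of the bootstrap chain (Section \ref{sec:boostrap_chain}) supplies the two key estimates
\begin{align*}
\|f_k\|_{C^s(\Theta_{c_s\delta})}\lesssim_s 1
\qquad\text{and}\qquad
\sup_{\theta\in\Theta}\bigl|(\mathcal B_0^{k+1}f)(\theta)\bigr|\lesssim_s\Bigl(\sqrt{{\frak d}_{\xi}(\Theta_{\delta};s-1)/n}\Bigr)^{s};
\end{align*}
the appearance of ${\frak d}_{\xi}(\,\cdot\,;s-1)$ rather than ${\frak v}_{\xi}$ reflects that bounding the Fr\`echet derivatives of $\mathcal B_0^{j}f$ up to order $k$ requires differentiation under the Gaussian expectation, which brings in the derivatives of $\theta\mapsto\xi(\theta)$ up to order $s-1$.

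\textbf{Decomposition and passage to the idealized experiment.}
Fix $\theta\in\Theta$ and split $f_k(\hat\theta)-f(\theta)=\bigl[f_k(\hat\theta)-{\mathbb E}_{\theta}f_k(\hat\theta)\bigr]+\bigl[{\mathbb E}_{\theta}f_k(\hat\theta)-f(\theta)\bigr]$. For the bias term, ${\mathbb E}_{\theta}f_k(\hat\theta)-f(\theta)=(\mathcal T f_k)(\theta)-f(\theta)=\bigl[(\mathcal T-\mathcal T_0)f_k\bigr](\theta)+(-1)^k(\mathcal B_0^{k+1}f)(\theta)$; since $(\mathcal T-\mathcal T_0)g(\theta)={\mathbb E}_{\theta}g(\hat\theta)-{\mathbb E}g(\tilde\theta)$, the first summand is at most $\|f_k\|_{C^s(\Theta_{c_s\delta})}\,\Delta_{\mathcal H,\Theta_{\delta}}(\hat\theta,\tilde\theta)\lesssim_s\Delta_{\mathcal H,\Theta_{\delta}}(\hat\theta,\tilde\theta)$ with $\mathcal H=\{g:\|g\|_{C^s(\Theta_{c_s\delta})}\le1\}$, and the second is controlled by the bootstrap--chain estimate. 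For the centered term, $\bigl|\|f_k(\hat\theta)-{\mathbb E}_{\theta}f_k(\hat\theta)\|_{\psi}-\|f_k(\tilde\theta)-{\mathbb E}_{\theta}f_k(\hat\theta)\|_{\psi}\bigr|\le\|f_k\|_{C^s(\Theta_{c_s\delta})}\,\Delta_{\mathcal H,\psi,\Theta_{\delta}}(\hat\theta,\tilde\theta)$ by the definition of $\Delta_{\mathcal H,\psi}$ (shifting $f_k$ by the constant ${\mathbb E}_{\theta}f_k(\hat\theta)$ changes only the $L_\infty$--part of its $C^s$--norm, which stays $\lesssim_s 1$), and then $\|f_k(\tilde\theta)-{\mathbb E}_{\theta}f_k(\hat\theta)\|_{\psi}\le\|f_k(\tilde\theta)-{\mathbb E}f_k(\tilde\theta)\|_{\psi}+|(\mathcal T-\mathcal T_0)f_k(\theta)|$, the last term again $\lesssim_s\Delta_{\mathcal H,\Theta_{\delta}}(\hat\theta,\tilde\theta)$. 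Thus, modulo the idealized fluctuation $\|f_k(\tilde\theta)-{\mathbb E}f_k(\tilde\theta)\|_{\psi}$, we have produced the terms $\Delta_{\mathcal H,\psi,\Theta_{\delta}}^{+}(\hat\theta,\tilde\theta)$ and $\bigl(\sqrt{{\frak d}_{\xi}(\Theta_{\delta};s-1)/n}\bigr)^s$ of \eqref{psi_error_bdd}.

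\textbf{Idealized fluctuation and localization.}
To bound $\|f_k(\tilde\theta)-{\mathbb E}f_k(\tilde\theta)\|_{\psi}$, let $\phi:E\to{\mathbb R}$ be a McShane--Whitney Lipschitz extension of $u\mapsto f_k(\theta+u/\sqrt n)$ from $\{\|u\|<c_s\delta\sqrt n\}$, so ${\rm Lip}(\phi)\le\|f_k\|_{{\rm Lip}(\Theta_{c_s\delta})}/\sqrt n\lesssim_s n^{-1/2}$; on $\{\|\xi(\theta)\|<c_s\delta\sqrt n\}$ one has $\phi(\xi(\theta))=f_k(\tilde\theta)$, and the Gaussian concentration inequality gives $\|\phi(\xi(\theta))-{\mathbb E}\phi(\xi(\theta))\|_{\psi_2}\lesssim{\rm Lip}(\phi)\,\|\Sigma(\theta)\|^{1/2}\lesssim_s\|\Sigma\|_{L_{\infty}(E)}^{1/2}/\sqrt n$; since $\psi\lesssim\psi_1$ and the subexponential norm is dominated by the subgaussian one, the $\psi$--norm obeys the same bound. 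The discrepancy between $f_k(\tilde\theta)$ and $\phi(\xi(\theta))$, and between their means, is supported on $\{\|\xi(\theta)\|\ge c_s\delta\sqrt n\}\subset\{\|\xi\|_{L_{\infty}(E)}\ge c_s\delta\sqrt n\}$, where $|f_k(\tilde\theta)-\phi(\xi(\theta))|\lesssim_s 1$, so it contributes $\lesssim_s\tilde\psi^{1/2}\bigl({\mathbb P}\{\|\xi\|_{L_{\infty}(E)}\ge c_s\delta\sqrt n\}\bigr)$ to the $L_{\psi}$--norm. Likewise, in the original experiment the event $\{\|\hat\theta-\theta\|\ge c_s\delta\}$ (off which $\hat\theta\in\Theta_{c_s\delta}$, so the whole argument is meaningful) contributes $\lesssim_s\tilde\psi\bigl({\mathbb P}_{\theta}\{\|\hat\theta-\theta\|\ge c_s\delta\}\bigr)$; bounding this, uniformly over the enlarged parameter set $\Theta_{\delta}$ on which the chain is analyzed, by $\sup_{\theta\in\Theta_{\delta}}\tilde\psi\bigl({\mathbb P}_{\theta}\{\|\hat\theta-\theta\|\ge c_s\delta\}\bigr)$ produces the penultimate term of \eqref{psi_error_bdd}. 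Collecting the four groups of terms, taking $\sup_{\theta\in\Theta}$ and $\sup_{\|f\|_{C^s(\Theta_{\delta})}\le1}$, and intersecting with $1$, yields \eqref{psi_error_bdd}.

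\textbf{Main obstacle.}
The heart of the argument is the pair of bootstrap--chain estimates quoted in the first paragraph — controlling $\|f_k\|_{C^s(\Theta_{c_s\delta})}$ and the iterated bias $\|\mathcal B_0^{k+1}f\|_{L_{\infty}(\Theta)}$ — which require a careful inductive analysis of the smoothing operator $\mathcal B_0$ on the nested neighborhoods $\Theta_{c_s\delta}\subset\Theta_{\delta}$, tracking how each application of $\mathcal B_0$ simultaneously lowers the effective order of differentiability and gains a factor $\sqrt{{\frak d}_{\xi}/n}$, and coping with the fact that functions of smoothness $s>1$ on $\Theta_{\delta}$ need not extend smoothly to $E$ (so $\theta\mapsto\theta+\xi(\theta)/\sqrt n$ must be kept inside $\Theta_{\delta}$, which is where the localization radius $c_s\delta$ and the truncation $\xi\equiv0$ off $\Theta_\delta$ come in). This is precisely the content of Section \ref{sec:boostrap_chain}; everything else above is bookkeeping around the normal approximation distances and the two boundary (tail) terms.
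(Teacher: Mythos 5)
Your plan breaks down at its very first step: you identify $f_k$ with the bias--corrected functional of the idealized Gaussian experiment, i.e.\ with $\sum_{j=0}^k(-1)^j{\mathcal B}_0^{j}f$ for ${\mathcal B}_0={\mathcal T}_0-{\mathcal I}$, $({\mathcal T}_0g)(\theta)={\mathbb E}g(G(\theta))$. In the theorem, $f_k$ is built from the operator ${\mathcal B}={\mathcal T}-{\mathcal I}$ of the \emph{true} model, $({\mathcal T}g)(\theta)={\mathbb E}_{\theta}g(\hat\theta)$, and the parenthetical ``up to the bootstrap--chain implementation'' hides exactly the missing work: the discrepancy between $f_k$ and its Gaussian surrogate $\tilde f_{\delta,k}=\sum_{j}(-1)^j\tilde{\mathcal B}_{\delta}^{j}f$ is a genuine quantity that must be bounded, and bounding it uniformly on $\Theta$ in sup--norm (with the factors $\Delta_{s,\delta}(\hat\theta,\tilde\theta)$ and the two tail probabilities) is the content of Section \ref{Gaussian_approx} (Theorem \ref{hat^k_tilde^k_hat_tilde_AAA} and Theorem \ref{uniform_bd}), resting on the operator--norm bounds for $\tilde{\mathcal T}_{\delta}$, $\tilde{\mathcal B}_{\delta}$. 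Because of this misidentification, your bias identity ${\mathcal T}_0f_k=f+(-1)^k{\mathcal B}_0^{k+1}f$ is false for the actual $f_k$, so the decomposition ${\mathcal T}f_k-f=({\mathcal T}-{\mathcal T}_0)f_k+(-1)^k{\mathcal B}_0^{k+1}f$ on which your bias analysis is based does not hold; what is true is ${\mathcal T}f_k-f=(-1)^k{\mathcal B}^{k+1}f$, and controlling ${\mathcal B}^{k+1}f$ directly would require smooth random homotopies for the true kernel $P(\theta;\cdot)$, which is precisely the difficulty the paper sets out to avoid.

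The same issue infects the two ``key estimates'' you take as given: $\|f_k\|_{C^s(\Theta_{c_s\delta})}\lesssim_s1$ and the $(\sqrt{{\frak d}_{\xi}(\Theta_{\delta};s-1)/n})^{s}$ bias bound are not available for the true $f_k$ under the stated hypotheses; they are established only for the surrogate $\tilde f_{\delta,k}$ (Lemma \ref{f_delta_k_first}, Lemma \ref{bd_on_tilde_f_kdelta_B}, and Step 1 in the proof of Theorem \ref{conc_main}) --- and even there the $C^s$ bound uses ${\mathbb E}\|\xi\|_{C^s}^s$, i.e.\ $s$--smoothness of $\xi$, while under your assumption ${\frak d}_{\xi}(\Theta_{\delta};s-1)\le cn$ only the $C^{1+\rho}$ norm of $\tilde f_{\delta,k}$ is controlled. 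This matters everywhere downstream: your pairing with $\Delta^{+}_{{\mathcal H},\psi,\Theta_{\delta}}$, your Lipschitz/Gaussian--concentration step and your bias bound are all applied to $f_k$, whereas in the paper they are applied to $\tilde f_{\delta,k}(\tilde\theta_{\delta})$ (Theorem \ref{conc_main}), and the passage $f_k(\hat\theta)\to\tilde f_{\delta,k}(\hat\theta)\to\tilde f_{\delta,k}(\tilde\theta)\to\tilde f_{\delta,k}(\tilde\theta_{\delta})$ is made first, in Orlicz/sup norm, precisely because only sup--norm closeness of $f_k$ and $\tilde f_{\delta,k}$ can be extracted from the normal approximation. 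Your surrounding bookkeeping (the WLOG reduction to ${\frak d}_{\xi}\lesssim n$ via the $\wedge1$, the localization radius $c_s\delta$, the two tail terms, the crude $\|\Sigma\|^{1/2}_{L_{\infty}(E)}/\sqrt n$ fluctuation bound, which indeed suffices for this theorem) is sensible and close in spirit to the paper, but as written the proof of the central transfer bound $\|f_k-\tilde f_{\delta,k}\|_{L_{\infty}(\Theta)}\lesssim\Delta_{s,\delta}(\hat\theta,\tilde\theta)+{\frak Q}_n(\cdot,\delta)$ is assumed rather than supplied, and with it the heart of the argument is missing; what you have sketched is, at best, the result for the different estimator $\tilde f_k(\hat\theta)$ mentioned in the remark at the end of Section \ref{sec:boostrap_chain}.
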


Next we study normal approximation of estimator $f_{k}(\hat \theta).$

\begin{theorem}
	\label{Main_Th_AA}
	Let $s=k+1+\rho$ with $k\geq 1$ and $\rho\in (0,1],$ and let $\delta>0.$
	Let $\Theta$ be a subset of $T$ such that $\Theta_{\delta}\subset T.$ 
Suppose that, for some sufficiently small constant $c_1>0,$
	\begin{align*} 
		{\frak d}_{\xi}(\Theta_{\delta};s)\leq c_1 n.
	\end{align*}
Then, the following statements hold.

(i) For all $\psi\in \Psi$ and some constant $c_s\in (0,1),$		
\begin{align}
	\label{psi_error}
	&
	\nonumber
	\sup_{\|f\|_{C^s(\Theta_{\delta})}\leq 1}\sup_{\theta\in \Theta}
	\Bigl|\|f_k(\hat \theta)-f(\theta)\|_{L_\psi({\mathbb P}_{\theta})}-n^{-1/2}\sigma_f(\theta)\|Z\|_{L_{\psi}({\mathbb P})}\Bigr|
	\\
	&
	\nonumber
	\lesssim_{s, \psi}  
	\biggl(\sqrt{\frac{{\frak d}_{\xi}(\Theta_{\delta};s-1)}{n}}\biggr)^s 
	+
	\frac{\|\Sigma\|_{L_{\infty}(E)}^{1/2}}{n^{1/2}}\sqrt{\frac{{\frak d}_{\xi}(\Theta_{\delta};s-1)}{n}}
	+\Delta_{{\mathcal H},\psi, \Theta_{\delta}}^{+}(\hat \theta,\tilde \theta)
	\\
	& 
	+\sup_{\theta\in \Theta_{\delta}}\tilde \psi\Bigl({\mathbb P}_{\theta}\{\|\hat \theta-\theta\|\geq c_s\delta\}\Bigr) + 
	\tilde \psi^{1/2}\Bigl({\mathbb P}\{\|\xi\|_{L_{\infty}(E)}\geq c_s\delta \sqrt{n}\}\Bigr),
\end{align}
where ${\mathcal H}:=\{g:\|g\|_{C^s(\Theta_{c_s \delta})}\leq 1\}.$

(ii)
For all $s'\in [1,s]$ 
and some constant $c_s\in (0,1),$
	\begin{align}
		\label{bd_Delta_s'_sgeq2}
		&
		\nonumber
		\sup_{\|f\|_{C^s(\Theta_{\delta})}\leq 1}
		\sup_{\theta\in \Theta}\Delta_{s'}(\sqrt{n}(f_{k}(\hat \theta)-f(\theta)), \sigma_{f}(\theta)Z)
		\\
		&
		\nonumber
		\lesssim_{s} 
		\biggl[
		\sqrt{n}\biggl(\sqrt{\frac{{\frak d}_{\xi}(\Theta_{\delta};s-1)}{n}}\biggr)^s 
		+
		\|\Sigma\|_{L_{\infty}(E)}^{1/2}\sqrt{\frac{{\frak d}_{\xi}(\Theta_{\delta};s-1)}{n}}
		+
		\Delta_{\mathcal F, \Theta_{\delta}}(\sqrt{n}(\hat \theta -\theta), \xi(\theta)) 
		\\
		&
		+\sqrt{n}\sup_{\theta \in \Theta_{\delta}}{\mathbb P}_{\theta}\{\|\hat \theta-\theta\|\geq c_s\delta\}
		+ \sqrt{n}{\mathbb P}^{1/4}\{\|\xi\|_{L_{\infty}(E)}\geq c_s\delta \sqrt{n}\}
		\biggr],
	\end{align}
where ${\mathcal F}:= \{g: \|g\|_{C^{0,s'}(U_{c_s\delta\sqrt{n}})}\leq 1\}.$
\end{theorem}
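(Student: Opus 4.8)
The plan is to reduce both statements to the exact decomposition
\[
f_k(\hat\theta)-f(\theta)=f'(\theta)[\hat\theta-\theta]+\phi_\theta(\hat\theta),\qquad \phi_\theta(t):=f_k(t)-f(\theta)-f'(\theta)[t-\theta],
\]
together with two facts: that the linear part $f'(\theta)[\hat\theta-\theta]$ is, under the assumed normal approximation, close to $n^{-1/2}f'(\theta)[\xi(\theta)]\overset{d}{=}n^{-1/2}\sigma_f(\theta)Z$, and that $\phi_\theta(\hat\theta)$ is negligible at the scale $n^{-1/2}$. For the latter, write $\phi_\theta(t)=(f_k(t)-f(t))+r_f(t)$ with $r_f(t):=f(t)-f(\theta)-f'(\theta)[t-\theta]$; the Taylor remainder $r_f$ of the $C^s$-functional $f$ has a comparatively \emph{large} expectation ${\mathbb E}_\theta r_f(\hat\theta)$, of order ${\frak v}_{\xi}(\Theta_\delta)/n$, but a \emph{small} fluctuation, and $f_k-f=\sum_{j=1}^{k}(-1)^j{\mathcal B}^jf$ is exactly what is subtracted to remove that expectation (leaving the bias $(-1)^k({\mathcal B}^{k+1}f)(\theta)$). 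Thus $\phi_\theta(\hat\theta)-{\mathbb E}_\theta\phi_\theta(\hat\theta)$ is a sum of \emph{centered} multilinear-in-noise terms whose size is governed by $\|\Sigma\|^{1/2}$ and ${\frak d}_\xi$, while ${\mathbb E}_\theta\phi_\theta(\hat\theta)$ is, up to the bias of $\hat\theta$ itself, just $(-1)^k({\mathcal B}^{k+1}f)(\theta)$. The inputs I will use — that $f_k$ and the iterates ${\mathcal B}^jf$ have $C^s(\Theta_{c_s\delta})$-norms $\lesssim_s1$; the bias bound $|({\mathcal B}^{k+1}f)(\theta)|\lesssim_s(\sqrt{{\frak d}_\xi(\Theta_\delta;s-1)/n})^s$; and the centered-remainder bound in the Gaussian surrogate $\tilde\theta=G(\theta)$,
\[
\|\phi_\theta(\tilde\theta)-{\mathbb E}\phi_\theta(\tilde\theta)\|_{L_\psi}\lesssim_{s,\psi}\Bigl(\sqrt{\tfrac{{\frak d}_\xi(\Theta_\delta;s-1)}{n}}\Bigr)^s+\frac{\|\Sigma\|_{L_\infty(E)}^{1/2}}{n^{1/2}}\Bigl(\sqrt{\tfrac{{\frak d}_\xi(\Theta_\delta;s-1)}{n}}\Bigr)^\rho
\]
— will be supplied by the bootstrap-chain analysis of Section \ref{sec:boostrap_chain} (the same analysis that underlies Theorem \ref{risk_efficient_psi}).

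For part (i): by the triangle inequality for $\|\cdot\|_{L_\psi}$, $\bigl|\|f_k(\hat\theta)-f(\theta)\|_{L_\psi}-\|f'(\theta)[\hat\theta-\theta]\|_{L_\psi}\bigr|\le\|\phi_\theta(\hat\theta)\|_{L_\psi}\le\|\phi_\theta(\hat\theta)-{\mathbb E}_\theta\phi_\theta(\hat\theta)\|_{L_\psi}+|{\mathbb E}_\theta\phi_\theta(\hat\theta)|$. Since $\phi_\theta$ has $C^s(\Theta_{c_s\delta})$-norm $\lesssim_s1$, the definition of $\Delta^{+}_{{\mathcal H},\psi,\Theta_\delta}$ gives $\bigl|\|\phi_\theta(\hat\theta)\|_{L_\psi}-\|\phi_\theta(\tilde\theta)\|_{L_\psi}\bigr|+|{\mathbb E}_\theta\phi_\theta(\hat\theta)-{\mathbb E}\phi_\theta(\tilde\theta)|\lesssim_s\Delta^{+}_{{\mathcal H},\psi,\Theta_\delta}(\hat\theta,\tilde\theta)$ up to the contribution of the event $\{\|\hat\theta-\theta\|\ge c_s\delta\}$ (on which $|\phi_\theta(\hat\theta)|\lesssim_s1$, contributing $\sup_\theta\tilde\psi({\mathbb P}_\theta\{\|\hat\theta-\theta\|\ge c_s\delta\})$), and in the surrogate the bias and centered-remainder bounds close this estimate. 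The same comparison, applied with $h$ a truncation of the affine map $t\mapsto f'(\theta)[t-\theta]$ that agrees with it on $\{\|t-\theta\|\le c_s\delta\}$ and has $C^s(E)$-norm $\lesssim1$, gives $\bigl|\|f'(\theta)[\hat\theta-\theta]\|_{L_\psi}-n^{-1/2}\sigma_f(\theta)\|Z\|_{L_\psi}\bigr|\lesssim\Delta^{+}_{{\mathcal H},\psi,\Theta_\delta}(\hat\theta,\tilde\theta)$ plus the escape contributions $\sup_\theta\tilde\psi({\mathbb P}_\theta\{\|\hat\theta-\theta\|\ge c_s\delta\})$ and $\tilde\psi^{1/2}({\mathbb P}\{\|\xi\|_{L_\infty(E)}\ge c_s\delta\sqrt n\})$. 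Collecting terms yields \eqref{psi_error}; alternatively this computation can be inserted into the proof of Theorem \ref{risk_efficient_psi}, which already isolates the linear term.

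For part (ii): since for $s'\ge1$ every $g$ with $\|g\|_{C^{s'}(E)}\le1$ is $1$-Lipschitz, $\Delta_{s'}\bigl(\sqrt n(f_k(\hat\theta)-f(\theta)),\sigma_f(\theta)Z\bigr)\le\sqrt n\,{\mathbb E}_\theta|\phi_\theta(\hat\theta)|+\Delta_{s'}\bigl(f'(\theta)[\sqrt n(\hat\theta-\theta)],f'(\theta)[\xi(\theta)]\bigr)$, using $\sigma_f(\theta)Z\overset{d}{=}f'(\theta)[\xi(\theta)]$. For the second term, for any test function $g$ in the $C^{s'}(E)$-unit ball the composition $x\mapsto g(f'(\theta)[x])$ lies in the $C^{0,s'}$-unit ball (composing with the linear map $f'(\theta)$, of norm $\|f'(\theta)\|\le\|f\|_{C^s}\le1$, does not enlarge the relevant seminorms), so that term is $\le\Delta_{{\mathcal F},\Theta_\delta}(\sqrt n(\hat\theta-\theta),\xi(\theta))$ with ${\mathcal F}=\{g:\|g\|_{C^{0,s'}(U_{c_s\delta\sqrt n})}\le1\}$, up to escape. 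For the first term, $\sqrt n\,{\mathbb E}_\theta|\phi_\theta(\hat\theta)|\le\sqrt n\,|{\mathbb E}_\theta\phi_\theta(\hat\theta)|+\sqrt n\,\|\phi_\theta(\hat\theta)-{\mathbb E}_\theta\phi_\theta(\hat\theta)\|_{L_2}$; for $h$ with $\|h\|_{C^s(\Theta_{c_s\delta})}\le1$ the function $x\mapsto h(\theta+x/\sqrt n)$ lies in $n^{-1/2}{\mathcal F}$, so $\sqrt n\,|{\mathbb E}_\theta h(\hat\theta)-{\mathbb E}h(\tilde\theta)|\lesssim\Delta_{{\mathcal F},\Theta_\delta}(\sqrt n(\hat\theta-\theta),\xi(\theta))$ (and likewise for the $L_2$ fluctuation), which reduces both pieces to the surrogate, where the bias bound and the $\psi(u)=u^2$ case of the centered-remainder bound give $\sqrt n(\sqrt{{\frak d}_\xi(\Theta_\delta;s-1)/n})^s+\|\Sigma\|_{L_\infty(E)}^{1/2}(\sqrt{{\frak d}_\xi(\Theta_\delta;s-1)/n})^\rho$. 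The escape events contribute $\sqrt n\sup_\theta{\mathbb P}_\theta\{\|\hat\theta-\theta\|\ge c_s\delta\}$ and, after two applications of the Cauchy--Schwarz inequality, $\sqrt n\,{\mathbb P}^{1/4}\{\|\xi\|_{L_\infty(E)}\ge c_s\delta\sqrt n\}$. Adding the pieces gives \eqref{bd_Delta_s'_sgeq2}.

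The step I expect to be the main obstacle is the centered-remainder bound in the surrogate, together with the $C^s$-control of $f_k$ and of the iterates ${\mathcal B}^jf$ and the bias estimate — i.e., the bootstrap-chain calculus of Section \ref{sec:boostrap_chain}. The difficulty is structural rather than computational: $\phi_\theta$ packages everything beyond the linear term, and a naive use of Taylor's formula leaves an uncancelled quadratic-in-noise contribution of size $\asymp{\frak v}_\xi(\Theta_\delta)/n$, which is fatal after multiplication by $\sqrt n$. What makes the estimate work is that this term, and the analogous higher-order ones, are large only in \emph{expectation}; the iterates $-{\mathcal B}f,{\mathcal B}^2f,\dots$ remove exactly these expectations one order at a time, so that along the bootstrap chain the residual is a sum of centered multilinear forms in the (approximately Gaussian) increments — each surviving centered power of an increment contributing a factor $\|\Sigma\|^{1/2}n^{-1/2}$, the number and variance of such contributions being governed by ${\frak d}_\xi$ — and the smallness hypothesis ${\frak d}_\xi(\Theta_\delta;s)\le c_1 n$ both ensures convergence of the Neumann and Taylor expansions and dominates their remainders. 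Carrying this out uniformly over $\theta\in\Theta$ and controlling the events on which $\hat\theta$ or $\xi(\theta)$ leaves $\Theta_{c_s\delta}$ — where neither the $C^s$-bound on $f_k$ nor the smoothness of $\xi$ persists, whence the tail terms — is the substance of the proof; it parallels, but is considerably more delicate than, the $s\le 2$ argument behind Theorem \ref{simple_case_s<2_B}.
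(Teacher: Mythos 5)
There is a genuine gap, and it sits exactly where you locate the ``main obstacle.'' Your argument takes as inputs that $f_k$ and the iterates ${\mathcal B}^j f$ have $C^s$-norms $\lesssim_s 1$, that the bias obeys $|({\mathcal B}^{k+1}f)(\theta)|\lesssim_s (\sqrt{{\frak d}_{\xi}(\Theta_{\delta};s-1)/n})^s$, and that $\phi_\theta(\tilde\theta)=f_k(\tilde\theta)-f(\theta)-f'(\theta)[\tilde\theta-\theta]$ concentrates at the Gaussian point, and you assert these ``will be supplied by the bootstrap-chain analysis of Section \ref{sec:boostrap_chain}.'' They are not. In the present general framework nothing is assumed about the regularity of the Markov kernel $\theta\mapsto P(\theta;\cdot)$ of $\hat\theta$, so ${\mathcal T}g(\theta)={\mathbb E}_\theta g(\hat\theta)$ need not even be differentiable in $\theta$; consequently ${\mathcal B}^j f$ and $f_k$ have no usable $C^s$-bounds, the H\"older-norm, bias and concentration estimates of Proposition \ref{prop_summary} (which require a smooth random homotopy for $\hat\theta$) are unavailable, and the objects $\phi_\theta$ you feed into $\Delta^{+}_{{\mathcal H},\psi,\Theta_\delta}$ and into Maurey--Pisier concentration need not lie in any H\"older ball. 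This is precisely the difficulty the paper is designed to circumvent: its proof never controls $f_k$ directly, but replaces it by the surrogate functional $\tilde f_{\delta,k}$ built from the truncated Gaussian operator $\tilde{\mathcal B}_\delta$, and replaces $\hat\theta$ by $\tilde\theta_\delta$. The transfer $f_k(\hat\theta)\rightsquigarrow \tilde f_{\delta,k}(\tilde\theta_\delta)$ is done through the bootstrap-chain approximation results (Theorems \ref{hat^k_tilde^k_hat_tilde_AAA} and \ref{uniform_bd}) — this is exactly where the terms $\Delta^{+}_{{\mathcal H},\psi,\Theta_\delta}(\hat\theta,\tilde\theta)$, $\Delta_{{\mathcal F},\Theta_\delta}(\sqrt n(\hat\theta-\theta),\xi(\theta))$ and the tail probabilities in \eqref{psi_error}--\eqref{bd_Delta_s'_sgeq2} come from — and only then are the Fa\`a di Bruno calculus, the bias estimate and the Gaussian concentration (Theorem \ref{conc_main}) applied, to $\tilde f_{\delta,k}$ and $\tilde\theta_\delta$ alone. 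Note also that your claimed bias cancellation ${\mathbb E}\phi_\theta(\tilde\theta)\approx(-1)^k({\mathcal B}^{k+1}f)(\theta)$ mixes kernels: the identity ${\mathbb E}_\theta f_k(\hat\theta)-f(\theta)=(-1)^k({\mathcal B}^{k+1}f)(\theta)$ holds only when the functional and the estimator are built from the same kernel, so evaluating $f_k$ (built from ${\mathcal B}$) at the Gaussian surrogate produces an extra term $(\tilde{\mathcal T}-{\mathcal T})f_k$ whose control again requires the missing smoothness of $f_k$.

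A secondary quantitative issue concerns part (ii): after splitting $\sqrt n\,{\mathbb E}_\theta|\phi_\theta(\hat\theta)|$ into bias plus $L_2$-fluctuation, transferring the fluctuation from $\hat\theta$ to the surrogate using only the expectation-type distance $\Delta_{{\mathcal F},\Theta_\delta}$ does not work at the stated rate (comparing second moments and taking square roots produces $n^{1/4}\Delta^{1/2}$-type terms, and using the Orlicz-difference distance instead produces $\Delta_{{\mathcal F},\psi,\Theta_\delta}$, which is not the distance appearing in \eqref{bd_Delta_s'_sgeq2}). The paper avoids this by never transferring moments of the nonlinear remainder: it composes the test function $\varphi$ with $x\mapsto\sqrt n(\tilde f_{\delta,k}(\theta+n^{-1/2}x)-f(\theta))$ (Lemma \ref{lemma_g_varphi}), so that the $\sqrt n$ scaling is absorbed inside a $C^{0,s'}$-bounded test function before invoking $\Delta_{{\mathcal F},\Theta_\delta}$, and the $L_2$ concentration is used only inside the Gaussian surrogate via Theorem \ref{conc_main} with $\psi(u)=u^2$. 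To repair your proposal you would have to reorganize it along these lines; as written, the linearization-of-$f_k$ route presupposes exactly the regularity of the original bootstrap chain that the theorem deliberately does not assume.
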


\begin{remark}
	\normalfont	
	For $\Theta=T=E,$ the bounds of Theorem \ref{risk_efficient_psi} simplify as follows:
	\begin{align}
		\label{psi_error_bdd'''}
		&
		\nonumber
		\sup_{\|f\|_{C^s(E)}\leq 1}\sup_{\theta\in E}
		\|f_k(\hat \theta)-f(\theta)\|_{L_{\psi}({\mathbb P}_{\theta})}
		\\
		&
		\lesssim_{s, \psi} 
		\biggl[\frac{\|\Sigma\|^{1/2}_{L_{\infty}(E)}}{n^{1/2}}
		+
		\biggl(\sqrt{\frac{{\frak d}_{\xi}(s-1)}{n}}\biggr)^s 
		+\Delta_{{\mathcal H},\psi, E}^{+}(\hat \theta,\tilde \theta)\biggr]\wedge 1,
	\end{align}	
where ${\mathcal H}:=\{g: \|g\|_{C^s(E)}\leq 1\}.$

Assume that, for some sufficiently small constant $c_1>0,$ ${\frak d}_{\xi}(s)\leq c_1 n.$  
Then, for all $\psi\in \Psi,$ the following versions of bounds of Theorem \ref{Main_Th_AA}	 hold:
\begin{align}
	\label{psi_error'''}
	&
	\nonumber
	\sup_{\|f\|_{C^s(E)}\leq 1}\sup_{\theta\in E}
	\Bigl|\|f_k(\hat \theta)-f(\theta)\|_{L_\psi({\mathbb P}_{\theta})}-n^{-1/2}\sigma_f(\theta)\|Z\|_{L_{\psi}({\mathbb P})}\Bigr|
	\\
	&
	\lesssim_{s, \psi} 
	\biggl(\sqrt{\frac{{\frak d}_{\xi}(s-1)}{n}}\biggr)^s 
	+
	\frac{\|\Sigma\|_{L_{\infty}(E)}^{1/2}}{n^{1/2}} \sqrt{\frac{{\frak d}_{\xi}(s-1)}{n}}
	+\Delta_{{\mathcal H},\psi, E}^{+}(\hat \theta,\tilde \theta)
\end{align}
with ${\mathcal H}:=\{g: \|g\|_{C^s(E)}\leq 1\},$	
and, for all $s'\in [1,s],$ 
\begin{align*}
	&
	\nonumber
	\sup_{\|f\|_{C^s(E)}\leq 1}\sup_{\theta \in E}\Delta_{s'}(\sqrt{n}(f_{k}(\hat \theta)-f(\theta)), \sigma_{f}(\theta)Z)
	\\
	&
	\lesssim_{s} 
	\biggl[
	\sqrt{n}\biggl(\sqrt{\frac{{\frak d}_{\xi}(s-1)}{n}}\biggr)^s 
	+
	\|\Sigma\|_{L_{\infty}(E)}^{1/2} \sqrt{\frac{{\frak d}_{\xi}(s-1)}{n}}
	+\Delta_{\mathcal F, E}(\sqrt{n}(\hat \theta -\theta), \xi(\theta))
	\biggr],
\end{align*}
where ${\mathcal F}:= \{g: \|g\|_{C^{0,s'}(E)}\leq 1\}.$

In the general case, there are additional terms depending on tail probabilities of $\|\xi\|_{L_{\infty}(E)}$ and $\|\hat \theta-\theta\|.$
The term $\tilde \psi^{1/2}\Bigl({\mathbb P}\{\|\xi\|_{L_{\infty}(E)}\geq c_s\delta \sqrt{n}\}\Bigr)$ is negligible (smaller than $n^{-1/2}$) for losses $\psi$ that grow slower 
than sub-exponential loss $\psi_1$ (see Remark \ref{rem_on_psi}). For the term 
$$\sup_{\theta\in \Theta_{\delta}}\tilde \psi\Bigl({\mathbb P}_{\theta}\{\|\hat \theta-\theta\|\geq c_s\delta\}\Bigr)$$
to be of the order $O(n^{-1/2}),$ some conditions on the tail probabilities $$\sup_{\theta\in \Theta_{\delta}}{\mathbb P}_{\theta}\{\|\hat \theta-\theta\|\geq c_s\delta\},$$ ranging from polynomial decay 
in the case of $L_p$-losses $\psi(u)=u^p$ to exponential decay in the case of sub-exponential losses, are needed. 
In some cases, it is possible to reduce these conditions to the conditions on the tails of $\|\xi\|_{L_{\infty}(E)}$
using normal approximation (see Corollary \ref{Main_Th_AA_cor_cor}).
\end{remark}

\begin{remark}
\label{rem_on_all_bounds}
\normalfont
The bounds of theorems \ref{simple_case_sleq2_a}, \ref{simple_case_s<2_B},  \ref{risk_efficient_psi} and \ref{Main_Th_AA}
show that the estimator $f_k (\hat \theta)$
of $f(\theta)$ exhibits the same type of behavior as in the case of Gaussian shift model studied in \cite{Koltchinskii_Zhilova} (see also Theorem \ref{Gaussian_shift_th} at the beginning of this section 
and the discussion that follows) provided that normal approximation of $\hat \theta,$ quantified by such parameters as 
$$
\Delta_{{\mathcal H},\psi, \Theta_{\delta}}^{+}(\hat \theta,\tilde \theta)\ \ {\rm and}\ \ \Delta_{\mathcal F, \Theta_{\delta}}(\sqrt{n}(\hat \theta -\theta), \xi(\theta)),
$$ 
is sufficiently accurate. 

\begin{enumerate}
\item The {\it ``Gaussian parts"} of the bounds of these theorems, such as the part
$$
\frac{\|\Sigma\|^{1/2}_{L_{\infty}(E)}}{n^{1/2}}+
\biggl(\sqrt{\frac{{\frak d}_{\xi}(\Theta_{\delta};s-1)}{n}}\biggr)^s 
$$ 
of bound \eqref{psi_error_bdd}, are similar to the main part
$\frac{\|\Sigma\|^{1/2}}{n^{1/2}}\bigvee \biggl(\sqrt{\frac{{\mathbb E}\|\xi\|^2}{n}}\biggr)^s$
of the bound of Theorem \ref{Gaussian_shift_th}. 
They consist of two terms: the concentration term, such as $\frac{\|\Sigma\|^{1/2}_{L_{\infty}(E)}}{n^{1/2}},$
controlling the random error of estimator $f_k(\hat \theta)$ and the bias term, such as $\Bigl(\sqrt{\frac{{\frak d}_{\xi}(\Theta_{\delta};s-1)}{n}}\Bigr)^s,$ 
controlling the bias of the estimator. 
In fact, the Gaussian parts are exactly the same as in the case of Gaussian shift model when $\xi(\theta)$ does not depend on $\theta$ and it is possible 
to obtain Theorem \ref{Gaussian_shift_th} for Gaussian shift models as a corollary of our general results, see Corollary \ref{cor_Gauss_shift} below. 

\item In typical examples, such as $\xi(\theta)=A(\theta)Z, Z\sim N(0,I_d),$ complexity parameters $\frak{v}_{\xi}(\Theta)$ and ${\frak d}_{\xi}(\Theta_{\delta};s-1)$ could be easily controlled 
in terms of some dimension type parameter $d$ (see, e.g.,  bound \eqref{bound_d_xi_d}), and the Gaussian parts of the bounds are controlled by the expression 
$
\frac{1}{\sqrt{n}}+ \Bigl(\sqrt{\frac{d}{n}}\Bigr)^s.
$
If the normal approximation terms 
of bounds of theorems \ref{simple_case_sleq2_a}, \ref{simple_case_s<2_B},  \ref{risk_efficient_psi} and \ref{Main_Th_AA}
are negligible comparing with the Gaussian part, there is a phase transition from the classical $\frac{1}{\sqrt{n}}$
error rate when the smoothness $s$ of functional $f$ is sufficiently large to slower rates when the smoothness is not sufficient
(similarly to the case of Gaussian shift models \cite{Koltchinskii_Zhilova}).	
More precisely, if $d\leq n^{\alpha}$ for some $\alpha\in (0,1),$ then $\frac{1}{\sqrt{n}}$ error rate for estimators $f_k(\hat \theta)$ holds for all $s\geq \frac{1}{1-\alpha}$
and slower rates hold for $s<\frac{1}{1-\alpha}$ (which is known to be a sharp threshold in the case of Gaussian shift models). 
Moreover, if $s>\frac{1}{1-\alpha},$ then the bounds of theorems \ref{simple_case_s<2_B} and \ref{Main_Th_AA}
also imply normal approximation of estimator $f_k(\hat \theta).$ 
However, for Gaussian type bounds on estimator $f_k(\hat \theta)$ to hold in the whole range of values of $\alpha\in (0,1),$ 
the normal approximation of $\sqrt{n}(\hat \theta-\theta)$ by $\xi(\theta)$ should hold for $d=o(n)$
(see further discussion in Section \ref{sec: Examples_and_Applications}).

\item Finally, note that, for $s\in (0,2],$ there is no need in bias reduction to achieve the optimal (in the Gaussian case)
error rates and plug-in estimator $f(\hat \theta)$ could be used for this purpose (see theorems  \ref{simple_case_sleq2_a} and \ref{simple_case_s<2_B}).
For $s>2,$ the bias of the plug-in estimator is too large and estimators with reduced bias, such as $f_k(\hat \theta),$ are needed to achieve the optimal 
rate (see theorems \ref{risk_efficient_psi} and \ref{Main_Th_AA}).

\end{enumerate}
\end{remark}

It is not hard to obtain a generalization of results of \cite{Koltchinskii_Zhilova} to more general Gaussian shift 
models as a corollary of the results of the current paper.
Namely, suppose that $X^{(n)}$ satisfies the following Gaussian shift model
\begin{align*}
X^{(n)}= \theta + \frac{\xi(\theta)}{\sqrt{n}}, \theta \in E,
\end{align*}
where $\xi(\theta)$ is a Gaussian random variable in $E$ with mean $0$ and covariance 
operator $\Sigma(\theta), \theta \in E.$ In particular, this includes the Gaussian shift models 
studied in \cite{Koltchinskii_Zhilova} in which the noise $\xi(\theta)=\xi$ did not depend on $\theta.$
Let $\hat \theta = \hat \theta (X^{(n)})=X^{(n)}.$ 
The next corollary is immediate since $\hat \theta = \tilde \theta$ and $\sqrt{n}(\hat \theta -\theta)=\xi(\theta),$
implying that
\begin{align*}
\Delta_{\mathcal F, \Theta_{\delta}}(\sqrt{n}(\hat \theta -\theta), \xi(\theta))=\Delta_{{\mathcal H},\psi, E}^{+}(\hat \theta,\tilde \theta) 
= 0.
\end{align*}

\begin{corollary}
\label{cor_Gauss_shift}
Let $s=k+1+\rho$ with $k\geq 0$ and $\rho\in (0,1].$ For all $\psi\in \Psi,$ 
\begin{align*}
&
\nonumber
\sup_{\|f\|_{C^s(E)}\leq 1}\sup_{\theta\in E}
\|f_k(\hat \theta)-f(\theta)\|_{L_{\psi}({\mathbb P}_{\theta})}
\lesssim_{s, \psi} 
\biggl[\frac{\|\Sigma\|^{1/2}_{L_{\infty}(E)}}{n^{1/2}}
+
\biggl(\sqrt{\frac{{\frak d}_{\xi}(s-1)}{n}}\biggr)^s\biggr]\bigwedge 1.
\end{align*}
Moreover, for $k\geq 1$ under the assumption that ${\frak d}_{\xi}(s)\leq c_1 n$ for a small enough constant $c_1>0,$
\begin{align*}
	&
	\nonumber
	\sup_{\|f\|_{C^s(E)}\leq 1}\sup_{\theta\in E}
	\Bigl|\|f_k(\hat \theta)-f(\theta)\|_{L_\psi({\mathbb P}_{\theta})}-n^{-1/2}\sigma_f(\theta)\|Z\|_{L_{\psi}({\mathbb P})}\Bigr|
	\\
	&
	\lesssim_{s, \psi} 
	\biggl(\sqrt{\frac{{\frak d}_{\xi}(s-1)}{n}}\biggr)^s 
	+
	\frac{\|\Sigma\|_{L_{\infty}(E)}^{1/2}}{n^{1/2}}\sqrt{\frac{{\frak d}_{\xi}(s-1)}{n}}
\end{align*}	
and, for all $s'\in [1,s],$
\begin{align*}
&
\nonumber
\sup_{\|f\|_{C^s(E)}\leq 1}\sup_{\theta \in E}\Delta_{s'}(\sqrt{n}(f_{k}(\hat \theta)-f(\theta)), \sigma_{f}(\theta)Z)
\\
&
\lesssim_{s} 
\biggl[
\sqrt{n}\biggl(\sqrt{\frac{{\frak d}_{\xi}(s-1)}{n}}\biggr)^s 
+
\|\Sigma\|_{L_{\infty}(E)}^{1/2}\sqrt{\frac{{\frak d}_{\xi}(s-1)}{n}}
\biggr].
\end{align*}
For $k=0,$ the last two bounds hold without the terms involving $\|\Sigma\|_{L_{\infty}(E)}^{1/2}\sqrt{\frac{{\frak d}_{\xi}(s-1)}{n}}.$
\end{corollary}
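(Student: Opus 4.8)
The plan is to obtain the Corollary by specializing Theorems \ref{risk_efficient_psi} and \ref{Main_Th_AA} to $\Theta = T = E$ and using that in this model the Gaussian surrogate coincides with the estimator itself. Indeed, under ${\mathbb P}_\theta$ the model $X^{(n)} = \theta + \xi(\theta)/\sqrt{n}$ with $\hat\theta = X^{(n)}$ gives the exact identities $\hat\theta = G(\theta) = \tilde\theta$ and $\sqrt{n}(\hat\theta - \theta) = \xi(\theta)$, so $\Delta_{{\mathcal H},\psi,\Theta_\delta}^{+}(\hat\theta,\tilde\theta) = 0$ and $\Delta_{{\mathcal F},\Theta_\delta}(\sqrt{n}(\hat\theta - \theta),\xi(\theta)) = 0$ for every admissible ${\mathcal H}$, ${\mathcal F}$, $\psi$, $\delta$, as already noted above the statement.

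Next I would take $\Theta = T = E$, so that $\Theta_\delta = E$ for all $\delta>0$; then the hypothesis $\Theta_\delta\subset T$ of Theorem \ref{risk_efficient_psi} holds trivially, and so do the hypotheses $\Theta_\delta\subset T$ and ${\frak d}_\xi(\Theta_\delta;s)\le c_1 n$ of Theorem \ref{Main_Th_AA} (the latter being precisely the standing assumption of the last two bounds of the Corollary). With the $\Delta$-terms equal to zero, the right-hand sides of \eqref{psi_error_bdd}, \eqref{psi_error} and \eqref{bd_Delta_s'_sgeq2} retain only the ``main'' terms, which do not involve $\delta$, plus the tail-probability terms $\sup_\theta\tilde\psi({\mathbb P}_\theta\{\|\hat\theta-\theta\|\ge c_s\delta\})$, $\tilde\psi^{1/2}({\mathbb P}\{\|\xi\|_{L_\infty(E)}\ge c_s\delta\sqrt{n}\})$ and, in \eqref{bd_Delta_s'_sgeq2}, their $\sqrt{n}\,{\mathbb P}(\cdot)$ and $\sqrt{n}\,{\mathbb P}^{1/4}(\cdot)$ analogues. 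Since $\|\hat\theta-\theta\| = \|\xi(\theta)\|/\sqrt{n}\le \|\xi\|_{L_\infty(E)}/\sqrt{n}$ pointwise, each of these is a continuous nondecreasing function, vanishing at $0$, of the single probability ${\mathbb P}\{\|\xi\|_{L_\infty(E)}\ge c_s\delta\sqrt{n}\}$; and ${\frak d}_\xi(s-1)<\infty$ (automatic in the last two bounds since ${\frak d}_\xi(s)\le c_1 n$, and the only nontrivial case in the first) forces $\|\xi\|_{L_\infty(E)}<\infty$ a.s., so that probability tends to $0$ as $\delta\to\infty$.

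I would then let $\delta\to\infty$: the constant $c_s$ and the constants hidden in $\lesssim_s$, $\lesssim_{s,\psi}$ depend only on $s$ (and $\psi$), not on $\delta$, so the passage to the limit is legitimate and annihilates every tail term. This turns \eqref{psi_error_bdd} into the first displayed bound of the Corollary (the outer $\bigwedge 1$ being stable under the limit) and, once ${\frak d}_\xi(s)\le c_1 n$, turns \eqref{psi_error} and \eqref{bd_Delta_s'_sgeq2} into the second and third bounds; in the third, the test class $\{g:\|g\|_{C^{0,s'}(U_{c_s\delta\sqrt{n}})}\le 1\}$ enlarges to $\{g:\|g\|_{C^{0,s'}(E)}\le 1\}$ as $\delta\to\infty$, which is immaterial because the corresponding distance is identically $0$. (Equivalently, one may simply quote the $\Theta=T=E$ simplifications \eqref{psi_error_bdd'''}, \eqref{psi_error'''} and the accompanying $\Delta_{s'}$-bound recorded in the Remarks and drop the vanishing $\Delta$-terms.)

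The whole argument is bookkeeping; the one genuine point is the passage $\delta\to\infty$, namely verifying that the constants in Theorems \ref{risk_efficient_psi} and \ref{Main_Th_AA} are $\delta$-free and that the tail terms really do vanish — which rests on the a.s. finiteness of $\|\xi\|_{L_\infty(E)}$ and on the continuity of $\tilde\psi$ at the origin, itself a consequence of $\psi$ being convex, even, increasing on ${\mathbb R}_+$ with $\psi(0)=0$.
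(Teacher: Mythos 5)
Your proposal is correct and follows essentially the paper's own route: the corollary is read off from Theorems \ref{risk_efficient_psi} and \ref{Main_Th_AA} specialized to $\Theta=T=E$, using that in this model $\hat\theta=\tilde\theta$ and $\sqrt{n}(\hat\theta-\theta)=\xi(\theta)$, so that $\Delta_{{\mathcal H},\psi,E}^{+}(\hat\theta,\tilde\theta)$ and $\Delta_{{\mathcal F},E}(\sqrt{n}(\hat\theta-\theta),\xi(\theta))$ vanish identically. Your explicit $\delta\to\infty$ limiting argument (legitimate, since $c_s$ and the implied constants are $\delta$-free and $\|\xi\|_{L_\infty(E)}<\infty$ a.s.) is just a careful justification of what the paper dispatches by quoting the $\Theta=T=E$ simplifications in the remarks, i.e.\ taking $\delta=+\infty$ in the proofs, so the substance is the same.
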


In the case when the noise $\xi(\theta)=\xi$ does not depend on $\theta,$
we have 
\begin{align*}
{\frak d}_{\xi}(s-1)= {\mathbb E}\|\xi\|^2, 
\end{align*}
and the above bounds 
immediately imply the main results of paper \cite{Koltchinskii_Zhilova}.

The bounds of theorems \ref{risk_efficient_psi} and \ref{Main_Th_AA} show that the ``Gaussian 
error rates" would hold for other models provided that the additional terms related to the accuracy of normal approximation and to the tails of random variables $\|\hat \theta-\theta\|$ and $\|\xi\|_{L_{\infty}(E)}$ are negligible comparing with the Gaussian terms. To ensure this 
(and, in particular, to ensure that $\sqrt{n}$ convergence rate is attainable for estimator $f_k(\hat \theta)$ if $f$ is sufficiently smooth), one needs the conditions 
\begin{align*}
\Delta_{\mathcal F, \Theta_{\delta}}\Bigl(\sqrt{n}(\hat \theta -\theta),\xi(\theta)\Bigr)\to 0
\end{align*}
and 
\begin{align*}
\Delta_{{\mathcal H},\psi, \Theta_{\delta}}^{+}(\hat \theta,\tilde \theta)=o(n^{-1/2})
\end{align*}
as $n\to\infty.$ 

The following proposition provides useful upper bounds
on the distances $\Delta_{{\mathcal H}, \Theta_{\delta}}(\hat \theta,\tilde \theta),$
$\Delta_{{\mathcal H},\psi, \Theta_{\delta}}(\hat \theta,\tilde \theta)$ and
$\Delta_{{\mathcal H},\psi, \Theta_{\delta}}^{+}(\hat \theta,\tilde \theta).$

\begin{proposition}
\label{prop_Delta_H_Delta_F}	
Let $s\geq 1.$ For ${\mathcal H}:=\{g: \|g\|_{C^s(\Theta_{\delta})}\leq 1\},$ 
\begin{align*}
\Delta_{{\mathcal H}, \Theta_{\delta}}(\hat \theta,\tilde \theta)
\leq 
\frac{\Delta_{{\mathcal F}, \Theta_{\delta}}\Bigl(\sqrt{n}(\hat \theta-\theta), \xi(\theta)\Bigr)}{\sqrt{n}},
\end{align*}
\begin{align*}
\Delta_{{\mathcal H},\psi, \Theta_{\delta}}(\hat \theta,\tilde \theta)
\leq 
\frac{\Delta_{{\mathcal F},\psi, \Theta_{\delta}}\Bigl(\sqrt{n}(\hat \theta-\theta), \xi(\theta)\Bigr)}{\sqrt{n}},
\end{align*}
and 
\begin{align*}
\Delta_{{\mathcal H},\psi, \Theta_{\delta}}^{+}(\hat \theta,\tilde \theta)
\leq 
\frac{\Delta_{{\mathcal F},\psi, \Theta_{\delta}}^{+}\Bigl(\sqrt{n}(\hat \theta-\theta), \xi(\theta)\Bigr)}{\sqrt{n}},
\end{align*}
where 
${\mathcal F}:=\{g: \|g\|_{C^{0,s}(U_{\delta\sqrt{n}})}\leq 1\}.$
\end{proposition}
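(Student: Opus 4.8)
\medskip\noindent\emph{Sketch of proof.}\
The plan is to reduce all three bounds to a single rescaling identity. Fix $\theta$ in the relevant parameter set and $g\in\mathcal{H}$, i.e.\ $\|g\|_{C^s(\Theta_{\delta})}\le 1$ with $s=k+\rho$, $k\ge 0$, $\rho\in(0,1]$. Since $\tilde\theta=\theta+\xi(\theta)/\sqrt{n}$ and, trivially, $\hat\theta=\theta+\sqrt{n}(\hat\theta-\theta)/\sqrt{n}$, the rescaled function
\begin{align*}
h_\theta(x):=g\Bigl(\theta+\frac{x}{\sqrt{n}}\Bigr),\qquad x\in E,
\end{align*}
satisfies $g(\hat\theta)=h_\theta\bigl(\sqrt{n}(\hat\theta-\theta)\bigr)$ and $g(\tilde\theta)=h_\theta\bigl(\xi(\theta)\bigr)$. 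Thus comparing $g$ at $\hat\theta$ versus $\tilde\theta$ is the same as comparing $h_\theta$ at $\sqrt{n}(\hat\theta-\theta)$ versus $\xi(\theta)$, which is precisely the comparison controlled by the right-hand side distances $\Delta_{\mathcal{F},\cdot}\bigl(\sqrt{n}(\hat\theta-\theta),\xi(\theta)\bigr)$.

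The substantive step is to estimate the smoothness of $h_\theta$ on the ball $U_{\delta\sqrt{n}}$. For $\|x\|<\delta\sqrt{n}$ the point $\theta+x/\sqrt{n}$ stays within distance $\delta$ of $\theta$, hence in the region where the H\"older seminorms of $g$ are bounded by $\|g\|_{C^s(\Theta_{\delta})}$ (this is where one matches the radii of the various neighbourhoods and uses that bounded Lipschitz / H\"older functions are extended to all of $E$ with preservation of norm, as arranged earlier). By the chain rule $h_\theta^{(j)}(x)=n^{-j/2}g^{(j)}(\theta+x/\sqrt{n})$, so each occurrence of the contraction $x\mapsto x/\sqrt{n}$ in the argument contributes a factor $n^{-1/2}$ to a Lipschitz seminorm and $n^{-\rho/2}$ to a ${\rm Lip}_\rho$ seminorm:
\begin{align*}
&\|h_\theta^{(j)}\|_{{\rm Lip}(U_{\delta\sqrt{n}})}\le n^{-(j+1)/2}\|g^{(j)}\|_{{\rm Lip}(\Theta_{\delta})},\qquad 0\le j\le k-1,
\\
&\|h_\theta^{(k)}\|_{{\rm Lip}_\rho(U_{\delta\sqrt{n}})}\le n^{-s/2}\|g^{(k)}\|_{{\rm Lip}_\rho(\Theta_{\delta})}.
\end{align*}
Since $s\ge 1$ (the range in which the proposition is applied), all the exponents $(j+1)/2$, $j\ge 0$, and $s/2$ are at least $1/2$, so taking the maximum gives $\|h_\theta\|_{C^{0,s}(U_{\delta\sqrt{n}})}\le n^{-1/2}\|g\|_{C^s(\Theta_{\delta})}\le n^{-1/2}$. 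It is essential here that the class $\mathcal{F}$ is defined by the $\|\cdot\|_{C^{0,s}}$ seminorm rather than $\|\cdot\|_{C^s}$: the rescaling shrinks every H\"older seminorm by a power of $n^{-1/2}$ but does not shrink $\|g\|_{L_\infty}$.

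Consequently $\sqrt{n}\,h_\theta\in\mathcal{F}=\{g:\|g\|_{C^{0,s}(U_{\delta\sqrt{n}})}\le 1\}$. Applying the definition of $\Delta_{\mathcal{F},\mathbb{P}_\theta}$ to the pair $\bigl(\sqrt{n}(\hat\theta-\theta),\xi(\theta)\bigr)$ and using homogeneity of the expectation,
\begin{align*}
\sqrt{n}\,\bigl|\mathbb{E}_\theta g(\hat\theta)-\mathbb{E}_\theta g(\tilde\theta)\bigr|
&=\bigl|\mathbb{E}_\theta(\sqrt{n}\,h_\theta)\bigl(\sqrt{n}(\hat\theta-\theta)\bigr)-\mathbb{E}_\theta(\sqrt{n}\,h_\theta)\bigl(\xi(\theta)\bigr)\bigr|
\\
&\le\Delta_{\mathcal{F},\mathbb{P}_\theta}\bigl(\sqrt{n}(\hat\theta-\theta),\xi(\theta)\bigr);
\end{align*}
the identical computation with the Orlicz $\psi$-norm in place of the expectation, now using $\|c\zeta\|_\psi=|c|\,\|\zeta\|_\psi$, gives $\sqrt{n}\,\bigl|\,\|g(\hat\theta)\|_\psi-\|g(\tilde\theta)\|_\psi\,\bigr|\le\Delta_{\mathcal{F},\psi,\mathbb{P}_\theta}\bigl(\sqrt{n}(\hat\theta-\theta),\xi(\theta)\bigr)$. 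Dividing by $\sqrt{n}$, taking the supremum over $g\in\mathcal{H}$ and then over $\theta$, and finally adding the two inequalities to obtain the $\Delta^{+}$ version, yields the three displayed bounds.

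The main obstacle is not any of the computations above, which are routine, but the bookkeeping of domains: the random variables $\hat\theta$, $\tilde\theta$, $\sqrt{n}(\hat\theta-\theta)$ and $\xi(\theta)$ are not confined to $\Theta_{\delta}$ or to $U_{\delta\sqrt{n}}$, so one must invoke the extension conventions to regard $g\in\mathcal{H}$ and $f\in\mathcal{F}$ as functions on all of $E$, and verify that the H\"older seminorms of $h_\theta$ on $U_{\delta\sqrt{n}}$ are genuinely controlled by those of $g$ on $\Theta_{\delta}$ once the neighbourhood radii are matched. A minor additional point is the integrability of an unbounded $f\in\mathcal{F}$; this is automatic in the intended applications, where $\hat\theta$ and $\xi(\theta)$ have light tails, and the asserted inequalities are vacuous whenever a right-hand side is infinite.
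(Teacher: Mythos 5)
Your proposal is correct and takes essentially the same route as the paper's own proof: both rescale each $h\in\mathcal{H}$ to $g(x):=h\bigl(\theta+\tfrac{x}{\sqrt{n}}\bigr)$, check that $\|g\|_{C^{0,s}(U_{\delta\sqrt{n}})}\le n^{-1/2}\|h\|_{C^s(\Theta_{\delta})}$, and exploit the identities $h(\hat\theta)=g\bigl(\sqrt{n}(\hat\theta-\theta)\bigr)$, $h(\tilde\theta)=g(\xi(\theta))$ together with homogeneity of the expectation and of the Orlicz norm before summing the two bounds for the $\Delta^{+}$ version. The only difference is that you spell out the chain-rule exponent bookkeeping and the domain/extension matching that the paper leaves implicit.
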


It follows that the condition $\Delta_{{\mathcal H},\psi, \Theta_{\delta}}^{+}(\hat \theta,\tilde \theta)=o(n^{-1/2})$ holds if 
$$
\Delta_{{\mathcal F},\psi, \Theta_{\delta}}^{+} \Bigl(\sqrt{n}(\hat \theta-\theta), \xi(\theta)\Bigr)
=o(1).
$$

Next we state corollaries of theorems \ref{risk_efficient_psi} and \ref{Main_Th_AA} (and, for $s\in (1,2],$ of theorems \ref{simple_case_sleq2_a} and \ref{simple_case_s<2_B})
in the case of quadratic loss $\psi(u)=u^2.$
In these corollaries, we will use Wasserstein distances $W_1,W_2$ to quantify the accuracy of normal approximation and to obtain a simpler form of the results. 

\begin{corollary}
	\label{Main_Th_AA_cor_cor}
	Let $s=k+1+\rho$ with $k\geq 1$ and $\rho\in (0,1],$ and let $\delta>0.$
	Let $\Theta$ be a subset of $E$ such that $\Theta_{\delta}\subset T.$ 
	Suppose that, for some sufficiently small constant $c_1>0,$
	\begin{align*} 
	{\frak d}_{\xi}(\Theta_{\delta};s)\leq c_1 \delta^2 n.
	\end{align*}
	Then, for all $s'\in [1,s]$ and for some constant $c_2>0,$
	\begin{align*}
	&
	\sup_{\|f\|_{C^s(\Theta_{\delta})}\leq 1}
	\sup_{\theta\in \Theta}\Delta_{s'}(\sqrt{n}(f_{k}(\hat \theta)-f(\theta)), \sigma_{f}(\theta)Z)
	\\
	&
	\lesssim_{s,\delta} 
	\biggl[
	\sqrt{n}\biggl(\sqrt{\frac{{\frak d}_{\xi}(\Theta_{\delta};s-1)}{n}}\biggr)^s 
	+
	\|\Sigma\|_{L_{\infty}(E)}^{1/2}\sqrt{\frac{{\frak d}_{\xi}(\Theta_{\delta};s-1)}{n}}
	\\
	&
	\ \ \ \ \ \ \ \ \ +W_{1,\Theta_{\delta}}(\sqrt{n}(\hat \theta -\theta), \xi(\theta)) 
	+ \sqrt{n}\exp\biggl\{-\frac{c_2\delta^2 n}{\|\Sigma\|_{L_{\infty}(E)}}\biggr\}
	\biggr].
	\end{align*}
\end{corollary}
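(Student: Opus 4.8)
The plan is to derive the corollary directly from part~(ii) of Theorem~\ref{Main_Th_AA}. Starting from the bound there, with ${\mathcal F}=\{g:\|g\|_{C^{0,s'}(U_{c_s\delta\sqrt n})}\le 1\}$, two modifications are needed: (a)~replace the term $\Delta_{{\mathcal F},\Theta_\delta}(\sqrt n(\hat\theta-\theta),\xi(\theta))$ by the larger Wasserstein-$1$ distance $W_{1,\Theta_\delta}(\sqrt n(\hat\theta-\theta),\xi(\theta))$; and (b)~absorb the two remaining tail terms $\sqrt n\sup_{\theta\in\Theta_\delta}{\mathbb P}_\theta\{\|\hat\theta-\theta\|\ge c_s\delta\}$ and $\sqrt n\,{\mathbb P}^{1/4}\{\|\xi\|_{L_\infty(E)}\ge c_s\delta\sqrt n\}$ into $W_{1,\Theta_\delta}(\sqrt n(\hat\theta-\theta),\xi(\theta))$ plus a term of the form $\sqrt n\exp\{-c_2\delta^2 n/\|\Sigma\|_{L_\infty(E)}\}$. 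The strengthened hypothesis ${\frak d}_\xi(\Theta_\delta;s)\le c_1\delta^2 n$ is exactly what makes step~(b) work (and, with $c_1$ small, it also supplies the hypothesis of Theorem~\ref{Main_Th_AA}(ii)).

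For (a): since $s'\ge 1$, the seminorm $\|g\|_{{\rm Lip}(U_{c_s\delta\sqrt n})}=\|g^{(0)}\|_{{\rm Lip}(U_{c_s\delta\sqrt n})}$ is one of the terms appearing in the definition of $\|g\|_{C^{0,s'}(U_{c_s\delta\sqrt n})}$; hence every $g\in{\mathcal F}$ is $1$-Lipschitz on $U_{c_s\delta\sqrt n}$ and, by the McShane--Whitney extension adopted throughout the paper, may be viewed as a $1$-Lipschitz function on all of $E$. By Kantorovich--Rubinstein duality, for each $\theta\in\Theta_\delta$ one has $\Delta_{{\mathcal F},{\mathbb P}_\theta}(\sqrt n(\hat\theta-\theta),\xi(\theta))\le W_{1,{\mathbb P}_\theta}(\sqrt n(\hat\theta-\theta),\xi(\theta))$, and taking the supremum over $\theta\in\Theta_\delta$ gives $\Delta_{{\mathcal F},\Theta_\delta}(\sqrt n(\hat\theta-\theta),\xi(\theta))\le W_{1,\Theta_\delta}(\sqrt n(\hat\theta-\theta),\xi(\theta))$.

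For (b): first note $\|\xi\|_{L_\infty(E)}=\|\xi\|_{L_\infty(\Theta_\delta)}\le\|\xi\|_{C^s(\Theta_\delta)}$, so ${\mathbb E}\|\xi\|_{L_\infty(E)}^2\le{\frak d}_\xi(\Theta_\delta;s)\le c_1\delta^2 n$, and, since $\sup_{\theta\in\Theta_\delta}{\mathbb E}\|\xi(\theta)\|^2\le{\mathbb E}\sup_{\theta\in\Theta_\delta}\|\xi(\theta)\|^2\le{\frak d}_\xi(\Theta_\delta;s)$, the same bound holds for each ${\mathbb E}\|\xi(\theta)\|^2$. The weak variance of the centered Gaussian process $(\theta,f^\ast)\mapsto f^\ast(\xi(\theta))$, whose supremum over $\theta\in\Theta_\delta$, $\|f^\ast\|\le1$, equals $\|\xi\|_{L_\infty(E)}$, is at most $\sup_\theta\|\Sigma(\theta)\|\le\|\Sigma\|_{L_\infty(E)}$, and similarly the weak variance of $\|\xi(\theta)\|$ is at most $\|\Sigma\|_{L_\infty(E)}$. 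Choosing $c_1$ small enough that ${\mathbb E}\|\xi\|_{L_\infty(E)}\le(c_s/2)\delta\sqrt n$, the Gaussian concentration inequality yields ${\mathbb P}\{\|\xi\|_{L_\infty(E)}\ge c_s\delta\sqrt n\}\le\exp\{-c\delta^2 n/\|\Sigma\|_{L_\infty(E)}\}$ and ${\mathbb P}\{\|\xi(\theta)\|\ge(c_s/2)\delta\sqrt n\}\le\exp\{-c\delta^2 n/\|\Sigma\|_{L_\infty(E)}\}$ for all $\theta\in\Theta_\delta$; the last tail term is thus bounded by $\sqrt n\exp\{-c_2\delta^2 n/\|\Sigma\|_{L_\infty(E)}\}$. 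For the first tail term, fix $\theta\in\Theta_\delta$ and take a coupling $(\eta_1',\eta_2')$ of $\sqrt n(\hat\theta-\theta)$ and $\xi(\theta)$ that is nearly $W_1$-optimal; then
\[
{\mathbb P}_\theta\{\|\sqrt n(\hat\theta-\theta)\|\ge c_s\delta\sqrt n\}\le{\mathbb P}\{\|\eta_2'\|\ge(c_s/2)\delta\sqrt n\}+{\mathbb P}\{\|\eta_1'-\eta_2'\|\ge(c_s/2)\delta\sqrt n\},
\]
where the first probability is controlled by the Gaussian concentration bound above and the second, by Markov's inequality, is at most $\tfrac{2}{c_s\delta\sqrt n}{\mathbb E}\|\eta_1'-\eta_2'\|$; optimizing over couplings and then over $\theta$ bounds it by $\tfrac{2}{c_s\delta\sqrt n}W_{1,\Theta_\delta}(\sqrt n(\hat\theta-\theta),\xi(\theta))$. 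Multiplying through by $\sqrt n$,
\[
\sqrt n\sup_{\theta\in\Theta_\delta}{\mathbb P}_\theta\{\|\hat\theta-\theta\|\ge c_s\delta\}\lesssim_\delta W_{1,\Theta_\delta}(\sqrt n(\hat\theta-\theta),\xi(\theta))+\sqrt n\exp\{-c_2\delta^2 n/\|\Sigma\|_{L_\infty(E)}\}.
\]
Inserting (a) and (b) into the bound of Theorem~\ref{Main_Th_AA}(ii) and absorbing constants into $\lesssim_{s,\delta}$ yields the stated inequality.

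I expect the argument to be routine given Theorem~\ref{Main_Th_AA}: the only mildly delicate points are identifying the correct weak-variance constant $\|\Sigma\|_{L_\infty(E)}$ for the Gaussian concentration inequality applied to the $E$-valued process $\xi$ (this is why $\Sigma(\theta)$, rather than ${\frak d}_\xi$, governs the exponent), and the bookkeeping ensuring that the single hypothesis ${\frak d}_\xi(\Theta_\delta;s)\le c_1\delta^2 n$, with $c_1$ chosen small, simultaneously gives the hypothesis needed to invoke Theorem~\ref{Main_Th_AA}(ii) (for which it suffices to treat $\delta$ as bounded) and the slack ${\mathbb E}\|\xi\|_{L_\infty(E)}\le(c_s/2)\delta\sqrt n$ needed for the two concentration estimates.
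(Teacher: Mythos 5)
Your proof is correct and follows essentially the same route as the paper: invoke Theorem \ref{Main_Th_AA}(ii), dominate $\Delta_{{\mathcal F},\Theta_{\delta}}(\sqrt{n}(\hat \theta-\theta),\xi(\theta))$ by $W_{1,\Theta_{\delta}}$ via the Lipschitz property of ${\mathcal F}$ and Kantorovich--Rubinstein, bound the Gaussian tail ${\mathbb P}\{\|\xi\|_{L_{\infty}(E)}\geq c_s\delta\sqrt{n}\}$ by $\exp\{-c_2\delta^2 n/\|\Sigma\|_{L_{\infty}(E)}\}$ using Gaussian concentration under ${\frak d}_{\xi}(\Theta_{\delta};s)\leq c_1\delta^2 n$, and convert $\sqrt{n}\sup_{\theta\in\Theta_{\delta}}{\mathbb P}_{\theta}\{\|\hat \theta-\theta\|\geq c_s\delta\}$ into $\lesssim_{\delta} W_{1,\Theta_{\delta}}(\sqrt{n}(\hat \theta-\theta),\xi(\theta))+\sqrt{n}\exp\{-c_2\delta^2 n/\|\Sigma\|_{L_{\infty}(E)}\}$. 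The only (cosmetic) difference is in that last conversion: you use a near-optimal coupling together with Markov's inequality, whereas the paper tests against a $\tfrac{2}{\delta\sqrt{n}}$-Lipschitz surrogate of the indicator of $\{\|x\|\geq \delta\sqrt{n}\}$ --- dual formulations of the same estimate.
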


\begin{corollary}
	\label{risk_efficient_psi_cor_cor}
	Let $\Theta \subset T,$ let $\delta>0$
	and let $s=k+1+\rho$ with $k\geq 0$ and $\rho\in (0,1].$  
	Suppose that $\Theta_{\delta}\subset T.$ 
	Then, for some constant $c_2>0,$
	\begin{align}
	\label{psi_error_bdd_W_2}
	&
	\nonumber
	\sup_{\|f\|_{C^s(\Theta_{\delta})}\leq 1}\sup_{\theta\in \Theta}
	\|f_k(\hat \theta)-f(\theta)\|_{L_2({\mathbb P}_{\theta})}
	\\
	&
	\nonumber
	\lesssim_{s,\delta}
	\biggl[\frac{\|\Sigma\|^{1/2}_{L_{\infty}(E)}}{n^{1/2}}+
	\biggl(\sqrt{\frac{{\frak d}_{\xi}(\Theta_{\delta};s-1)}{n}}\biggr)^s 
	\\
	&
	\ \ \ \ \ \ \ \ +\frac{W_{2,\Theta_{\delta}}(\sqrt{n}(\hat \theta -\theta), \xi(\theta))}{\sqrt{n}}
	+\exp\biggl\{-\frac{c_2\delta^2 n}{\|\Sigma\|_{L_{\infty}(E)}}\biggr\}
	\biggr]\bigwedge 1.
	\end{align}	
	Moreover, if 	
	for some sufficiently 
	small constant $c_1>0,$
	\begin{align*}
	{\frak d}_{\xi}(\Theta_{\delta};s)\leq c_1\delta^2 n,
	\end{align*}
	then 	
	\begin{align}
	\label{psi_error_W_2}
	&
	\nonumber
	\sup_{\|f\|_{C^s(\Theta_{\delta})}}\sup_{\theta\in \Theta}
	\Bigl|\|f_k(\hat \theta)-f(\theta)\|_{L_2({\mathbb P}_{\theta})}-n^{-1/2}\sigma_f(\theta)\Bigr|
	\\
	&
	\nonumber
	\lesssim_{s, \delta}  
	\biggl(\sqrt{\frac{{\frak d}_{\xi}(\Theta_{\delta};s-1)}{n}}\biggr)^s 
	+
	\frac{\|\Sigma\|_{L_{\infty}(E)}^{1/2}}{n^{1/2}}\sqrt{\frac{{\frak d}_{\xi}(\Theta_{\delta};s-1)}{n}}
	\\
	& 
	\ \ \ \ \ \ \ +\frac{W_{2,\Theta_{\delta}}(\sqrt{n}(\hat \theta -\theta), \xi(\theta))}{\sqrt{n}}
	+\exp\biggl\{-\frac{c_2\delta^2 n}{\|\Sigma\|_{L_{\infty}(E)}}\biggr\}.
	\end{align}
\end{corollary}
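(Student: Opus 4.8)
The plan is to derive both displays by specializing Theorem \ref{Main_Th_AA}(i) and Theorem \ref{risk_efficient_psi} to the quadratic loss $\psi(u)=u^{2},$ for which $\tilde\psi(u)=u^{1/2}$ and $\|Z\|_{L_\psi({\mathbb P})}=\|Z\|_{L_2({\mathbb P})}=1,$ and then to bound the three ``extra'' terms appearing on the right-hand sides of \eqref{psi_error} and \eqref{psi_error_bdd} by the single Wasserstein quantity $n^{-1/2}W_{2,\Theta_\delta}(\sqrt n(\hat\theta-\theta),\xi(\theta))$ together with the exponential remainder $\exp\{-c_2\delta^{2}n/\|\Sigma\|_{L_\infty(E)}\}.$ With $\psi(u)=u^{2}$ those terms read $\Delta^{+}_{{\mathcal H},\psi,\Theta_\delta}(\hat\theta,\tilde\theta),$ the estimator-tail term $\sup_{\theta\in\Theta_\delta}\tilde\psi({\mathbb P}_\theta\{\|\hat\theta-\theta\|\ge c_s\delta\})=\sup_{\theta\in\Theta_\delta}{\mathbb P}_\theta^{1/2}\{\|\hat\theta-\theta\|\ge c_s\delta\},$ and the noise-tail term $\tilde\psi^{1/2}({\mathbb P}\{\|\xi\|_{L_\infty(E)}\ge c_s\delta\sqrt n\})={\mathbb P}^{1/4}\{\|\xi\|_{L_\infty(E)}\ge c_s\delta\sqrt n\}.$

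For the first of these, Proposition \ref{prop_Delta_H_Delta_F} gives $\Delta^{+}_{{\mathcal H},\psi,\Theta_\delta}(\hat\theta,\tilde\theta)\le n^{-1/2}\Delta^{+}_{{\mathcal F},\psi,\Theta_\delta}(\sqrt n(\hat\theta-\theta),\xi(\theta))$ for the corresponding class ${\mathcal F}$ of $C^{0,s}$-functions on a ball $U_{c_s\delta\sqrt n};$ since $s>2,$ each such function is $1$-Lipschitz (and, by McShane--Whitney, may be taken $1$-Lipschitz on all of $E$), hence a contraction, so \eqref{Delta_F_psi_W_psi} together with $W_{1,\Theta_\delta}\le W_{2,\Theta_\delta}$ yields
\[
\Delta^{+}_{{\mathcal H},\psi,\Theta_\delta}(\hat\theta,\tilde\theta)\le\frac{W_{1,\Theta_\delta}(\sqrt n(\hat\theta-\theta),\xi(\theta))+W_{2,\Theta_\delta}(\sqrt n(\hat\theta-\theta),\xi(\theta))}{\sqrt n}\lesssim\frac{W_{2,\Theta_\delta}(\sqrt n(\hat\theta-\theta),\xi(\theta))}{\sqrt n}.
\]

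For the noise-tail term, recall that $\xi\equiv0$ off $\Theta_\delta,$ so ${\mathbb E}\|\xi\|_{L_\infty(E)}^{2}={\mathbb E}\|\xi\|_{L_\infty(\Theta_\delta)}^{2}\le{\frak d}_\xi(\Theta_\delta;s-1)\le{\frak d}_\xi(\Theta_\delta;s);$ under the hypothesis ${\frak d}_\xi(\Theta_\delta;s)\le c_1\delta^{2}n$ with $c_1$ small enough (depending on $c_s$) this forces ${\mathbb E}\|\xi\|_{L_\infty(E)}\le\tfrac12 c_s\delta\sqrt n,$ and Gaussian concentration (Borell--TIS) gives ${\mathbb P}\{\|\xi\|_{L_\infty(E)}\ge c_s\delta\sqrt n\}\le\exp\{-c\delta^{2}n/\|\Sigma\|_{L_\infty(E)}\},$ whose fourth root is $\le\exp\{-c_2\delta^{2}n/\|\Sigma\|_{L_\infty(E)}\}.$ For the estimator-tail term I would couple: for any $\theta$ and any joint law of $(\eta_1,\eta_2)$ with $\eta_1\overset{d}{=}\sqrt n(\hat\theta-\theta),$ $\eta_2\overset{d}{=}\xi(\theta),$ the triangle and Markov inequalities give ${\mathbb P}_\theta\{\|\hat\theta-\theta\|\ge c_s\delta\}\le{\mathbb P}\{\|\xi(\theta)\|\ge\tfrac12 c_s\delta\sqrt n\}+4(c_s\delta\sqrt n)^{-2}{\mathbb E}\|\eta_1-\eta_2\|^{2};$ taking the infimum over couplings (so ${\mathbb E}\|\eta_1-\eta_2\|^{2}$ becomes $W_{2,{\mathbb P}_\theta}^{2}$), then a square root and $\sup_{\theta\in\Theta_\delta},$ and bounding the first summand by Borell--TIS as above,
\[
\sup_{\theta\in\Theta_\delta}{\mathbb P}_\theta^{1/2}\{\|\hat\theta-\theta\|\ge c_s\delta\}\lesssim_{s,\delta}\frac{W_{2,\Theta_\delta}(\sqrt n(\hat\theta-\theta),\xi(\theta))}{\sqrt n}+\exp\Bigl\{-\frac{c_2\delta^{2}n}{\|\Sigma\|_{L_\infty(E)}}\Bigr\}.
\]

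Plugging the three estimates into \eqref{psi_error} of Theorem \ref{Main_Th_AA}(i)---whose hypothesis ${\frak d}_\xi(\Theta_\delta;s)\le c_1 n$ holds under the corollary's assumption---and collecting terms gives \eqref{psi_error_W_2}. Plugging them into \eqref{psi_error_bdd} of Theorem \ref{risk_efficient_psi} gives \eqref{psi_error_bdd_W_2}; here the smallness of ${\frak d}_\xi$ is not assumed, but if ${\frak d}_\xi(\Theta_\delta;s-1)>c_1'\delta^{2}n$ for a suitable $c_1'(s)$ then $(\sqrt{{\frak d}_\xi(\Theta_\delta;s-1)/n})^{s}$ already exceeds a fixed positive constant depending on $s,\delta,$ while the left-hand side is $\lesssim_s 1$ by Theorem \ref{risk_efficient_psi} itself (because of the $\wedge1$ there), so the bound holds trivially; in the complementary range the concentration estimates above apply verbatim. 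The step requiring genuine care is the coupling estimate for the estimator tail---extracting exactly a linear power of $W_{2,\Theta_\delta}$ out of a second-moment Markov bound and keeping all constants uniform over $\theta\in\Theta_\delta$ and over $\|f\|_{C^{s}(\Theta_\delta)}\le1$; everything else is bookkeeping of the constants $c_s,c_1,c_2$ inherited from Theorems \ref{risk_efficient_psi} and \ref{Main_Th_AA} and from Gaussian concentration.
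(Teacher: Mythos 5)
Your proof is correct and follows the paper's overall route: specialize Theorems \ref{risk_efficient_psi} and \ref{Main_Th_AA} (i) to $\psi(u)=u^2$, bound $\Delta^{+}_{{\mathcal H},\psi,\Theta_{\delta}}(\hat\theta,\tilde\theta)$ by $n^{-1/2}W_{2,\Theta_{\delta}}(\sqrt n(\hat\theta-\theta),\xi(\theta))$ via Proposition \ref{prop_Delta_H_Delta_F} together with \eqref{Delta_psi_W_psi_comp}--\eqref{Delta_F_psi_W_psi}, and control the Gaussian tail ${\mathbb P}\{\|\xi\|_{L_{\infty}(E)}\ge c_s\delta\sqrt n\}$ by concentration exactly as in \eqref{bd_on_xi_L_infty}. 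Where you genuinely deviate --- and, in fact, make the argument more solid than the paper's terse sketch --- is the estimator-tail term: the paper handles it by invoking its bound \eqref{norm_hat_theta_xi}, a first-moment ($W_1$, Lipschitz test function) estimate, and after applying $\tilde\psi(p)=p^{1/2}$ that estimate only produces a term of order $(W_1/(\delta\sqrt n))^{1/2}$; obtaining the linear term $W_{2,\Theta_{\delta}}/\sqrt n$ appearing in \eqref{psi_error_bdd_W_2}--\eqref{psi_error_W_2} requires exactly the second-moment argument you give, namely the coupling/Chebyshev bound ${\mathbb P}_{\theta}\{\|\hat\theta-\theta\|\ge c_s\delta\}\le {\mathbb P}\{\|\xi(\theta)\|\ge c_s\delta\sqrt n/2\}+4(c_s\delta\sqrt n)^{-2}W_{2,{\mathbb P}_{\theta}}^2(\sqrt n(\hat\theta-\theta),\xi(\theta))$, whose square root is linear in $W_2/\sqrt n$ (an equivalent fix is to rerun the paper's smoothed-indicator argument in $L_2({\mathbb P}_{\theta})$-norm instead of in expectation). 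Your explicit treatment of the regime ${\frak d}_{\xi}(\Theta_{\delta};s-1)\gtrsim \delta^2 n$ for the first bound, where no smallness of ${\frak d}_{\xi}$ is assumed and the $\wedge 1$ on the right-hand side makes the inequality trivial because $\|f_k\|_{L_{\infty}}\lesssim_s \|f\|_{L_{\infty}}$, is likewise a point the paper leaves implicit, and it is needed since both tail estimates presuppose ${\mathbb E}\|\xi\|_{L_{\infty}(E)}\lesssim \delta\sqrt n$. The remaining steps (viewing the $C^{0,s}$ test functions as contractions via McShane--Whitney, $W_1\le W_2$, absorbing the fourth root of the Gaussian tail into $\exp\{-c_2\delta^2 n/\|\Sigma\|_{L_{\infty}(E)}\}$, and the harmless $c_s\delta$ versus $\delta$ bookkeeping) are handled correctly and match the paper.
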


\begin{remark}
\normalfont
Bounds of corollaries \ref{Main_Th_AA_cor_cor} and \ref{risk_efficient_psi_cor_cor} also holds for $k=0.$ In this case, 
the terms involving $\|\Sigma\|_{L_{\infty}(E)}^{1/2}\sqrt{\frac{{\frak d}_{\xi}(\Theta_{\delta};s-1)}{n}}$ could be dropped.
\end{remark}

As a simple consequence, we get the following result that shows asymptotic normality 
of estimator $f_k(\hat \theta)$ with $\sqrt{n}$ rate and provides an exact limit of its mean squared error if normal approximation holds and the functional $f$ is sufficiently smooth.   

\begin{corollary}
	\label{Cor_Main_Th_AA}
	Let $\Theta = \Theta_n \subset T,$ let $\delta>0$
	and let $s=k+1+\rho$ with $k\geq 0$ and $\rho\in (0,1].$  
	Suppose that $\Theta_{\delta}\subset T$ and, for some $\alpha\in (0,1),$
	\begin{align*} 
	{\frak d}_{\xi}(\Theta_{\delta};s)\lesssim n^{\alpha}.
	\end{align*}
	Suppose also that $\|\Sigma\|_{L_{\infty}(E)}\lesssim 1.$
	Assume that $s>\frac{1}{1-\alpha}.$ Finally, suppose that
	\begin{align}
	\label{Delta_to_zero}
	W_{2,\Theta_{\delta}}\Bigl(\sqrt{n}(\hat \theta -\theta),\xi(\theta)\Bigr)\to 0\ 
	{\rm as}\ n\to\infty.
	\end{align}
Then
\begin{align}
\label{risk_eff_asymp_claim}
\sup_{\|f\|_{C^s(\Theta_{\delta})}\leq 1}\sup_{\theta\in \Theta}
\Bigl|n{\mathbb E}_{\theta}(f_k(\hat \theta)-f(\theta))^2 - \sigma_f^2(\theta)\Bigr|\to 0\ {\rm as}\ n\to\infty,
\end{align}
and, for all $\sigma_0>0,$ 
	\begin{align*}
	\sup_{\|f\|_{C^s(\Theta_{\delta})}\leq 1}
	\sup_{\theta\in \Theta, \sigma_f(\theta)\geq \sigma_0} d_K\Bigl(\frac{\sqrt{n}(f_{k}(\hat \theta)-f(\theta))}{\sigma_{f}(\theta)}, Z\Bigr)
	\to 0\ {\rm as}\ n\to\infty,
	\end{align*}
	where $Z\sim N(0,1).$\footnote{Of course, it is assumed here and in Theorem \ref{Main_Th_AA} that $\hat \theta=\hat \theta(X^{(n)}), X^{(n)}\sim P_{\theta}^{(n)}.$}
\end{corollary}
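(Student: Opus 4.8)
The plan is to deduce both assertions from Corollary~\ref{risk_efficient_psi_cor_cor} (in particular from \eqref{psi_error_W_2}) and Corollary~\ref{Main_Th_AA_cor_cor}. Their hypotheses hold for all sufficiently large $n$: since $s$ and $\delta$ are fixed and ${\frak d}_{\xi}(\Theta_{\delta};s)\lesssim n^{\alpha}$ with $\alpha\in(0,1)$, eventually ${\frak d}_{\xi}(\Theta_{\delta};s)\le c_{1}\delta^{2}n$, while ${\frak d}_{\xi}(\Theta_{\delta};s-1)\lesssim {\frak d}_{\xi}(\Theta_{\delta};s)\lesssim n^{\alpha}$ and $\|\Sigma\|_{L_{\infty}(E)}\lesssim 1$. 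The key bookkeeping is that $\sqrt{n}$ times every deterministic term appearing on the right-hand sides of \eqref{psi_error_W_2} and of the bound in Corollary~\ref{Main_Th_AA_cor_cor} tends to $0$: indeed $\sqrt{n}\bigl(\sqrt{{\frak d}_{\xi}(\Theta_{\delta};s-1)/n}\,\bigr)^{s}\lesssim n^{(1-(1-\alpha)s)/2}\to 0$ precisely because $s>\frac{1}{1-\alpha}$; next $\sqrt{n}\,n^{-1/2}\bigl(\sqrt{{\frak d}_{\xi}(\Theta_{\delta};s-1)/n}\,\bigr)^{\rho}\lesssim n^{-(1-\alpha)\rho/2}\to 0$ since $\rho>0$; and $\sqrt{n}\exp\{-c_{2}\delta^{2}n/\|\Sigma\|_{L_{\infty}(E)}\}\to 0$ since $\|\Sigma\|_{L_{\infty}(E)}$ is bounded. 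Finally the Wasserstein terms vanish by \eqref{Delta_to_zero}, using $W_{1,\Theta_{\delta}}(\sqrt{n}(\hat\theta-\theta),\xi(\theta))\le W_{2,\Theta_{\delta}}(\sqrt{n}(\hat\theta-\theta),\xi(\theta))$.

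For \eqref{risk_eff_asymp_claim}, set $a:=\|f_{k}(\hat\theta)-f(\theta)\|_{L_{2}(\mathbb{P}_{\theta})}$ and $b:=n^{-1/2}\sigma_{f}(\theta)$, so that $n\mathbb{E}_{\theta}(f_{k}(\hat\theta)-f(\theta))^{2}-\sigma_{f}^{2}(\theta)=n(a-b)(a+b)$. By \eqref{psi_error_W_2}, $|a-b|\le r_{n}$ uniformly over $\|f\|_{C^{s}(\Theta_{\delta})}\le 1$ and $\theta\in\Theta$, where $r_{n}$ is the right-hand side of \eqref{psi_error_W_2}; moreover $b\le n^{-1/2}\|\Sigma\|_{L_{\infty}(E)}^{1/2}\|f'(\theta)\|\lesssim n^{-1/2}$ (using $\|f'(\theta)\|\le\|f\|_{C^{s}(\Theta_{\delta})}\le 1$, legitimate since $s>2$). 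Hence $n(a-b)(a+b)\le n\,r_{n}(2b+r_{n})\lesssim \sqrt{n}\,r_{n}+(\sqrt{n}\,r_{n})^{2}\to 0$ uniformly by the previous paragraph, which is \eqref{risk_eff_asymp_claim}.

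For the Kolmogorov bound, apply Corollary~\ref{Main_Th_AA_cor_cor} with $s'=1\in[1,s]$; by the first paragraph its right-hand side tends to $0$, so
\[
\varepsilon_{n}:=\sup_{\|f\|_{C^{s}(\Theta_{\delta})}\le 1}\ \sup_{\theta\in\Theta}\ \Delta_{1}\bigl(\sqrt{n}(f_{k}(\hat\theta)-f(\theta)),\,\sigma_{f}(\theta)Z\bigr)\ \longrightarrow\ 0 .
\]
It remains to convert this integral-probability-metric bound into a bound on Kolmogorov's distance. I would invoke the elementary smoothing estimate: if $\eta,\zeta$ are real random variables and $\zeta$ has a density bounded by $L$, then $d_{K}(\eta,\zeta)\le 2\sqrt{L\,\Delta_{1}(\eta,\zeta)}$ whenever $\Delta_{1}(\eta,\zeta)\le L$; this follows by sandwiching $I_{(-\infty,x]}$ between Lipschitz functions that, after rescaling by $\gamma\in(0,1]$, lie in the unit ball of $C^{1}(\mathbb{R})$, which gives $|\mathbb{P}\{\eta\le x\}-\mathbb{P}\{\zeta\le x\}|\le L\gamma+\Delta_{1}(\eta,\zeta)/\gamma$, and then optimizing over $\gamma$. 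Applying this with $\zeta=\sigma_{f}(\theta)Z\sim N(0,\sigma_{f}^{2}(\theta))$, whose density is bounded by $1/(\sqrt{2\pi}\,\sigma_{f}(\theta))\le 1/(\sqrt{2\pi}\,\sigma_{0})$ when $\sigma_{f}(\theta)\ge\sigma_{0}$, and using the scale invariance $d_{K}(c^{-1}\eta,c^{-1}\zeta)=d_{K}(\eta,\zeta)$, $c>0$, we obtain
\[
d_{K}\!\left(\frac{\sqrt{n}(f_{k}(\hat\theta)-f(\theta))}{\sigma_{f}(\theta)},\,Z\right)=d_{K}\bigl(\sqrt{n}(f_{k}(\hat\theta)-f(\theta)),\,\sigma_{f}(\theta)Z\bigr)\lesssim_{\sigma_{0}}\sqrt{\varepsilon_{n}}\ \longrightarrow\ 0
\]
uniformly over $\|f\|_{C^{s}(\Theta_{\delta})}\le 1$ and $\theta\in\Theta$ with $\sigma_{f}(\theta)\ge\sigma_{0}$.

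The substantive work is already contained in Corollaries~\ref{risk_efficient_psi_cor_cor} and \ref{Main_Th_AA_cor_cor}, so there is no serious obstacle here; the only genuinely new ingredient is the smoothing step, whose success relies on the uniform lower bound $\sigma_{f}(\theta)\ge\sigma_{0}$ to keep the limiting Gaussian density uniformly bounded, and the one place where the hypothesis $s>\frac{1}{1-\alpha}$ enters is in rendering the leading bias-type term $\sqrt{n}\,({\frak d}_{\xi}(\Theta_{\delta};s-1)/n)^{s/2}$ negligible. The only mild care needed is in tracking that all estimates hold uniformly over the $n$-dependent sets $\Theta=\Theta_{n}$, which is automatic since the cited corollaries are stated uniformly in $\theta$.
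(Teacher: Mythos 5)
Your proposal is correct and follows essentially the same route as the paper: the variance statement \eqref{risk_eff_asymp_claim} is read off from bound \eqref{psi_error_W_2} of Corollary \ref{risk_efficient_psi_cor_cor}, and the Kolmogorov statement comes from the $\Delta_{s'}$ normal approximation bound (Theorem \ref{Main_Th_AA}/Corollary \ref{Main_Th_AA_cor_cor}) combined with a smoothing conversion to $d_K$, which is exactly the paper's Lemma \ref{lem_UUU}, with the normalization over $\sigma_f(\theta)\geq \sigma_0$ handled via its scaling Lemma \ref{lem_DDD}. Your only deviations---fixing $s'=1$ and rescaling through the scale invariance of $d_K$ together with the density bound $1/(\sqrt{2\pi}\,\sigma_0)$ for $\sigma_f(\theta)Z$, instead of first rescaling the $\Delta_{s'}$-distance and smoothing against a standard normal---are cosmetic and equally valid.
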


\begin{remark}
\normalfont 
Let $\Theta=T=E$ and assume that, for some sufficiently small constant $c_1>0,$ ${\frak d}_{\xi}(s)\leq c_1 n.$  
Then, for all $s'\in [1,s],$ the following version of the bound of Corollary \ref{Main_Th_AA_cor_cor} holds:
\begin{align}
\label{Delta_s'_T=E}
	&
	\nonumber
	\sup_{\|f\|_{C^s(E)}\leq 1}\sup_{\theta \in E}\Delta_{s'}(\sqrt{n}(f_{k}(\hat \theta)-f(\theta)), \sigma_{f}(\theta)Z)
	\\
	&
	\lesssim_{s} 
	\biggl[
	\sqrt{n}\biggl(\sqrt{\frac{{\frak d}_{\xi}(s-1)}{n}}\biggr)^s 
	+
	\|\Sigma\|_{L_{\infty}(E)}^{1/2}\sqrt{\frac{{\frak d}_{\xi}(s-1)}{n}}
	+W_{1,E}(\sqrt{n}(\hat \theta -\theta), \xi(\theta))
	\biggr].
\end{align}	
The bounds of Corollary \ref{risk_efficient_psi_cor_cor} simplify as follows:
\begin{align}
	\label{psi_error_bdd'''}
	&
	\nonumber
	\sup_{\|f\|_{C^s(E)}\leq 1}\sup_{\theta\in E}
	\|f_k(\hat \theta)-f(\theta)\|_{L_2({\mathbb P}_{\theta})}
	\\
	&
	\lesssim_{s} 
	\biggl[\frac{\|\Sigma\|^{1/2}_{L_{\infty}(E)}}{n^{1/2}}
	+
	\biggl(\sqrt{\frac{{\frak d}_{\xi}(s-1)}{n}}\biggr)^s 
	+\frac{W_{2,E}(\sqrt{n}(\hat \theta -\theta), \xi(\theta))}{\sqrt{n}}\biggr]\wedge 1,
\end{align}		
and, under the condition 
${\frak d}_{\xi}(s)\leq c_1 n$ for a small enough constant $c_1>0,$  	
\begin{align}
	\label{psi_error'''}
	&
	\nonumber
	\sup_{\|f\|_{C^s(E)}\leq 1}\sup_{\theta\in E}
	\Bigl|\|f_k(\hat \theta)-f(\theta)\|_{L_2({\mathbb P}_{\theta})}-n^{-1/2}\sigma_f(\theta)\Bigr|
	\\
	&
	\lesssim_{s} 
	\biggl(\sqrt{\frac{{\frak d}_{\xi}(s-1)}{n}}\biggr)^s 
	+
	\frac{\|\Sigma\|_{L_{\infty}(E)}^{1/2}}{n^{1/2}}\sqrt{\frac{{\frak d}_{\xi}(s-1)}{n}} 
	+\frac{W_{2,E}(\sqrt{n}(\hat \theta -\theta), \xi(\theta))}{\sqrt{n}}.
\end{align}
\end{remark}

\subsection{Examples and applications: estimation of functionals and normal approximation in high-dimensional spaces}
\label{sec: Examples_and_Applications}

To apply the results of Section \ref{sec:Main results} to concrete statistical models, 
one needs to use sharp bounds on the accuracy of normal approximation over classes 
of smooth functions for typical statistical estimators (such as maximum likelihood estimators) in 
a high-dimensional setting. Ideally, in the case of a $d$-dimensional parameter $\theta,$
bounds on such distances as $\Delta_{\mathcal F, \Theta_{\delta}}(\sqrt{n}(\hat \theta -\theta), \xi(\theta))$
with ${\mathcal F}:= \{g: \|g\|_{C^{0,s'}(U_{\delta\sqrt{n}})}\leq 1\}$ of the order 
$\sqrt{\frac{d}{n}},$ or $\Delta_{{\mathcal H}, \psi, \Theta_{\delta}}(\hat \theta, \tilde \theta)$
with ${\mathcal H}:=\{h: \|h\|_{C^s(\Theta_{\delta})}\leq 1\}$ of the order $n^{-1/2}\sqrt{\frac{d}{n}}$
are needed to ensure that the normal approximation 
holds for $d=o(n).$ This would allow us to deduce from Theorem \ref{risk_efficient_psi} and Theorem \ref{Main_Th_AA} 
 the results known to be optimal in the Gaussian case.
Unfortunately, such bounds are, in our view, underdeveloped in the literature,
not only in the case of general classes of estimators for high-dimensional models, 
such as MLE (see, e.g., \cite{Anastasiou_0, Anastasiou}), but even in the case of classical central limit theorems (CLT) in high-dimensional spaces (see, e.g., \cite{Portnoy} where there are counterexamples 
showing that CLT could fail for some reasonable distributions in ${\mathbb R}^d$ unless $d^2=o(n)$).
The main difficulties involved in these problems are purely probabilistic: identifying classes of distributions 
in high-dimensional spaces with a reasonably good dependence of the normal approximation bounds on the dimension. The importance of these problems in high-dimensional statistics 
goes far beyond their applications to functional estimation discussed in the current paper. 
In this section, we will provide a very brief review of some approaches to high-dimensional 
CLT (including, very recent ones) and discuss several applications to the problem of functional 
estimation. A more detailed development of this approach is beyond the scope of the paper.

\subsubsection{High-dimensional CLT}

The rates of convergence in CLT in ${\mathbb R}^d$ and in infinite-dimensional 
spaces have been studied for over fifty years (see \cite{Bhattacharya}, \cite{Paulauskas}, \cite{Senatov} and references 
therein) with a goal to obtain the bounds on the accuracy of normal approximation in various distances
in the spaces of probability distributions often represented by sup-norms over classes of sets 
(for instance, convex sets), or classes of functions (for instance, Lipschitz functions).

The distances $\zeta_s$ defined by \eqref{Zolotarev} are particularly useful for our purposes.
Such distances occur very naturally in connection to the Lindeberg's proof of CLT, they are used as a tool in bounding other distances (the sup-norms over convex sets, bounded 
Lipschitz distance, etc) and they were advocated in \cite{Zolotarev}. In particular, the following fact is 
straightforward: if $X_1,\dots, X_n$ are i.i.d. r.v. in ${\mathbb R}^d$ (equipped with the Euclidean norm) with mean zero 
and identity covariance and $Z$ is a standard normal r.v. in ${\mathbb R}^d,$ then   
\begin{align*}
\zeta_3\Bigl(\frac{X_1+\dots+X_n}{\sqrt{n}}, Z\Bigr)\lesssim \frac{{\mathbb E}\|X\|^3}{\sqrt{n}}.
\end{align*}
Typically, ${\mathbb E} \|X\|^3$ could be of the order $d^{3/2},$ yielding 
the bound on $\zeta_3$-distance of the order $\frac{d^{3/2}}{\sqrt{n}}.$ Thus, normal 
approximation holds when $d=o(n^{1/3}).$ This is not good enough for our purposes 
since an interesting regime in functional estimation problem is $d\sim n^{\alpha}$ for $\alpha\geq 1/2,$ which 
leads to non-trivial bias reduction problems. However, in some special cases, in particular, in the 
case of random vectors with independent components, one can improve bounds on $\zeta_s$-distance
rather substantially. The following fact is very simple and well known (see \cite{Zolotarev} for similar 
statements).

\begin{proposition}
\label{componentwise}
Let $Y=(Y_1,\dots, Y_d),$ $Y'=(Y_1', \dots, Y_d')$ be two random vectors with independent components.
Then 
\begin{align*}
\zeta_{s}(Y,Y') \leq \sum_{j=1}^d \zeta_{s}(Y_j, Y_j').
\end{align*}
\end{proposition}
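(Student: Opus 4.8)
The plan is to prove the subadditivity of $\zeta_s$ over independent coordinates by a telescoping (hybrid) argument, replacing the components of $Y$ by those of $Y'$ one at a time and controlling each swap by the one-dimensional $\zeta_s$-distance. Throughout, write $s = k+\rho$ with $k \geq 0$, $\rho \in (0,1]$, so that $\mathcal{F} = \{f : \|f^{(k)}\|_{{\rm Lip}_\rho(\mathbb{R}^d)} \leq 1\}$ and $\zeta_s(\eta_1,\eta_2) = \sup_{f \in \mathcal{F}} |\mathbb{E}f(\eta_1) - \mathbb{E}f(\eta_2)|$. We may assume $Y$ and $Y'$ are independent of each other (they live on an enlarged product space), and that all the relevant one-dimensional moments are finite, since otherwise the right-hand side is $+\infty$ and there is nothing to prove.

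First I would fix $f \in \mathcal{F}$ and, for $j = 0, 1, \dots, d$, define the hybrid vector
\begin{align*}
W_j := (Y_1', \dots, Y_j', Y_{j+1}, \dots, Y_d),
\end{align*}
so that $W_0 = Y$ and $W_d = Y'$. Then
\begin{align*}
\mathbb{E}f(Y) - \mathbb{E}f(Y') = \sum_{j=1}^d \bigl(\mathbb{E}f(W_{j-1}) - \mathbb{E}f(W_j)\bigr).
\end{align*}
The two vectors $W_{j-1}$ and $W_j$ differ only in the $j$-th coordinate: $W_{j-1}$ has $Y_j$ there and $W_j$ has $Y_j'$. The key point is to condition on the common coordinates $V_j := (Y_1', \dots, Y_{j-1}', Y_{j+1}, \dots, Y_d)$, which by the independence of all components is independent of both $Y_j$ and $Y_j'$. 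Define, for a fixed value $v$ of $V_j$, the one-variable function $g_{j,v}(t) := f(y_1', \dots, y_{j-1}', t, y_{j+1}, \dots, y_d)$ obtained by freezing the other coordinates at $v$. Since the Fréchet derivative $f^{(k)}$ has $\rho$-Hölder norm at most $1$, its restriction to the $j$-th coordinate direction, namely $g_{j,v}^{(k)}$, also has $\|g_{j,v}^{(k)}\|_{{\rm Lip}_\rho(\mathbb{R})} \leq 1$; hence $g_{j,v} \in \{g : \|g^{(k)}\|_{{\rm Lip}_\rho(\mathbb{R})} \leq 1\}$, the class defining $\zeta_s$ in one dimension. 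Therefore, by conditioning and using independence,
\begin{align*}
\bigl|\mathbb{E}f(W_{j-1}) - \mathbb{E}f(W_j)\bigr|
&= \bigl|\mathbb{E}\,\mathbb{E}\bigl[g_{j,V_j}(Y_j) - g_{j,V_j}(Y_j') \,\big|\, V_j\bigr]\bigr| \\
&\leq \mathbb{E}\,\bigl|\mathbb{E}\bigl[g_{j,V_j}(Y_j)\bigr] - \mathbb{E}\bigl[g_{j,V_j}(Y_j')\bigr]\bigr|_{\text{at }V_j}
\leq \zeta_s(Y_j, Y_j').
\end{align*}
Summing over $j$ and taking the supremum over $f \in \mathcal{F}$ gives $\zeta_s(Y,Y') \leq \sum_{j=1}^d \zeta_s(Y_j, Y_j')$.

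The only genuinely delicate point is the claim that freezing all but one coordinate of a function $f$ with $\|f^{(k)}\|_{{\rm Lip}_\rho} \leq 1$ yields a one-variable function in the corresponding one-dimensional $\zeta_s$-class; this is where I would be careful. It follows because the $k$-th derivative of $t \mapsto g_{j,v}(t)$ is exactly $f^{(k)}(\dots,t,\dots)[e_j,\dots,e_j]$ evaluated on the $j$-th basis vector $e_j$ ($\|e_j\|=1$), so for $t, t'$,
\begin{align*}
|g_{j,v}^{(k)}(t) - g_{j,v}^{(k)}(t')| = \bigl|\bigl(f^{(k)}(\dots,t,\dots) - f^{(k)}(\dots,t',\dots)\bigr)[e_j,\dots,e_j]\bigr| \leq \|f^{(k)}(\dots,t,\dots) - f^{(k)}(\dots,t',\dots)\| \leq |t-t'|^\rho,
\end{align*}
using the operator norm on $k$-linear forms and that the two argument points differ only in the $j$-th slot by $|t-t'|$. (For $k=0$ this just says $g_{j,v}$ is itself $\rho$-Hölder with constant $1$.) A minor housekeeping step is to note that one need not worry about the $L_\infty$ or lower-order Lipschitz parts of any $C^s$-type norm here, since $\zeta_s$ is defined purely through the top-order condition $\|f^{(k)}\|_{{\rm Lip}_\rho} \leq 1$; this keeps the argument clean and makes the restriction step immediate.
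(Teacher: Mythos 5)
Your proof is correct, and it is essentially the same argument the paper itself uses (for the analogous lemma inside the proof of Proposition \ref{prop_ind_comp}): a Lindeberg-type coordinate-by-coordinate swap, conditioning on the remaining independent components and observing that freezing all but the $j$-th coordinate of $f$ with $\|f^{(k)}\|_{{\rm Lip}_{\rho}}\leq 1$ yields a one-variable function in the unit ball defining the one-dimensional $\zeta_s$. The paper states Proposition \ref{componentwise} without proof as a well-known fact, and your telescoping argument, including the careful verification of the restriction step via $f^{(k)}[e_j,\dots,e_j]$, fills that in correctly.
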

 
As a consequence, in the case when r.v. $X=(X^{(1)}, \dots, X^{(d)})$ has independent components,
\begin{align*}
\zeta_3\Bigl(\frac{X_1+\dots+X_n}{\sqrt{n}}, Z\Bigr)\lesssim \max_{1\leq j\leq d}{\mathbb E}|X^{(j)}|^3
\frac{d}{\sqrt{n}}.
\end{align*}
and if, in addition, ${\mathbb E}(X^{(j)})^3=0,$ then it is easy to see that    
\begin{align*}
\zeta_4\Bigl(\frac{X_1+\dots+X_n}{\sqrt{n}}, Z\Bigr)\lesssim \max_{1\leq j\leq d}{\mathbb E}|X^{(j)}|^4
\frac{d}{n}.
\end{align*} 
The last bound is of the order $\frac{d}{n},$ which is already sufficient for our purposes.   
 
In Subsection \ref{sec:ind_comp} below, we use this very simple approach to study 
estimation of smooth functionals for some statistical models with independent components.  
 
In the recent years, there has been a lot of interest in studying normal approximation 
bounds in high-dimensional CLT in optimal transport distances (in particular, Wasserstein 
type distances).  
A recent result in \cite{Eldan}, provides the following bound on the Wasserstein 
$W_2$-distance in normal approximation: assuming that $\|X\|\leq \beta$ a.s.,
\begin{align*} 
W_2\Bigl(\frac{X_1+\dots+X_n}{\sqrt{n}}, Z\Bigr) \lesssim \frac{\beta \sqrt{d \log n}}{\sqrt{n}}.
\end{align*}
Thus, for convergence of $W_2$-distance to $0,$ this bound requires the condition $d=o\Bigl(\sqrt{\frac{n}{\log n}}\Bigr)$ in typical situations when $\beta \sim \sqrt{d}.$ This is again too restrictive for our purposes.

In recent papers \cite{Fathi, Fathi_1}, another approach to high-dimensional normal approximation
has been developed.  It is based on the technique of Stein kernels and it applies to 
probability distributions in ${\mathbb R}^d$ with bounded Poincar\'e constants,
in particular, to some log-concave distributions (see also \cite{Houdre} for more general results).

A probability measure $\mu$ on ${\mathbb R}^d$ is said to satisfy Poincar\'e inequality 
iff there exists a constant $C>0$ such that for all locally Lipschitz functions 
$g:{\mathbb R}^d\mapsto {\mathbb R}$ and for $X\sim \mu,$
\begin{align*}
{\rm Var}_{\mu} (g(X)) \leq C {\mathbb E}_{\mu} \|(\nabla g)(X)\|^2.
\end{align*}
Let $C_{P}(\mu)$ denote the infimum of all constants $C>0$ for which 
the inequality holds. It is called the Poincar\'e constant of probability 
measure $\mu.$ 

A probability measure (distribution) $\mu$ on ${\mathbb R}^d$ with density $p$ is called {\it log-concave} if $p$ is a log-concave function, that is, 
$\log p$ is concave. Among the examples of log-concave distributions are Gaussian measures and uniform distributions in convex bodies 
of ${\mathbb R}^d.$ It is known that log-concave distributions satisfy Poincar\'e inequality.

\begin{remark}
\label{properties_P_const}
\normalfont
The following facts are well known: 

\begin{enumerate}
\item 
For a standard Gaussian measure $\mu$ on ${\mathbb R}^d,$ $C_P(\mu)=1.$
Moreover, if $\mu(dx)=e^{-V(x)}dx$ with $V:{\mathbb R}^d \mapsto {\mathbb R}$
such that $V^{''}(x)\succeq C^{-1}$ for a symmetric positively definite matrix 
$C,$ then 
$
C_P(\mu) \leq \|C\|,
$
and, if $B$ is a symmetric positively definite matrix 
and 
$$\mu(dx)=\exp\Bigl\{-\frac{1}{2}\langle B^{-1}x,x\rangle-V(x)\Bigr\}dx,$$
where $V$ is a convex function on ${\mathbb R}^d,$ then 
$
C_P(\mu)\leq \|B\|
$
(see \cite{Bobkov}).

\item There are also ways to control the value of Poincar\'e constant under certain perturbations of 
probability measure.
For instance, if $\mu, \nu$ are two probability measures and $\mu $ is absolutely continuous with respect 
to $\nu$ with the density $\frac{d\mu}{d\nu}$ bounded from above by a constant $A>0$ and bounded 
from below by a constant $a>0,$ then 
$$
C_P(\mu)\leq \frac{A}{a} C_P(\nu).
$$
Also, if $\mu, \nu$ are log-concave measures on ${\mathbb R}^d$ and, for some $\eps\in (0,1),$ 
\begin{align*}
d_{TV}(\mu,\nu):= \sup_{A\subset {\mathbb R}^d}|\mu(A)-\nu(A)|\leq 1-\eps,
\end{align*}
then $C_P(\mu) \lesssim_{\eps} C_P(\nu)$ (see \cite{Milman}).

\item Let $\mu$ be an arbitrary log-concave distribution with covariance $\Sigma.$ 
According to the Kannan-Lov\`asz-Simonovits (KLS) conjecture,
$C_P(\mu)\lesssim \|\Sigma\|.$ Although this conjecture 
still remains open,  it was recently shown in \cite{Yuansi_Chen} 
(building upon earlier results of \cite{Eldan, Lee_Vempala})
that for some constant $c>0$
\begin{align*}
C_P(\mu)\leq d^{c(\frac{\log \log d}{\log d})^{1/2}} \|\Sigma\|.
\end{align*}
\end{enumerate}
\end{remark}

It was proved in \cite{Fathi} that, if $X_1,\dots, X_n$ are i.i.d. mean zero random variables 
with identity covariance sampled from a distribution $\mu$ on ${\mathbb R}^d$ such that $C_P(\mu)<\infty,$
then 
\begin{align}
\label{bd_Fathi_1}
W_2\Bigl(\frac{X_1+\dots+X_n}{\sqrt{n}}, Z\Bigr) \leq \sqrt{C_P(\mu)-1}\sqrt{\frac{d}{n}},
\end{align}
where $Z\sim N(0;I_d).$ Thus, the convergence in high-dimensional CLT in the $W_2$-distance 
holds provided that $d=o(n)$ for all the distributions with bounded Poincar\'e constants.  
If distribution $\mu$ is log-concave, then it follows from the bound on Poincar\'e constant 
proved in \cite{Yuansi_Chen} (see Remark \ref{properties_P_const}) that the CLT holds 
provided that $d\leq n^{1-\delta}$ for an arbitrary $\delta>0.$

This approach will be used in Subsection \ref{Poincare_constant} below to study 
smooth functional estimation for some classes of log-concave and related models.

\begin{remark}
\normalfont	
Another interesting approach to high-dimensional normal approximation was initiated 	
in \cite{Chernozhukov}. In this paper, the authors were trying to overcome the ``curse of dimensionality" in CLT
by sacrificing the convergence rate with respect to $n.$ Namely, they proved the bound on the 
accuracy of normal approximation in sup-norm over the class of hyperrectangles of the order 
$O\Bigl(\Bigl(\frac{\log^7 (dn)}{n}\Bigr)^{1/6}\Bigr),$ implying that normal approximation holds 
provided that $\log^7 d=o(n).$ More recently, this result was improved in \cite{Kuchib, Koike, Chernozhukov_2020, Chernozhukov_2022}. 
In particular, it was shown in \cite{Chernozhukov_2022} that the accuracy of normal approximation over hyperrectangles is of the order 
$O\Bigl(\frac{\log^{3/2}d}{\sqrt{n}}\log n\Bigr),$ which is optimal up to a $\log n$ factor. 
Thus, the normal approximation holds when $\log^3 d=o\Bigl(\frac{n}{\log^2 n}\Bigr).$ In principle, the results 
of this type could be adapted for our purposes in the case when $E$ is the space ${\mathbb R}^d$
equipped with the $\ell_{\infty}$-norm. However, in this case ${\mathbb E}\|\xi\|_{\ell_{\infty}}^2$
would be typically of the order $\log d$ and this would be also a typical size of such parameters
as  ${\frak d}_{\xi}(s)$ involved in our bounds. 
Thus, the Gaussian part of the error bounds in functional estimation (see Remark \ref{rem_on_all_bounds})
would be of the order $\frac{1}{\sqrt{n}}+ \Bigl(\sqrt{\frac{\log d}{n}}\Bigr)^s.$ 
If $\log d=o(n^{\alpha})$ for some $\alpha<1/3,$ the classical rate $n^{-1/2}$ dominates the bias term $\Bigl(\sqrt{\frac{\log d}{n}}\Bigr)^s$ 
for all $s\geq 3/2$ and there is no improvement of the rate when the degree of smoothness $s$ is above $3/2.$
Moreover, for such low values of $\log d,$ the bias reduction is not required and the optimal rates would be 
attained for plug-in estimators.  
One would need to have normal approximation in high-dimensional CLT in the distances relevant in our paper for $\log d=o(n)$ 
to take the full advantage of the bias reduction method in the whole range of smoothness of the functionals.
However, such normal approximation results do not seem to be 
available in the current literature.  
\end{remark}

\subsubsection{Independent components}
\label{sec:ind_comp}

We will start this section with an application of  Corollary \ref{Main_Th_AA_cor_cor}
and Corollary \ref{risk_efficient_psi_cor_cor} to statistical models with many independent components. 

Let $X^{(n)}= (X_1^{(n)}, \dots, X_d^{(n)})$ be an observation with values in the space $S^{(n)}:=S_1^{(n)}\times \dots \times S_d^{(n)},$
where $(S_j^{(n)}, {\mathcal A}_j^{(n)}), j=1,\dots, d$ are measurable spaces and $S^{(n)}$ is equipped with the 
product $\sigma$-algebra ${\mathcal A}^{(n)}:= {\mathcal A}_1^{(n)}\times \dots \times {\mathcal A}_d^{(n)}.$
We will assume that the components $X_1^{(n)}, \dots, X_d^{(n)}$ of $X^{(n)}$ are independent r.v. and 
$X_j^{(n)}\sim P_{\theta_j}^{(n)}$ with parameter $\theta_j$ taking values in a Banach space $E_j,$ $j=1,\dots, d.$ 
Let $E:=E_1\times \dots \times E_d$ be equipped with a standard structure of linear space (the direct sum of linear spaces $E_1,\dots, E_d$)
and with the norm 
$
\|x\| = \Bigl(\sum_{j=1}^d \|x_j\|^2\Bigr)^{1/2},
$
$x=(x_1,\dots, x_d)\in E.$ Then, clearly, $X^{(n)}\sim P_{\theta}^{(n)}, \theta\in E,$ where 
$P_{\theta}^{(n)}:= P_{\theta_1}^{(n)}\times \dots \times P_{\theta_d}^{(n)},$ $\theta=(\theta_1,\dots, \theta_d)\in E.$
In the problems we have in mind, $\{P_{\theta_j}^{(n)}:\theta_j \in E_j\}, j=1,\dots, d$ are low dimensional models and the complexity 
of combined model $\{P_{\theta}^{(n)}: \theta=(\theta_1,\dots, \theta_d)\in E\}$ depends only on the number $d$ of independent components. 

Let $\hat \theta_j= \hat \theta_j(X_j^{(n)})$ be estimators of parameters $\theta_j, j=1,\dots, d$ and let $\hat \theta := (\hat \theta_1,\dots, \hat \theta_d)$ be the estimator of $\theta.$
Assume 
that $\sqrt{n}(\hat \theta_j-\theta_j)$ could be approximated in distribution by a centered Gaussian r.v. $\xi_j(\theta_j)$
with values in $E_j$ and with covariance operator $\Sigma_j(\theta_j).$ Since $\hat \theta_j, j=1,\dots, d$ are independent r.v., we assume that $\xi_j, j=1,\dots, d$
are also independent and $\xi(\theta):=(\xi_1(\theta_1),\dots, \xi_d(\theta_d)), \theta=(\theta_1,\dots, \theta_d)\in E$ can be used to approximate 
$\sqrt{n}(\hat \theta-\theta)$ in distribution. The following formula holds for the covariance operator $\Sigma(\theta)$ of $\xi(\theta):$ 
 \begin{align}
\label{cov_Sigma_Sigma_j}
&
\nonumber
\langle\Sigma(\theta)u,v\rangle = \sum_{j=1}^d \langle \Sigma_j(\theta_j) u_j,v_j\rangle,
\\
&
u=(u_1,\dots ,u_d), v=(v_1,\dots, v_d)\in E^{\ast}=E_1^{\ast}\times \dots\times E_d^{\ast}.
\end{align}
Moreover, we will view $\theta_j\mapsto \xi_j(\theta_j)$ 
as a stochastic process and use the following ``complexity"
characteristic of $\xi:$
\begin{align*}
{\frak q}_{\xi}(s):= 
\begin{cases}
\frac{1}{d}\sum_{i=1}^d {\mathbb E}\|\xi_i\|_{C^s(E_i)}^2& {\rm for}\ s\in (0,1]\\
\frac{1}{d}\sum_{i=1}^d {\mathbb E}\|\xi_i\|_{C^1(E_i)}^2 + \frac{\log(2d)}{d}\max_{1\leq i\leq d} {\mathbb E}  \|\xi_i\|_{C^{1,s}(E_i)}^2 & {\rm for}\ s>1. 
\end{cases}
\end{align*}

Based on estimator $\hat \theta := (\hat \theta_1,\dots, \hat \theta_d),$ define operators ${\mathcal T}, {\mathcal B}$
and functions $f_k.$ Note that 
\begin{align*}
\sigma_f^2(\theta)=\langle \Sigma(\theta) f'(\theta), f'(\theta)\rangle = \sum_{j=1}^d \langle \Sigma_j(\theta_j) f'_{\theta_j}(\theta),f'_{\theta_j}(\theta)\rangle,
\end{align*}
where $f'_{\theta_j}(\theta)\in E_j^{\ast}$ denotes the partial Fr\'echet derivative of $f(\theta)=f(\theta_1,\dots, \theta_d)$ w.r.t. $\theta_j.$

The following result holds.

\begin{corollary}
\label{indep_hat_theta_j}
Suppose that 
$
{\frak q}_{\xi}(s)\lesssim 1
$
and, for some sufficiently small constant $c_1>0,$ $d\leq c_1 n.$ 
Let $s=k+1+\rho$ with $k\geq 1$ and $\rho\in (0,1].$ Then, for all $s'\in [1,s],$ 
\begin{align}
\label{ind_comp_AAA}
	&
	\nonumber
	\sup_{\|f\|_{C^s(E)}\leq 1}\sup_{\theta \in E}\Delta_{s'}(\sqrt{n}(f_{k}(\hat \theta)-f(\theta)), \sigma_{f}(\theta)Z)
	\\
	&
	\nonumber
	\lesssim_{s} 
	\biggl[
	\sqrt{n}{\frak q}_{\xi}^{s/2}(s-1) \biggl(\sqrt{\frac{d}{n}}\biggr)^s 
	+
	\max_{1\leq j\leq d}\|\Sigma_j\|_{L_{\infty}(E_j)}^{1/2}{\frak q}_{\xi}^{1/2}(s-1)\sqrt{\frac{d}{n}}
	\\
	&
	\ \ \ \ \ \ \ +\Bigl(\sum_{j=1}^d W_{2,E_j}^2(\sqrt{n}(\hat \theta_j -\theta_j), \xi_j(\theta_j))\Bigr)^{1/2}
	\biggr]
\end{align}	
and 
\begin{align}
\label{ind_comp_BBB}
&	
\nonumber
	\sup_{\|f\|_{C^s(E)}\leq 1}\sup_{\theta\in E}
	\Bigl|\|f_k(\hat \theta)-f(\theta)\|_{L_2({\mathbb P}_{\theta})}-n^{-1/2}\sigma_f(\theta)\Bigr|
	\\
	&
	\nonumber
	\lesssim_{s} 
	{\frak q}_{\xi}^{s/2}(s-1)\ \biggl(\sqrt{\frac{d}{n}}\biggr)^s 
	+
	\frac{\max_{1\leq j\leq d}\|\Sigma_j\|_{L_{\infty}(E_j)}^{1/2}}{n^{1/2}}{\frak q}_{\xi}^{1/2}(s-1) \sqrt{\frac{d}{n}}
	\\
	&
	\ \ \ \ \ +\frac{1}{\sqrt{n}}\Bigl(\sum_{j=1}^d W_{2,E_j}^2(\sqrt{n}(\hat \theta_j -\theta_j), \xi_j(\theta_j))\Bigr)^{1/2}. 
\end{align}
In particular, bound \eqref{ind_comp_BBB} implies that 
\begin{align}
\label{ind_comp_CCC}
\nonumber
\sup_{\|f\|_{C^s(E)}\leq 1}\sup_{\theta\in E}\|f_k(\hat \theta)-f(\theta)\|_{L_2({\mathbb P}_{\theta})}
&
\lesssim_s \frac{\max_{1\leq j\leq d}\|\Sigma_j\|_{L_{\infty}(E_j)}^{1/2}}{n^{1/2}} + 
{\frak q}_{\xi}^{s/2}(s-1)\ \biggl(\sqrt{\frac{d}{n}}\biggr)^s 
\\
&
+ \frac{1}{\sqrt{n}}\Bigl(\sum_{j=1}^d W_{2,E_j}^2(\sqrt{n}(\hat \theta_j -\theta_j), \xi_j(\theta_j))\Bigr)^{1/2}.
\end{align}
\end{corollary}

\begin{remark}
\normalfont
The bounds also hold for $k=0.$ In this case, the terms 
$$
\max_{1\leq j\leq d}\|\Sigma_j\|_{L_{\infty}(E_j)}^{1/2}{\frak q}_{\xi}^{1/2}(s-1)\sqrt{\frac{d}{n}} 
$$
of \eqref{ind_comp_AAA} and 
$$
\frac{\max_{1\leq j\leq d}\|\Sigma_j\|_{L_{\infty}(E_j)}^{1/2}}{n^{1/2}}{\frak q}_{\xi}^{1/2}(s-1) \sqrt{\frac{d}{n}}
$$
of \eqref{ind_comp_BBB} could be dropped.
\end{remark}

\begin{remark}
\normalfont
Suppose ${\frak q}_{\xi}(s)\lesssim 1.$ In particular, for $s>1,$ this holds if $\max_{1\leq i\leq d}{\mathbb E}\|\xi_i\|_{C^1(E_i)}^2\lesssim 1$
and $\max_{1\leq i\leq d} {\mathbb E}  \|\xi_i\|_{C^{1,s}(E_i)}^2\lesssim \frac{d}{\log d}.$ Suppose, in addition,  that  
$$\max_{1\leq j\leq d}\|\Sigma_j\|_{L_{\infty}(E_j)}^{1/2}\lesssim 1.$$ 
If the models $\{P_{\theta_j}: \theta_j\in E_j\}$ are low-dimensional and sufficiently 
regular, the assumptions above hold for maximum likelihood estimators $\hat \theta_j$
of $\theta_j, j=1,\dots, d.$ In fact, in this case, we would have $\max_{1\leq i\leq d}{\mathbb E}\|\xi_i\|_{C^s(E_i)}^2\lesssim 1$
(if the Fisher information matrices $I_j(\theta_j)$ of low dimensional models are sufficiently smooth). 
If, in addition,  the following 
normal approximation bound holds for the estimators $\hat \theta_j$ of the components $\theta_j$
\begin{align}
\label{W_2_rate_estimator}
\max_{1\leq j\leq d} W_{2,E_j}(\sqrt{n}(\hat \theta_j -\theta_j), \xi_j(\theta_j)) \lesssim n^{-1/2},
\end{align}
then we have 
\begin{align*}
\Bigl(\sum_{j=1}^d W_{2,E_j}^2(\sqrt{n}(\hat \theta_j -\theta_j), \xi_j(\theta_j))\Bigr)^{1/2}\lesssim \sqrt{\frac{d}{n}},
\end{align*}
which guarantees normal approximation of $\sqrt{n}(\hat \theta-\theta)$ by $\xi(\theta)$ for $d=o(n).$
In this case, \eqref{ind_comp_AAA} implies 
\begin{align*}
\nonumber
	\sup_{\|f\|_{C^s(E)}\leq 1}\sup_{\theta \in E}\Delta_{s'}(\sqrt{n}(f_{k}(\hat \theta)-f(\theta)), \sigma_{f}(\theta)Z)
	\lesssim_{s} \sqrt{n}\biggr(\sqrt{\frac{d}{n}}\biggr)^s +\sqrt{\frac{d}{n}},
\end{align*}
\eqref{ind_comp_BBB} implies 
\begin{align*}
	\sup_{\|f\|_{C^s(E)}\leq 1}\sup_{\theta\in E}
	\Bigl|\|f_k(\hat \theta)-f(\theta)\|_{L_2({\mathbb P}_{\theta})}-n^{-1/2}\sigma_f(\theta)\Bigr|
\lesssim_s \biggl(\sqrt{\frac{d}{n}}\biggr)^s + \frac{1}{\sqrt{n}} \sqrt{\frac{d}{n}}
\end{align*}
and \eqref{ind_comp_CCC} implies that 
\begin{align*}
\sup_{\|f\|_{C^s(E)}\leq 1}\sup_{\theta\in E}\|f_k(\hat \theta)-f(\theta)\|_{L_2({\mathbb P}_{\theta})}
\lesssim_s \frac{1}{\sqrt{n}}+ \biggl(\sqrt{\frac{d}{n}}\biggr)^s.
\end{align*}
If $d\leq n^{\alpha}$ for some $\alpha\in (0,1)$ and $s>\frac{1}{1-\alpha},$ the above bounds imply the asymptotic normality of estimator 
$f_k(\hat \theta)$ with $\sqrt{n}$ rate as well as the convergence of $\sqrt{n}\|f_k(\hat \theta)-f(\theta)\|_{L_2({\mathbb P}_{\theta})}$ to $\sigma_f(\theta).$
Note also that, if $d\leq n^{\alpha}$ for some $\alpha \in (0,1),$ then it is sufficient for asymptotic normality of $f_k(\hat \theta)$ and for convergence of its normalized 
risk to $\sigma_f(\theta)$ to have normal approximation error in  \eqref{W_2_rate_estimator} of the order $o(n^{-\alpha/2})$ instead of $n^{-1/2}.$
\end{remark}

\begin{remark}
\normalfont
In the low-dimensional case, bounds of the order $n^{-1/2}$ on the accuracy of normal approximation of MLE and more general $M$-estimators in Kolmogorov's distance (Berry-Esseen type bounds)
could be found in \cite{Pfanzagl, Bentkus, Pinelis} and in Wasserstein's $W_1$-distance in \cite{Anastasiou}.
We are not aware of similar published results for Wassertein's $W_2$-distance. However, it is possible to adapt the approach of these papers 
in combination with known bounds on the accuracy of normal approximation in CLT (see, e.g.,  \cite{Rio, Eldan}) to obtain bounds for the $W_2$-distance 
suitable in the framework of Corollary \ref{indep_hat_theta_j}.
\end{remark}

We now turn to some other examples of statistical models with independent components.  
Assume that 
\begin{align*}
X= \theta + A(\theta)\sum_{j=1}^d \eta_j x_j, \theta \in E,
\end{align*}
where $A(\theta):E\mapsto E$ is a bounded linear operator,
$\{\eta_j\}$ are independent r.v. with\footnote{One can even assume that the distribution 
of $\eta_j$ depends on $\theta.$} 
\begin{align*}
{\mathbb E}\eta_j=0,\ {\mathbb E}\eta_j^2=1, j=1,\dots, d
\end{align*}
and $x_j\in E, j=1,\dots, d.$ Define 
\begin{align*}
\xi(\theta):= A(\theta)\sum_{j=1}^d \zeta_j x_j, \theta \in E,
\end{align*}  
where $\{\zeta_j\}$ are i.i.d. standard normal r.v. 
Note that 
\begin{align*}
\Sigma(\theta)= \sum_{j=1}^d A(\theta)x_j \otimes A(\theta)x_j
\end{align*}
is the covariance operator of both $X$ and $\xi(\theta).$

Denote 
\begin{align*}
{\frak d}_d:= {\frak d}_d(x_1,\dots, x_d):={\mathbb E}\Bigl\|\sum_{j=1}^d \zeta_j x_j\Bigr\|^2.
\end{align*}

Given a sample $X_1,\dots, X_n$ of i.i.d. copies of $X,$ let 
\begin{align*}
\hat \theta := \bar X = \frac{X_1+\dots+X_n}{n}. 
\end{align*}

\begin{proposition}
\label{prop_ind_comp_00}	
Let $s=k+1+\rho$ with $k\geq 1$ and $\rho \in (0,1].$
Let 
\begin{align*}
\beta_{4}:= \max_{1\leq j\leq d}{\mathbb E}|\eta_j|^{4}<\infty. 
\end{align*}
Suppose also that $\|A\|_{C^s(E)}\lesssim 1$ and ${\frak d}_d\leq c_1 n$ 
for a small enough constant $c_1>0.$
Then the following bounds hold:
\begin{align*}
&
\sup_{\|f\|_{C^s(E)}\leq 1}\sup_{\theta \in E}\Delta_{1}(\sqrt{n}(f_{k}(\hat \theta)-f(\theta)), \sigma_{f}(\theta)Z)
\\
&
\lesssim_{s} 
\biggl[
\sqrt{n}\biggl(\|A\|_{C^s(E)}\sqrt{\frac{{\frak d}_{d}}{n}}\biggr)^s 
+
\|\Sigma\|_{L_{\infty}(E)}^{1/2}\|A\|_{C^s(E)}\sqrt{\frac{{\frak d}_{d}}{n}}
+\beta_4^{1/2} \|\Sigma\|_{L_{\infty}(E)}^{1/2}\sqrt{\frac{d}{n}}
\biggr]
\end{align*}
and 
\begin{align*}
&
\sup_{\|f\|_{C^s(E)}\leq 1}\sup_{\theta \in E}
\Bigl|\|f_k(\hat \theta)-f(\theta)\|_{L_2({\mathbb P}_{\theta})}
-n^{-1/2}\sigma_f(\theta)\Bigr|
\\
&
\lesssim_{s} 
\biggl[
\biggl(\|A\|_{C^s(E)}\sqrt{\frac{{\frak d}_{d}}{n}}\biggr)^s 
+
\frac{\|\Sigma\|_{L_{\infty}(E)}^{1/2}}{\sqrt{n}}\|A\|_{C^s(E)}\sqrt{\frac{{\frak d}_{d}}{n}}
+\frac{\beta_4^{1/2} \|\Sigma\|_{L_{\infty}(E)}^{1/2}}{\sqrt{n}}\sqrt{\frac{d}{n}}
\biggr].
\end{align*}	
\end{proposition}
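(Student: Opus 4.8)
\emph{Proof plan.} The idea is to read off the two inequalities from the general bounds of Theorem~\ref{Main_Th_AA}(ii) (with $s'=1$) and Corollary~\ref{risk_efficient_psi_cor_cor}, specialized to $\Theta=T=E$, in which case the extra tail terms drop out and the bounds take the simplified form recorded in the remarks following those statements (for $k\geq 1$; when $k=0$, i.e. $s\in(1,2]$, one uses Theorem~\ref{simple_case_sleq2_a} and Theorem~\ref{simple_case_s<2_B} in exactly the same way). First one checks the hypothesis ${\frak d}_{\xi}(s)\leq c_1 n$. Here $\xi(\theta)=A(\theta)W$ with $W:=\sum_{j=1}^d\zeta_j x_j$ a fixed Gaussian vector in $E$ and $\theta\mapsto A(\theta)$ a $C^s$ map into $L(E)$, so the bound ${\frak d}_{\xi}(\Theta;s)\leq\|A\|_{C^s(\Theta)}^2\,\E\|W\|^2$ recorded before Theorem~\ref{risk_efficient_psi} gives ${\frak d}_{\xi}(s)\leq\|A\|_{C^s(E)}^2\,{\frak d}_d\lesssim{\frak d}_d\leq c_1 n$, and likewise ${\frak d}_{\xi}(s-1)\lesssim_s\|A\|_{C^s(E)}^2\,{\frak d}_d$ (using $\|A\|_{C^{s-1}(E)}\lesssim_s\|A\|_{C^s(E)}$). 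Substituting these estimates into the two simplified bounds turns their first two summands into exactly the first two summands of the claimed inequalities, so everything reduces to controlling the normal approximation terms $\Delta_{\mathcal F,E}(\sqrt n(\hat\theta-\theta),\xi(\theta))$ with $\mathcal F=\{g:\|g\|_{C^{0,1}(E)}\leq1\}$ (the $1$-Lipschitz functions, i.e. the Kantorovich-Rubinstein distance $W_{1,E}$) and $\tfrac1{\sqrt n}W_{2,E}(\sqrt n(\hat\theta-\theta),\xi(\theta))$, respectively.

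Next I would reduce these distances to a finite-dimensional Euclidean CLT. Writing $L:\R^d\to E$, $Lu:=\sum_{j=1}^d u_j x_j$, and $\bar\eta_n:=\bigl(\tfrac1{\sqrt n}\sum_{i=1}^n\eta_j^{(i)}\bigr)_{j=1}^d$, one has $\sqrt n(\hat\theta-\theta)=A(\theta)L\bar\eta_n$ and $\xi(\theta)=A(\theta)L\zeta$ with $\zeta\sim N(0,I_d)$. The linear map $\Phi_\theta:=A(\theta)L:\R^d\to E$ has covariance $\Phi_\theta\Phi_\theta^{*}=\Sigma(\theta)$, so $\|\Phi_\theta\|_{\ell_2^d\to E}^2=\|\Sigma(\theta)\|\leq\|\Sigma\|_{L_\infty(E)}$. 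Since composition with $\Phi_\theta$ is Lipschitz with constant $\|\Sigma\|_{L_\infty(E)}^{1/2}$ from $(\R^d,\|\cdot\|_2)$ into $(E,\|\cdot\|)$, pushing forward test functions (resp. optimal couplings) yields, for $p\in\{1,2\}$,
\begin{align*}
W_{p,E}\bigl(\sqrt n(\hat\theta-\theta),\xi(\theta)\bigr)\leq\|\Sigma\|_{L_\infty(E)}^{1/2}\,W_p(\bar\eta_n,\zeta),
\end{align*}
where $W_p(\bar\eta_n,\zeta)$ is computed in Euclidean $\R^d$ (it does not depend on $\theta$ in the basic case; if the law of $\eta_j$ is allowed to depend on $\theta$ one takes $\sup_\theta$ and replaces $\beta_4$ by $\sup_\theta\max_j\E|\eta_j|^4$).

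It then remains to bound $W_2(\bar\eta_n,\zeta)$ in $\R^d$; since $W_1\leq W_2$ this covers both the $\Delta_1$ and the $L_2$ statement. Here the independence of the coordinates is essential: both $\bar\eta_n$ and $\zeta$ are product measures, so coupling coordinatewise (using the optimal $W_2$-coupling in each coordinate) gives
\begin{align*}
W_2^2(\bar\eta_n,\zeta)\leq\sum_{j=1}^d W_2^2(\bar\eta_{j,n},\zeta_j),
\end{align*}
where $\bar\eta_{j,n}=\tfrac1{\sqrt n}\sum_{i=1}^n\eta_j^{(i)}$ is a normalized sum of $n$ i.i.d.\ mean zero, unit variance variables. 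A one-dimensional $W_2$-bound in the CLT, valid with a constant controlled by the fourth moment (e.g.\ $W_2^2(\bar\eta_{j,n},\zeta_j)\lesssim\E|\eta_j|^4/n\leq\beta_4/n$), then gives $W_2(\bar\eta_n,\zeta)\lesssim\beta_4^{1/2}\sqrt{d/n}$. Combining with the previous step, $\Delta_{\mathcal F,E}(\sqrt n(\hat\theta-\theta),\xi(\theta))\leq W_{1,E}(\cdot,\cdot)\lesssim\beta_4^{1/2}\|\Sigma\|_{L_\infty(E)}^{1/2}\sqrt{d/n}$ and $\tfrac1{\sqrt n}W_{2,E}(\sqrt n(\hat\theta-\theta),\xi(\theta))\lesssim\tfrac{\beta_4^{1/2}\|\Sigma\|_{L_\infty(E)}^{1/2}}{\sqrt n}\sqrt{d/n}$, which are exactly the last summands of the two displayed bounds; plugging these in finishes the argument.

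The only genuinely non-routine ingredient is the one-dimensional $W_2$-rate in the CLT with the correct moment dependence: I would either invoke a known sharp bound (Rio-type) or establish the crude version $W_2^2(\bar\eta_{j,n},\zeta_j)\lesssim\beta_4/n$ directly, which is all that is needed here. Note that Proposition~\ref{componentwise} alone does not suffice, since it controls the homogeneous distances $\zeta_s$ (smooth test functions), whereas the bias-reduction bounds of Theorem~\ref{Main_Th_AA} and Theorem~\ref{simple_case_s<2_B} require control in the Wasserstein/Lipschitz sense; the coordinatewise coupling above is precisely the device that plays the role of Proposition~\ref{componentwise} at the level of $W_2$.
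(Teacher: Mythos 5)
Your proposal is correct and follows essentially the same route as the paper: reduce to Corollaries \ref{Main_Th_AA_cor_cor} and \ref{risk_efficient_psi_cor_cor} with $\Theta=T=E$ (using ${\frak d}_{\xi}(s)\leq \|A\|_{C^s(E)}^2{\frak d}_d$), bound $W_{2,E}(\sqrt{n}(\hat\theta-\theta),\xi(\theta))$ by $\|\Sigma\|_{L_\infty(E)}^{1/2}\sqrt{d}\,\max_j W_2(V_j,W_j)$ via independent coordinatewise optimal couplings, and invoke Rio's one-dimensional $W_2$ bound with fourth moments. Your factorization through $A(\theta)L$ plus a product coupling in $\R^d$ is just a repackaging of the paper's single Cauchy--Schwarz computation in $E$, and your explicit treatment of the case $k=0$ via Theorems \ref{simple_case_sleq2_a} and \ref{simple_case_s<2_B} is a welcome clarification of a point the paper leaves implicit.
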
	

\begin{remark}
\normalfont
For $k=0,$ the bounds hold without the terms $\|\Sigma\|_{L_{\infty}(E)}^{1/2}\|A\|_{C^s(E)}\sqrt{\frac{{\frak d}_{d}}{n}}$
and $\frac{\|\Sigma\|_{L_{\infty}(E)}^{1/2}}{\sqrt{n}}\|A\|_{C^s(E)}\sqrt{\frac{{\frak d}_{d}}{n}}.$
\end{remark}

\begin{remark}
\normalfont	
Under a stronger assumption that r.v. $\eta_j$ are sub-exponential and
\begin{align*}
\max_{1\leq j\leq d}\|\eta_j\|_{L_{\psi_1}({\mathbb P})}\lesssim 1,
\end{align*}	
it is possible to prove a bound on the $L_{\psi}$-risk $\|f_k(\hat \theta)-f(\theta)\|_{L_\psi({\mathbb P}_{\theta})}$ of estimator $f_k(\hat \theta)$ for any loss $\psi \in \Psi$ such that the function $\psi (\sqrt{u}), u\geq 0$ is convex. Namely, in this case, the following bound holds:
\begin{align*}
	&
	\sup_{\|f\|_{C^s(E)}\leq 1}\sup_{\theta \in E}
	\Bigl|\|f_k(\hat \theta)-f(\theta)\|_{L_\psi({\mathbb P}_{\theta})}
	-n^{-1/2}\sigma_f(\theta)\|Z\|_{L_{\psi}({\mathbb P})}\Bigr|
	\\
	&
	\lesssim_{s} 
	\biggl[
	\biggl(\|A\|_{C^s(E)}\sqrt{\frac{{\frak d}_{d}}{n}}\biggr)^s 
	+
	\frac{\|\Sigma\|_{L_{\infty}(E)}^{1/2}}{\sqrt{n}}\|A\|_{C^s(E)}\sqrt{\frac{{\frak d}_{d}}{n}}
	+\frac{\beta_{\psi, \eta}\|\Sigma\|_{L_{\infty}(E)}^{1/2}}{\sqrt{n}}\sqrt{\frac{d}{n}}
	\biggr],
\end{align*}
where the constant $\beta_{\psi, \eta}$ depends only on $\max_{1\leq j\leq d}\|\eta_j\|_{L_{\psi_1}({\mathbb P})}.$ The proof relies on Theorem 2.1  in \cite{Rio}
providing rates of convergence in CLT in ${\mathbb R}$ in Wasserstein $W_{\psi_1}$-distance. 	
\end{remark}	

In the next proposition, it will be additionally required that $s\geq 3,$ but it might provide
a better bound in the cases when the norm of space $E$ is not Euclidean (say, the operator norm 
for matrices) and vectors $x_j$ have small norms (see the example below).

\begin{proposition}
\label{prop_ind_comp}
Let $s=k+1+\rho \geq 3,$ $k\geq 1,$ $\rho \in (0,1].$ 
Let 
\begin{align*}
\beta_{3}:= \max_{1\leq j\leq d}{\mathbb E}|\eta_j|^{3}<\infty 
\end{align*}
and 
\begin{align*}
U:= \max_{1\leq j\leq d}\|x_j\|.
\end{align*}
Suppose also that $\|A\|_{C^s(E)}\lesssim 1$ and ${\frak d}_d\leq c_1 n$ 
for a small enough constant $c_1>0.$
Then the following bounds hold:
\begin{align*}
&
\sup_{\|f\|_{C^s(E)}\leq 1}\sup_{\theta \in E}\Delta_{3}(\sqrt{n}(f_{k}(\hat \theta)-f(\theta)), \sigma_{f}(\theta)Z)
\\
&
\lesssim_{s} 
\biggl[
\sqrt{n}\biggl(\|A\|_{C^s(E)}\sqrt{\frac{{\frak d}_{d}}{n}}\biggr)^s 
+
\|\Sigma\|_{L_{\infty}(E)}^{1/2}\|A\|_{C^s(E)}\sqrt{\frac{{\frak d}_{d}}{n}}
+\frac{\|A\|_{L_{\infty}(E)}^{3}\beta_{3} U^{3} d}{\sqrt{n}}
\biggr]
\end{align*}
and 
\begin{align*}
&
\sup_{\|f\|_{C^s(E)}\leq 1}\sup_{\theta \in E}
\Bigl|\|f_k(\hat \theta)-f(\theta)\|_{L_2({\mathbb P}_{\theta})}
-n^{-1/2}\sigma_f(\theta)\Bigr|
\\
&
\lesssim_{s} 
\biggl[
\biggl(\|A\|_{C^s(E)}\sqrt{\frac{{\frak d}_{d}}{n}}\biggr)^s 
+
\frac{\|\Sigma\|_{L_{\infty}(E)}^{1/2}}{\sqrt{n}}\|A\|_{C^s(E)}\sqrt{\frac{{\frak d}_{d}}{n}}
\\
&
\ \ \ \ \ \ +
\frac{\|A\|_{L_{\infty}(E)}^{3/2}\beta_{3}^{1/2} U^{3/2}\sqrt{d}}{n} 
+\frac{\|A\|_{L_{\infty}(E)}^{3}\beta_{3} U^{3}d}{n^2}
\biggr].
\end{align*}	
\end{proposition}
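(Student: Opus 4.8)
The plan is to derive both displays of the Proposition from the $\Theta=T=E$ specializations of Theorem~\ref{Main_Th_AA} (for the first display, part~(ii) with $s'=3$) and of Corollary~\ref{risk_efficient_psi_cor_cor} (for the $L_2$ display, bound \eqref{psi_error_W_2}). Taking $\Theta=E$ lets all the tail terms involving $\{\|\hat\theta-\theta\|\ge c_s\delta\}$ and $\{\|\xi\|_{L_\infty(E)}\ge c_s\delta\sqrt n\}$ disappear upon sending $\delta\to\infty$, so what remains after that specialization is a sum of ``bias'' terms plus a single normal-approximation distance between $\sqrt n(\hat\theta-\theta)$ and $\xi(\theta)$. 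The hypothesis $s=k+1+\rho\ge 3$ is exactly what is needed so that $s'=3$ is admissible in Theorem~\ref{Main_Th_AA}(ii), and $k\ge1$ makes the theorems applicable.

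First I would dispose of the bias terms. For each fixed vector $v\in E$, the map $\theta\mapsto A(\theta)v$ inherits the $C^s$-smoothness of $A$, with $\|A(\cdot)v\|_{C^s(E)}\le\|A\|_{C^s(E)}\|v\|$; applying this with $v=\sum_j\zeta_j x_j$ gives $\|\xi(\cdot)\|_{C^s(E)}\le\|A\|_{C^s(E)}\bigl\|\sum_j\zeta_j x_j\bigr\|$ and hence
\begin{align*}
{\frak d}_\xi(E;s-1)\le{\frak d}_\xi(E;s)\le\|A\|_{C^s(E)}^2\,{\mathbb E}\Bigl\|\sum_j\zeta_j x_j\Bigr\|^2=\|A\|_{C^s(E)}^2\,{\frak d}_d .
\end{align*}
This turns every occurrence of $\sqrt{{\frak d}_\xi(E;s-1)/n}$ into $\|A\|_{C^s(E)}\sqrt{{\frak d}_d/n}$, which reproduces the first two terms in both displays of the Proposition, and it also shows that the smallness hypotheses ${\frak d}_\xi(E;s)\le c_1 n$ of Theorem~\ref{Main_Th_AA} and Corollary~\ref{risk_efficient_psi_cor_cor} follow from ${\frak d}_d\le c_1' n$ with $c_1'$ chosen small enough, since $\|A\|_{C^s(E)}\lesssim1$. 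The operator $\Sigma(\theta)$ appears unchanged in the target bounds, so nothing has to be done with it.

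It remains to bound the normal-approximation distance, which is where the structure of the model enters. Fix $\theta$ and write $\sqrt n(\hat\theta-\theta)=A(\theta)\Phi(W)$ and $\xi(\theta)=A(\theta)\Phi(\zeta)$, where $\Phi:{\mathbb R}^d\to E$ is the linear map $\Phi(w)=\sum_{j=1}^d w_j x_j$, $W=(W_1,\dots,W_d)$ with $W_j:=n^{-1/2}\sum_{i=1}^n\eta_j^{(i)}$ has independent coordinates, and $\zeta=(\zeta_1,\dots,\zeta_d)$ is standard normal; both $\sqrt n(\hat\theta-\theta)$ and $\xi(\theta)$ have mean zero and common covariance $\Sigma(\theta)$. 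For the first display I would use Theorem~\ref{Main_Th_AA}(ii) with $s'=3$, leaving only $\Delta_{{\mathcal F},E}(\sqrt n(\hat\theta-\theta),\xi(\theta))$ with ${\mathcal F}=\{g:\|g\|_{C^{0,3}(E)}\le1\}$. Since $\|g\|_{C^{0,3}(E)}\le1$ forces $\|g''\|_{{\rm Lip}(E)}\le1$, the $C^{0,3}(E)$-unit ball lies inside the class defining $\zeta_3$, so for every $\theta$ this distance is at most $\zeta_3(\sqrt n(\hat\theta-\theta),\xi(\theta))$, which is finite because the first two moments match and the third absolute moments are finite. Now $\zeta_3$ is an ideal metric of order $3$ (regularity under independent summation and homogeneity of order $3$): precomposition with the linear operator $A(\theta)$ multiplies $\|g''\|_{\rm Lip}$ by at most $\|A(\theta)\|^3$, and precomposition with $\Phi$ — regarded as a bounded operator from $({\mathbb R}^d,\|\cdot\|_{\ell_1})$ into $(E,\|\cdot\|)$, whose norm is $\max_j\|x_j\|=U$ — multiplies it by at most $U^3$; therefore
\begin{align*}
\zeta_3\bigl(A(\theta)\Phi(W),A(\theta)\Phi(\zeta)\bigr)\le\|A(\theta)\|^3U^3\,\zeta_3^{\ell_1}(W,\zeta)\le\|A\|_{L_\infty(E)}^3\,U^3\sum_{j=1}^d\zeta_3(W_j,\zeta_j),
\end{align*}
the last step being Proposition~\ref{componentwise} applied on $({\mathbb R}^d,\|\cdot\|_{\ell_1})$ (its proof is insensitive to which $\ell_p$-norm is used, since the standard unit vectors have norm one in all of them). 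Finally, the elementary one-dimensional bound quoted before Proposition~\ref{componentwise} — regularity and $3$-homogeneity of $\zeta_3$ followed by Lindeberg swapping over the $n$ i.i.d.\ summands — gives $\zeta_3(W_j,\zeta_j)\le n^{-1/2}\zeta_3(\eta_j^{(1)},\zeta_1^{(1)})\lesssim n^{-1/2}{\mathbb E}|\eta_j^{(1)}|^3\le n^{-1/2}\beta_3$. Summing over $j$ and taking $\sup_\theta$ yields $\Delta_{{\mathcal F},E}(\sqrt n(\hat\theta-\theta),\xi(\theta))\lesssim\|A\|_{L_\infty(E)}^3\beta_3U^3d\,n^{-1/2}$, which is precisely the last term of the first display; together with the bias reductions this proves it.

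For the $L_2$ display I would argue in the same spirit using Corollary~\ref{risk_efficient_psi_cor_cor} (bound \eqref{psi_error_W_2}), whose remaining term is $n^{-1/2}W_{2,E}(\sqrt n(\hat\theta-\theta),\xi(\theta))$ (the exponential term vanishes for $\Theta=E$). Since $W_2$ contracts under Lipschitz maps, the same factorization gives $W_2(\sqrt n(\hat\theta-\theta),\xi(\theta))\le\|A(\theta)\|\,\|\Phi\|_{\ell_1^d\to E}\,W_2^{\ell_1}(W,\zeta)$; a coordinatewise coupling of $W$ and $\zeta$ together with a one-dimensional $W_2$-rate in the CLT under a finite third moment (of Berry--Esseen/Rio type, $W_2(W_j,\zeta_j)\lesssim\beta_3 n^{-1/2}$) bounds $W_2^{\ell_1}(W,\zeta)$, and the two additional lower-order terms $\|A\|_{L_\infty(E)}^{3/2}\beta_3^{1/2}U^{3/2}\sqrt d\,n^{-1}$ and $\|A\|_{L_\infty(E)}^{3}\beta_3U^{3}d\,n^{-2}$ come out of optimizing a smooth truncation of $x\mapsto x^2$ against a fourth-moment bound for $\sqrt n(f_k(\hat\theta)-f(\theta))$, which is what is needed to pass from $W_2$-control to control of $\bigl|\,\|f_k(\hat\theta)-f(\theta)\|_{L_2}-n^{-1/2}\sigma_f(\theta)\bigr|$. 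The main obstacle throughout is the bookkeeping of operator norms in a non-Euclidean $E$: the entire improvement over Proposition~\ref{prop_ind_comp_00} comes from routing the estimates through $\|\Phi\|_{\ell_1^d\to E}=\max_j\|x_j\|$ rather than through an aggregate such as ${\mathbb E}\|\sum_j\zeta_j x_j\|^2$, so one must verify that at each step — the $\zeta_3$-composition inequality, the $W_2$-factorization, the coordinatewise coupling, and the truncation step — the only norm of the $x_j$'s that enters is $U=\max_j\|x_j\|$.
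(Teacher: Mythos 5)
Your treatment of the first display is correct and is essentially the paper's argument in a different wrapper: you bound $\Delta_{{\mathcal F},E}(\sqrt n(\hat\theta-\theta),\xi(\theta))$ by a componentwise Lindeberg argument producing $\|A\|_{L_\infty(E)}^3U^3\beta_3 d\,n^{-1/2}$ and plug it into Theorem \ref{Main_Th_AA}(ii); the paper does the same, telescoping through the coordinatewise H\"older seminorms $\|\bar g\|_{s',j}\le\|A(\theta)x_j\|^{s'}$ rather than through your ideal-metric/$\ell_1$ formulation of Proposition \ref{componentwise}, and both routes make $U=\max_j\|x_j\|$ the only norm of the $x_j$'s that enters. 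The reduction ${\frak d}_{\xi}(s)\le\|A\|_{C^s(E)}^2{\frak d}_d$ is also as in the paper.

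For the second display, however, there is a genuine gap. Your plan runs through Corollary \ref{risk_efficient_psi_cor_cor}, i.e.\ through $n^{-1/2}W_{2,E}(\sqrt n(\hat\theta-\theta),\xi(\theta))$, and the key ingredient you invoke is a one-dimensional rate $W_2(W_j,\zeta_j)\lesssim\beta_3 n^{-1/2}$ under a finite \emph{third} moment. No such bound is available: Rio's $n^{-1/2}$ rate for $W_2$ (the result quoted in the proof of Proposition \ref{prop_ind_comp_00}) requires a finite fourth moment, and under third moments only the $W_2$ rate is genuinely slower (a distribution with tails of index slightly above $3$ forces $W_2(W_j,\zeta_j)\gg n^{-1/2}$). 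This is exactly why Proposition \ref{prop_ind_comp_00} assumes $\beta_4<\infty$ while Proposition \ref{prop_ind_comp} does not, and why the paper abandons the $W_2$ route here. Your fallback step --- ``optimizing a smooth truncation of $x\mapsto x^2$ against a fourth-moment bound for $\sqrt n(f_k(\hat\theta)-f(\theta))$'' --- is not an argument: such a fourth-moment bound is neither proved nor plausible from $\beta_3<\infty$ alone, and structurally the Corollary cannot produce the term $\|A\|_{L_\infty(E)}^3\beta_3U^3d\,n^{-2}$, since that would require $W_{2,E}(\sqrt n(\hat\theta-\theta),\xi(\theta))\lesssim d\,n^{-3/2}$.

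The paper's actual route for the $L_2$ display bypasses $W_2$ entirely: it applies bound \eqref{psi_error} of Theorem \ref{Main_Th_AA} with $\psi(u)=u^2$ and $\Theta=T=E$, so what must be controlled is $\Delta^{+}_{{\mathcal H},\psi,E}(\hat\theta,\tilde\theta)$ with ${\mathcal H}=\{g:\|g\|_{C^s(E)}\le1\}$. The same coordinatewise Lindeberg computation as in the first display, applied at the scale of $\hat\theta-\theta$ (each one-dimensional step now carries the factor $n^{-3/2}$ by $3$-homogeneity of $\zeta_3$), gives $\Delta_{{\mathcal H},E}(\hat\theta,\tilde\theta)\lesssim\|A\|_{L_\infty(E)}^3U^3\beta_3 d\,n^{-2}$ (bound \eqref{Delta_HE_hat}); then the elementary inequality
\begin{align*}
\bigl|\|g(\hat\theta)\|_{L_2({\mathbb P}_\theta)}-\|g(\tilde\theta)\|_{L_2({\mathbb P}_\theta)}\bigr|
\le\sqrt{\bigl|{\mathbb E}_\theta g^2(\hat\theta)-{\mathbb E}_\theta g^2(\tilde\theta)\bigr|},
\qquad \|g^2\|_{C^s(E)}\lesssim_s 1,
\end{align*}
yields $\Delta_{{\mathcal H},\psi,E}(\hat\theta,\tilde\theta)\lesssim_s\sqrt{\Delta_{{\mathcal H},E}(\hat\theta,\tilde\theta)}$. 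The two closing terms of the second display, $\|A\|_{L_\infty(E)}^{3/2}\beta_3^{1/2}U^{3/2}\sqrt d\,n^{-1}$ and $\|A\|_{L_\infty(E)}^{3}\beta_3U^{3}d\,n^{-2}$, are precisely this square root and the quantity under it --- a fingerprint of the $\Delta_{{\mathcal H},\psi}\lesssim\sqrt{\Delta_{\mathcal H}}$ mechanism, not of any optimal-transport estimate. To repair your proof you should replace the entire $W_2$/truncation paragraph by this argument.
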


As a more specific example, consider a matrix problem in which 
$E={\mathbb H}_m$ is the space of $m\times m$ Hermitian matrices 
equipped with the Hilbert--Schmidt inner product. Let $d=m=2^l$ and let $\{E_1, \dots, E_{m^2}\}$
be {\it the Pauli basis} (often used in quantum compressed sensing). It is defined as follows:
let
\begin{equation*}
 \sigma_0:=\left(\begin{array}{cc}1&0\\0&1 \end{array}\right),\quad \sigma_1:=\left(\begin{array}{cc}0&1\\1&0 \end{array}\right),
  \quad \sigma_2:=\left(\begin{array}{cc}0&i\\-i&0 \end{array}\right),\quad \sigma_3:=\left(\begin{array}{cc}1&0\\0&-1 \end{array}\right).
 \end{equation*}
 The matrices $\sigma_1, \sigma_2, \sigma_3$ (often denoted $\sigma_x, \sigma_y, \sigma_z$)
 are called the {\it Pauli matrices}. The matrices $W_i=\frac{1}{\sqrt{2}}\sigma_i,\ i=0,1,2, 3$
 form an orthonormal basis of the space ${\mathbb H}_2$ (the Pauli basis).  
 The Pauli basis of the space ${\mathbb H}_m,$ $m=2^l$ is defined by tensorizing the Pauli basis of ${\mathbb H}_2:$ it consists of $m^2=4^l$ tensor products $W_{i_1}\otimes\ldots\otimes W_{i_l}, (i_1,\ldots,i_l)
\in \left\{0,1,2,3\right\}^l.$

The goal is to estimate a target matrix $\theta\in {\mathbb H}_m$
(for instance, a density matrix of a quantum system) based on 
i.i.d. copies $X_1,\dots, X_n$ of r.v. $X,$ 
\begin{align*}
X= \theta + A(\theta)\sum_{j=1}^{m^2} \eta_j E_j,
\end{align*}
where $A(\theta):{\mathbb H}_m\mapsto {\mathbb H}_m$ is a linear operator. 
We can now equip ${\mathbb H}_m$ either with its Hilbert--Schmidt norm $\|\cdot\|_2,$ or with its 
operator norm (that will be denoted by $\|\cdot\|$).

In the first case, $\|E_j\|_2=1$ and $U=1.$ We also have,
${\frak d}_d =d=m^2.$ The following corollary of Proposition \ref{prop_ind_comp_00}
is immediate. 

\begin{corollary}
\label{cor_ind_comp_1}
Let $s=k+1+\rho,$ $k\geq 0,$ $\rho \in (0,1].$ 
Suppose $\|A\|_{C^s({\mathbb H}_m;\|\cdot\|_2)}\lessim 1$ and $\beta_{4}\lesssim 1.$
Suppose also that $d=m^2 \lesssim n^{\alpha}$ for some $\alpha\in (0,1)$
and $s>\frac{1}{1-\alpha}.$
Then, 
\begin{align*}
\sup_{\|f\|_{C^s({\mathbb H}_m;\|\cdot\|_2)}\leq 1}
\sup_{\theta\in {\mathbb H}_m}\Bigl|n{\mathbb E}(f_k(\hat \theta)-f(\theta))^2-\sigma_f^2(\theta) \Bigr|\to 0
\end{align*}
and, for all $\sigma_0>0,$ 
\begin{align*}
\sup_{\|f\|_{C^s({\mathbb H}_m;\|\cdot\|_2)}\leq 1}
\sup_{\theta\in {\mathbb H}_m, \sigma_f(\theta)\geq \sigma_0} d_K\Bigl(\frac{\sqrt{n}(f_{k}(\hat \theta)-f(\theta))}{\sigma_{f}(\theta)}, Z\Bigr)\to 0
\end{align*}
as $n\to\infty.$
\end{corollary}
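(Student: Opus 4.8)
\proof
The plan is to read the corollary off from Proposition \ref{prop_ind_comp_00}, which applies verbatim to the present model ($E={\mathbb H}_m$ with the Hilbert--Schmidt norm, $x_j=E_j$, $j=1,\dots,m^2$) once its model-dependent quantities are evaluated. First I would record that, since the Pauli basis $\{E_j\}$ is orthonormal in $({\mathbb H}_m,\|\cdot\|_2)$, one has $U=\max_j\|E_j\|_2=1$ and ${\frak d}_d={\mathbb E}\|\sum_j\zeta_j E_j\|_2^2=\sum_j\|E_j\|_2^2=m^2=d$; moreover $\sum_j E_j\otimes E_j$ is the identity on ${\mathbb H}_m$, so $\Sigma(\theta)=A(\theta)A(\theta)^{\ast}$ and $\|\Sigma\|_{L_{\infty}({\mathbb H}_m)}\leq\|A\|_{C^s({\mathbb H}_m;\|\cdot\|_2)}^2\lesssim 1$. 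Since $d=m^2\lesssim n^{\alpha}$ with $\alpha\in(0,1)$, the requirement ${\frak d}_d\leq c_1 n$ holds once $n$ is large, and $\beta_4\lesssim 1$ by hypothesis, so Proposition \ref{prop_ind_comp_00} is in force.

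Next I would substitute ${\frak d}_d=d\lesssim n^{\alpha}$, $\|A\|_{C^s}\lesssim 1$, $\|\Sigma\|_{L_{\infty}}\lesssim 1$, $\beta_4\lesssim 1$ into its two bounds and simplify. The first bound becomes
\begin{align*}
\sup_{\|f\|_{C^s}\leq 1}\sup_{\theta}\Delta_{1}\bigl(\sqrt{n}(f_{k}(\hat\theta)-f(\theta)),\sigma_{f}(\theta)Z\bigr)\lesssim_s n^{\frac12-\frac{s(1-\alpha)}{2}}+n^{-\frac{\rho(1-\alpha)}{2}}+n^{-\frac{1-\alpha}{2}},
\end{align*}
and all three exponents are negative: the first because $s>\frac{1}{1-\alpha}$, the other two because $\alpha<1$, so $\Delta_1\to 0$ uniformly. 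The second bound gives $\bigl|\,\|f_k(\hat\theta)-f(\theta)\|_{L_2({\mathbb P}_{\theta})}-n^{-1/2}\sigma_f(\theta)\bigr|\leq\eps_n$ uniformly, with $\sqrt{n}\,\eps_n$ dominated by the same right-hand side, hence $\sqrt{n}\,\eps_n\to 0$. To reach \eqref{risk_eff_asymp_claim} I would set $r(\theta):=\sqrt{n}\,\|f_k(\hat\theta)-f(\theta)\|_{L_2({\mathbb P}_{\theta})}$ and factor
\begin{align*}
n{\mathbb E}_{\theta}(f_k(\hat\theta)-f(\theta))^2-\sigma_f^2(\theta)=(r(\theta)-\sigma_f(\theta))(r(\theta)+\sigma_f(\theta));
\end{align*}
the first factor is at most $\sqrt{n}\,\eps_n$, and since $\sigma_f^2(\theta)=\langle\Sigma(\theta)f'(\theta),f'(\theta)\rangle\leq\|\Sigma\|_{L_{\infty}}\|f'\|_{L_{\infty}}^2\lesssim 1$ the second is $\lesssim 1$, so the product tends to $0$ uniformly.

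Finally I would deduce the Kolmogorov-distance statement from the $\Delta_1$ bound. Since $\Delta_1=\Delta_{\mathcal H}$ for ${\mathcal H}=\{g:\|g\|_{C^1({\mathbb R})}\leq 1\}$, a standard smoothing argument---sandwiching $I_{(-\infty,x]}$ between two $\eps^{-1}$-Lipschitz functions valued in $[0,1]$ and using that $\sigma_f(\theta)Z$ has density at most $(\sigma_0\sqrt{2\pi})^{-1}$ when $\sigma_f(\theta)\geq\sigma_0$---gives, for each small $\eps$,
\begin{align*}
d_K\Bigl(\frac{\sqrt{n}(f_{k}(\hat\theta)-f(\theta))}{\sigma_{f}(\theta)},Z\Bigr)=d_K\bigl(\sqrt{n}(f_{k}(\hat\theta)-f(\theta)),\sigma_f(\theta)Z\bigr)\lesssim\frac{\Delta_1}{\eps}+\frac{\eps}{\sigma_0},
\end{align*}
and optimizing over $\eps$ yields $d_K\lesssim\sqrt{\Delta_1/\sigma_0}\to 0$ uniformly over $\|f\|_{C^s}\leq 1$ and $\theta$ with $\sigma_f(\theta)\geq\sigma_0$. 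I do not expect a genuine obstacle here: each step is a substitution into Proposition \ref{prop_ind_comp_00} followed by one of the two routine conversions above, and the only points needing a little care are the elementary identities $U=1$, ${\frak d}_d=d$ and $\Sigma(\theta)=A(\theta)A(\theta)^{\ast}$, all consequences of orthonormality of the Pauli basis in the Hilbert--Schmidt geometry (were ${\mathbb H}_m$ equipped with the operator norm one would instead invoke Proposition \ref{prop_ind_comp} together with a bound on $U$).
\QED
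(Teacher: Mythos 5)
Your proof is correct and follows essentially the same route as the paper: the paper also obtains this corollary by noting that in the Hilbert--Schmidt geometry $U=1$ and ${\frak d}_d=d=m^2$, substituting into Proposition \ref{prop_ind_comp_00}, and then passing from the $\Delta_1$ and $L_2$ bounds to the mean-squared-error and Kolmogorov-distance statements exactly as in the proof of Corollary \ref{Cor_Main_Th_AA}. Your hands-on smoothing argument for $d_K$ (indicator sandwiched between $\eps^{-1}$-Lipschitz functions plus the Gaussian density bound) is just an explicit rendering of the paper's Lemmas \ref{lem_DDD} and \ref{lem_UUU}, so there is no substantive difference.
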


In the second case, we have $\|E_j\| \leq m^{-1/2}$ and we can take $U=m^{-1/2}.$
Also, by standard bounds for Gaussian matrices, it is easy to see that 
${\frak d}_d \lesssim m= \sqrt{d}.$ Proposition \ref{prop_ind_comp} implies the following corollary.

\begin{corollary}
\label{cor_ind_comp_2}
Let $s=k+1+\rho \geq 3,$ $k\geq 1,$ $\rho \in (0,1].$ 
Suppose $\|A\|_{C^s({\mathbb H}_m;\|\cdot\|)}\lessim 1$ and $\beta_{3}\lesssim 1.$
Suppose also that $m \lesssim n^{\alpha}$ for some $\alpha\in (0,1)$
and $s>\frac{1}{1-\alpha}.$
Then 
\begin{align*}
\sup_{\|f\|_{C^s({\mathbb H}_m;\|\cdot\|)}\leq 1}
\sup_{\theta\in {\mathbb H}_m}\Bigl|n{\mathbb E}(f_k(\hat \theta)-f(\theta))^2-\sigma_f^2(\theta)\Bigr|\to 0
\end{align*}
and, for all $\sigma_0>0,$ 
\begin{align*}
\sup_{\|f\|_{C^s({\mathbb H}_m;\|\cdot\|)}\leq 1}
\sup_{\theta\in {\mathbb H}_m, \sigma_f(\theta)\geq \sigma_0} d_K\Bigl(\frac{\sqrt{n}(f_{k}(\hat \theta)-f(\theta))}{\sigma_{f}(\theta)}, Z\Bigr)
\to 0
\end{align*}
as $n\to\infty.$
\end{corollary}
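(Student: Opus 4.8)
The plan is to deduce Corollary~\ref{cor_ind_comp_2} from Proposition~\ref{prop_ind_comp}, applied with $E={\mathbb H}_m$ equipped with its operator norm $\|\cdot\|$, with the vectors $x_j$ taken to be the $m^2$ elements $E_j$ of the Pauli basis, and with the prescribed noise $\eta_j$. The argument then has three ingredients: (a) identifying the structural quantities $U=\max_j\|E_j\|$, $\mathfrak{d}_d=\mathbb{E}\bigl\|\sum_{j=1}^{m^2}\zeta_j E_j\bigr\|^2$, $\|\Sigma\|_{L_\infty(E)}$ and $\sigma_f(\theta)$ for this model; (b) substituting them into the two bounds of Proposition~\ref{prop_ind_comp} and checking, under the hypotheses $\|A\|_{C^s({\mathbb H}_m;\|\cdot\|)}\lesssim 1$, $\beta_3\lesssim 1$, $m\lesssim n^{\alpha}$ and $s\geq 3$, $s>\frac{1}{1-\alpha}$, that every resulting term tends to $0$ (respectively, is $o(n^{-1/2})$); and (c) converting the resulting smooth-function bound into the claimed Kolmogorov bound.

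For (a): since $W_i=2^{-1/2}\sigma_i$ with each $\sigma_i$ unitary, every tensor product $E_j=W_{i_1}\otimes\cdots\otimes W_{i_l}$ equals $m^{-1/2}$ times a Hermitian unitary matrix, so $\|E_j\|\leq m^{-1/2}$ and one may take $U=m^{-1/2}$. Writing $\sum_j\zeta_j E_j=m^{-1/2}\sum_j\zeta_j U_j$ with $U_j$ Hermitian unitary and $\sum_j U_j^2=m^2 I$, Gaussian matrix concentration (non-commutative Khintchine) gives $\mathbb{E}\bigl\|\sum_j\zeta_j U_j\bigr\|\lesssim m$, up to a logarithmic factor absorbed into the constant (the complete Pauli family being regular enough to give the clean order $m$), hence $\mathfrak{d}_d\lesssim m$. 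Since $\{E_j\}$ is an orthonormal basis of $({\mathbb H}_m,\langle\cdot,\cdot\rangle_{HS})$ the covariance operator is $\Sigma(\theta)=A(\theta)A(\theta)^{*}$, so $\|\Sigma\|_{L_\infty(E)}\lesssim 1$ (a consequence of the hypothesis on $A$) and $\sigma_f^2(\theta)\leq\|\Sigma(\theta)\|\,\|f'(\theta)\|^2\lesssim 1$ whenever $\|f\|_{C^s}\leq 1$.

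For (b), in the first bound of Proposition~\ref{prop_ind_comp} the three summands become, up to constants, $\sqrt n\,(m/n)^{s/2}$, $(m/n)^{\rho/2}$ and $U^3 d/\sqrt n=\sqrt{m/n}$; by $m\lesssim n^{\alpha}$ the first is at most $n^{1/2-\frac{s}{2}(1-\alpha)}\to 0$ precisely because $s>\frac{1}{1-\alpha}$, while the other two vanish since $\alpha<1$, so $\sup_{\|f\|_{C^s}\leq1}\sup_\theta\Delta_3\bigl(\sqrt n(f_k(\hat\theta)-f(\theta)),\sigma_f(\theta)Z\bigr)\to0$. For (c) one approximates $I_{(-\infty,x]}$ on $\mathbb{R}$ from above and below by functions $g^{\pm}_h$ with $\|g^{\pm}_h\|_{C^3(\mathbb{R})}\lesssim h^{-3}$, so that $d_K\bigl(\sqrt n(f_k(\hat\theta)-f(\theta)),\sigma_f(\theta)Z\bigr)\lesssim h^{-3}\Delta_3+\sup_x\mathbb{P}\{|\sigma_f(\theta)Z-x|\leq h\}\lesssim h^{-3}\Delta_3+h/\sigma_f(\theta)$, and optimizing over $h$ on the event $\sigma_f(\theta)\geq\sigma_0$ gives a bound of order $(\Delta_3/\sigma_0^3)^{1/4}\to0$, uniformly; since $d_K$ is scale invariant this is the asserted bound for $\sqrt n(f_k(\hat\theta)-f(\theta))/\sigma_f(\theta)$. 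Similarly, multiplying the second ($L_2$) bound of Proposition~\ref{prop_ind_comp} by $\sqrt n$ produces the terms $n^{1/2-\frac{s}{2}(1-\alpha)}$, $(m/n)^{\rho/2}$, $U^{3/2}\sqrt d/\sqrt n=m^{1/4}/\sqrt n$ and $U^3 d/n^{3/2}=m^{1/2}/n^{3/2}$, all $o(1)$ under the same hypotheses; hence $\sqrt n\,\bigl|\,\|f_k(\hat\theta)-f(\theta)\|_{L_2(\mathbb{P}_\theta)}-n^{-1/2}\sigma_f(\theta)\,\bigr|\to0$ uniformly, and squaring (using $\sigma_f(\theta)\lesssim1$) gives $n\mathbb{E}(f_k(\hat\theta)-f(\theta))^2\to\sigma_f^2(\theta)$ uniformly.

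I expect the main obstacle to be the random-matrix estimate $\mathfrak{d}_d\lesssim m$ in step (a): one needs that a Gaussian combination of all $m^2$ Pauli tensor products has operator norm of order $\sqrt m$ rather than $\sqrt{m\log m}$, which is where the algebraic structure of the Pauli basis (or a careful operator-valued concentration argument) enters; with that in hand, the rest of the proof is exponent bookkeeping together with the standard smoothing/anti-concentration lemma. A secondary point requiring care is to use the operator norm of ${\mathbb H}_m$ consistently throughout --- in $\|E_j\|\leq m^{-1/2}$, in $\|\Sigma\|_{L_\infty(E)}\lesssim1$, and in the smoothness assumption on $A$ --- since it is precisely the smallness $U=m^{-1/2}$, absent under the Hilbert--Schmidt normalization, that forces the remainder terms in Proposition~\ref{prop_ind_comp} to vanish.
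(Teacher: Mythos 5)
Your proposal is correct and follows essentially the same route as the paper: the corollary is deduced from Proposition \ref{prop_ind_comp} with $U=m^{-1/2}$, ${\frak d}_d\lesssim m$ (the Gaussian Pauli combination being a standard Gaussian matrix in $({\mathbb H}_m,\langle\cdot,\cdot\rangle_{HS})$, so its operator norm is of order $\sqrt{m}$), followed by the same exponent bookkeeping under $m\lesssim n^{\alpha}$, $s>\frac{1}{1-\alpha}$. Your smoothing/anti-concentration step (c) is exactly the content of Lemmas \ref{lem_DDD} and \ref{lem_UUU} used in the proof of Corollary \ref{Cor_Main_Th_AA}, giving $d_K\lesssim \Delta_3^{1/4}$, so no genuinely different argument is involved.
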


Note that the $C^s$-norms depend on the underlying norm of $E$ and, in a high-dimensional 
setting, norms  $\|\cdot\|_{C^s({\mathbb H}_m;\|\cdot\|_2)}$ and 
$\|\cdot\|_{C^s({\mathbb H}_m;\|\cdot\|)}$ could be very different.

\subsubsection{Poincar\'e constants and log-concave models}
\label{Poincare_constant}

Let $E={\mathbb R}^d$ be equipped with the Euclidean norm and let $X\sim P_{\theta}, \theta \in T, T\subset {\mathbb R}^d$ be a statistical model with the sample space ${\mathbb R}^d.$
As before, we assume that $T$ is an open subset. Also assume that ${\mathbb E}_{\theta}\|X\|^2<\infty, \theta \in T$
and let 
\begin{align*}
&
\Psi(\theta):= {\mathbb E}_{\theta} X, 
\\
&
\Sigma(\theta):= {\mathbb E}_{\theta}(X-\Psi(\theta))\otimes 
(X-\Psi(\theta)),
\theta \in \Theta.
\end{align*} 
Moreover, let us assume that 
$\Psi : T\mapsto \Psi(T)$ is a homeomorphism between open sets $T$ and $\Psi(T).$ This 
assumption would allow us to re-parametrize our model by setting 
$\vartheta:= \Psi(\theta)={\mathbb E}_{\theta} X, \theta \in T$ and using parameter $\vartheta\in \Psi(T)$
instead of $\theta.$ For this new parameter, we simply have ${\mathbb E}_{\vartheta} X=\vartheta, \vartheta \in \Psi(T).$ 

Given i.i.d. observations $X_1,\dots, X_n$ of $X,$
let 
\begin{align*}
\bar X := \frac{X_1+\dots + X_n}{n},
\end{align*}
\begin{align*}
\hat \theta = \hat \theta(X_1,\dots, X_n) = 
\begin{cases}
\Psi^{-1}(\bar X) & {\rm if}\ \bar X\in \Psi(T)\\ 
\theta_0& {\rm if}\ \bar X\not\in \Psi(T),
\end{cases}
\end{align*}
where $\theta_0\in T$ is an arbitrary point, 
and 
\begin{align*}
\hat \vartheta = \hat \vartheta(X_1,\dots, X_n) = 
\begin{cases}
\bar X & {\rm if}\ \bar X\in \Psi(T)\\ 
\Psi(\theta_0)& {\rm if}\ \bar X\not\in \Psi(T)
\end{cases}
= \Psi (\hat \theta).
\end{align*}
It is easy to check that ${\mathcal T}(f\circ \Psi^{-1})=({\mathcal T}f)\circ \Psi^{-1},$
${\mathcal B}(f\circ \Psi^{-1})= ({\mathcal B}f)\circ \Psi^{-1}$ and $(f\circ \Psi^{-1})_k= f_k\circ \Psi^{-1},$
where, with a little abuse of notation, we keep the same letters ${\mathcal T}$ and ${\mathcal B}$
to denote the operators based on estimator $\hat \vartheta.$
This allows us to reduce the problem of estimation of functional $f(\theta)$
to the problem of estimation of functional $(f\circ \Psi^{-1})(\vartheta)$ under its proper 
smoothness 
and to use for this purpose the estimator 
$$
f_k (\hat \theta)= (f_k \circ \Psi^{-1})(\hat \vartheta)= (f\circ \Psi^{-1})_k(\hat \vartheta).
$$
Of course, one can expect that $\sqrt{n}(\hat\vartheta-\vartheta)$ could be approximated 
by Gaussian random variable $\xi(\theta)$ with mean zero and covariance operator $\Sigma(\theta)$
(for $\vartheta=\Psi(\theta)$).

We will assume that $P_{\theta}$ satisfies Poincar\'e inequality,
so, $C_P(P_{\theta})<\infty.$
Let
\begin{align*}
\sigma^2_{f\circ \Psi^{-1}}(\vartheta)= \langle \Sigma(\Psi^{-1}(\vartheta)) (f\circ \Psi^{-1})'(\vartheta), 
(f\circ \Psi^{-1})'(\vartheta)\rangle.
\end{align*}

\begin{proposition}
\label{Cor_Main_Th_DD}
Let $d=d_n$ and $\Theta = \Theta_n \subset {\mathbb R}^d$ with ${\rm Diam}(\Theta)\lesssim n^{A}$
for some $A>0.$ 
Let $\delta>0$
and let $s=k+1+\rho$ with $k\geq 0$ and $\rho\in (0,1].$  
Suppose that $\Theta_{\delta}\subset T$ and 
\begin{align}
\label{cond_on_Sigma}
\|\Sigma\|_{C^s(\Theta_{\delta})}\lesssim 1
\ {\rm and}\  
\|\Sigma^{-1}\|_{L_{\infty}(\Theta_{\delta})}\lesssim 1.
\end{align}
Suppose that, for some $\alpha\in (0,1),$
$ 
d\lesssim n^{\alpha}
$
and assume that $s>\frac{1}{1-\alpha}.$ 
Finally, suppose that 
\begin{align}
\label{cond_on_C_P}
\sup_{\theta \in \Theta_{\delta}}C_P(P_{\theta}) = o(n^{1-\alpha})
\ {\rm as}\ n\to\infty.
\end{align}
Let $\theta_0$ in the definition of $\hat \theta$ be a point from $\Theta.$
Then
\begin{align}
\label{asymp_variance_XYZ}
\sup_{\|f\circ \Psi^{-1}\|_{C^s((\Psi(\Theta))_{\delta})}\leq 1}
\sup_{\theta\in \Theta} 
\Bigl|n{\mathbb E}_{\theta}(f_k(\hat \theta)-f(\theta))^2- 
\sigma_{f\circ \Psi^{-1}}^2(\Psi(\theta))\Bigr|\to 0
\end{align}
and, for all $\sigma_0>0,$ 
\begin{align}
\label{asymp_norm_XYZ}
\sup_{\|f\circ \Psi^{-1}\|_{C^s((\Psi(\Theta))_{\delta})}\leq 1}
\sup_{\theta\in \Theta, \sigma_{f\circ \Psi^{-1}}(\Psi(\theta))\geq \sigma_0} 
d_K\Bigl(\frac{\sqrt{n}(f_{k}(\hat \theta)-f(\theta))}{\sigma_{f\circ \Psi^{-1}}(\Psi(\theta))}, Z\Bigr)\to 0
\end{align}
as $n\to\infty.$
\end{proposition}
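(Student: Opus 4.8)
The plan is to reduce the proposition, via the re-parametrization $\vartheta=\Psi(\theta)$, to a direct application of Corollary~\ref{Cor_Main_Th_AA} in the model indexed by $\vartheta$, in which $X$ has mean $\vartheta$ and the base estimator is the clipped sample mean $\hat\vartheta=\bar X$. Since $\hat\vartheta=\Psi(\hat\theta)$ and, as recorded before the statement, $f_k(\hat\theta)=(f\circ\Psi^{-1})_k(\hat\vartheta)$ while $\sigma_f(\theta)=\sigma_{f\circ\Psi^{-1}}(\Psi(\theta))$, the two displays \eqref{asymp_variance_XYZ}--\eqref{asymp_norm_XYZ} are exactly the conclusions of Corollary~\ref{Cor_Main_Th_AA} for the functional $g:=f\circ\Psi^{-1}$ over the set $\Psi(\Theta)$, with the Gaussian process $\vartheta\mapsto\xi(\vartheta)$ of mean zero and covariance $\widetilde\Sigma(\vartheta):=\Sigma(\Psi^{-1}(\vartheta))$, and with $\sqrt n(\hat\vartheta-\vartheta)$ approximating $\xi(\vartheta)$. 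It therefore suffices to verify the hypotheses of that corollary in the $\vartheta$-chart (all of this up to a harmless shrinking of $\delta$, which only enlarges the admissible class of $f$ and so strengthens the conclusion).

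First I would transfer the regularity conditions to the $\vartheta$-chart. Since $\mathrm{Diam}(\Theta)\lesssim1$ and, by \eqref{cond_on_Sigma}, both $\Sigma$ and $\Sigma^{-1}$ are bounded on $\Theta_\delta$, the re-parametrization $\Psi$ is a bi-Lipschitz $C^s$ change of variables with constants of order $1$ (in the exponential-family models of the section, $D\Psi=\Sigma$ and \eqref{cond_on_Sigma} is precisely the relevant bound); hence $(\Psi(\Theta))_{c\delta}\subset\Psi(\Theta_\delta)\subset\Psi(T)$ for some small $c$, and the composition rule for H\"older norms gives $\|\widetilde\Sigma\|_{C^s((\Psi(\Theta))_{c\delta})}\lesssim1$ and $\|\widetilde\Sigma^{-1}\|_{L_\infty((\Psi(\Theta))_{c\delta})}\lesssim1$. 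Writing $\xi(\vartheta)=\widetilde\Sigma(\vartheta)^{1/2}Z$ with $Z\sim N(0,I_d)$, and using that the matrix square root of a $C^s$ positive-definite field with bounded inverse is again $C^s$, the bound recorded before Theorem~\ref{Main_Th_AA} gives ${\frak d}_\xi((\Psi(\Theta))_{c\delta};s)\le\|\widetilde\Sigma^{1/2}\|_{C^s}^2\,{\mathbb E}\|Z\|^2\lesssim d\lesssim n^\alpha$, while $\|\widetilde\Sigma\|_{L_\infty(E)}\lesssim1$. Together with $s>\tfrac1{1-\alpha}$ this settles all the deterministic hypotheses of Corollary~\ref{Cor_Main_Th_AA}, and, for $n$ large, also the requirement ${\frak d}_\xi\le c_1\delta^2n$.

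The core of the argument is the remaining condition \eqref{Delta_to_zero}, namely $W_{2,(\Psi(\Theta))_{c\delta}}\bigl(\sqrt n(\hat\vartheta-\vartheta),\xi(\vartheta)\bigr)\to0$. Fix $\vartheta$ and let $\mu_\vartheta$ be the law of $Y_i:=\widetilde\Sigma(\vartheta)^{-1/2}(X_i-\vartheta)$, so that $Y_1,\dots,Y_n$ are i.i.d.\ mean zero with identity covariance under $P_{\Psi^{-1}(\vartheta)}$. By the standard transformation rule for Poincar\'e constants under bounded linear maps and by \eqref{cond_on_Sigma}, $C_P(\mu_\vartheta)\le\|\widetilde\Sigma(\vartheta)^{-1}\|\,C_P(P_{\Psi^{-1}(\vartheta)})\lesssim C_P(P_{\Psi^{-1}(\vartheta)})$. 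Applying the Fathi bound \eqref{bd_Fathi_1} to $Y_1,\dots,Y_n$, then the linear map $\widetilde\Sigma(\vartheta)^{1/2}$ (of operator norm $\lesssim1$), and coupling the Gaussian terms,
\begin{align*}
W_{2,{\mathbb P}_{\Psi^{-1}(\vartheta)}}\bigl(\sqrt n(\bar X-\vartheta),\xi(\vartheta)\bigr)
&\le \|\widetilde\Sigma(\vartheta)\|^{1/2}\sqrt{C_P(\mu_\vartheta)-1}\;\sqrt{d/n}\\
&\lesssim \sqrt{\frac{C_P(P_{\Psi^{-1}(\vartheta)})}{n^{1-\alpha}}}\longrightarrow0
\end{align*}
uniformly over $\vartheta\in(\Psi(\Theta))_{c\delta}$, using \eqref{cond_on_C_P} and $d\lesssim n^\alpha$. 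Replacing $\bar X$ by the clipped estimator $\hat\vartheta$ costs, in $W_2$, at most $\sqrt n\,\bigl({\mathbb E}\|\bar X-\Psi(\theta_0)\|^4\bigr)^{1/4}{\mathbb P}^{1/4}\{\bar X\notin\Psi(T)\}$, which is negligible: $\vartheta$ sits at distance $\gtrsim\delta$ from $\partial\Psi(T)$, ${\mathbb E}\|\bar X-\vartheta\|^2=\mathrm{tr}\,\widetilde\Sigma(\vartheta)/n\lesssim n^{\alpha-1}$, and the Poincar\'e inequality forces exponential concentration of $\bar X$ about $\vartheta$ (all moments of $X_i-\vartheta$ being finite and controlled by $C_P$ and $d$). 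Hence \eqref{Delta_to_zero} holds in the $\vartheta$-model, and Corollary~\ref{Cor_Main_Th_AA} yields \eqref{asymp_variance_XYZ}--\eqref{asymp_norm_XYZ}.

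I expect the main obstacle to be the combination of the re-parametrization bookkeeping with the quantitative form of the whitening step: one must confirm that the $C^s$ conditions on $\Sigma$, stated in the $\theta$-chart, really produce $C^s$ control of $\widetilde\Sigma$, of $\widetilde\Sigma^{1/2}$, and of the nested neighborhoods $(\Psi(\Theta))_{c\delta}\subset\Psi(\Theta_\delta)$, and that passing from a general covariance to the identity inflates the Poincar\'e constant by at most the bounded factor $\|\widetilde\Sigma^{-1}\|$, so that the Fathi bound \eqref{bd_Fathi_1} is invoked with a constant that is still $o(n^{1-\alpha})$ and the normal-approximation error genuinely vanishes.
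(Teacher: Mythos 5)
Your proposal is correct and follows essentially the same route as the paper's proof: reduce to the mean-parametrized model via $\vartheta=\Psi(\theta)$, set $\xi(\theta)=\Sigma^{1/2}(\theta)Z$ and use smooth functional calculus to get $\|\Sigma^{1/2}\|_{C^s}\lesssim\|\Sigma\|_{C^s}$ so that ${\frak d}_{\xi}\lesssim d\lesssim n^{\alpha}$, invoke the Fathi-type $W_2$ bound \eqref{bd_Fathi_1} (the paper states it directly with the factor $\|\Sigma\|^{1/2}\|\Sigma^{-1}\|^{1/2}$, which is exactly your whitening argument), control the clipping $\bar X\mapsto\hat\theta$ by the subexponential/Bernstein concentration implied by the Poincar\'e constant together with ${\rm Diam}(\Theta)\lesssim 1$, and conclude from Corollary \ref{Cor_Main_Th_AA}. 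The only cosmetic difference is that the paper writes out the argument only for $\Psi(\theta)=\theta$ and dismisses the chart-change bookkeeping that you spell out; your version is, if anything, slightly more explicit on that point.
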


\begin{remark}
\normalfont
Suppose that, for small $\delta>0,$ $\Psi $ is a $C^s$-diffeomorphism between $\Theta_{\delta}$ and 
$\Psi(\Theta_{\delta})$ (with bounded $C^s$-norms of $\Psi$ and $\Psi^{-1}$). Then, for a small enough
$\delta>0,$ there exists $\delta'>0$ such that $\Psi^{-1}((\Psi(\Theta))_{\delta'})\subset \Theta_{\delta}$
and the first supremum in \eqref{asymp_norm_XYZ} and \eqref{asymp_variance_XYZ}
could be taken over the set $\|f\|_{C^s(\Theta_{\delta})}\lesssim 1.$
\end{remark}

\begin{remark}
\normalfont
The properties of Poincar\'e constants discussed in Remark \ref{properties_P_const}
provide a way to check condition \eqref{cond_on_C_P}.
In particular, the claim of the corollary obviously holds in the Gaussian case. 
Moreover, if $P_{\theta}$ is absolutely continuous with respect 
to a measure $\nu_{\theta}$ for which $C_P(\nu_{\theta})$ is controlled by a numerical constant 
(for instance, a Gaussian measure)
and the densities $\frac{dP_{\theta}}{d\nu_{\theta}}$ 
are bounded from above by a constant $A>0$ and bounded from below by a constant $a>0,$ then 
$
C_P(P_{\theta})\lesssim 1
$
and condition \eqref{cond_on_C_P}  holds. Thus, the claim of Proposition \ref{Cor_Main_Th_DD}
also holds (under the rest of its conditions).
\end{remark}

\begin{remark}
\normalfont
Suppose that measures $P_{\theta}, \theta \in \Theta_{\delta}$ are log-concave. It follows from a recent result 
of \cite{Yuansi_Chen} (see Remark \ref{properties_P_const}) that $\sup_{\theta \in \Theta_{\delta}}C_P(P_{\theta})\lesssim_{\nu} d^{\nu}$
for an arbitrary $\nu>0.$ Thus, in this case, condition \eqref{cond_on_C_P} holds for all $\alpha\in (0,1)$ (and $d\lesssim n^{\alpha}$)
and so are the claims of  Proposition \ref{Cor_Main_Th_DD}.
\end{remark}

In the simplest case, $X=\theta+ \eta,$ where $\eta$ is a mean zero noise sampled 
from some distribution $\mu_{\theta}$ in ${\mathbb R}^d,$ depending on the parameter 
$\theta.$ 
In this case, $\vartheta=\Psi(\theta)=\theta$ and it is easy to state a simplified version of Proposition \ref{Cor_Main_Th_DD}. 
If the distribution $\mu_{\theta}=\mu$ of the noise does not depend on $\theta$ and $C_P(\mu)<\infty,$ similar problems 
were studied in a recent paper \cite{Koltchinskii_Zhilova_2020}. The approach was based on a more direct analysis of estimator 
$f_k(\hat \theta)$ in the case of such Poincar\'e random shift models without using normal approximation. However, this approach could not 
be extended to more general models with distribution $\mu_{\theta}$ of the noise depending on $\theta$ since, in this case, the construction 
of random homotopies between estimator $\bar X$ and parameter $\vartheta$ leads to rather challenging coupling problems (see also the discussion 
in Section \ref{sec:bias_reduce}).

A slightly more complicated example, is an exponential 
family\footnote{All the facts about exponential families used below could be found, for instance, in \cite{Chencov}}
\begin{align}
\label{exponential_family_def}
P_{\theta}(dx) = \frac{1}{Z(\theta)} \exp\{\langle\theta, x\rangle\}h(x)dx, \theta\in T,
\end{align}
where $h: {\mathbb R}^d\mapsto [0,+\infty)$ is a Borel measurable function and 
\begin{align*}
Z(\theta):= \int_{{\mathbb R}^d}\exp\{\langle\theta, x\rangle\}h(x)dx<\infty, \theta\in T.
\end{align*}
Note that the set $\{\theta\in {\mathbb R}^d: Z(\theta)<+\infty\}$ is convex and $T$ is a subset of this set. 
Assume that $T$ is convex, too. It is well known that $T\ni\theta \mapsto \log Z(\theta)$ is a 
strictly convex smooth function and 
\begin{align*}
\vartheta=\Psi(\theta)={\mathbb E}_{\theta} X= (\nabla \log Z)(\theta), \theta\in T.
\end{align*}

Moreover, $\Psi = \nabla \log Z$ is a strictly monotone vector field on $T$ (as a gradient 
of a strictly convex smooth function) and, therefore, it is a one-to-one mapping from $T$
onto $\Psi(T)$ (as before, it is also assumed to be a homeomorphism). Following the terminology 
of \cite{Chencov} (which is not quite standard), $\theta$ is called the {\it canonical parameter} of the exponential family 
and $\vartheta$ is called its {\it natural parameter}.

Note also that 
\begin{align*}
(\log Z)^{\prime \prime}(\theta) = \Psi^{\prime}(\theta) = \Sigma(\theta)
\end{align*}
is the covariance of $X.$ It is also the Fisher information matrix $I(\theta)$ for this model with respect 
to the canonical parameter $\theta$ and the inverse Fisher information matrix ${\mathcal I}^{-1}(\vartheta)$
with 
respect to the natural parameter $\vartheta=\Psi(\theta).$ 
Let now $X_1,\dots, X_n$ be i.i.d. $\sim P_{\theta}, \theta \in T.$ If $\bar X\in \Psi(T),$ then 
$\hat \theta=\Psi^{-1}(\bar X)$ is the unique maximum likelihood estimator for this exponential model.

We will call exponential family \eqref{exponential_family_def} log-concave iff the function $h$ is log-concave. 
Clearly, in this case the distributions $P_{\theta}, \theta \in T$ are log-concave. Proposition \eqref{Cor_Main_Th_DD}
and the above discussion yield the following corollary.

\begin{corollary}
\label{cor_log_conv_exp}
Let $d=d_n$ and let $P_{\theta}, \theta \in T=T_n\subset {\mathbb R}^d$ be a log-concave exponential family. 
Let $\Theta = \Theta_n \subset T$ with ${\rm Diam}(\Theta)\lesssim n^{A}$ for some $A>0.$ Let $\delta>0$
and let $s=k+1+\rho$ with $k\geq 0$ and $\rho\in (0,1].$  
Suppose that $\Theta_{\delta}\subset T$ and conditions \eqref{cond_on_Sigma}
hold. Suppose that, for some $\alpha\in (0,1),$
$ 
d\lesssim n^{\alpha}
$
and assume that $s>\frac{1}{1-\alpha}.$ 
Let $\theta_0$ in the definition of $\hat \theta$ be a point from $\Theta.$ Then asymptotic relationships \eqref{asymp_variance_XYZ} and 
\eqref{asymp_norm_XYZ} hold for estimator $f_k(\hat \theta)$ of $f(\theta).$ 
\end{corollary}

\begin{remark}
\normalfont
Note that, in the case of exponential model, the limit variance $\sigma_{f\circ \Psi^{-1}}(\Psi(\theta))$ in Proposition \ref{Cor_Main_Th_DD}
is equal to $\langle {\mathcal I}^{-1}(\vartheta)(f\circ \Psi^{-1})'(\vartheta),(f\circ \Psi^{-1})'(\vartheta)\rangle$
with $\vartheta=\Psi(\theta).$ It is possible to prove local minimax lower bounds showing  
optimality of this variance and the asymptotic efficiency of estimator of $f_k(\hat \theta)$
(for instance, using Van Trees inequality \cite{GillLevit}, see \cite{Koltchinskii_2017}, \cite{Koltchinskii_Zhilova}
for similar results).
\end{remark}

\begin{remark}
\normalfont
The result of Corollary \ref{cor_log_conv_exp} also holds under more general assumption that function $h$ in the definition of exponential model 
\eqref{exponential_family_def} satisfies the condition $c^{-1} g(x)\leq h(x)\leq c g(x), x\in {\mathbb R}^d$ for a non-negative log-concave function $g$
and for a constant $c\geq 1.$
\end{remark}

\begin{remark}
\normalfont
It was shown in \cite{Portnoy_1}, Theorem 3.1 that, under some moment assumptions on $d$-dimensional exponential families with MLE $\hat \theta,$
$\hat \theta-\theta$ could be approximated by a sample mean with accuracy $O_{{\mathbb P}}(\frac{d}{n}).$ Together with a high-dimensional 
CLT proved in \cite{Portnoy}, this implies that normal approximation of $\sqrt{n}(\hat \theta-\theta)$ holds if $d=o(\sqrt{n}).$
It was also shown in \cite{Portnoy_1}, Proposition 3.1 that, if $d$ is larger than $\sqrt{n},$ the normal approximation of 
$\sqrt{n}(\hat \theta-\theta)$ could fail even for linear functionals. Thus, additional conditions on exponential family 
(for instance, shape constraints such as log-concavity) are needed to justify normal approximation for MLE when $d\geq \sqrt{n}$
(which is an interesting regime for functional estimation requiring the bias reduction).
\end{remark}

\subsection{Outline of the proofs: bootstrap chains and random homotopies}
\label{sec:boostrap_chain}

Let $\hat \theta^{(k)}, k\geq 0$ be the Markov chain in the space $T$ with transition 
probability kernel $P(\theta, A), \theta \in T, A\subset T,$ defined by \eqref{define_Markov_kernel},
and with $\hat \theta^{(0)}=\theta.$ For this chain, $\hat \theta^{(1)}$ has the same distribution 
as $\hat \theta;$ conditionally on $\hat \theta^{(1)},$ $\hat \theta^{(2)}$ is sampled from the distribution $P(\hat \theta^{(1)}; \cdot);$ conditionally on $\hat \theta^{(2)},$ $\hat \theta^{(3)}$ is sampled from the distribution $P(\hat \theta^{(2)}, \cdot),$ etc. Thus, the Markov chain $\hat \theta^{(k)}, k\geq 0$ is constructed by an iterative 
application of parametric bootstrap to the estimator $\hat \theta$ and it was called in \cite{Koltchinskii_2017}
{\it the bootstrap chain} of this estimator. 
Bootstrap chains are involved in representations of functionals
${\mathcal B}^k f, k\geq 1$ needed to control the bias of estimator $f_k(\hat \theta).$ Namely (see \cite{Koltchinskii_2017, Koltchinskii_2018, Koltchinskii_Zhilova, Koltchinskii_Zhilova_19}), 
\begin{align*}
({\mathcal B}^k f)(\theta)= {\mathbb E}_{\theta}\sum_{j=0}^{k} (-1)^{k-j} {k\choose j} f(\hat \theta^{(j)}),
\end{align*}
which is the expectation of the $k$-th order difference of function $f$ along the sample path of the 
bootstrap chain. It is well known that for a $k$ times continuously differentiable function $f$ in the real 
line, its $k$-th order difference 
\begin{align*}
\Delta_h^k f(x) = \sum_{j=0}^k (-1)^{k-j} {k\choose j} f(x+jh) = f^{(k)}(x)h^k + o(h^k)\ {\rm as}\ h\to 0.  
\end{align*}
If, for a small $\delta>0,$ 
$
\sup_{\theta \in T}{\mathbb P}_{\theta}\{\|\hat \theta-\theta\|\geq \delta\}
$
is also small, we would have that $\|\hat \theta^{(j+1)}-\hat \theta^{(j)}\|<\delta$
with a high probability. In this case, one could expect that, for a $k$ times continuously 
differentiable function $f:E\mapsto {\mathbb R},$ $({\mathcal B}^k f)(\theta)$ is of the order $\delta^{k},$
and, if $f$ is $k+1$ times continuously differentiable function, then the bias of estimator 
$f_k(\hat \theta)$ of $f(\theta)$
\begin{align*}
{\mathbb E}_{\theta} f_k(\hat \theta)-f(\theta)= (-1)^k ({\mathcal B}^{k+1}f)(\theta)= 
O(\delta^{k+1}).
\end{align*}   
This heuristic was justified in \cite{Koltchinskii_Zhilova_19} (with some ideas developed earlier in \cite{Koltchinskii_2017, Koltchinskii_2018, Koltchinskii_Zhilova}) using representations of bootstrap 
chains as superpositions of so called {\it random homotopies}. 

A random homotopy between parameter $\theta$ and its estimator $\hat \theta$ is 
an a.s. continuous stochastic process $H: T\times [0,1]\times\Omega \mapsto T$
defined on a probability space $(\Omega, {\mathcal F}, {\mathbb P})$ such that, 
for all $\theta \in \Theta,$ 
\begin{gather*}
H(\theta;0) \coloneqq \theta, \  H(\theta;1) \overset{d}{=} \thetah,\ {\rm where}\ \thetah \sim P(\theta;\cdot).
\end{gather*}
In addition, random homotopy $H(\theta, t), \theta\in T, t\in [0,1]$ will be assumed to be sufficiently smooth. 
In other words, random homotopy is a coupling that provides a smooth path between parameter $\theta$
and a random variable in the parameter space with the same distribution as the estimator $\hat \theta.$
Given i.i.d. copies $H_1, H_2, \dots,$ one can define their superpositions $G_k:= H_k\bullet \dots \bullet H_1$
as follows: 
\begin{align*}
G_k(\theta; t_1,\dots, t_k):= H_k(G_{k-1}(\theta; t_1,\dots, t_{k-1}), t_k), (t_1,\dots, t_k)\in [0,1]^k
\end{align*}
with $G_0 \equiv \theta.$ One can also define a Markov chain $\tilde \theta^{(k)}:= G_k(\theta;1,\dots, 1)$
with $\tilde \theta^{(0)}=\theta$ and show that 
\begin{align*}
(\hat \theta^{(k)}: k\geq 0) \overset{d}{=} (\tilde \theta^{(k)}: k\geq 0),
\end{align*}
see Lemma 4.1 in \cite{Koltchinskii_Zhilova_19}. Moreover, it is also shown in the same 
lemma that 
\begin{align*}
\hat \theta_l \overset{d}{=} G_k(\theta;t_1,\dots, t_k), (t_1,\dots, t_k)\in \{0,1\}^k, \sum_{i=1}^k t_i = l.
\end{align*}
Using these facts, it is easy to derive the following representation of $({\mathcal B}^{k} f)(\theta)$
\begin{align*}
({\mathcal B}^k f)(\theta) = {\mathbb E} \Delta^{(1)}\dots \Delta^{(k)}f(G_k(\theta;t_1,\dots, t_k)),
\end{align*}
where 
\begin{align*}
\Delta^{(i)}\varphi (t_1,\dots, t_k) = \varphi (t_1,\dots, t_i,\dots, t_k)_{|t_i=1}-  
\varphi (t_1,\dots, t_i,\dots, t_k)_{|t_i=0}, i=1,\dots, k.
\end{align*}
Under proper smoothness assumptions on $f$ and on random homotopies, this yields 
the following formula:
\begin{align*}
({\mathcal B}^k f)(\theta) = \int_{0}^1\dots \int_{0}^1 {\mathbb E} \frac{\partial^k f(G_k(\theta;t_1,\dots, t_k))}{\partial t_1\dots \partial t_k}
dt_1\dots dt_k,
\end{align*}
This approach and other analytic techniques developed in \cite{Koltchinskii_Zhilova_19} 
led to the bounds on the H\"older norms of functions ${\mathcal B}^{j} f$ and $f_k$
as well as the bounds on the bias of estimator $f_k(\hat \theta)$ of $f(\theta).$

For a function $V: T\times [0,1]\mapsto F$
with values in a Banach space $F$ and such that $V(\cdot, t)\in C^s(T), t\in [0,1],$ denote 
\begin{align*}
\|V\|_{C^s(T\times [0,1])}^{\sim}:= \sup_{t\in [0,1]} \|V(\cdot, t)\|_{C^s(T)}
\end{align*} 
and 
\begin{align*}
\|V\|_{C^{0,s}(T\times [0,1])}^{\sim}:= \sup_{t\in [0,1]} \|V(\cdot, t)\|_{C^{0,s}(T)}
\end{align*}

We will summarize some facts proved in \cite{Koltchinskii_Zhilova_19}
(see, in particular, Theorem 3.1, Theorem 3.2, Proposition 7.1).

\begin{proposition}
\label{prop_summary}
Let $s=k+1+\rho,$ $k\geq 1, \rho\in (0,1].$
Assume that $H(\theta;t)$ is $k+1$ times continuously differentiable in $T\times [0,1]$ and 
let $\dot H(\theta; t):= \frac{d}{dt}H(\theta;t).$ Then, the following statements hold: 
\begin{enumerate}
\item If 
\begin{align}
\label{assume_glavnij}
{\mathbb E} (\|H\|_{C^{0,s-1}(T\times [0,1])}^{\sim})^{s-1} \|\dot H\|_{C^{s-1}(T\times [0,1])}^{\sim}<+\infty,
\end{align}
then 
\begin{align*}
\|{\mathcal B}\|_{C^s(T)\mapsto C^{s-1}(T)} \leq 4(k+1)^{k+2}{\mathbb E} (\|H\|_{C^{0,s-1}(T\times [0,1])}^{\sim})^{s-1} \|\dot H\|_{C^{s-1}(T\times [0,1])}^{\sim}.
\end{align*}
\item Moreover, under the same assumption, for some constant $D_s$ and for all $j=1,\dots, k,$
\begin{align*}
\|{\mathcal B}^j\|_{C^s(T)\mapsto C^{1+\rho}(T)} \leq D_s\Bigl({\mathbb E} (\|H\|_{C^{0,s-1}(T\times [0,1])}^{\sim})^{s-1} \|\dot H\|_{C^{s-1}(T\times [0,1])}^{\sim}\Bigr)^j.
\end{align*}
\item If 
\begin{align*}
D_s {\mathbb E} (\|H\|_{C^{0,s-1}(T\times [0,1])}^{\sim})^{s-1} \|\dot H\|_{C^{s-1}(T\times [0,1])}^{\sim}
\leq 1/2,
\end{align*}
then 
\begin{align*}
\|f_k\|_{C^{1+\rho}(T)}\leq 2 \|f\|_{C^s(T)}. 
\end{align*}
\item If assumption \eqref{assume_glavnij} holds, 
then for all $\theta\in T,$
\begin{align*}
|{\mathbb E}_{\theta} f_k(\hat \theta)-f(\theta)| \lesssim_s 
\|f\|_{C^s(T)} &\Bigl({\mathbb E} (\|H\|_{C^{0,s-1}(T\times [0,1])}^{\sim})^{s-1} \|\dot H\|_{C^{s-1}(T\times [0,1])}^{\sim}\Bigr)^k 
\\
&
\times \Bigl(\Bigl\|{\mathbb E}\int_0^1 \dot H(\theta;t) dt\Bigr\|+ {\mathbb E}\|\dot H\|_{L_{\infty}(T\times [0,1])}^{1+\rho}\Bigr).
\end{align*}
\end{enumerate}
\end{proposition}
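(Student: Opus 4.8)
\medskip\par\noindent{\sc Proof sketch.}\ These are (essentially) Theorem~3.1, Theorem~3.2, and Proposition~5.1 of \cite{Koltchinskii_Zhilova_19}, and the plan is to derive all four statements from the random-homotopy representation of the iterated operators recalled above. In particular, for $j=1$ this representation reads $({\mathcal B}f)(\theta)={\mathbb E}\int_0^1 f'(H(\theta;t))[\dot H(\theta;t)]\,dt$, and more generally $({\mathcal B}^j f)(\theta)=\int_{[0,1]^j}{\mathbb E}\,\partial_{t_1}\cdots\partial_{t_j}f(G_j(\theta;t_1,\dots,t_j))\,dt_1\cdots dt_j$. Throughout I write $M:={\mathbb E}\,(\|H\|_{C^{0,s-1}(T\times[0,1])}^{\sim})^{s-1}\|\dot H\|_{C^{s-1}(T\times[0,1])}^{\sim}$, and I use the innocuous lower bound $\|H\|_{C^{0,s-1}(T\times[0,1])}^{\sim}\geq\|{\rm id}\|_{{\rm Lip}(T)}=1$, which holds because $H(\theta;0)=\theta$; this is what makes the exponent bookkeeping below work out.

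For (1), I would differentiate the integrand $\theta\mapsto f'(H(\theta;t))[\dot H(\theta;t)]$ exactly $k$ times in $\theta$ and estimate the $C^{s-1}$-norm ($s-1=k+\rho$) of the result uniformly in $t$. Splitting off the $\dot H$-factor by the Leibniz rule and expanding $f'(H(\cdot;t))$ by the Fa\`a di Bruno formula produces a sum of boundedly many (in $k$) terms, each of the form $f^{(m+1)}(H(\theta;t))$ evaluated on a tuple made up of $m$ partial $\theta$-derivatives of $H(\cdot;t)$ of orders in $\{1,\dots,k\}$ together with one partial $\theta$-derivative of $\dot H(\cdot;t)$ of order at most $k$. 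Since $\|H\|_{C^{0,s-1}}^{\sim}$ controls the $\theta$-derivatives of $H$ of orders $1,\dots,k$ and the $\rho$-H\"older modulus of the $k$-th of them, while $\|\dot H\|_{C^{s-1}}^{\sim}$ controls the $\theta$-derivatives of $\dot H$ up to order $k$, the operator norm of each such term — as well as the $\rho$-H\"older modulus in $\theta$ of the full $k$-th derivative, which is needed for the $C^{s-1}$-norm — is bounded by $\|f\|_{C^s(T)}$ times a product of at most $k$ factors $\|H\|_{C^{0,s-1}}^{\sim}$, with total exponent not exceeding $k+\rho=s-1$, and one factor $\|\dot H\|_{C^{s-1}}^{\sim}$; the $L_\infty$-part of the $C^{s-1}$-norm is dominated by $\|f\|_{C^s(T)}\|\dot H\|_{C^{s-1}}^{\sim}$. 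Because $\|H\|_{C^{0,s-1}}^{\sim}\geq 1$, each such product is at most $(\|H\|_{C^{0,s-1}}^{\sim})^{s-1}\|\dot H\|_{C^{s-1}}^{\sim}$, so taking expectations under \eqref{assume_glavnij} gives (1); the constant $4(k+1)^{k+2}$ comes from counting the Fa\`a di Bruno and Leibniz coefficients (together with the factor arising from the increment estimate).

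Statement (2) then follows by iteration: applying (1) at each intermediate level shows that ${\mathcal B}$ maps $C^{m+1+\rho}(T)$ into $C^{m+\rho}(T)$ for every $m=1,\dots,k$ with norm $\lesssim_s M$ (one bounds the lower-order homotopy norms by $\|H\|_{C^{0,s-1}}^{\sim}$ and $\|\dot H\|_{C^{s-1}}^{\sim}$, again using $\|H\|_{C^{0,s-1}}^{\sim}\geq 1$), hence ${\mathcal B}^j$ maps $C^s(T)$ into $C^{s-j}(T)$ with norm $\lesssim_s M^j$; since $\|\cdot\|_{C^{1+\rho}(T)}\lesssim_s\|\cdot\|_{C^{s-j}(T)}$ for $j\leq k$ by an elementary comparison of H\"older seminorms, this gives (2) with an appropriate $D_s$. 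For (3), since $f_k=\sum_{j=0}^k(-1)^j{\mathcal B}^j f$ and $\|f\|_{C^{1+\rho}(T)}\leq\|f\|_{C^s(T)}$, part (2) together with the hypothesis $D_s M\leq 1/2$ (which, after enlarging $D_s$ so that $D_s\geq 1$, also forces $M\leq 1/2$) gives $\|f_k\|_{C^{1+\rho}(T)}\leq\|f\|_{C^s(T)}(1+\sum_{j\geq 1}2^{-j})=2\|f\|_{C^s(T)}$. Finally, for (4) I would write the bias as $(-1)^k({\mathcal B}^{k+1}f)(\theta)=(-1)^k({\mathcal B}g)(\theta)$ with $g:={\mathcal B}^k f$, so that $\|g\|_{C^{1+\rho}(T)}\lesssim_s M^k\|f\|_{C^s(T)}$ by (2), and then in $({\mathcal B}g)(\theta)={\mathbb E}\int_0^1 g'(H(\theta;t))[\dot H(\theta;t)]\,dt$ split $g'(H(\theta;t))=g'(\theta)+(g'(H(\theta;t))-g'(\theta))$: the first piece contributes $g'(\theta)\bigl[{\mathbb E}\int_0^1\dot H(\theta;t)\,dt\bigr]$, of modulus at most $\|g\|_{C^{1+\rho}(T)}\,\bigl\|{\mathbb E}\int_0^1\dot H(\theta;t)\,dt\bigr\|$, while for the second one uses $\|g'(H(\theta;t))-g'(\theta)\|\leq\|g\|_{C^{1+\rho}(T)}\|H(\theta;t)-\theta\|^\rho$ and $\|H(\theta;t)-\theta\|\leq\|\dot H\|_{L_\infty(T\times[0,1])}$ to bound its contribution by $\|g\|_{C^{1+\rho}(T)}\,{\mathbb E}\|\dot H\|_{L_\infty(T\times[0,1])}^{1+\rho}$; combining these and inserting the bound on $\|g\|_{C^{1+\rho}(T)}$ gives (4).

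The main obstacle is the Fa\`a di Bruno bookkeeping behind (1): one must check that every term in the expansion carries exactly one (differentiated) $\dot H$-factor and at most $k$ (differentiated) $H$-factors, so that the total ``homotopy degree'' never exceeds $s-1$ — this is precisely what makes the exponent $s-1$ of $\|H\|_{C^{0,s-1}}^{\sim}$ the right one — identify the critical terms responsible for the $\rho$-H\"older modulus of the $k$-th $\theta$-derivative (those in which $f^{(k+1)}$, or the $k$-th $\theta$-derivative of $H$, carries the H\"older continuity) and verify the same degree count there, and track the coefficients carefully enough to reach $4(k+1)^{k+2}$. A secondary point needing care is that $T$ need not be bounded, which is why every homotopy norm appearing is of $C^{0,\cdot}$-type (involving only derivatives of order $\geq 1$) and $H$ itself is never controlled in sup-norm.
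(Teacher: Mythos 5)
Your sketch is essentially the argument of \cite{Koltchinskii_Zhilova_19}, which is exactly what the paper relies on: Proposition \ref{prop_summary} is stated without proof as a summary of Theorem 3.1, Theorem 3.2 and Proposition 5.1 of that reference, and your route — the homotopy representation of ${\mathcal B}$, the Fa\`a di Bruno bookkeeping with the normalization $\|H\|_{C^{0,s-1}}^{\sim}\geq 1$, iteration of the one-step bound to get $\|{\mathcal B}^j\|_{C^s\mapsto C^{s-j}}\lesssim M^j$, the Neumann-series estimate for $f_k$, and the split $g'(H(\theta;t))=g'(\theta)+\bigl(g'(H(\theta;t))-g'(\theta)\bigr)$ for the bias — is the same scheme the paper itself reuses in its Gaussian adaptation (Lemma \ref{f_delta_k_second}, bound \eqref{bd_B_delta^j}, Lemma \ref{bd_on_tilde_f_kdelta_B}, and Step 1 of the proof of Theorem \ref{conc_main}). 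So the proposal is correct and takes essentially the same approach.
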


These facts provide a way to control the bias of estimator $f_k(\hat \theta)$ and, using the smoothness 
of function $f_k,$ to study the concentration of $f_k(\hat \theta)$ around its expectation (in the case 
of normal models where Gaussian concentration could be used). 
However, both the construction of random homotopies and the development 
of concentration bounds for more general statistical models than Gaussian are 
challenging problems. 

In this paper, we try to get around this difficulty by using 
the normal approximation of estimator $\hat \theta$ and reducing the problem 
to the Gaussian case. More precisely, instead of developing random homotopies 
directly for the estimator $\hat \theta,$ we use a very simple random homotopy 
\begin{align*}
H(\theta;t) := \theta + \frac{t\xi(\theta)}{\sqrt{n}} 
\end{align*}
for the ``estimator" $\tilde \theta= G(\theta)=\theta+\frac{\xi(\theta)}{\sqrt{n}},$ or,
more precisely, for a slightly modified ``estimator" $\tilde \theta_{\delta},$ 
defined as follows:
$$
\tilde \theta_{\delta} := G_{\delta}(\theta) := \theta+ \frac{\xi_{\delta}(\theta)}{\sqrt{n}} \in \Theta_{\delta}, 
$$
where
$$
\xi_{\delta}(\theta):= \xi(\theta) I\Bigl(\|\xi\|_{L_{\infty}(E)}< \delta \sqrt{n}\Bigr), \theta \in E.
$$
This would allow us to prove our results under smoothness assumptions on the process 
$\xi$ and functional $f$ locally in a neighborhood of $\Theta.$
For these estimators, we construct the corresponding
bootstrap chains $\tilde \theta^{(k)}$ and $\tilde \theta^{(k)}_{\delta}$ and 
show that these chains approximate in distribution the bootstrap chain 
$\hat \theta^{(k)}$ of the initial estimator $\hat \theta$ 
(see Theorem \ref{hat^k_tilde^k_hat_tilde_AAA} in 
Section \ref{Gaussian_approx}). 
We also approximate operator ${\mathcal T}$ by the operators $\tilde {\mathcal T}$ and $\tilde {\mathcal T}_{\delta}:$
\begin{align*}
&
(\tilde {\mathcal T} f)(\theta) := {\mathbb E}_{\theta} f(\tilde \theta)
= {\mathbb E} f(G(\theta)),
\\
&
(\tilde {\mathcal T}_{\delta} f)(\theta) := {\mathbb E}_{\theta} f(\tilde \theta_{\delta})
= {\mathbb E} f(G_{\delta}(\theta)),\ \theta\in E, f\in {\rm Lip}(E)
\end{align*}
and define $\tilde {\mathcal B}:= \tilde {\mathcal T}-{\mathcal I},$ 
$\tilde {\mathcal B}_{\delta}:= \tilde {\mathcal T}_{\delta}-{\mathcal I}.$
This allows us to 
approximate the function $f_k$ by similar 
functions $\tilde f, \tilde f_{\delta,k}$ defined as 
follows
\begin{align*}
\tilde f_k(\theta):= \sum_{j=0}^k (-1)^j (\tilde {\mathcal B}^{j}f)(\theta),
\tilde f_{\delta,k}(\theta):= \sum_{j=0}^k (-1)^j (\tilde {\mathcal B}_{\delta}^{j}f)(\theta)
\end{align*}
(see Theorem \ref{uniform_bd}). 
Finally, in Section \ref{Gaussian_conc}, 
we use Gaussian concentration (more precisely, Maurey-Pisier type inequalities)
to control the norm 
\begin{align*}
\sup_{\theta\in \Theta}
\Bigl\|\tilde f_{\delta,k}(\tilde \theta_{\delta})-f(\theta)-n^{-1/2}\langle f'(\theta), \xi(\theta)\rangle
\Bigr\|_{L_{\psi}({\mathbb P})}
\end{align*}
(see Theorem \ref{conc_main}). 
We combine all these pieces together in Section \ref{proof_main} to complete the proofs.

\begin{remark}
\normalfont
It easily follows from the proofs of the 
main results that they also hold for estimators $\tilde f_k(\hat \theta)$
and $\tilde f_{\delta, k}(\hat \theta),$ based on the functionals related 
to the Gaussian approximation of estimator $\hat \theta.$ 
\end{remark}

\section{Gaussian approximation of bootstrap chains}
\label{Gaussian_approx}

The main goal of this section is to use the Gaussian approximation of estimator $\hat \theta$ in order to develop 
certain Markov chains approximating the bootstrap chain $\hat \theta^{(k)}, k\geq 0.$

We will need some additional definitions and notations. 
For a given $\theta\in T,$ let 
\begin{align*}
\hat \theta_{\delta}:= 
\begin{cases}
\hat \theta &\ {\rm if}\ \|\hat \theta-\theta\|<\delta\\
\theta &\ {\rm if}\  \|\hat \theta-\theta\|\geq \delta
\end{cases}
\end{align*} 
and let $\hat \theta_{\delta}^{(k)}, k\geq 0$ be the corresponding 
bootstrap chain. More precisely, $\hat \theta_{\delta}^{(k)}, k\geq 0$
is a Markov chain with $\hat \theta_{\delta}^{(0)}=\theta$ and with 
the transition kernel 
\begin{align*}
P_{\delta}(\theta;A) := {\mathbb P}_{\theta}\{\hat \theta_{\delta}\in A\}=
{\mathbb P}_{\theta}\{\hat \theta \in A, \|\hat \theta-\theta\|<\delta\}
+ I_A(\theta){\mathbb P}_{\theta}\{\|\hat \theta-\theta\|\geq \delta\}, 
\end{align*}	
where $A\subset T, \theta\in T.$
Note that, for all $k\geq 0,$ $\hat \theta_{\delta}^{(k)}\in \Theta_{k\delta}.$
Similarly to \eqref{define_T}, define operator ${\mathcal T}_{\delta}$ as follows:
\begin{align*}
({\mathcal T}_{\delta} g)(\theta):= {\mathbb E}_{\theta} g(\hat \theta_{\delta}), \theta \in T.
\end{align*}
Recall that $\tilde \theta= G(\theta)=\theta+\frac{\xi(\theta)}{\sqrt{n}}$ and 
$
\tilde \theta_{\delta} = G_{\delta}(\theta) := \theta+ \frac{\xi_{\delta}(\theta)}{\sqrt{n}} \in \Theta_{\delta}. 
$
Note that 
\begin{align}
\label{T^k}
({\mathcal T}^k f)(\theta)= {\mathbb E}_{\theta} f(\hat \theta^{(k)}),\ \ 
({\mathcal T}_{\delta}^k f)(\theta)= {\mathbb E}_{\theta} f(\hat \theta_{\delta}^{(k)})
\end{align}
and 
\begin{align}
\label{T_delta^k}
(\tilde {\mathcal T}^k f)(\theta)= {\mathbb E}_{\theta} f(\tilde \theta^{(k)}),
\ \
(\tilde {\mathcal T}_{\delta}^k f)(\theta)= {\mathbb E}_{\theta} f(\tilde \theta_{\delta}^{(k)}), k\geq 0.
\end{align}

We will now try to approximate the bootstrap chain $\hat \theta^{(k)}, k\geq 0$ by the Markov chains $\tilde \theta^{(k)}, k\geq 0$ and $\tilde \theta_{\delta}^{(k)}, k\geq 0$ (the bootstrap chains 
of $\tilde \theta$ and $\tilde \theta_{\delta}$) defined as follows:
\begin{align*}
&
\tilde \theta^{(0)}:= \theta,
\\
&
\tilde \theta^{(k)}:= F_{k}(\theta), k\geq 1,
\end{align*}
where $F_{k}:= G_{k}\circ \dots \circ G_{1}, k\geq 1,$
\begin{align*}
G_j(\theta):=\theta + \frac{\xi_j(\theta)}{\sqrt{n}},
\end{align*}
$\xi_j, j\geq 1$ being i.i.d. copies of $\xi;$
\begin{align*}
&
\tilde \theta_{\delta}^{(0)}:= \theta,
\\
&
\tilde \theta^{(k)}_{\delta}:= F_{k,\delta}(\theta), k\geq 1,
\end{align*}
where $F_{k,\delta}:= G_{k,\delta}\circ \dots \circ G_{1,\delta}, k\geq 1,$
\begin{align*}
&
G_{j,\delta}(\theta):= \theta + \frac{\xi_{j,\delta}(\theta)}{\sqrt{n}}, 
\\
&
\xi_{j,\delta}(\theta):= \xi_j(\theta)I\Bigl(\|\xi_j\|_{L_{\infty}(E)}<\delta \sqrt{n}\Bigr), \theta \in E.
\end{align*}
It follows from the above definitions that, for all $\theta\in \Theta$ and all $k\geq 0,$ 
$\tilde \theta_{\delta}^{(k)}\in \Theta_{k\delta}.$ 

For r.v. $\eta_1\sim \mu_{\theta}, \eta_2\sim \nu_{\theta}, \theta\in \Theta$ with values in $E,$ denote 
\begin{align*}
\Delta_{s,\delta}(\eta_1,\eta_2) 
&
= \Delta_{s,\delta,\Theta}(\eta_1,\eta_2):= \sup_{\theta\in \Theta} \sup_{\|f\|_{C^s(\Theta_{\delta})}\leq 1}
\Bigl|{\mathbb E}_{\theta}f(\eta_1)-{\mathbb E}_{\theta}f(\eta_2)\Bigr| 
\\
&
= \Delta_{{\mathcal F},\Theta}(\eta_1,\eta_2),
\end{align*}
where ${\mathcal F}:= \{f:\|f\|_{C^s(\Theta_{\delta})}\leq 1\}.$

The following theorem provides bounds on approximation of Markov chain $\hat \theta^{(k)}$ by Markov chains $\tilde \theta^{(k)}$ and $\tilde \theta_{\delta}^{(k)}.$ 

\begin{theorem}
	\label{hat^k_tilde^k_hat_tilde_AAA}
	For all $s\geq 1, \delta>0, k\geq 1$ such that $\Theta_{k\delta}\subset T,$
	\begin{align}
	\label{Delta_s,kdelta}
	&
	\Delta_{s,k\delta} (\hat \theta^{(k)}, \tilde \theta_{\delta}^{(k)})
	\lesssim_{s,k} 
	\biggl(1+ \frac{{\mathbb E}\|\xi\|_{C^s(\Theta_{(k-1)\delta})}^s}{n^{s/2}}\biggr)^{k-1}
	\Bigl[\Delta_{s,\delta}(\hat \theta, \tilde \theta)+ {\frak Q}_n(\Theta_{(k-1)\delta},\delta)
	\Bigr]
	\end{align} 
	and 
	\begin{align*}
	&
	\nonumber
	\Delta_{s,k\delta} (\hat \theta^{(k)}, \tilde \theta^{(k)})
	\lesssim_{s,k} 
	\biggl(1+ \frac{{\mathbb E}\|\xi\|_{C^s(\Theta_{(k-1)\delta})}^s}{n^{s/2}}\biggr)^{k-1}
	\Bigl[\Delta_{s,\delta}(\hat \theta, \tilde \theta)
	+{\frak Q}_n(\Theta_{(k-1)\delta},\delta)\Bigr],
	\end{align*}
	where 
	\begin{align*}
	\frak{Q}_n(\Theta, \delta):=\sup_{\theta\in \Theta}{\mathbb P}\{\|\hat \theta-\theta\|\geq \delta\}
	+{\mathbb P}\{\|\xi\|_{L_{\infty}(E)}\geq \delta \sqrt{n}\}.
	\end{align*}
	In particular, if $\Theta=T=E,$ then 
	\begin{align*}
	&
	\Delta_{s} (\hat \theta^{(k)}, \tilde \theta^{(k)})
	\lesssim_{s,k} 
	\biggl(1+ \frac{{\mathbb E}\|\xi\|_{C^s(E)}^s}{n^{s/2}}\biggr)^{k-1}\Delta_{s}(\hat \theta, \tilde \theta).
	\end{align*} 
\end{theorem}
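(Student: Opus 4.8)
The plan is to reformulate everything through the iterated transition operators and to exploit the smoothing produced by the Gaussian step. By \eqref{T^k} and \eqref{T_delta^k}, estimating $\Delta_{s,k\delta}(\hat\theta^{(k)},\tilde\theta_\delta^{(k)})$ is the same as bounding $\sup_{\theta\in\Theta}\big|(\mathcal{T}^k f)(\theta)-(\tilde{\mathcal{T}}_\delta^k f)(\theta)\big|$ over $\|f\|_{C^s(\Theta_{k\delta})}\le1$, and likewise with $\tilde{\mathcal{T}}^k$ for the second inequality. The first reduction is to pass from the untruncated chains to the truncated ones. I would couple $\hat\theta^{(k)}$ and $\hat\theta_\delta^{(k)}$ so that, started at the same $\theta\in\Theta$, they agree until the first step whose increment has norm $\ge\delta$; since the truncated chain always lies in $\Theta_{j\delta}\subset\Theta_{(k-1)\delta}$ after $j$ steps, a union bound over the $k$ steps gives $\Delta_{s,k\delta}(\hat\theta^{(k)},\hat\theta_\delta^{(k)})\lesssim_k\sup_{\theta\in\Theta_{(k-1)\delta}}\mathbb{P}_\theta\{\|\hat\theta-\theta\|\ge\delta\}\le\mathfrak{Q}_n(\Theta_{(k-1)\delta},\delta)$ (test functions being bounded by $1$ after the norm-preserving extension). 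The analogous coupling of $\tilde\theta^{(k)}$ and $\tilde\theta_\delta^{(k)}$, which can disagree only if some $\|\xi_j\|_{L_\infty(E)}\ge\delta\sqrt n$, costs $\lesssim_k\mathbb{P}\{\|\xi\|_{L_\infty(E)}\ge\delta\sqrt n\}\le\mathfrak{Q}_n$. So it remains to bound $\Delta_{s,k\delta}(\hat\theta_\delta^{(k)},\tilde\theta_\delta^{(k)})$.

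The second ingredient is a smoothing estimate for the Gaussian step. Writing $(\tilde{\mathcal{T}}_\delta h)(\theta)=\mathbb{E}\,h(\phi_\delta(\theta))$ with the random near-identity map $\phi_\delta(\theta)=\theta+\xi_\delta(\theta)/\sqrt n$, a H\"older-norm estimate for compositions $h\circ\phi_\delta$ (of the kind underlying \cite{Koltchinskii_Zhilova_19}; cf.\ Proposition \ref{prop_summary}) gives $\|h\circ\phi_\delta\|_{C^s}\lesssim_s\|h\|_{C^s}\,(1+\|\xi_\delta\|_{C^s}/\sqrt n)^s$ on the relevant neighborhood, and $\|\xi_\delta\|_{C^s}\le\|\xi\|_{C^s}$ pointwise because the truncation multiplies $\xi$ by a $\{0,1\}$-valued factor independent of the argument. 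Taking expectations (differentiation under the expectation being justified by the finiteness of $\mathbb{E}\|\xi\|_{C^s}^s$) and using $\mathbb{E}(1+X)^s\lesssim_s1+\mathbb{E}X^s$, one obtains, for $h\in C^s(\Theta_{m\delta})$, $\|\tilde{\mathcal{T}}_\delta h\|_{C^s(\Theta_{(m-1)\delta})}\lesssim_s\big(1+n^{-s/2}\mathbb{E}\|\xi\|_{C^s(\Theta_{(m-1)\delta})}^s\big)\|h\|_{C^s(\Theta_{m\delta})}$; iterating $j$ times (all domains that arise are inside $\Theta_{(k-1)\delta}$) yields $\|\tilde{\mathcal{T}}_\delta^j f\|_{C^s}\lesssim_{s,j}\big(1+n^{-s/2}\mathbb{E}\|\xi\|_{C^s(\Theta_{(k-1)\delta})}^s\big)^j\|f\|_{C^s(\Theta_{k\delta})}$.

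The core step is an induction on $k$ for $\Delta_{s,k\delta}(\hat\theta_\delta^{(k)},\tilde\theta_\delta^{(k)})$ (carried out with a general base set, so that it may be applied with $\Theta$ replaced by $\Theta_\delta$), peeling off the first step. From $\mathbb{E}_\theta f(\hat\theta_\delta^{(k)})=\mathbb{E}_\theta\big[(\mathcal{T}_\delta^{k-1}f)(\hat\theta_\delta)\big]$, $\mathbb{E}_\theta f(\tilde\theta_\delta^{(k)})=\mathbb{E}_\theta\big[(\tilde{\mathcal{T}}_\delta^{k-1}f)(\tilde\theta_\delta)\big]$, and inserting $\pm\,\mathbb{E}_\theta\big[(\tilde{\mathcal{T}}_\delta^{k-1}f)(\hat\theta_\delta)\big]$, the difference is $(\mathrm{I})+(\mathrm{II})$ with $(\mathrm{I})=\mathbb{E}_\theta\big[(\mathcal{T}_\delta^{k-1}f-\tilde{\mathcal{T}}_\delta^{k-1}f)(\hat\theta_\delta)\big]$ and $(\mathrm{II})=\mathbb{E}_\theta\big[g(\hat\theta_\delta)-g(\tilde\theta_\delta)\big]$ for $g:=\tilde{\mathcal{T}}_\delta^{k-1}f$. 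Term $(\mathrm{I})$: since $\hat\theta_\delta$ started at $\theta\in\Theta$ lies in $\Theta_\delta$ and $\mathcal{T}_\delta^{k-1}f-\tilde{\mathcal{T}}_\delta^{k-1}f$ is, by \eqref{T^k}--\eqref{T_delta^k}, the integrand controlled by $\Delta_{s,(k-1)\delta}(\hat\theta_\delta^{(k-1)},\tilde\theta_\delta^{(k-1)})$ over the base set $\Theta_\delta$ (the domains match, $\Theta_{k\delta}=(\Theta_\delta)_{(k-1)\delta}$), this is dominated by the induction hypothesis, which carries the prefactor $(1+\cdots)^{k-2}$. Term $(\mathrm{II})$: by the triangle inequality through $\hat\theta,\tilde\theta$, $\big|\mathbb{E}_\theta[g(\hat\theta_\delta)-g(\tilde\theta_\delta)]\big|\le\|g\|_{L_\infty}\big(\mathbb{P}_\theta\{\|\hat\theta-\theta\|\ge\delta\}+\mathbb{P}\{\|\xi\|_{L_\infty(E)}\ge\delta\sqrt n\}\big)+\big|\mathbb{E}_\theta[g(\hat\theta)-g(\tilde\theta)]\big|\lesssim\|g\|_{C^s}\big(\Delta_{s,\delta}(\hat\theta,\tilde\theta)+\mathfrak{Q}_n(\Theta,\delta)\big)$, where $g\in C^s(\Theta_\delta)$ and $\|g\|_{C^s}\lesssim_{s,k}(1+\cdots)^{k-1}\|f\|_{C^s(\Theta_{k\delta})}$ by the smoothing estimate. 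Adding $(\mathrm{I})$ and $(\mathrm{II})$ closes the induction with the prefactor $\big(1+n^{-s/2}\mathbb{E}\|\xi\|_{C^s(\Theta_{(k-1)\delta})}^s\big)^{k-1}$; combining with the two first-step couplings yields both displayed bounds, and the case $\Theta=T=E$ follows at once, since then no escape from $T$ is possible, one may work directly with $\hat\theta^{(k)},\tilde\theta^{(k)}$ (no truncation), and the $\mathfrak{Q}_n$ terms disappear.

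The main obstacle I anticipate is the bookkeeping of the nested neighborhoods $\Theta_{j\delta}$ together with the fact that the untruncated bootstrap chain can leave every bounded set: this is exactly why the truncated chains are introduced, and the delicate points are that the reduction coupling must be arranged so that the $k$-step error is only $k$ times a single one-step tail probability over $\Theta_{(k-1)\delta}$, and that in the induction the base set grows by precisely one layer of width $\delta$ per step, so that the smoothing constants never involve anything beyond $\|\xi\|_{C^s(\Theta_{(k-1)\delta})}$ and the one-step distance remains essentially the one over $\Theta$. The composition estimate for H\"older norms is technical but routine, and I would quote it from \cite{Koltchinskii_Zhilova_19} rather than reprove it.
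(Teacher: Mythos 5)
Your proposal is correct and follows essentially the same route as the paper: the decomposition into terms (I) and (II) is exactly the paper's operator identity $\mathcal{T}_{\delta}^k-\tilde{\mathcal T}_{\delta}^k=\mathcal{T}_{\delta}(\mathcal{T}_{\delta}^{k-1}-\tilde{\mathcal T}_{\delta}^{k-1})+(\mathcal{T}_{\delta}-\tilde{\mathcal T}_{\delta})\tilde{\mathcal T}_{\delta}^{k-1}$, your smoothing estimate is the paper's bound $\|\tilde{\mathcal T}_{\delta}\|_{C^s(\Theta_{(i+1)\delta})\mapsto C^s(\Theta_{i\delta})}\lesssim_s 1+n^{-s/2}{\mathbb E}\|\xi\|^s_{C^s(\Theta_{i\delta})}$ obtained from the Fa\`a di Bruno composition lemma, and your coupling reduction from the untruncated to the truncated chains (with the per-step tail controlled over $\Theta_{(j-1)\delta}$ via the Markov property) is the content of the paper's lemmas comparing $\hat\theta^{(k)}$ with $\hat\theta_{\delta}^{(k)}$ and $\hat\theta_{\delta},\tilde\theta_{\delta}$ with $\hat\theta,\tilde\theta$. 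The only deviations are presentational: the paper phrases the truncation step through the events $A_k, B_j$ rather than an explicit coupling, and handles your term (I) by the $L_{\infty}$-contraction of $\mathcal{T}_{\delta}$ instead of rerunning the induction over the enlarged base set, which amounts to the same bookkeeping.
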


In addition, the following result providing a bound on approximation error 
of the function $f_k$ by function the $\tilde f_{\delta,k}$ holds. 

\begin{theorem}
	\label{uniform_bd}
	For all $s=k+1+\rho, k\geq 1, \rho\in (0,1]$ and $\delta>0$ such that $\Theta_{k\delta}\subset T,$
	\begin{align*}
	&
	\|f_k-\tilde f_{\delta,k}\|_{L_{\infty}(\Theta)}
	\\
	&
	\lesssim_{s,k} 
	\|f\|_{C^s(\Theta_{k\delta})}\biggl(1+ \frac{{\mathbb E}\|\xi\|_{C^s(\Theta_{(k-1)\delta})}^s}{n^{s/2}}\biggr)^{k-1}
	\Bigl[\Delta_{s,\delta}(\hat \theta, \tilde \theta)+\frak{Q}_n(\Theta_{(k-1)\delta},\delta)
	\Bigr].
	\end{align*}
\end{theorem}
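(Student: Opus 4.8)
The plan is to reduce the comparison of the two functionals to the comparison of the underlying bootstrap chains, which is precisely what Theorem~\ref{hat^k_tilde^k_hat_tilde_AAA} controls. Since $\mathcal B=\mathcal T-\mathcal I$ and $\mathcal T$ commutes with $\mathcal I$, the binomial theorem gives $\mathcal B^j=\sum_{i=0}^j\binom{j}{i}(-1)^{j-i}\mathcal T^i$ and likewise $\tilde{\mathcal B}_\delta^j=\sum_{i=0}^j\binom{j}{i}(-1)^{j-i}\tilde{\mathcal T}_\delta^i$. Because $\mathcal T^0=\tilde{\mathcal T}_\delta^0=\mathcal I$, the $i=0$ terms cancel in $\mathcal B^jf-\tilde{\mathcal B}_\delta^jf$, and the $j=0$ term of $f_k-\tilde f_{\delta,k}$ vanishes altogether. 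Hence, for every $\theta\in\Theta$,
\[
|f_k(\theta)-\tilde f_{\delta,k}(\theta)|\le\sum_{j=1}^k\sum_{i=1}^j\binom{j}{i}\,\bigl|(\mathcal T^if)(\theta)-(\tilde{\mathcal T}_\delta^if)(\theta)\bigr|.
\]

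For each $1\le i\le k$ and $\theta\in\Theta$, the identities \eqref{T^k} and \eqref{T_delta^k} give $(\mathcal T^if)(\theta)-(\tilde{\mathcal T}_\delta^if)(\theta)={\mathbb E}_\theta f(\hat\theta^{(i)})-{\mathbb E}_\theta f(\tilde\theta_\delta^{(i)})$. Normalizing $f$ by $\|f\|_{C^s(\Theta_{i\delta})}$ (so that $f/\|f\|_{C^s(\Theta_{i\delta})}$ is admissible in the class defining $\Delta_{s,i\delta}=\Delta_{s,i\delta,\Theta}$) yields
\[
\bigl|(\mathcal T^if)(\theta)-(\tilde{\mathcal T}_\delta^if)(\theta)\bigr|\le\|f\|_{C^s(\Theta_{i\delta})}\,\Delta_{s,i\delta}(\hat\theta^{(i)},\tilde\theta_\delta^{(i)}).
\]
Applying Theorem~\ref{hat^k_tilde^k_hat_tilde_AAA} with $i$ in place of $k$ (legitimate since $i\ge1$ and $\Theta_{i\delta}\subseteq\Theta_{k\delta}\subset T$) bounds $\Delta_{s,i\delta}(\hat\theta^{(i)},\tilde\theta_\delta^{(i)})$, up to a constant depending only on $s$ and $i$, by
\[
\Bigl(1+n^{-s/2}\,{\mathbb E}\|\xi\|_{C^s(\Theta_{(i-1)\delta})}^s\Bigr)^{i-1}\bigl[\Delta_{s,\delta}(\hat\theta,\tilde\theta)+{\frak Q}_n(\Theta_{(i-1)\delta},\delta)\bigr].
\]

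It then remains to majorize each $i$-dependent quantity by its value at $i=k$ and to collect constants. All the relevant quantities are monotone in the set over which they are formed: $\|f\|_{C^s(\Theta_{i\delta})}\le\|f\|_{C^s(\Theta_{k\delta})}$ because every seminorm entering the $C^s$-norm is a supremum over the underlying set; ${\mathbb E}\|\xi\|_{C^s(\Theta_{(i-1)\delta})}^s\le{\mathbb E}\|\xi\|_{C^s(\Theta_{(k-1)\delta})}^s$ since $\Theta_{(i-1)\delta}\subseteq\Theta_{(k-1)\delta}$ bounds the norms pointwise; ${\frak Q}_n(\Theta_{(i-1)\delta},\delta)\le{\frak Q}_n(\Theta_{(k-1)\delta},\delta)$ since the supremum defining ${\frak Q}_n$ is over the larger neighborhood; and the exponent obeys $(\cdot)^{i-1}\le(\cdot)^{k-1}$ because the base is $\ge1$. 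The remaining combinatorial factor $\sum_{j=1}^k\sum_{i=1}^j\binom{j}{i}\le 2^{k+1}$ depends only on $k$ and is absorbed into the symbol $\lesssim_{s,k}$. Taking $\sup_{\theta\in\Theta}$ then produces the claimed bound.

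Since the substantive estimate is entirely contained in Theorem~\ref{hat^k_tilde^k_hat_tilde_AAA}, the present argument is essentially mechanical. The only points that require genuine care are the bookkeeping with the nested neighborhoods $\Theta_{i\delta}$ (and the resulting monotonicities used above) and the verification, under the paper's standing conventions on extensions of Hölder functions, that $f_k$, $\tilde f_{\delta,k}$, and hence all the operators $\mathcal T^i$ and $\tilde{\mathcal T}_\delta^i$ appearing above, are well defined for the functional $f$ in question.
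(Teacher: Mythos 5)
Your proposal is correct and follows essentially the same route as the paper: both arguments reduce $f_k-\tilde f_{\delta,k}$ via the binomial expansions of ${\mathcal B}^j=({\mathcal T}-{\mathcal I})^j$ and $\tilde{\mathcal B}_{\delta}^j=(\tilde{\mathcal T}_{\delta}-{\mathcal I})^j$ to the differences ${\mathcal T}^i f-\tilde{\mathcal T}_{\delta}^i f$, which are then controlled by bound \eqref{Delta_s,kdelta} of Theorem \ref{hat^k_tilde^k_hat_tilde_AAA} applied to the $i$-step chains (using \eqref{T^k}, \eqref{T_delta^k}), with the same monotonicity-in-$i$ bookkeeping over the nested neighborhoods. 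The only difference is the order of the two steps (you expand first and then invoke the chain approximation termwise, while the paper first records $\max_{0\le j\le k}\|{\mathcal T}^jf-\tilde{\mathcal T}_{\delta}^jf\|_{L_{\infty}(\Theta)}$ and then expands), which is immaterial.
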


\subsection{Preliminary facts related to Fa\`a di Bruno calculus}

We will start with several technical facts (in particular, bounds on the norms 
of operators $\tilde {\mathcal T}_{\delta}$ and $\tilde {\mathcal B}_{\delta}:=\tilde {\mathcal T}_{\delta}-{\mathcal I}$) that are needed both in this and the the following sections. 
These facts are based on Fa\`a di Bruno type calculus developed in \cite{Koltchinskii_Zhilova_19} and they are just modifications of the 
results already presented there (see, in particular, Section 5 in \cite{Koltchinskii_Zhilova_19};
see also Proposition \ref{prop_summary} in the current paper).

The next lemma provides a bound on H\"older $C^s$-norm of superposition 
of two $C^s$ functions. 

\begin{lemma}
	\label{superposition_h_g}
	Let $E,F$ be Banach spaces and $U\subset E, V\subset F$ be open sets.
	Suppose $g: U\mapsto F,$ $g(U)\subset V\subset F,$ $f: V\mapsto {\mathbb R}$
	and  $g\in C^{s}(U), f\in C^s(V)$ for some $s\geq 1.$ Then 
	\begin{align*}
	\|f\circ g\|_{C^s(U)} \lesssim_s \|f\|_{C^{s}(V)} (1 \vee \|g\|_{C^{0,s}(U)}^s).
	\end{align*}
\end{lemma}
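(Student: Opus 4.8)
The plan is to reduce the $C^s$-norm estimate for $f \circ g$ to two separate tasks: bounding the sup-norm of $f \circ g$, which is immediate since $\|f\circ g\|_{L_\infty(U)} \le \|f\|_{L_\infty(V)} \le \|f\|_{C^s(V)}$; and bounding the Hölder norm of the top-order derivative, where $s = m + \rho$ with $m \ge 0$ an integer and $\rho \in (0,1]$. The main tool is the Faà di Bruno formula for Fréchet derivatives: for $1 \le j \le m$,
\begin{align*}
(f\circ g)^{(j)}(x)[u_1,\dots,u_j] = \sum_{\pi \in \mathcal{P}_j} f^{(|\pi|)}(g(x))\Bigl[ \bigotimes_{B \in \pi} g^{(|B|)}(x)[u_B] \Bigr],
\end{align*}
the sum running over all partitions $\pi$ of $\{1,\dots,j\}$, with $|\pi|$ the number of blocks and $u_B$ the subtuple of $u_i$ indexed by $B \in \pi$. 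Since the number of partitions of $\{1,\dots,j\}$ is a constant depending only on $j \le m$ (hence only on $s$), it suffices to bound each summand and a fixed number of them.

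First I would establish the operator-norm bounds on intermediate derivatives: for $1 \le j \le m$, $\|g^{(j)}\|_{L_\infty(U)} \le \|g\|_{C^s(U)}$ (using the convention in the paper's Hölder-norm definition, $\|g^{(j)}\|_{L_\infty} \le \|g^{(j-1)}\|_{\mathrm{Lip}} \le \|g\|_{C^s(U)}$), and similarly $\|f^{(i)}\|_{L_\infty(V)} \le \|f\|_{C^s(V)}$ for $0 \le i \le m$. For a typical term in the Faà di Bruno sum with a partition into $k = |\pi|$ blocks of sizes $b_1,\dots,b_k$ summing to $j$, its operator norm is bounded by $\|f^{(k)}\|_{L_\infty(V)} \prod_{r=1}^k \|g^{(b_r)}\|_{L_\infty(U)} \le \|f\|_{C^s(V)}\, \|g\|_{C^s(U)}^k \le \|f\|_{C^s(V)}(1 \vee \|g\|_{C^s(U)}^s)$, since $k \le j \le m \le s$. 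Summing over the (constantly many) partitions gives $\|(f\circ g)^{(j)}\|_{L_\infty(U)} \lesssim_s \|f\|_{C^s(V)}(1 \vee \|g\|_{C^s(U)}^s)$, which controls all the Lipschitz-norm terms $\|(f\circ g)^{(j-1)}\|_{\mathrm{Lip}(U)}$ for $j \le m$ — except that a Lipschitz bound on $(f\circ g)^{(j-1)}$ really needs a bound on the increments, so I would argue directly by estimating $(f\circ g)^{(m)}(x) - (f\circ g)^{(m)}(x')$ below rather than relying on an $L_\infty$ bound on $(f\circ g)^{(m)}$ (which controls only the non-increment parts).

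The crux is the top-order Hölder estimate: bounding $\|(f\circ g)^{(m)}\|_{\mathrm{Lip}_\rho(U)}$, i.e. estimating the difference of the Faà di Bruno sum at two points $x, x'$. I would expand each summand's difference telescopically: in a product $f^{(k)}(g(x))\bigl[\bigotimes_r g^{(b_r)}(x)\bigr]$, replace one factor at a time, so the difference is a sum of terms each containing exactly one increment — either $f^{(k)}(g(x)) - f^{(k)}(g(x'))$ or $g^{(b_r)}(x) - g^{(b_r)}(x')$ — times bounded copies of the remaining factors. For the $g$-increments with $b_r \le m-1$ we have $\|g^{(b_r)}(x) - g^{(b_r)}(x')\| \le \|g\|_{C^s(U)}\|x-x'\|$; for $b_r = m$, $\|g^{(m)}(x)-g^{(m)}(x')\| \le \|g\|_{C^s(U)}\|x-x'\|^\rho$; and for the $f$-increment, $\|f^{(k)}(g(x)) - f^{(k)}(g(x'))\| \le \|f\|_{C^s(V)}\|g(x)-g(x')\|^{(\rho \text{ or }1)} \le \|f\|_{C^s(V)}(\|g\|_{C^s(U)}\|x-x'\|)^{(\rho\text{ or }1)}$, using that $g$ is Lipschitz with constant $\le \|g\|_{C^s(U)}$ and $\|x-x'\| \le \mathrm{something}$ — here one uses $t^\rho \le t^{\rho}$ directly and, when a factor of $\|x-x'\|$ must be downgraded to $\|x-x'\|^\rho$ on a bounded domain, absorbs the constant; more cleanly, on the relevant scale $\|x-x'\|^{1} \le \mathrm{diam}(U)^{1-\rho}\|x-x'\|^\rho$ if $U$ is bounded, or else one simply notes the extra power only helps. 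The accounting then shows every resulting term is bounded by $\|f\|_{C^s(V)}\,\|g\|_{C^s(U)}^{k + (\text{sum of the untouched }b_r)} \|x-x'\|^\rho \le \|f\|_{C^s(V)}(1\vee\|g\|_{C^s(U)}^{s})\|x-x'\|^\rho$, since the total exponent on $\|g\|_{C^s(U)}$ is at most $k + m \le 2m \le$ \dots hmm — actually the exponent is at most $|\pi| + \sum_r b_r \le m + m$, so I would take $1 \vee \|g\|_{C^s(U)}^{2s}$ unless a sharper count is made; the paper's $\|g\|_{C^s}^s$ follows from the sharper observation that $\sum_r b_r = j \le m$ and $k = |\pi| \le$ the number of blocks, and when $\|g\|_{C^s(U)} \ge 1$ the exponent $k + j - b_{r_0}$ (one block's contribution is replaced by its increment, which still carries the same power) is at most $j + k \le$ \dots. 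I would reconcile this by using $1 \vee \|g\|_{C^s(U)}^s$ with the understanding that the combinatorial constant $\lesssim_s 1$ absorbs the discrepancy, or by the standard refinement that for the Faà di Bruno term indexed by a partition into $k$ blocks the relevant homogeneity degree in $g$ is exactly $j \le s$ (the $f$-derivative order $k$ does not multiply the $g$-power). This homogeneity bookkeeping is the main obstacle, and I would handle it by invoking the Faà di Bruno calculus already developed in \cite{Koltchinskii_Zhilova_19}, Section 5, where precisely such superposition bounds are proved; the present lemma is a clean special case and the constant $\lesssim_s 1$ is all that is claimed.
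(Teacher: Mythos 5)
Your proposal follows the same route as the paper, which proves this lemma exactly via the Faà di Bruno formula for $D^{j}(f\circ g)$ summed over partitions and then controls Lipschitz/H\"older norms of the resulting terms, deferring the details to the calculus of \cite{Koltchinskii_Zhilova_19}; so in outline you and the paper coincide. Two points where your write-up needs tightening. First, the exponent bookkeeping you hedge about closes with no loss: in a term indexed by a partition with $k$ blocks of sizes $b_1,\dots,b_k$ ($\sum_r b_r=j\le m$, $s=m+\rho$), the sup-norm bound carries the power $k\le m\le s$ of $\|g\|_{C^s(U)}$ (one factor per block, the order of $f^{(k)}$ contributes no $g$-power), while in the telescoped difference of the $m$-th derivative the worst case is $k=m$ with all blocks singletons and the H\"older increment falling on $f^{(m)}$, which yields $\|g\|_{C^s(U)}^{m}\cdot\|g\|_{C^s(U)}^{\rho}\|x-x'\|^{\rho}=\|g\|_{C^s(U)}^{s}\|x-x'\|^{\rho}$; all other cases give power at most $m$. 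Hence $1\vee\|g\|_{C^s(U)}^{s}$ is exactly right and no appeal to an exponent $2s$ (or to absorbing it into the constant) is needed. Second, your remark that when a full factor $\|x-x'\|$ appears ``the extra power only helps'' is incorrect for $\|x-x'\|>1$, and the ${\rm diam}(U)$ trick is unavailable since $U$ may be unbounded (in the paper's applications $U=\Theta$ or even $U=E$); the standard fix is to split cases: for $\|x-x'\|\le 1$ use $\|x-x'\|\le\|x-x'\|^{\rho}$, and for $\|x-x'\|\ge 1$ bound the increment crudely by $2\|(f\circ g)^{(m)}\|_{L_{\infty}(U)}\le 2\|(f\circ g)^{(m)}\|_{L_{\infty}(U)}\|x-x'\|^{\rho}$, using the sup-norm bound you already established. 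With that correction, and with your (correct) observation that the intermediate Lipschitz seminorms must be estimated through increments rather than through $L_\infty$ bounds on the next derivative because $U$ need not be convex, the argument is complete and is essentially the paper's.
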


\begin{remark}
\label{proofs_superposition}	
\normalfont
The proof of Lemma \ref{superposition_h_g} is based on a version of Fa\`a di Bruno's 
formula briefly discussed below (see Section 2 in \cite{Koltchinskii_Zhilova_19} for the details on tensor 
product notations). 
Let $I_k:=\{1,\dots, k\}.$
For $I=\{i_1,\dots, i_l\}\subset I_k,$ $1\leq i_1<\dots<i_l\leq k$ and $h_i\in E, i=1,\dots, k,$
denote $h_I:= h_{i_1}\otimes \dots \otimes h_{i_l}.$ For $j=1,\dots, k,$
let 
\begin{align*}
{\mathcal K}_j := \{(k_1,\dots, k_j): k_i\geq 1, i=1,\dots, j, k_1\geq \dots \geq k_j, \sum_{i=1}^j k_i=k\}.
\end{align*}
Let ${\mathcal D}_{I_{k}, k_1,\dots, k_j}$ be the set of all partitions $(\Delta_1,\dots, \Delta_j)$
of set $I_k$ into disjoint subsets such that ${\rm card}(\Delta_i)=k_i, i=1,\dots, j.$ 
Finally, let 
\begin{align*}
h_{k_1,\dots, k_j}:= 
\sum_{(\Delta_1,\dots, \Delta_j)\in {\mathcal D}_{I_k, k_1,\dots, k_j}} 
h_{\Delta_1}\otimes \dots \otimes h_{\Delta_j}
\end{align*} 
and 
\begin{align*}
(D^{k_1,\dots, k_j} g)(x):= (D^{k_1}g)(x)\otimes \dots \otimes (D^{k_j}g)(x).
\end{align*}
Then 
\begin{align*}
D^k (f\circ g)(x)[h_1\otimes \dots \otimes h_k] = 
\sum_{j=1}^k \sum_{(k_1,\dots, k_j)\in {\mathcal K}_j}
(D^j f)(g(x))[(D^{k_1,\dots, k_j} g)(x)[h_{k_1,\dots, k_j}]].
\end{align*}
The proof of this formula is similar to the proof of Theorem 5.3
in \cite{Koltchinskii_Zhilova_19}.
The formula provides a way to control Lipschitz norms of the derivatives 
$D^k (f\circ g)$ for all $k<s$ similarly to the proof of Proposition 5.1
in \cite{Koltchinskii_Zhilova_19}, which leads to the bound of Lemma \ref{superposition_h_g}.
The proof of Lemma \ref{lemma_g_varphi} in Section \ref{proof_main} below is based on a similar argument. 
\end{remark}

In what follows, we need several lemmas to control the norms of operators 
$\tilde {\mathcal T}_{\delta},$ $\tilde {\mathcal B}_{\delta}$ and their powers. 

\begin{lemma}
\label{bd_T_delta}
For all $s\geq 1, \delta>0,$
\begin{align*}
\|\tilde {\mathcal T}_{\delta}\|_{C^s(\Theta_{\delta})\mapsto C^s(\Theta)}
\lesssim_s 1+ \frac{{\mathbb E}\|\xi\|_{C^s(\Theta)}^s}{n^{s/2}}.
\end{align*}
\end{lemma}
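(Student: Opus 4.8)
The plan is to bound $\|\tilde{\mathcal T}_{\delta}f\|_{C^s(\Theta)}$ pathwise and then integrate. Recall that $(\tilde{\mathcal T}_{\delta}f)(\theta)={\mathbb E}f(G_{\delta}(\theta))$ with $G_{\delta}(\theta)=\theta+\frac{\xi_{\delta}(\theta)}{\sqrt n}$ and $\xi_{\delta}(\theta)=\xi(\theta)I\bigl(\|\xi\|_{L_{\infty}(E)}<\delta\sqrt n\bigr)$. Two elementary observations are used throughout. First, the truncation indicator does not depend on $\theta$, so along any sample path $\xi_{\delta}$ equals either $\xi$ or $0$; hence $\|\xi_{\delta}\|_{C^s(\Theta)}\le\|\xi\|_{C^s(\Theta)}$ and $\|\xi_{\delta}\|_{L_{\infty}(E)}<\delta\sqrt n$. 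Second, the latter bound gives $\|G_{\delta}(\theta)-\theta\|<\delta$ for every $\theta\in\Theta$, so that $G_{\delta}(\Theta)\subset\Theta_{\delta}$. Write $g:=\xi_{\delta}/\sqrt n$, so that $G_{\delta}(\theta)=\theta+g(\theta)$ on $\Theta$ and $\|g\|_{C^s(\Theta)}\le\|\xi\|_{C^s(\Theta)}/\sqrt n$.

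The core step is the pathwise bound
\[
\|f\circ G_{\delta}\|_{C^s(\Theta)}\lesssim_s\|f\|_{C^s(\Theta_{\delta})}\bigl(1\vee\|g\|_{C^s(\Theta)}^{s}\bigr).
\]
One would like to invoke Lemma \ref{superposition_h_g} here, but that lemma is stated in terms of $\|G_{\delta}\|_{C^s(\Theta)}$, which includes $\|G_{\delta}\|_{L_{\infty}(\Theta)}=\sup_{\theta\in\Theta}\|\theta+g(\theta)\|$ and is infinite when $\Theta$ is unbounded (and leaves an unwanted dependence on $\sup_{\theta\in\Theta}\|\theta\|$ even when $\Theta$ is bounded). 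Instead I would revisit the proof of Lemma \ref{superposition_h_g} — the Fa\`a di Bruno computation recalled in Remark \ref{proofs_superposition} and carried out in the proof of Proposition 5.1 of \cite{Koltchinskii_Zhilova_19} — and observe that the inner map enters only through (a) factors $(D^jf)(G_{\delta}(x))$, which are bounded by $\|f\|_{C^s(\Theta_{\delta})}$ because $G_{\delta}(x)\in\Theta_{\delta}$ (for every derivative order occurring); and (b) the operator norms and Lipschitz/H\"older seminorms of the derivatives $D^iG_{\delta}$. Since $DG_{\delta}=\mathrm{Id}_E+Dg$ and $D^iG_{\delta}=D^ig$ for $i\ge2$, each such factor has operator norm $\le1+\|g\|_{C^s(\Theta)}$, while — the constant summand $\mathrm{Id}_E$ contributing nothing — its Lipschitz and $\rho$-H\"older seminorms are $\le\|g\|_{C^s(\Theta)}$. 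Running the product and composition estimates for these quantities exactly as in \cite{Koltchinskii_Zhilova_19}, together with $\|f\circ G_{\delta}\|_{L_{\infty}(\Theta)}\le\|f\|_{L_{\infty}(\Theta_{\delta})}$, yields the displayed bound, the right-hand side being a polynomial in $1+\|g\|_{C^s(\Theta)}$ controlled, exactly as in Lemma \ref{superposition_h_g}, by $C_s\bigl(1\vee\|g\|_{C^s(\Theta)}^{s}\bigr)$. For $0<s\le1$ there are no derivatives and the same estimate follows directly from $\|G_{\delta}(x)-G_{\delta}(y)\|\le\|x-y\|+\|g(x)-g(y)\|$.

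It remains to integrate. We may assume ${\mathbb E}\|\xi\|_{C^s(\Theta)}^{s}<\infty$, since otherwise the asserted bound is trivial; under this assumption the $j$-th derivative of $f\circ G_{\delta}$ is dominated, uniformly in $\theta$, by an integrable random variable (a multiple of $1\vee\|\xi\|_{C^s(\Theta)}^{s}$), so one may differentiate under the expectation and $(\tilde{\mathcal T}_{\delta}f)^{(j)}={\mathbb E}\,(f\circ G_{\delta})^{(j)}$. Since every seminorm entering the definition of $\|\cdot\|_{C^s(\Theta)}$ is convex, $\|{\mathbb E}W\|_{C^s(\Theta)}\le{\mathbb E}\|W\|_{C^s(\Theta)}$ for any integrable random function $W$; applying this to $W=f\circ G_{\delta}$, using the pathwise bound and $\|g\|_{C^s(\Theta)}^{s}\le\|\xi\|_{C^s(\Theta)}^{s}/n^{s/2}$, gives
\[
\|\tilde{\mathcal T}_{\delta}f\|_{C^s(\Theta)}\le{\mathbb E}\|f\circ G_{\delta}\|_{C^s(\Theta)}\lesssim_s\|f\|_{C^s(\Theta_{\delta})}\,{\mathbb E}\bigl(1\vee\|g\|_{C^s(\Theta)}^{s}\bigr)\le\|f\|_{C^s(\Theta_{\delta})}\Bigl(1+\frac{{\mathbb E}\|\xi\|_{C^s(\Theta)}^{s}}{n^{s/2}}\Bigr),
\]
which is the assertion. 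The main obstacle is the middle paragraph: performing the Fa\`a di Bruno bookkeeping for the composition with the unbounded map $\theta\mapsto\theta+g(\theta)$ so that the (possibly infinite) sup-norm of $G_{\delta}$ is never used, only the fact that $G_{\delta}$ maps $\Theta$ into $\Theta_{\delta}$, where $f$ and its derivatives are controlled.
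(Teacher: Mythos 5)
Your argument is correct and follows essentially the same route as the paper: the paper's proof consists of applying Lemma \ref{superposition_h_g} with $U=\Theta$, $V=\Theta_{\delta}$, $g=G_{\delta}$, using $\|\xi_{\delta}\|_{L_{\infty}(E)}<\delta\sqrt{n}$ to get $G_{\delta}(\Theta)\subset\Theta_{\delta}$, and then (implicitly) the convexity of the $C^s$-norm under expectation, exactly as in your final display. Where you diverge is deliberate and well taken: a literal, black-box application of Lemma \ref{superposition_h_g} yields the factor $1\vee\|G_{\delta}\|_{C^s(\Theta)}^{s}$, and $\|G_{\delta}\|_{C^s(\Theta)}$ contains the sup-norm $\sup_{\theta\in\Theta}\|\theta+n^{-1/2}\xi_{\delta}(\theta)\|$, which is infinite for unbounded $\Theta$ and in any case not controlled by $n^{-1/2}\|\xi\|_{C^s(\Theta)}$ alone; the paper's ``easily follows'' glosses over this. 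Your reworking of the Fa\`a di Bruno bookkeeping — noting that the inner map enters only through $(D^jf)(G_{\delta}(x))$, which is controlled because $G_{\delta}(\Theta)\subset\Theta_{\delta}$, and through $DG_{\delta}=\mathrm{Id}_E+Dg$, $D^iG_{\delta}=D^ig$ for $i\geq 2$, whose relevant operator norms and Lipschitz/H\"older seminorms are bounded by $1+\|g\|_{C^s(\Theta)}$ — is precisely what makes the intended argument rigorous with the stated right-hand side, and your separate treatment of $s\leq 1$ and the truncation observation about $\xi_{\delta}$ are both correct. So this is the paper's proof, carried out with the one refinement it actually needs.
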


Bound of Lemma \ref{bd_T_delta}
easily follows from Lemma  \ref{superposition_h_g} with $U=\Theta,$ $V=\Theta_{\delta},$
$f\in C^s(\Theta_{\delta})$ and $g(\theta)=G_{\delta}(\theta)=\theta+\frac{\xi_{\delta}(\theta)}{\sqrt{n}},\theta\in \Theta.$
Since $\|\xi_\delta\|_{L_{\infty}(E)}<\delta\sqrt{n},$ we have $G_{\delta}(\Theta)\subset \Theta_{\delta},$ which allows one to 
apply  Lemma  \ref{superposition_h_g}.

Lemma \ref{bd_T_delta} implies the following:

\begin{lemma}
\label{f_delta_k_first}
For all $s\geq 1, \delta>0, k\geq 1,$
\begin{align*}
&
\|\tilde f_{\delta,k}\|_{C^s(\Theta)}
\lesssim_{s,k} \biggl(1+ \frac{{\mathbb E}\|\xi\|_{C^s(\Theta_{(k-1)\delta})}^s}{n^{s/2}}\biggr)^{k}
\|f\|_{C^s(\Theta_{k\delta})}.
\end{align*}
\end{lemma}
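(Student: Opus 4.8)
The plan is to control each term $\tilde{\mathcal B}_\delta^j f$ of the defining sum $\tilde f_{\delta,k}=\sum_{j=0}^k(-1)^j\tilde{\mathcal B}_\delta^j f$ separately by iterating the operator estimate of Lemma \ref{bd_T_delta} along the nested neighborhoods $\Theta\subset\Theta_\delta\subset\Theta_{2\delta}\subset\dots\subset\Theta_{k\delta}$, and then to sum the resulting geometric bounds.

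First I would record the one-step estimate. Applying Lemma \ref{bd_T_delta} with the open set $\Theta_{m\delta}$ in the role of $\Theta$ (its proof, via Lemma \ref{superposition_h_g}, goes through verbatim for an arbitrary open set, since $G_\delta(\Theta_{m\delta})\subset(\Theta_{m\delta})_\delta$ and $(\Theta_{m\delta})_\delta\subset\Theta_{(m+1)\delta}$, the constant being $s$-dependent only), one obtains, for every $m\ge 0$ and every $g\in C^s(\Theta_{(m+1)\delta})$,
\begin{align*}
\|\tilde{\mathcal T}_\delta g\|_{C^s(\Theta_{m\delta})}\lesssim_s\Bigl(1+\tfrac{{\mathbb E}\|\xi\|_{C^s(\Theta_{m\delta})}^s}{n^{s/2}}\Bigr)\|g\|_{C^s(\Theta_{(m+1)\delta})}.
\end{align*}
Since $\Theta_{m\delta}\subset\Theta_{(m+1)\delta}$ gives $\|g\|_{C^s(\Theta_{m\delta})}\le\|g\|_{C^s(\Theta_{(m+1)\delta})}$, writing $\tilde{\mathcal B}_\delta=\tilde{\mathcal T}_\delta-{\mathcal I}$ and using the triangle inequality yields the same estimate with $\tilde{\mathcal B}_\delta$ in place of $\tilde{\mathcal T}_\delta$ (the extra additive $1$ being absorbed into the $\lesssim_s$ constant).

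Next I would iterate. Set $L:=1+\tfrac{{\mathbb E}\|\xi\|_{C^s(\Theta_{(k-1)\delta})}^s}{n^{s/2}}$, and note that $U\mapsto\|\xi\|_{C^s(U)}$ is nondecreasing with respect to inclusion, so each factor $1+\tfrac{{\mathbb E}\|\xi\|_{C^s(\Theta_{m\delta})}^s}{n^{s/2}}$ with $0\le m\le k-1$ is at most $L$. Composing the one-step bound $j$ times, starting from $f\in C^s(\Theta_{k\delta})$ and decreasing the radius by $\delta$ at each application, gives, for $0\le j\le k$,
\begin{align*}
\|\tilde{\mathcal B}_\delta^j f\|_{C^s(\Theta_{(k-j)\delta})}\lesssim_{s,k}L^j\|f\|_{C^s(\Theta_{k\delta})},
\end{align*}
and since $\Theta\subset\Theta_{(k-j)\delta}$ this also bounds $\|\tilde{\mathcal B}_\delta^j f\|_{C^s(\Theta)}$.

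Finally I would sum over $j$: by the triangle inequality and $L\ge 1$ (so $L^j\le L^k$ for $j\le k$),
\begin{align*}
\|\tilde f_{\delta,k}\|_{C^s(\Theta)}\le\sum_{j=0}^k\|\tilde{\mathcal B}_\delta^j f\|_{C^s(\Theta)}\lesssim_{s,k}L^k\|f\|_{C^s(\Theta_{k\delta})},
\end{align*}
which is the assertion (if ${\mathbb E}\|\xi\|_{C^s(\Theta_{(k-1)\delta})}^s=\infty$ there is nothing to prove). There is no substantive obstacle here — the argument is essentially a $k$-fold iteration of a lemma already in hand — and the only point needing care is the bookkeeping of radii: one must check that each application of $\tilde{\mathcal B}_\delta$ shrinks the domain by exactly one $\delta$ (via $(\Theta_{m\delta})_\delta\subset\Theta_{(m+1)\delta}$), that Lemma \ref{bd_T_delta} may legitimately be invoked with these shifted domains, and that monotonicity of the $C^s$-norm in the domain lets one collapse the product of $j$ distinct factors into the single power $L^k$.
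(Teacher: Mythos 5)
Your proof is correct and follows essentially the same route as the paper's: you iterate the one-step bound of Lemma \ref{bd_T_delta} (equivalently, the induced bound on $\tilde{\mathcal B}_\delta$) along the nested neighborhoods $\Theta\subset\Theta_\delta\subset\dots\subset\Theta_{k\delta}$ to control each $\|\tilde{\mathcal B}_\delta^j f\|_{C^s(\Theta)}$ by $L^j\|f\|_{C^s(\Theta_{k\delta})}$, and then sum over $j\le k$, exactly as in the paper's telescoping product of operator norms $\prod_{i=1}^j\|\tilde{\mathcal B}_\delta\|_{C^s(\Theta_{i\delta})\mapsto C^s(\Theta_{(i-1)\delta})}$. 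Your bookkeeping of the radii and the monotonicity of $U\mapsto\|\xi\|_{C^s(U)}$ is the same device the paper uses to collapse all factors into the single power with $\Theta_{(k-1)\delta}$.
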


\begin{proof}
Note that bound of Lemma \ref{bd_T_delta} implies that 
\begin{align*}
\|\tilde {\mathcal B}_{\delta}\|_{C^s(\Theta_{\delta})\mapsto C^s(\Theta)}
\lesssim_s 1+ \frac{{\mathbb E}\|\xi\|_{C^s(\Theta)}^s}{n^{s/2}}.
\end{align*}
Therefore,
\begin{align*}
&
\|\tilde f_{\delta,k}\|_{C^s(\Theta)}
\leq \sum_{j=0}^k \|\tilde {\mathcal B}_{\delta}^j f\|_{C^s(\Theta_{j\delta})}
\leq \sum_{j=0}^k \|\tilde {\mathcal B}_{\delta}^j\|_{C^s(\Theta_{j\delta})\mapsto C^s(\Theta)}
\|f\|_{C^s(\Theta_{j\delta})}
\\
&
\leq 
\sum_{j=0}^k 
\prod_{i=1}^j \|\tilde {\mathcal B}_{\delta}\|_{C^s(\Theta_{i\delta})\mapsto C^s(\Theta_{(i-1)\delta})}\|f\|_{C^s(\Theta_{k\delta})}
\lesssim \sum_{j=0}^k \biggl(1+ \frac{{\mathbb E}\|\xi\|_{C^s(\Theta_{(k-1)\delta})}^s}{n^{s/2}}\biggr)^j  
\|f\|_{C^s(\Theta_{k\delta})}
\\
&
\lesssim_{s,k} \biggl(1+ \frac{{\mathbb E}\|\xi\|_{C^s(\Theta_{(k-1)\delta})}^s}{n^{s/2}}\biggr)^{k}\|f\|_{C^s(\Theta_{k\delta})}.
\end{align*}
\end{proof}

We will also use the following lemma that provides a bound 
on operator $\tilde {\mathcal B}_{\delta} =\tilde {\mathcal T}_{\delta}-{\mathcal I}.$
It is a slight modification of Proposition 5.1 in \cite{Koltchinskii_Zhilova_19}
(see also Proposition \ref{prop_summary}).

\begin{lemma}
\label{f_delta_k_second}
For all $s\geq 2,$ 
\begin{align*}
\|\tilde {\mathcal B}_{\delta}\|_{C^s(\Theta_{\delta})\mapsto C^{s-1}(\Theta)}
\leq 4 s^{s+1} {\mathbb E} \biggl(1+ \frac{\|\xi\|_{C^{s-1}(\Theta)}}{\sqrt{n}}\biggr)^{s-1}
\frac{\|\xi\|_{C^{s-1}(\Theta)}}{\sqrt{n}}.
\end{align*}
\end{lemma}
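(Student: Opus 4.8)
The plan is to recognize $\tilde{\mathcal B}_\delta=\tilde{\mathcal T}_\delta-{\mathcal I}$ as the operator ${\mathcal B}$ attached, in the sense of Proposition~\ref{prop_summary}, to the explicit random homotopy
\[
H(\theta;t):=\theta+\frac{t\,\xi_\delta(\theta)}{\sqrt{n}},\qquad \theta\in\Theta,\ t\in[0,1],
\]
and then to read off the bound from part~1 of that proposition after estimating the two norms that enter it. First I would check that $H$ is an admissible homotopy here: $H(\theta;0)=\theta$ and $H(\theta;1)=G_\delta(\theta)=\tilde\theta_\delta$, so $H(\theta;1)$ has the law of the ``estimator'' $\tilde\theta_\delta$, and $\dot H(\theta;t)=\xi_\delta(\theta)/\sqrt{n}$ does not depend on $t$. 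Since $\|\xi_\delta\|_{L_\infty(E)}<\delta\sqrt{n}$ a.s., we have $\|t\xi_\delta(\theta)\|/\sqrt{n}<\delta$, hence $H$ maps $\Theta\times[0,1]$ into $\Theta_\delta$; this is the one point that makes the argument behind Proposition~5.1 of \cite{Koltchinskii_Zhilova_19} (which underlies Proposition~\ref{prop_summary}) run with source domain $\Theta$ and with $f$ defined and $C^s$-smooth on $\Theta_\delta$, producing a bound on $\|\tilde{\mathcal B}_\delta\|_{C^s(\Theta_\delta)\mapsto C^{s-1}(\Theta)}$ rather than a single-domain operator norm. If the right-hand side of the claimed inequality is infinite there is nothing to prove, so I may assume $\mathbb E\bigl(1+\|\xi\|_{C^{s-1}(\Theta)}/\sqrt n\bigr)^{s-1}\|\xi\|_{C^{s-1}(\Theta)}<\infty$; this is exactly assumption \eqref{assume_glavnij} for $H$ and it legitimizes differentiating under $\mathbb E$ and under $\int_0^1$ in the Fa\`a di Bruno expansion of $D^j_\theta[\partial_t(f\circ H)]$, $j\le k$, on which the proof of Proposition~5.1 is based.

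The two norm estimates are elementary. Because $H$ is affine in $t$ and $\xi_\delta=\xi\,I(\|\xi\|_{L_\infty(E)}<\delta\sqrt{n})$, so that $\|\xi_\delta\|_{C^{s-1}(\Theta)}\le\|\xi\|_{C^{s-1}(\Theta)}$ pointwise on $\Omega$, one gets
\[
\|\dot H\|_{C^{s-1}(\Theta\times[0,1])}^{\sim}=\frac{\|\xi_\delta\|_{C^{s-1}(\Theta)}}{\sqrt{n}}\le\frac{\|\xi\|_{C^{s-1}(\Theta)}}{\sqrt{n}}.
\]
Writing $s-1=k+\rho$, the order-zero contribution to $\|H(\cdot,t)\|_{C^{0,s-1}(\Theta)}$ is $\|H(\cdot,t)\|_{{\rm Lip}(\Theta)}\le 1+t\|\xi_\delta\|_{{\rm Lip}(\Theta)}/\sqrt{n}$ (the identity map being $1$-Lipschitz), while $H^{(1)}(\theta;t)={\rm Id}_E+t\xi_\delta'(\theta)/\sqrt{n}$ and $H^{(j)}(\theta;t)=t\xi_\delta^{(j)}(\theta)/\sqrt{n}$ for $2\le j\le k$, so all the ${\rm Lip}$- and ${\rm Lip}_\rho$-norms of the derivatives $H^{(j)}(\cdot,t)$, $1\le j\le k$, are at most $\|\xi_\delta\|_{C^{s-1}(\Theta)}/\sqrt{n}$. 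Hence $\|H(\cdot,t)\|_{C^{0,s-1}(\Theta)}\le 1+\|\xi\|_{C^{s-1}(\Theta)}/\sqrt{n}$ uniformly in $t$, i.e.\ $\|H\|_{C^{0,s-1}(\Theta\times[0,1])}^{\sim}\le 1+\|\xi\|_{C^{s-1}(\Theta)}/\sqrt{n}$.

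Plugging these into part~1 of Proposition~\ref{prop_summary},
\[
\|\tilde{\mathcal B}_\delta\|_{C^s(\Theta_\delta)\mapsto C^{s-1}(\Theta)}\le 4(k+1)^{k+2}\,\mathbb E\Bigl(\|H\|_{C^{0,s-1}(\Theta\times[0,1])}^{\sim}\Bigr)^{s-1}\|\dot H\|_{C^{s-1}(\Theta\times[0,1])}^{\sim},
\]
and bounding the integrand pointwise on $\Omega$ via the two displays above (using that $u\mapsto(1+u)^{s-1}u$ is nondecreasing), I obtain the right-hand side $4(k+1)^{k+2}\,\mathbb E\bigl(1+\|\xi\|_{C^{s-1}(\Theta)}/\sqrt{n}\bigr)^{s-1}\|\xi\|_{C^{s-1}(\Theta)}/\sqrt{n}$. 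Since $s\ge 2\ge 1$ and $k+1=s-\rho\le s$, $k+2=s+1-\rho\le s+1$, we have $(k+1)^{k+2}\le s^{s+1}$, which yields the asserted bound. (For $s=2$, i.e.\ $k=0$, Proposition~\ref{prop_summary} is not literally applicable, but this is the base case of the same computation: one estimates $\|\tilde{\mathcal B}_\delta f\|_{L_\infty(\Theta)}$ and $\|\tilde{\mathcal B}_\delta f\|_{{\rm Lip}(\Theta)}$ directly from $\tilde{\mathcal B}_\delta f(\theta)=\mathbb E\int_0^1\langle f'(H(\theta;t)),\dot H(\theta;t)\rangle\,dt$ using only that $f'$ is bounded and Lipschitz, and the constant $4s^{s+1}=32$ is more than adequate.) The only real work — and, such as it is, the main obstacle — is the bookkeeping required to see that the Fa\`a di Bruno combinatorics and the constant $4(k+1)^{k+2}$ in the proof of Proposition~5.1 of \cite{Koltchinskii_Zhilova_19} transfer verbatim to the two-domain setup $\Theta\subset\Theta_\delta$ and to the truncated noise $\xi_\delta$; once one notes $H(\Theta\times[0,1])\subset\Theta_\delta$ and $\|\xi_\delta\|_{C^{s-1}(\Theta)}\le\|\xi\|_{C^{s-1}(\Theta)}$, no genuinely new estimate is needed.
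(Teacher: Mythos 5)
Your proof is correct and takes essentially the same route as the paper: the paper justifies this lemma only by calling it a slight modification of Proposition 5.1 of \cite{Koltchinskii_Zhilova_19} (Proposition \ref{prop_summary} here), applied to the homotopy $H(\theta;t)=\theta+t\xi_{\delta}(\theta)/\sqrt{n}$, which is exactly the reduction you carry out, including the observations $H(\Theta\times[0,1])\subset\Theta_{\delta}$, $\|\xi_{\delta}\|_{C^{s-1}(\Theta)}\leq\|\xi\|_{C^{s-1}(\Theta)}$, and $(k+1)^{k+2}\leq s^{s+1}$. Your explicit computation of $\|H\|^{\sim}_{C^{0,s-1}}$ and $\|\dot H\|^{\sim}_{C^{s-1}}$ and your separate remark on the boundary case $s=2$ supply details the paper leaves implicit, but introduce no new idea beyond its argument.
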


This bound implies that 
\begin{align*}
\|\tilde {\mathcal B}_{\delta}\|_{C^s(\Theta_{i\delta})\mapsto C^{s-1}(\Theta_{(i-1)\delta})}
\leq 4 s^{s+1} {\mathbb E} \biggl(1+ \frac{\|\xi\|_{C^{s-1}(\Theta_{(i-1)\delta})}}{\sqrt{n}}\biggr)^{s-1}
\frac{\|\xi\|_{C^{s-1}(\Theta_{(i-1)\delta})}}{\sqrt{n}}.
\end{align*}
Iterating the last bound $j$ times for some $j<s-1$ yields
\begin{align*}
&
\|\tilde {\mathcal B}_{\delta}^j\|_{C^s(\Theta_{j\delta})\mapsto C^{s-j}(\Theta)}
\leq 
\prod_{i=1}^j \|\tilde {\mathcal B}_{\delta}\|_{C^{s-(j-i)}(\Theta_{i\delta})\mapsto C^{s-(j-i)-1}(\Theta_{(i-1)\delta})}
\\
&
\leq 
\prod_{i=1}^j   \biggl(4 (s-(j-i))^{s-(j-i)+1} 
{\mathbb E} \biggl(1+ \frac{\|\xi\|_{C^{s-(j-i)-1}(\Theta_{(i-1)\delta})}}{\sqrt{n}}\biggr)^{s-1}
\frac{\|\xi\|_{C^{s-(j-i)-1}(\Theta_{(i-1)\delta})}}{\sqrt{n}}\biggr)
\\
&
\leq \biggl(4 s^{s+1} {\mathbb E} \biggl(1+ \frac{\|\xi\|_{C^{s-1}(\Theta_{(j-1)\delta})}}{\sqrt{n}}\biggr)^{s-1}
\frac{\|\xi\|_{C^{s-1}(\Theta_{(j-1)\delta})}}{\sqrt{n}}\biggr)^j.
\end{align*}
Thus, we have that for all $j=1,\dots, k,$
\begin{align}
\label{bd_B_delta^j}
\|\tilde {\mathcal B}_{\delta}^j\|_{C^s(\Theta_{k\delta})\mapsto C^{s-j}(\Theta)}
\leq \biggl(4 s^{s+1} {\mathbb E} \biggl(1+ \frac{\|\xi\|_{C^{s-1}(\Theta_{(k-1)\delta})}}{\sqrt{n}}\biggr)^{s-1}
\frac{\|\xi\|_{C^{s-1}(\Theta_{(k-1)\delta})}}{\sqrt{n}}\biggr)^j.
\end{align}

\begin{lemma}
\label{bd_on_tilde_f_kdelta_B}
Let $s=k+1+\rho$ with $k\geq 1$ and $\rho\in (0,1].$
Assume that 
\begin{align*}
4 s^{s+1} {\mathbb E} \biggl(1+ \frac{\|\xi\|_{C^{s-1}(\Theta_{(k-1)\delta})}}{\sqrt{n}}\biggr)^{s-1}
\frac{\|\xi\|_{C^{s-1}(\Theta_{(k-1)\delta})}}{\sqrt{n}}\leq 1/2.
\end{align*}
Then 
\begin{align}
\label{bd_on_tilde_f_kdelta_B_one}
&
\|\tilde f_{\delta,k}\|_{C^{1+\rho}(\Theta)}
\leq 2\|f\|_{C^s(\Theta_{k\delta})}
\end{align}
and 
\begin{align}
\label{bd_on_tilde_f_kdelta_B_two}
\|\tilde f_{\delta,k}-f\|_{C^{1+\rho}(\Theta)}
\lesssim_s 
\|f\|_{C^s(\Theta_{k\delta})} {\mathbb E} \biggl(1+ \frac{\|\xi\|_{C^{s-1}(\Theta_{(k-1)\delta})}}{\sqrt{n}}\biggr)^{s-1}
\frac{\|\xi\|_{C^{s-1}(\Theta_{(k-1)\delta})}}{\sqrt{n}}.
\end{align}
\end{lemma}

\begin{proof}
Indeed, to prove \eqref{bd_on_tilde_f_kdelta_B_one}, note that 
\begin{align*}
&
\|\tilde f_{\delta,k}\|_{C^{1+\rho}(\Theta)}=
\|\tilde f_{\delta,k}\|_{C^{s-k}(\Theta)}
\leq \sum_{j=0}^k \|\tilde {\mathcal B}_{\delta}^j f\|_{C^s(\Theta_{j\delta})}
\leq \sum_{j=0}^k \|\tilde {\mathcal B}_{\delta}^j\|_{C^s(\Theta_{j\delta})\mapsto C^{s-j}(\Theta)}
\|f\|_{C^s(\Theta_{j\delta})}
\\
&
\leq \sum_{j=0}^k \biggl(4 s^{s+1} {\mathbb E} \biggl(1+ \frac{\|\xi\|_{C^{s-1}(\Theta_{(k-1)\delta})}}{\sqrt{n}}\biggr)^{s-1}
\frac{\|\xi\|_{C^{s-1}(\Theta_{(k-1)\delta})}}{\sqrt{n}}\biggr)^j  \|f\|_{C^s(\Theta_{k\delta})}
\\
&
\leq \sum_{j=0}^k 2^{-j} \|f\|_{C^s(\Theta_{k\delta})} \leq 2\|f\|_{C^s(\Theta_{k\delta})}
\end{align*}
and bound \eqref{bd_on_tilde_f_kdelta_B_two} is proved similarly by controlling the sum from $j=1$ to $k.$
\end{proof}

\subsection{Proofs of approximation bounds for bootstrap chains}

We are now ready to provide the proofs of theorems \ref{hat^k_tilde^k_hat_tilde_AAA}
and \ref{uniform_bd}.

\begin{proof}
We will start with the following simple lemma.
\begin{lemma}
\label{bd_T-T_delta}
For all $s\geq 1, \delta>0,$
\begin{align*}
&
|\Delta_{s,\delta}(\hat \theta,\tilde \theta)- \Delta_{s,\delta}(\hat \theta_{\delta},\tilde \theta_{\delta})|
\leq 2\frak{Q}_n(\Theta,\delta).
\end{align*}
In addition, 
\begin{align*}
&
|\Delta_{s,\delta}(\hat \theta,\tilde \theta)- \Delta_{s,\delta}(\hat \theta,\tilde \theta_{\delta})|
\leq 
2{\mathbb P}\{\|\xi\|_{L_{\infty}(E)}\geq \delta \sqrt{n}\}\leq 2\frak{Q}_n(\Theta,\delta).
\end{align*}
\end{lemma}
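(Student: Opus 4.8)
The claim in Lemma~\ref{bd_T-T_delta} is a pair of simple perturbation estimates comparing the distances $\Delta_{s,\delta}$ of various pairs among $\hat\theta,\hat\theta_\delta,\tilde\theta,\tilde\theta_\delta$. The key observation is that each of $\hat\theta$ vs.\ $\hat\theta_\delta$ and $\tilde\theta$ vs.\ $\tilde\theta_\delta$ differs only on an event of small probability, and that all test functions involved are uniformly bounded by $1$ (since $\|f\|_{C^s(\Theta_\delta)}\le 1$ forces $\|f\|_{L_\infty(\Theta_\delta)}\le 1$). So the plan is to estimate, for a single fixed $\theta\in\Theta$ and a single test function $f$ with $\|f\|_{C^s(\Theta_\delta)}\le 1$, the quantity $|{\mathbb E}_\theta f(\hat\theta)-{\mathbb E}_\theta f(\hat\theta_\delta)|$, and similarly $|{\mathbb E}_\theta f(\tilde\theta)-{\mathbb E}_\theta f(\tilde\theta_\delta)|$, then use the triangle inequality for the sup over $\theta$ and $f$.

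**Main steps.**
First I would record that on the event $\{\|\hat\theta-\theta\|<\delta\}$ we have $\hat\theta_\delta=\hat\theta$, so
$f(\hat\theta)-f(\hat\theta_\delta)=\bigl(f(\hat\theta)-f(\theta)\bigr)I(\|\hat\theta-\theta\|\ge\delta)$, whence
$|{\mathbb E}_\theta f(\hat\theta)-{\mathbb E}_\theta f(\hat\theta_\delta)|\le 2\,{\mathbb P}_\theta\{\|\hat\theta-\theta\|\ge\delta\}$
using $\|f\|_{L_\infty(\Theta_\delta)}\le 1$. Note here that $\hat\theta_\delta$ takes values in $\Theta_\delta$ and $\hat\theta$ does too on the relevant event, so both evaluations are legitimate. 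Similarly, on $\{\|\xi\|_{L_\infty(E)}<\delta\sqrt n\}$ we have $\xi_\delta(\theta)=\xi(\theta)$, hence $\tilde\theta_\delta=\tilde\theta$, and
$|{\mathbb E} f(\tilde\theta)-{\mathbb E} f(\tilde\theta_\delta)|\le 2\,{\mathbb P}\{\|\xi\|_{L_\infty(E)}\ge\delta\sqrt n\}$. Taking sup over $\|f\|_{C^s(\Theta_\delta)}\le 1$ and over $\theta\in\Theta$ gives
$\Delta_{s,\delta}(\hat\theta,\hat\theta_\delta)\le 2\sup_{\theta\in\Theta}{\mathbb P}_\theta\{\|\hat\theta-\theta\|\ge\delta\}$
and
$\Delta_{s,\delta}(\tilde\theta,\tilde\theta_\delta)\le 2\,{\mathbb P}\{\|\xi\|_{L_\infty(E)}\ge\delta\sqrt n\}$,
both of which are $\le 2\,\frak{Q}_n(\Theta,\delta)$ by definition of $\frak Q_n$. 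Then the first inequality of the lemma follows from the triangle inequality
$|\Delta_{s,\delta}(\hat\theta,\tilde\theta)-\Delta_{s,\delta}(\hat\theta_\delta,\tilde\theta_\delta)|\le \Delta_{s,\delta}(\hat\theta,\hat\theta_\delta)+\Delta_{s,\delta}(\tilde\theta,\tilde\theta_\delta)$, and the second inequality follows from
$|\Delta_{s,\delta}(\hat\theta,\tilde\theta)-\Delta_{s,\delta}(\hat\theta,\tilde\theta_\delta)|\le \Delta_{s,\delta}(\tilde\theta,\tilde\theta_\delta)\le 2\,{\mathbb P}\{\|\xi\|_{L_\infty(E)}\ge\delta\sqrt n\}\le 2\frak Q_n(\Theta,\delta)$.

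**Anticipated obstacle.**
There is essentially no deep obstacle here; this is a bookkeeping lemma. The one point requiring a little care is making sure the test functions $f$ with $\|f\|_{C^s(\Theta_\delta)}\le 1$ are genuinely bounded by $1$ on all the points being evaluated: $\hat\theta_\delta,\tilde\theta_\delta\in\Theta_\delta$ by construction, and on the exceptional events we evaluate $f$ at $\theta\in\Theta\subset\Theta_\delta$, so all evaluations lie in $\Theta_\delta$ where $\|f\|_{L_\infty}\le 1$ holds — and the bound $\|f\|_{L_\infty(\Theta_\delta)}\le\|f\|_{C^s(\Theta_\delta)}$ is immediate from the definition of the $C^s$-norm (for $s\ge 1$ it is one of the terms in the max; for $s\in(0,1]$ likewise). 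One should also remember that, per the conventions set earlier in the paper, $\xi$ is set to $0$ outside $\Theta_\delta$, but this plays no role in the argument above. I would present the proof in two short displayed estimates followed by the two triangle-inequality lines.
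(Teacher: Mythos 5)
Your proof is correct and is essentially the paper's own argument: split via the triangle inequality for suprema into $\sup_{\theta,f}|{\mathbb E}_\theta f(\hat\theta)-{\mathbb E}_\theta f(\hat\theta_\delta)|$ and $\sup_{\theta,f}|{\mathbb E}_\theta f(\tilde\theta)-{\mathbb E}_\theta f(\tilde\theta_\delta)|$, observe that the pairs coincide off the events $\{\|\hat\theta-\theta\|\geq\delta\}$ and $\{\|\xi\|_{L_\infty(E)}\geq\delta\sqrt n\}$ respectively, and bound each difference by twice the sup-norm of $f$ times the probability of the exceptional event, which gives $2\frak{Q}_n(\Theta,\delta)$ (resp.\ $2{\mathbb P}\{\|\xi\|_{L_\infty(E)}\geq\delta\sqrt n\}$).

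One justification in your write-up is inaccurate, though easily repaired: it is not true that $\hat\theta$ lies in $\Theta_\delta$ "on the relevant event", nor that all evaluations of $f$ occur in $\Theta_\delta$. On the exceptional event $\{\|\hat\theta-\theta\|\geq\delta\}$ the estimator $\hat\theta$ may leave $\Theta_\delta$ entirely, and likewise $\tilde\theta=\theta+n^{-1/2}\xi(\theta)$ may leave $\Theta_\delta$ on $\{\|\xi\|_{L_\infty(E)}\geq\delta\sqrt n\}$; yet $f(\hat\theta)$ and $f(\tilde\theta)$ still appear in the expectations being compared. The correct justification — and the one the paper relies on, bounding by $2\|f\|_{L_\infty(E)}$ — is the standing convention that every $f$ with $\|f\|_{C^s(\Theta_\delta)}\leq 1$, $s\geq 1$, is extended (McShane--Whitney) to all of $E$ with preservation of its bounded-Lipschitz norm, so that $\|f\|_{L_\infty(E)}\leq 1$; indeed, without such an extension the distance $\Delta_{s,\delta}(\hat\theta,\tilde\theta)$ itself would not be well defined. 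With that substitution your argument matches the paper's proof.
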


\begin{proof}
Note that 
\begin{align*}
&
|\Delta_{s,\delta}(\hat \theta,\tilde \theta)- \Delta_{s,\delta}(\hat \theta_{\delta},\tilde \theta_{\delta})|
\\
&
=
\Bigl|\sup_{\theta \in \Theta}\sup_{\|f\|_{C^s(\Theta_{\delta})}\leq 1} 
\Bigl| {\mathbb E}_{\theta} f(\hat \theta)- {\mathbb E}_{\theta} f(\tilde \theta)\Bigr|
- \sup_{\theta \in \Theta}\sup_{\|f\|_{C^s(\Theta_{\delta})}\leq 1}
\Bigl| {\mathbb E}_{\theta} f(\hat \theta_{\delta})- {\mathbb E}_{\theta} f(\tilde \theta_{\delta})\Bigr|
\Bigr|
\\
&
\leq 
\sup_{\theta \in \Theta}\sup_{\|f\|_{C^s(\Theta_{\delta})}\leq 1} 
\Bigl| {\mathbb E}_{\theta} f(\hat \theta)- {\mathbb E}_{\theta} f(\hat \theta_{\delta})\Bigr|
+
\sup_{\theta \in \Theta}\sup_{\|f\|_{C^s(\Theta_{\delta})}\leq 1} 
\Bigl| {\mathbb E}_{\theta} f(\tilde \theta)- {\mathbb E}_{\theta} f(\tilde \theta_{\delta})\Bigr|.
\end{align*}
Since $\hat \theta = \hat \theta_{\delta}$ on the event $\{\|\hat \theta-\theta\|< \delta\},$
we get
\begin{align*}
\sup_{\theta\in \Theta}\Bigl| {\mathbb E}_{\theta} f(\hat \theta)- {\mathbb E}_{\theta} f(\hat \theta_{\delta})\Bigr|
\leq 2\|f\|_{L_{\infty}(E)} \sup_{\theta\in \Theta}{\mathbb P}_{\theta}
\{\|\hat \theta-\theta\|\geq \delta\}.
\end{align*}
Similarly,
\begin{align*}
&
\sup_{\theta\in \Theta}\Bigl| {\mathbb E}_{\theta} f(\tilde \theta)- {\mathbb E}_{\theta} f(\tilde \theta_{\delta})\Bigr|
\leq 2\|f\|_{L_{\infty}(E)} 
\sup_{\theta\in \Theta}{\mathbb P}\{\|\xi(\theta)\|\geq \delta\sqrt{n})\},
\end{align*}
which easily implies the claim.
\end{proof}

\begin{lemma}
\label{lm_delta_hat_tilde}
For all $\delta>0, k\geq 1,$
\begin{align*}
\sup_{\theta \in \Theta}|{\mathbb E}_{\theta} f(\hat \theta^{(k)})- 
{\mathbb E}_{\theta} f(\hat \theta_{\delta}^{(k)})|
\leq 2k \|f\|_{L_{\infty}(E)}
\sup_{\theta\in \Theta_{(k-1)\delta}}{\mathbb P}_{\theta}\{\|\hat \theta-\theta\|\geq \delta\}
\end{align*}
and 
\begin{align*}
\sup_{\theta \in \Theta}|{\mathbb E}_{\theta} f(\tilde\theta^{(k)})- 
{\mathbb E}_{\theta} f(\tilde \theta_{\delta}^{(k)})|
\leq 2k \|f\|_{L_{\infty}(E)}
{\mathbb P}\{\|\xi\|_{L_{\infty}(E)}\geq \delta \sqrt{n}\}.
\end{align*}
\end{lemma}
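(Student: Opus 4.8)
The plan is to bound both differences by telescoping in the corresponding Markov operators, using two elementary facts: the truncated operator $\tilde{\mathcal T}_\delta$ (resp. ${\mathcal T}_\delta$) differs from $\tilde{\mathcal T}$ (resp. ${\mathcal T}$) only through an event of small probability, and all four operators are $L_\infty$-contractions, with ${\mathcal T}_\delta$ and $\tilde{\mathcal T}_\delta$ pushing a function defined on an enlarged neighborhood of $\Theta$ to a function on a neighborhood that is smaller by $\delta$.

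For the first inequality, recall from \eqref{T^k} that $({\mathcal T}^k f)(\theta)={\mathbb E}_\theta f(\hat\theta^{(k)})$ and $({\mathcal T}_\delta^k f)(\theta)={\mathbb E}_\theta f(\hat\theta_\delta^{(k)})$, and write
\begin{align*}
{\mathcal T}^k-{\mathcal T}_\delta^k=\sum_{j=1}^{k}{\mathcal T}_\delta^{\,j-1}\,({\mathcal T}-{\mathcal T}_\delta)\,{\mathcal T}^{\,k-j}.
\end{align*}
Since $\hat\theta_\delta=\hat\theta$ on $\{\|\hat\theta-\theta'\|<\delta\}$ and $\hat\theta_\delta=\theta'$ otherwise, for any bounded $h$ on $T$ and any $\theta'\in T$,
\begin{align*}
\bigl|(({\mathcal T}-{\mathcal T}_\delta)h)(\theta')\bigr|
&=\Bigl|{\mathbb E}_{\theta'}\bigl[(h(\hat\theta)-h(\theta'))I(\|\hat\theta-\theta'\|\geq\delta)\bigr]\Bigr| \\
&\leq 2\|h\|_{L_\infty(T)}\,{\mathbb P}_{\theta'}\{\|\hat\theta-\theta'\|\geq\delta\}.
\end{align*}
Taking $h={\mathcal T}^{\,k-j}f$, so that $\|h\|_{L_\infty(T)}\leq\|f\|_{L_\infty(T)}\leq\|f\|_{L_\infty(E)}$ by the contraction property of ${\mathcal T}$, and using that $\hat\theta_\delta^{(j-1)}\in\Theta_{(j-1)\delta}\subseteq\Theta_{(k-1)\delta}$ when started at $\theta\in\Theta$, so that $\|{\mathcal T}_\delta^{\,j-1}g\|_{L_\infty(\Theta)}\leq\|g\|_{L_\infty(\Theta_{(k-1)\delta})}$, each of the $k$ summands is at most $2\|f\|_{L_\infty(E)}\sup_{\theta\in\Theta_{(k-1)\delta}}{\mathbb P}_\theta\{\|\hat\theta-\theta\|\geq\delta\}$. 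Summing over $j$ gives the first bound, with the better constant $2k\leq 4k^2$.

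For the second inequality I would argue identically, now using \eqref{T_delta^k} together with
\begin{align*}
\tilde{\mathcal T}^k-\tilde{\mathcal T}_\delta^k=\sum_{j=1}^{k}\tilde{\mathcal T}_\delta^{\,j-1}\,(\tilde{\mathcal T}-\tilde{\mathcal T}_\delta)\,\tilde{\mathcal T}^{\,k-j}.
\end{align*}
The point here is that $G_\delta(\theta)=\theta+\xi_\delta(\theta)/\sqrt n$ coincides with $G(\theta)=\theta+\xi(\theta)/\sqrt n$ for every $\theta$ off the event $\{\|\xi\|_{L_\infty(E)}\geq\delta\sqrt n\}$, since $\xi_\delta\equiv 0$ on that event; hence $|((\tilde{\mathcal T}-\tilde{\mathcal T}_\delta)h)(\theta')|\leq 2\|h\|_{L_\infty(E)}\,{\mathbb P}\{\|\xi\|_{L_\infty(E)}\geq\delta\sqrt n\}$ for every $\theta'$. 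Combining this with $\tilde\theta_\delta^{(j-1)}\in\Theta_{(j-1)\delta}$ for $\theta\in\Theta$ and with the $L_\infty(E)$-contraction property of $\tilde{\mathcal T}$, the same telescoping yields the bound $2k\|f\|_{L_\infty(E)}{\mathbb P}\{\|\xi\|_{L_\infty(E)}\geq\delta\sqrt n\}\leq 4k^2\|f\|_{L_\infty(E)}{\mathbb P}\{\|\xi\|_{L_\infty(E)}\geq\delta\sqrt n\}$. The only point requiring any care is the bookkeeping of the nested neighborhoods $\Theta\subseteq\Theta_\delta\subseteq\cdots\subseteq\Theta_{(k-1)\delta}$ (all contained in $T$) and the uniform tail bound over the largest of them; there is no genuine obstacle beyond this.
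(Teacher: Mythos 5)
Your proof is correct, but it takes a genuinely different route from the paper. The paper argues pathwise: it notes that the chains $\hat \theta^{(j)}$ and $\hat \theta_{\delta}^{(j)}$ coincide on the event $A_k=\{\|\hat \theta^{(j)}-\hat \theta^{(j-1)}\|<\delta,\ j=1,\dots,k\}$, runs an induction in $k$ that picks up a term $4\|f\|_{L_{\infty}(E)}{\mathbb P}_{\theta}(A_k^c)$ at each step, and then bounds ${\mathbb P}_{\theta}(A_k^c)$ by decomposing into the events $B_j$ where the first large jump occurs at step $j$ and applying the Markov property, which is how the factor $4k^2$ and the supremum over $\Theta_{(k-1)\delta}$ arise. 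You instead work entirely at the operator level, using the algebraic identity ${\mathcal T}^k-{\mathcal T}_{\delta}^k=\sum_{j=1}^{k}{\mathcal T}_{\delta}^{\,j-1}({\mathcal T}-{\mathcal T}_{\delta}){\mathcal T}^{\,k-j}$ together with three facts the paper also supplies: $({\mathcal T}-{\mathcal T}_{\delta})h$ is controlled pointwise by $2\|h\|_{L_{\infty}}{\mathbb P}_{\theta'}\{\|\hat\theta-\theta'\|\geq\delta\}$ (and analogously for $\tilde{\mathcal T}-\tilde{\mathcal T}_{\delta}$ by the global event $\{\|\xi\|_{L_{\infty}(E)}\geq\delta\sqrt n\}$, which does not depend on $\theta'$), the untruncated operators are $L_{\infty}$-contractions, and the truncated chain started in $\Theta$ stays in $\Theta_{(j-1)\delta}$ after $j-1$ steps, so ${\mathcal T}_{\delta}^{\,j-1}$ only ever samples the middle factor on $\Theta_{(j-1)\delta}\subset\Theta_{(k-1)\delta}$. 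Your bookkeeping of the nested neighborhoods and of where each supremum is taken is accurate, and the identities \eqref{T^k} and \eqref{T_delta^k} justify translating the operator bound back into the statement about the chains. The payoff of your version is a cleaner one-shot argument with the sharper constant $2k$ in place of $4k^2$ (which of course still implies the stated bound); the payoff of the paper's coupling-on-an-event argument is that it makes the probabilistic mechanism (the chains literally agree until the first large increment) explicit, a picture that is reused in spirit elsewhere in Section \ref{Gaussian_approx}. Note also that your telescoping is close in flavor to the recursion the paper uses in Proposition \ref{hat^k_tilde^k_hat_tilde} for the companion comparison of $\hat\theta_{\delta}^{(k)}$ with $\tilde\theta_{\delta}^{(k)}$, so your approach fits naturally into the paper's framework even though it differs from the proof given for this particular lemma.
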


\begin{proof}
Note that the chains $\hat \theta^{(j)}, j=0,\dots, k$ and $\hat \theta_{\delta}^{(j)}, j=0,\dots, k$
coincide on the event 
\begin{align*}
A_k:= \Bigl\{\|\hat \theta^{(j)}-\hat \theta^{(j-1)}\| < \delta, j=1,\dots, k\Bigr\}.
\end{align*}
Therefore, 
\begin{align*}
&
{\mathbb E}_{\theta} f(\hat \theta^{(k)})- 
{\mathbb E}_{\theta} f(\hat \theta_{\delta}^{(k)})
= 
{\mathbb E}_{\theta} (f(\hat \theta^{(k)})- f(\hat \theta_{\delta}^{(k)}))I_{A_k^c}
\end{align*}
and 
\begin{align*}
|{\mathbb E}_{\theta} f(\hat \theta^{(k)})- 
{\mathbb E}_{\theta} f(\hat \theta_{\delta}^{(k)})| \leq 
2\|f\|_{L_{\infty}(E)} {\mathbb P}_{\theta}(A_k^c).
\end{align*}
Note also that
\begin{align*}
{\mathbb P}_{\theta}(A_k^c) = \sum_{j=1}^k {\mathbb P}_{\theta}(B_j),
\end{align*}
where 
\begin{align*}
B_j := \{\|\hat \theta^{(i)}-\hat \theta^{(i-1)}\|< \delta, i=1,\dots, j-1, \|\hat \theta^{(j)}-\hat \theta^{(j-1)}\|\geq \delta\}.
\end{align*} 
On the event $B_j,$ $\|\hat \theta^{(i)}-\theta\|< \delta i, i=1,\dots, j-1,$ implying that, for all $\theta\in \Theta,$ 
$\hat \theta^{(i)}\in \Theta_{i \delta}, i=1,\dots, j-1.$ Therefore, 
\begin{align*}
\sup_{\theta\in \Theta} {\mathbb P}_{\theta}(B_j) 
&\leq 
\sup_{\theta\in \Theta} {\mathbb E}_{\theta}I(\|\hat \theta^{(j)}-\hat \theta^{(j-1)}\| \geq \delta)
I(\hat \theta^{(j-1)}\in \Theta_{(j-1)\delta})
\\
&
=\sup_{\theta\in \Theta} {\mathbb E}_{\theta}{\mathbb E}\Bigl(I(\|\hat \theta^{(j)}-\hat \theta^{(j-1)}\| \geq \delta)
I(\hat \theta^{(j-1)}\in \Theta_{(j-1)\delta})|\hat \theta^{(j-1)}\Bigr)
\\
&
=\sup_{\theta\in \Theta} {\mathbb E}_{\theta} {\mathbb P}_{\hat \theta^{(j-1)}}\{\|\hat \theta^{(j)}-\hat \theta^{(j-1)}\|\geq \delta\}I(\hat \theta^{(j-1)}\in \Theta_{(j-1)\delta})
\\
&
\leq \sup_{\theta \in \Theta_{(j-1)\delta}}{\mathbb P}_{\theta}\{\|\hat \theta-\theta\|\geq \delta\},
\end{align*}
which implies 
\begin{align*}
\sup_{\theta \in \Theta}{\mathbb P}_{\theta}(A_k^c)
\leq \sum_{j=1}^k 
\sup_{\theta \in \Theta_{(j-1)\delta}}{\mathbb P}_{\theta}\{\|\hat \theta-\theta\|\geq \delta\}
\leq k 
\sup_{\theta \in \Theta_{(k-1)\delta}}{\mathbb P}_{\theta}\{\|\hat \theta-\theta\|\geq \delta\}.
\end{align*} 
As a result, we get 
\begin{align*}
&
\sup_{\theta \in \Theta}|{\mathbb E}_{\theta} f(\hat \theta^{(k)})- 
{\mathbb E}_{\theta} f(\hat \theta_{\delta}^{(k)})|
\leq 
2 k\|f\|_{L_{\infty}(E)}
\sup_{\theta \in \Theta_{(k-1)\delta}}{\mathbb P}_{\theta}\{\|\hat \theta-\theta\|\geq \delta\}, 
\end{align*}
and the first claim follows. The proof of the second claim is similar.
\end{proof}

Our next goal is to bound the distances $\Delta_{s,k\delta} (\hat \theta^{(k)}, \tilde \theta^{(k)})$
and $\Delta_{s,k\delta} (\hat \theta_{\delta}^{(k)}, \tilde \theta_{\delta}^{(k)})$ in terms 
of $\Delta_{s,\delta}(\hat \theta, \tilde \theta),$ which could be reduced to
bounding the difference $({\mathcal T}_{\delta}^k-\tilde {\mathcal T}_{\delta}^k)f.$
Denote 
\begin{align*}
&
L_{k,s}(\delta):=\prod_{i=1}^{k} \|\tilde {\mathcal T}_{\delta}\|_{C^s(\Theta_{(i+1)\delta})\mapsto C^s(\Theta_{i\delta})}, k\geq 1,
\end{align*}
with $L_{0,s}(\delta):=1$ for $k=0.$

\begin{proposition}
\label{hat^k_tilde^k_hat_tilde}
For all $\delta>0, s\geq 1$ and for $k\geq 1,$
\begin{align*}
\Delta_{s,k\delta} (\hat \theta_{\delta}^{(k)}, \tilde \theta_{\delta}^{(k)})
\leq 
\sum_{j=0}^{k-1} L_{j,s}(\delta)
\Bigl[\Delta_{s,\delta}(\hat \theta, \tilde \theta)+ 2\frak{Q}_n(\Theta,\delta)\Bigr],
\end{align*} 
\begin{align*}
&
\Delta_{s,k\delta} (\hat \theta^{(k)}, \tilde \theta^{(k)})
\leq 
\sum_{j=0}^{k-1} L_{j,s}(\delta)\Bigl[\Delta_{s,\delta}(\hat \theta, \tilde \theta)+ 2\frak{Q}_n(\Theta,\delta)\Bigr]
+ 4k \frak{Q}_n(\Theta_{(k-1)\delta},\delta)
\end{align*}
and 
\begin{align*}
&
\Delta_{s,k\delta} (\hat \theta^{(k)}, \tilde \theta_{\delta}^{(k)})
\leq 
\sum_{j=0}^{k-1} L_{j,s}(\delta)\Bigl[\Delta_{s,\delta}(\hat \theta, \tilde \theta)+ 2\frak{Q}_n(\Theta,\delta)\Bigr]
+ 2k \frak{Q}_n(\Theta_{(k-1)\delta},\delta).
\end{align*}
\end{proposition}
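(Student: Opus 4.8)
The plan is to reduce the three inequalities to a single bound on the operator difference $\mathcal{T}_\delta^{k}-\tilde{\mathcal{T}}_\delta^{k}$ and then telescope. First I would observe that, by the identities $(\mathcal{T}_\delta^{k}f)(\theta)={\mathbb E}_\theta f(\hat\theta_\delta^{(k)})$ and $(\tilde{\mathcal{T}}_\delta^{k}f)(\theta)={\mathbb E}_\theta f(\tilde\theta_\delta^{(k)})$, together with the fact that $\hat\theta_\delta^{(k)},\tilde\theta_\delta^{(k)}\in\Theta_{k\delta}$ for $\theta\in\Theta$, the first bound is equivalent to controlling $\sup_{\theta\in\Theta}\sup_{\|f\|_{C^s(\Theta_{k\delta})}\le1}|(\mathcal{T}_\delta^{k}f)(\theta)-(\tilde{\mathcal{T}}_\delta^{k}f)(\theta)|$. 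I would expand the difference through the telescoping identity
\[
\mathcal{T}_\delta^{k}-\tilde{\mathcal{T}}_\delta^{k}
=\sum_{j=1}^{k}\mathcal{T}_\delta^{\,j-1}\,(\mathcal{T}_\delta-\tilde{\mathcal{T}}_\delta)\,\tilde{\mathcal{T}}_\delta^{\,k-j},
\]
the crucial feature being the ordering: the smoothing operator $\tilde{\mathcal{T}}_\delta$ (which maps $C^s$ into $C^s$ by Lemma \ref{bd_T_delta}) is always applied to $f$ first, whereas $\mathcal{T}_\delta$ — for which only the contraction property on $L_\infty$ is available — is applied last, where boundedness suffices.

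For a generic term, with $\|f\|_{C^s(\Theta_{k\delta})}\le1$, I would argue as follows. Iterating Lemma \ref{bd_T_delta} along the decreasing chain of neighborhoods, the function $\tilde{\mathcal{T}}_\delta^{\,k-j}f$ lies in the relevant $C^s$-class with norm bounded by a product of $C^s\mapsto C^s$ operator norms of $\tilde{\mathcal{T}}_\delta$, i.e. precisely by the quantity $L_{j,s}(\delta)$ (up to the exact index that the bookkeeping dictates). Applying $(\mathcal{T}_\delta-\tilde{\mathcal{T}}_\delta)$ to this $C^s$-function and evaluating on the appropriate $\delta$-interior produces a factor $\Delta_{s,\delta}(\hat\theta_\delta,\tilde\theta_\delta)$, which by Lemma \ref{bd_T-T_delta} is at most $\Delta_{s,\delta}(\hat\theta,\tilde\theta)+2{\frak Q}_n(\Theta,\delta)$. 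Finally $\mathcal{T}_\delta^{\,j-1}$, being a Markov-kernel average whose iterates displace the base point by at most $(j-1)\delta$ (so that all evaluation points stay inside $\Theta_{k\delta}\subset T$), only turns the previous estimate into a supremum over a slightly enlarged neighborhood, with no loss of constant. Summing the $k$ terms and bounding each coefficient by the corresponding $L_{j,s}(\delta)$ yields the first inequality.

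The remaining two inequalities follow by the triangle inequality combined with Lemma \ref{lm_delta_hat_tilde}. Since $\|f\|_{L_\infty(E)}\le\|f\|_{C^s(\Theta_{k\delta})}\le1$ for the relevant test functions, Lemma \ref{lm_delta_hat_tilde} gives $\Delta_{s,k\delta}(\hat\theta^{(k)},\hat\theta_\delta^{(k)})\le4k^2{\frak Q}_n(\Theta_{(k-1)\delta},\delta)$ and likewise $\Delta_{s,k\delta}(\tilde\theta^{(k)},\tilde\theta_\delta^{(k)})\le4k^2{\frak Q}_n(\Theta_{(k-1)\delta},\delta)$; adding the first of these to the bound already obtained gives the estimate for $\Delta_{s,k\delta}(\hat\theta^{(k)},\tilde\theta_\delta^{(k)})$, and adding both gives the estimate for $\Delta_{s,k\delta}(\hat\theta^{(k)},\tilde\theta^{(k)})$. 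I expect the main obstacle to be purely one of bookkeeping: keeping track of the growing neighborhoods $\Theta_{j\delta}$ so that every intermediate function $\tilde{\mathcal{T}}_\delta^{\,m}f$ is $C^s$ on a set still contained in $T$ and every evaluation point produced by the outer iterates of $\mathcal{T}_\delta$ falls in a region where the distance $\Delta_{s,\delta}$ and the tail quantity ${\frak Q}_n$ are under control — and making sure the telescoping is organized so that $C^s$-regularity always sits on the inside and mere $L_\infty$-boundedness on the outside.
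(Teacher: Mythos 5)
Your argument is correct and is essentially the paper's own proof: the telescoping identity $\mathcal{T}_\delta^{k}-\tilde{\mathcal{T}}_\delta^{k}=\sum_{j}\mathcal{T}_\delta^{\,j-1}(\mathcal{T}_\delta-\tilde{\mathcal{T}}_\delta)\tilde{\mathcal{T}}_\delta^{\,k-j}$ is just the expanded form of the one-step recursion the paper iterates by induction, with the same placement of the $C^s$-smoothing operator $\tilde{\mathcal{T}}_\delta$ inside (controlled via Lemma \ref{bd_T_delta}, giving the factors $L_{j,s}(\delta)$) and the $L_\infty$-contraction $\mathcal{T}_\delta$ outside, and the same use of Lemma \ref{bd_T-T_delta} for the middle factor. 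Your treatment of the second and third inequalities via the triangle inequality and Lemma \ref{lm_delta_hat_tilde} also coincides with the paper, and the minor indexing slack in the $L_{j,s}$ sum is immaterial since the constants are absorbed downstream.
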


\begin{proof}
Note that 
\begin{align}
\label{odin}
{\mathcal T}_{\delta}^k - \tilde {\mathcal T}_{\delta}^k = {\mathcal T}_{\delta}
({\mathcal T}_{\delta}^{k-1}-\tilde {\mathcal T}_{\delta}^{k-1}) + ({\mathcal T}_{\delta}-\tilde {\mathcal T}_{\delta}) \tilde {\mathcal T}_{\delta}^{k-1}.
\end{align}
Since operator ${\mathcal T}_{\delta}: L_{\infty}(\Theta)\mapsto L_{\infty}(\Theta)$ is a contraction,
we have 
\begin{align}
\label{dva}
\|{\mathcal T}_{\delta}({\mathcal T}_{\delta}^{k-1}-\tilde {\mathcal T}_{\delta}^{k-1})f\|_{L_{\infty}(\Theta)}
\leq \|({\mathcal T}_{\delta}^{k-1}-\tilde {\mathcal T}_{\delta}^{k-1})f\|_{L_{\infty}(\Theta)}, f\in L_{\infty}(E), k>1.
\end{align}
The following bound is also straightforward:
\begin{align}
\label{tri}
\Bigl\|({\mathcal T}_{\delta}-\tilde {\mathcal T}_{\delta}) \tilde {\mathcal T}_{\delta}^{k-1}\Bigr\|_{C^s(\Theta_{\delta k})\mapsto L_{\infty}(\Theta)}
\leq 
L_{k-1,s}(\delta)\Bigl\|{\mathcal T}_{\delta}-\tilde {\mathcal T}_{\delta}\Bigr\|_{C^s(\Theta_{\delta})\mapsto L_{\infty}(\Theta)}.
\end{align}
Using \eqref{T^k}, \eqref{T_delta^k}, \eqref{odin}, \eqref{dva} and \eqref{tri}, 
we get 
\begin{align}
\label{bd_k_k-1}
\nonumber
&
\sup_{\theta\in \Theta} \Bigl|{\mathbb E}_{\theta} f(\hat \theta_{\delta}^{(k)})- {\mathbb E}_{\theta} 
f(\tilde \theta_{\delta}^{(k)})\Bigr|
\\
&
\leq
\sup_{\theta\in \Theta} 
\Bigl|{\mathbb E}_{\theta} f(\hat \theta_{\delta}^{(k-1)})- {\mathbb E}_{\theta} f(\tilde \theta_{\delta}^{(k-1)})\Bigr|
+
L_{k-1,s}(\delta)\|f\|_{C^s(\Theta_{k\delta})}\Bigl\|{\mathcal T}_{\delta}-\tilde {\mathcal T}_{\delta}\Bigr\|_{C^s(\Theta_{\delta})\mapsto L_{\infty}(\Theta)}.
\end{align}
By Lemma \ref{bd_T-T_delta},
\begin{align*}
\Bigl\|{\mathcal T}_{\delta}-\tilde {\mathcal T}_{\delta}\Bigr\|_{C^s(\Theta_{\delta})\mapsto L_{\infty}(\Theta)}
= \Delta_{s,\delta}(\hat \theta_{\delta},\tilde \theta_{\delta})
\leq  \Delta_{s,\delta}(\hat \theta,\tilde \theta)+2\frak{Q}_n(\Theta,\delta).
\end{align*}
It then follows from \eqref{bd_k_k-1} that 
\begin{align*}
&
\sup_{\theta\in \Theta} \Bigl|{\mathbb E}_{\theta} f(\hat \theta_{\delta}^{(k)})- {\mathbb E}_{\theta} f(\tilde \theta_{\delta}^{(k)})\Bigr|
\leq
\sup_{\theta\in \Theta} 
\Bigl|{\mathbb E}_{\theta} f(\hat \theta_{\delta}^{(k-1)})- {\mathbb E}_{\theta} f(\tilde \theta_{\delta}^{(k-1)})\Bigr|
\\
&
+
L_{k-1,s}(\delta)\|f\|_{C^s(\Theta_{k\delta})}
\Bigl[\Delta_{s,\delta}(\hat \theta, \tilde \theta)+
2\frak{Q}_n(\Theta,\delta)\Bigr].
\end{align*}
By induction, this implies that 
\begin{align*}
&
\sup_{\theta\in \Theta} \Bigl|{\mathbb E}_{\theta} f(\hat \theta_{\delta}^{(k)})- {\mathbb E}_{\theta} f(\tilde \theta_{\delta}^{(k)})\Bigr|
\leq
\sum_{j=0}^{k-1} L_{j,s}(\delta)
\|f\|_{C^s(\Theta_{k\delta})}
\Bigl[\Delta_{s,\delta}(\hat \theta, \tilde \theta)+ 2\frak{Q}_n(\Theta,\delta)\Bigr],
\end{align*}
and the first bound follows.
It remains to combine it with the bounds of Lemma \ref{lm_delta_hat_tilde}
to complete the proof. 
\end{proof}

To control the norms of operator $\tilde {\mathcal T}_{\delta},$
we use Lemma \ref{bd_T_delta}. 
It implies that, for all $i=1,\dots, k$ 
\begin{align*}
\|\tilde {\mathcal T}_{\delta}\|_{C^s(\Theta_{(i+1)\delta})\mapsto C^s(\Theta_{i\delta})}
\lesssim_s
1+ \frac{{\mathbb E}\|\xi\|_{C^s(\Theta_{i\delta})}^s}{n^{s/2}} 
\end{align*}
and 
\begin{align*}
L_{k,s}(\delta) \lesssim_{s,k} 
\biggl(1+ \frac{{\mathbb E}\|\xi\|_{C^s(\Theta_{(k-1)\delta})}^s}{n^{s/2}}\biggr)^{k}.
\end{align*}
Substituting the last bound in the bounds of 
Proposition \ref{hat^k_tilde^k_hat_tilde} yields the claim of Theorem \ref{hat^k_tilde^k_hat_tilde_AAA}.

Bound \eqref{Delta_s,kdelta} of Theorem \ref{hat^k_tilde^k_hat_tilde_AAA} implies that 
\begin{align*}
&
\max_{0\leq j\leq k}\Bigl\|{\mathcal T}^j f-\tilde{\mathcal T}_{\delta}^j f\Bigr\|_{L_{\infty}(\Theta)}
\\
&
\lesssim_{s,k} 
\|f\|_{C^s(\Theta_{k\delta})}\biggl(1+ \frac{{\mathbb E}\|\xi\|_{C^s(\Theta_{(k-1)\delta})}^s}{n^{s/2}}\biggr)^{k-1}
\Bigl[\Delta_{s,\delta}(\hat \theta, \tilde \theta)+ \frak{Q}_n(\Theta_{(k-1)\delta},\delta)
\Bigr].
\end{align*}
Since 
\begin{align*}
{\mathcal B}^k f = ({\mathcal T}-{\mathcal I})^k f= \sum_{j=0}^k (-1)^{k-j} {k\choose j} {\mathcal T}^j f
\end{align*}
and
\begin{align*}
\tilde {\mathcal B}_{\delta}^k f = (\tilde{\mathcal T}_{\delta}-{\mathcal I})^k f=\sum_{j=0}^k (-1)^{k-j} {k\choose j} \tilde{\mathcal T}_{\delta}^j f,
\end{align*}
we easily get 
\begin{align*}
&
\max_{0\leq j\leq k}\Bigl\|{\mathcal B}^j f-\tilde{\mathcal B}_{\delta}^j f\Bigr\|_{L_{\infty}(\Theta)}
\\
&
\lesssim_{s,k} 
\|f\|_{C^s(\Theta_{k\delta})}\biggl(1+ \frac{{\mathbb E}\|\xi\|_{C^s(\Theta_{(k-1)\delta})}^s}{n^{s/2}}\biggr)^{k-1}
\Bigl[\Delta_{s,\delta}(\hat \theta, \tilde \theta)+ 
\frak{Q}_n(\Theta_{(k-1)\delta},\delta)\Bigr].
\end{align*}
Recalling that 
\begin{align*}
f_k(\theta)= \sum_{j=0}^k (-1)^j ({\mathcal B}^j f)(\theta)\ {\rm and}\ 
\tilde f_{\delta,k}(\theta)= \sum_{j=0}^k (-1)^j (\tilde {\mathcal B}_{\delta}^j f)(\theta),
\end{align*}
we can conclude that the bound of Theorem \ref{uniform_bd} also holds.
\end{proof}

\section{Concentration bounds for the approximating Gaussian model}
\label{Gaussian_conc}

In this section, we study concentration properties of ``estimator" $\tilde f_{\delta,k}(\tilde \theta_{\delta})$
in the approximating Gaussian model. 

Recall that $\psi\in \Psi$  is a convex function 
$\psi:{\mathbb R}\mapsto {\mathbb R}_+$
with $\psi(0)=0.$ 
It is also even, increasing on ${\mathbb R}_+,$
and satisfies the condition $c'u\leq \psi(u)\leq \psi_1(c''u), u\geq 0$ for some constants $c',c''>0,$ where $\psi_1(u):=e^{u}-1, u\geq 0.$  
Also recall the notation
$
\tilde \psi(u):= \frac{1}{\psi^{-1}(\frac{1}{u})}, u\geq 0.
$

Our main goal is to prove the following theorem. 

\begin{theorem}
\label{conc_main}
Let $s=k+1+\rho,$ $k\geq 1, \rho\in (0,1].$ Let $\delta>0$ and $f\in C^s (\Theta_{(k+3)\delta}).$
Suppose that, for a sufficiently small constant $c_1>0,$
\begin{align*}
{\frak d}_{\xi}(\Theta_{(k+2)\delta};s-1)\leq c_1 n.
\end{align*}
Then, for all $\psi\in \Psi,$ the following bound holds:
\begin{align*}
&
\sup_{\theta\in \Theta}
\Bigl\|\tilde f_{\delta,k}(\tilde \theta_{\delta})-f(\theta)-n^{-1/2}\langle f'(\theta), \xi(\theta)\rangle
\Bigr\|_{L_\psi({\mathbb P})}
\\
&
\lesssim_{s, \psi}  
\|f\|_{C^s(\Theta_{(k+3)\delta})}
\biggl[
\biggl(\sqrt{\frac{{\frak d}_{\xi}(\Theta_{(k+2)\delta};s-1)}{n}}\biggr)^s 
+
\frac{\|\Sigma\|_{L_{\infty}(E)}^{1/2}}{n^{1/2}} \sqrt{\frac{{\frak d}_{\xi}(\Theta_{(k+2)\delta};s-1)}{n}}
\\
&
\ \ \ \ \ \ \ \ \ \ \ \ \ \ \ \ \ \ \ \ \ \ \ \ \ \ + \sqrt{\frac{{\frak d}_{\xi}(\Theta_{(k+2)\delta};s-1)}{n}}\tilde \psi^{1/2}({\mathbb P}\{\|\xi\|_{L_{\infty}(E)}\geq \delta \sqrt{n}\})
\biggr].
\end{align*}
\end{theorem}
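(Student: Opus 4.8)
The plan is a bias/fluctuation decomposition: the bias of $\tilde f_{\delta,k}(\tilde\theta_{\delta})$ equals (up to sign) the $(k{+}1)$st power of the ``small operator'' $\tilde{\mathcal B}_{\delta}$ and is controlled by the random--homotopy estimates of Proposition \ref{prop_summary}, while the centred fluctuation is obtained by linearizing $\tilde f_{\delta,k}$ around $\theta$ and applying a Maurey--Pisier concentration inequality to the remainder. Write $r:=\bigl({\frak d}_{\xi}(\Theta_{k\delta};s-1)/n\bigr)^{1/2}$ (so $r^{2}\le c_{1}$) and $A:=\{\|\xi\|_{L_{\infty}(E)}\ge\delta\sqrt n\}$. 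I will use freely: $\|\Sigma\|_{L_{\infty}(E)}\le{\frak d}_{\xi}(\Theta_{k\delta};s-1)$; by Gaussian concentration $\bigl\|\,\|\xi(\theta)\|\,\bigr\|_{\psi_{2}}\lesssim\sqrt{{\frak d}_{\xi}(\Theta_{k\delta};s-1)}$ for $\theta\in\Theta_{k\delta}$ and $\|\xi(\theta)\|^{2}$ subexponential with scale $\asymp{\frak d}_{\xi}(\Theta_{k\delta};s-1)$; for $\psi\in\Psi$, $\psi\lesssim\psi_{1}$ and $\tilde\psi(u)\gtrsim_{\psi}u$ (so ${\mathbb P}(A)^{1/2}\lesssim_{\psi}\tilde\psi^{1/2}({\mathbb P}(A))$), together with the Orlicz--Hölder bound $\|WI_{A}\|_{\psi}\lesssim_{\psi}\|W\|_{\psi_{1}}\tilde\psi^{1/2}({\mathbb P}(A))$ for subexponential $W$; and, from \eqref{bd_B_delta^j} summed over $j=1,\dots,k$ together with Lemma \ref{bd_on_tilde_f_kdelta_B} (the constant $c_{1}$ small so that the geometric series converges), $\|\tilde f_{\delta,k}-f\|_{C^{1+\rho}(\Theta_{\delta})}\lesssim_{s}r\,\|f\|_{C^{s}(\Theta_{(k+1)\delta})}$ and $\|\tilde f_{\delta,k}\|_{C^{1+\rho}(\Theta_{\delta})}\lesssim_{s}\|f\|_{C^{s}(\Theta_{(k+1)\delta})}$.

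Fix $\theta\in\Theta$, set $L(u):=n^{-1/2}\langle f'(\theta),u\rangle$ and $\Psi(u):=\tilde f_{\delta,k}(\theta+u/\sqrt n)-\tilde f_{\delta,k}(\theta)-L(u)$ (so $\Psi(0)=0$), and let $\bar\Psi$ be the globally Lipschitz extension of $\Psi$ obtained by precomposing with the radial retraction $\pi$ onto $\{\|u\|\le\delta\sqrt n\}$; then $\bar\Psi=\Psi$ on that ball, which always contains $\xi_{\delta}(\theta)$ and contains $\xi(\theta)$ on $A^{c}$. Since $\xi_{\delta}(\theta)=\xi(\theta)I_{A^{c}}$ we have $\tilde f_{\delta,k}(\tilde\theta_{\delta})=\tilde f_{\delta,k}(\theta)+\bigl(\bar\Psi(\xi)+L(\xi)\bigr)I_{A^{c}}$; subtracting the mean and using $\mathbb E L(\xi)=0$ and $(\tilde{\mathcal T}_{\delta}\tilde f_{\delta,k})(\theta)=f(\theta)+(-1)^{k}(\tilde{\mathcal B}_{\delta}^{k+1}f)(\theta)$ gives
\begin{align*}
&\tilde f_{\delta,k}(\tilde\theta_{\delta})-f(\theta)-n^{-1/2}\langle f'(\theta),\xi(\theta)\rangle\\
&\quad=(-1)^{k}(\tilde{\mathcal B}_{\delta}^{k+1}f)(\theta)+\bigl(\bar\Psi(\xi)-\mathbb E\bar\Psi(\xi)\bigr)-\bigl(\bar\Psi(\xi)I_{A}-\mathbb E\bar\Psi(\xi)I_{A}\bigr)-\bigl(L(\xi)I_{A}-\mathbb E L(\xi)I_{A}\bigr).
\end{align*}
It thus suffices to bound the $L_{\psi}$-norms of these four terms by $\|f\|_{C^{s}(\Theta_{(k+3)\delta})}$ times, respectively, $r^{s}+r\,\tilde\psi^{1/2}({\mathbb P}(A))$;\ \ $n^{-1/2}\|\Sigma\|_{L_{\infty}(E)}^{1/2}r^{\rho}$;\ \ $r\,\tilde\psi^{1/2}({\mathbb P}(A))$;\ \ and $r\,\tilde\psi^{1/2}({\mathbb P}(A))$.

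For the bias term, $\tilde{\mathcal B}_{\delta}$ is the small operator of the random homotopy $H(\theta;t):=\theta+t\xi_{\delta}(\theta)/\sqrt n$ (so $\dot H=\xi_{\delta}(\theta)/\sqrt n$), whose smoothness hypotheses hold a.s.\ because $\xi\in C^{s-1}(\Theta_{(k+1)\delta})$ a.s.\ (the enlarged neighbourhood in the statement absorbs the $(k{+}1)$-fold superposition). By Proposition \ref{prop_summary}(4) and Gaussian concentration, $\mathbb E(\|H\|_{C^{0,s-1}}^{\sim})^{s-1}\|\dot H\|_{C^{s-1}}^{\sim}\lesssim_{s}r$ and $\mathbb E\|\dot H\|_{L_{\infty}}^{1+\rho}\lesssim r^{1+\rho}$, while — crucially, since $\mathbb E\xi(\theta)=0$ — $\bigl\|\mathbb E\int_{0}^{1}\dot H(\theta;t)\,dt\bigr\|=\|\mathbb E\xi(\theta)I_{A}\|/\sqrt n\le\sqrt{{\frak d}_{\xi}(\Theta_{k\delta};s-1)}\,{\mathbb P}(A)^{1/2}/\sqrt n=r\,{\mathbb P}(A)^{1/2}$ by Cauchy--Schwarz; hence $|(\tilde{\mathcal B}_{\delta}^{k+1}f)(\theta)|\lesssim_{s}\|f\|_{C^{s}}\bigl(r^{s}+r^{k+1}{\mathbb P}(A)^{1/2}\bigr)$, which is of the claimed size since $r^{k+1}\le r$. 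For the last two terms one bounds the centred quantity by twice the uncentred one and applies the Orlicz--Hölder bound: $\|\bar\Psi(\xi)I_{A}\|_{\psi}\lesssim_{\psi}\|\bar\Psi(\xi)\|_{\psi_{1}}\tilde\psi^{1/2}({\mathbb P}(A))\lesssim\|f\|_{C^{s}}r^{1+\rho}\tilde\psi^{1/2}({\mathbb P}(A))$ (using $|\bar\Psi(\xi)|\lesssim\|f\|_{C^{s}}\bigl(\|\xi\|^{1+\rho}n^{-(1+\rho)/2}+r\,n^{-1/2}\|\xi\|\bigr)$ and the Gaussian tails of $\|\xi\|$), and $\|L(\xi)I_{A}\|_{\psi}\lesssim_{\psi}\|L(\xi)\|_{\psi_{2}}\tilde\psi^{1/2}({\mathbb P}(A))\lesssim n^{-1/2}\|f\|_{C^{s}}\|\Sigma\|_{L_{\infty}}^{1/2}\tilde\psi^{1/2}({\mathbb P}(A))$, since $L(\xi)=n^{-1/2}\langle f'(\theta),\xi(\theta)\rangle$ is centred Gaussian with variance $\le n^{-1}\|f\|_{C^{s}}^{2}\|\Sigma\|_{L_{\infty}}$; both are $\le\|f\|_{C^{s}}r\,\tilde\psi^{1/2}({\mathbb P}(A))$ because $r^{1+\rho}\le r$ and $\|\Sigma\|_{L_{\infty}}\le{\frak d}_{\xi}(\Theta_{k\delta};s-1)$.

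The heart of the proof is the second term, $\bar\Psi(\xi)-\mathbb E\bar\Psi(\xi)$. Here $\bar\Psi$ is an a.e.-differentiable Lipschitz function of the genuine centred Gaussian $\xi(\theta)$ with $\|\bar\Psi'(u)\|_{E^{*}}\le n^{-1/2}\|\tilde f_{\delta,k}'(\theta+\pi(u)/\sqrt n)-f'(\theta)\|\le n^{-1/2}\bigl(\|\tilde f_{\delta,k}-f\|_{C^{1+\rho}(\Theta_{\delta})}+\|f\|_{C^{s}}\|u\|/\sqrt n\bigr)\lesssim n^{-1/2}\|f\|_{C^{s}}(r+\|u\|/\sqrt n)$ — note only the generic bound $\|\tilde f_{\delta,k}-f\|_{C^{1+\rho}}\lesssim r\|f\|_{C^{s}}$ is needed, no sharper cancellation. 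A Maurey--Pisier inequality with the covariance scaling of $\xi(\theta)$ then gives $\mathbb E\,e^{\mu(\bar\Psi(\xi)-\mathbb E\bar\Psi(\xi))}\le\mathbb E\,e^{c\mu^{2}\|\Sigma(\theta)\|\,\|\bar\Psi'(\xi)\|^{2}}\lesssim e^{c'\mu^{2}\|\Sigma\|_{L_{\infty}}n^{-1}\|f\|_{C^{s}}^{2}r^{2}}\,\mathbb E\,e^{c''\mu^{2}\|\Sigma\|_{L_{\infty}}n^{-2}\|f\|_{C^{s}}^{2}\|\xi\|^{2}}$, and the last expectation is $O(1)$ for $|\mu|\lesssim\sqrt n/(r\|f\|_{C^{s}}\|\Sigma\|_{L_{\infty}}^{1/2})$ precisely because $\|\xi\|^{2}$ is subexponential with scale $\asymp{\frak d}_{\xi}(\Theta_{k\delta};s-1)=r^{2}n$; hence $\bar\Psi(\xi)-\mathbb E\bar\Psi(\xi)$ is subexponential with $\psi_{1}$-norm $\lesssim n^{-1/2}\|f\|_{C^{s}}\|\Sigma\|_{L_{\infty}}^{1/2}r$, so $\|\bar\Psi(\xi)-\mathbb E\bar\Psi(\xi)\|_{\psi}\lesssim_{\psi}n^{-1/2}\|f\|_{C^{s}}\|\Sigma\|_{L_{\infty}}^{1/2}r\le n^{-1/2}\|f\|_{C^{s}}\|\Sigma\|_{L_{\infty}}^{1/2}r^{\rho}$ (as $r\le r^{\rho}$, $\rho\le1$). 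The main obstacle is exactly this step: one must pass from the \emph{truncated} Gaussian $\xi_{\delta}(\theta)$ to $\xi(\theta)$ (whence the retraction $\pi$) and run the log-moment-generating-function estimate for a function whose Lipschitz constant is not uniformly small but is $O(n^{-1/2}\|f\|_{C^{s}}\|\xi(\theta)\|/\sqrt n)$ — the subexponentiality of $\|\xi(\theta)\|^{2}$ together with ${\frak d}_{\xi}(\Theta_{k\delta};s-1)\le c_{1}n$ is just strong enough to absorb it. Collecting the four bounds and taking $\sup_{\theta\in\Theta}$ completes the proof; everything else (the Faà di Bruno bounds of Section 3 for $\tilde f_{\delta,k}$, the Cauchy--Schwarz truncation estimate, the Orlicz--Hölder inequality, and the neighbourhood bookkeeping behind $\Theta_{(k+3)\delta}$) is routine.
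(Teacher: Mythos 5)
Your overall architecture is essentially the paper's: the same bias/fluctuation decomposition (the identity $(\tilde{\mathcal T}_{\delta}\tilde f_{\delta,k})(\theta)=f(\theta)+(-1)^{k}(\tilde{\mathcal B}_{\delta}^{k+1}f)(\theta)$, the homotopy $H(\theta;t)=\theta+t\xi_{\delta}(\theta)/\sqrt n$ with the cancellation ${\mathbb E}\xi(\theta)=0$ and Cauchy--Schwarz on ${\mathbb E}\xi(\theta)I_{A}$), truncation terms carrying $\tilde\psi^{1/2}({\mathbb P}(A))$, and Gaussian concentration for the centred remainder with the covariance factorization $\|\sum_k(z_{1,k}-z_{2,k})x_k(\theta)\|\le\|\Sigma(\theta)\|^{1/2}\|z_1-z_2\|_{\ell_2}$. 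The packaging differs: you merge the derivative-correction term $n^{-1/2}\langle\tilde f_{\delta,k}'(\theta)-f'(\theta),\cdot\rangle$ and the Taylor remainder into one function $\bar\Psi$ extended by radial retraction, and you run an exponential-moment (Bernstein-type) form of Maurey--Pisier to get a $\psi_1$-bound valid for all $\psi\in\Psi$, whereas the paper keeps these pieces separate, smooths the truncation with the cutoff $\varphi(\|\xi(\theta)\|/(\delta\sqrt n))$ and applies the Orlicz-norm version (Lemma \ref{Maurey_Pisier}, with $\psi^{\sharp}$). Your variant is viable (it even produces $r$ rather than $r^{\rho}$ for the centred term, $r:=({\frak d}_{\xi}(\Theta_{k\delta};s-1)/n)^{1/2}$); the points you gloss over (a.e.\ differentiability of Lipschitz maps on an infinite-dimensional $E$, the $N\to\infty$ limit) must be handled, as in the paper, via local Lipschitz constants and finite-dimensional Gaussian approximation, but that is routine.

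There is, however, one genuine flaw: the ``Orlicz--H\"older bound'' $\|WI_{A}\|_{L_\psi}\lesssim_{\psi}\|W\|_{\psi_{1}}\tilde\psi^{1/2}({\mathbb P}(A))$ that you use for the term $\bar\Psi(\xi)I_{A}$ is false in the stated generality. Take $\psi=\psi_{1}\in\Psi$, $W$ standard exponential and $A=\{W>t\}$: then $WI_{A}$ still has exponential tails of rate one beyond level $t$, so $\|WI_{A}\|_{\psi_{1}}\geq c>0$ uniformly in $t$ (indeed ${\mathbb E}\exp(WI_A/c)=\infty$ for every $c<1$), while $\|W\|_{\psi_{1}}\tilde\psi_{1}^{1/2}({\mathbb P}(A))\asymp t^{-1/2}\to0$. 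The correct device (the paper's Lemmas \ref{lemma_psi_simple_1} and \ref{lemma_psi_simple_2}) gives $\|WI_{A}\|_{L_\psi}\le\|W^{2}\|_{L_\psi}^{1/2}\tilde\psi^{1/2}({\mathbb P}(A))\lesssim_{\psi}\|W\|_{\psi_{2}}\tilde\psi^{1/2}({\mathbb P}(A))$, i.e.\ it requires a \emph{sub-Gaussian} factor; and your envelope $|\bar\Psi(\xi)|\lesssim\|f\|_{C^{s}}\bigl(n^{-(1+\rho)/2}\|\xi(\theta)\|^{1+\rho}+rn^{-1/2}\|\xi(\theta)\|\bigr)$ is only sub-exponential when $\rho=1$, so it cannot be inserted into either inequality. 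The repair is immediate and is exactly what the paper's cutoff accomplishes: use the cruder Lipschitz envelope instead. Since $\|\tilde f_{\delta,k}\|_{{\rm Lip}(\Theta_{\delta})}\lesssim\|f\|_{C^{s}}$ by Lemma \ref{bd_on_tilde_f_kdelta_B} and $\|f'(\theta)\|\le\|f\|_{C^{s}}$, one has $|\bar\Psi(\xi)|\lesssim\|f\|_{C^{s}}\,n^{-1/2}\|\xi(\theta)\|$, hence $\|\bar\Psi(\xi)\|_{\psi_{2}}\lesssim\|f\|_{C^{s}}\,r$ by Gaussian concentration, and the correct inequality yields $\|\bar\Psi(\xi)I_{A}\|_{L_\psi}\lesssim_{\psi}\|f\|_{C^{s}}\,r\,\tilde\psi^{1/2}({\mathbb P}(A))$, which is the claimed size. (Your treatment of $L(\xi)I_{A}$ already uses the $\psi_{2}$ version and is fine.) With this single substitution your proof goes through; the bias bound, the $\psi_{1}$-norm estimate for $\bar\Psi(\xi)-{\mathbb E}\bar\Psi(\xi)$ via the MGF argument with ${\mathbb E}\exp(t\|\xi(\theta)\|^{2})=O(1)$ for $t\lesssim1/{\frak d}_{\xi}(\Theta_{k\delta};s-1)$, and the bookkeeping $r^{k+1}\le r\le r^{\rho}$, $\|\Sigma\|_{L_{\infty}(E)}\le{\frak d}_{\xi}(\Theta_{k\delta};s-1)$, ${\mathbb P}(A)^{1/2}\lesssim_{\psi}\tilde\psi^{1/2}({\mathbb P}(A))$ are all correct.
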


In the proof, we will use some concentration bounds for locally Lipschitz functions of Gaussian 
random variables that go back to Maurey and Pisier. 
Let $g:{\mathbb R}^N\mapsto {\mathbb R}$ be a locally Lipschitz function. Define its local Lipschitz constant as
\begin{align*}
(Lg)(x):= \inf_{U\ni x} \sup_{x_1,x_2\in U}\frac{|g(x_1)-g(x_2)|}{\|x_1-x_2\|_{\ell_2}},
\end{align*} 
where the infimum is taken over all neighborhoods $U$ of $x.$

Define
\begin{align*} 
\psi^{\sharp}(u):= {\mathbb E}\psi (uZ), u\in {\mathbb R},\ Z\sim N(0,1). 
\end{align*}
Note that $\psi^{\sharp}$ is convex even function and it is increasing on ${\mathbb R}_+.$

The following version of Maurey-Pisier concentration bound will be used below (see Section 5.2 in \cite{Koltchinskii_Zhilova_19}).

\begin{lemma}
	\label{Maurey_Pisier}	
	Let $Z^{(N)}$ be a standard normal r.v. in ${\mathbb R}^N$ and let $g:{\mathbb R}^N\mapsto {\mathbb R}$ be a locally Lipschitz function. Then 
	\begin{align*}
	\|g(Z^{(N)})-{\mathbb E} g(Z^{(N)})\|_{L_{\psi}({\mathbb P})}\leq \frac{\pi}{2}\|(Lg)(Z^{(N)})\|_{L_{\psi^{\sharp}}({\mathbb P})}.
	\end{align*}
\end{lemma}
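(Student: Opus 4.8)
The plan is the classical Maurey--Pisier interpolation (``rotation'') argument, arranged so that the constant is dimension-free. Write $Z:=Z^{(N)}$, let $Z'$ be an independent copy, and for $\theta\in[0,\pi/2]$ put
\begin{align*}
Z_\theta:=Z\sin\theta+Z'\cos\theta,\qquad \dot Z_\theta:=Z\cos\theta-Z'\sin\theta .
\end{align*}
The $2N\times 2N$ matrix sending $(Z,Z')$ to $(Z_\theta,\dot Z_\theta)$ is orthogonal, so for every fixed $\theta$ the pair $(Z_\theta,\dot Z_\theta)$ consists of two independent standard normal vectors in ${\mathbb R}^N$; moreover $Z_{\pi/2}=Z$ and $Z_0=Z'$. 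First I would establish the path-wise identity
\begin{align*}
g(Z)-g(Z')=\int_0^{\pi/2}\langle (\nabla g)(Z_\theta),\dot Z_\theta\rangle\,d\theta\qquad\text{a.s.},
\end{align*}
where $\nabla g$ is the a.e.-defined gradient of the locally Lipschitz function $g$ (Rademacher). To justify it, note that $g$ is Lipschitz on a neighbourhood of the compact curve $\{Z_\theta:\theta\in[0,\pi/2]\}$ (finite subcover), so $\theta\mapsto g(Z_\theta)$ is Lipschitz, hence absolutely continuous, on $[0,\pi/2]$; and since the non-differentiability set of $g$ is Lebesgue-null while $Z_\theta$ has a density for each $\theta$, a Fubini argument shows that for a.e.\ realization of $(Z,Z')$ the chain rule $\frac{d}{d\theta}g(Z_\theta)=\langle (\nabla g)(Z_\theta),\dot Z_\theta\rangle$ holds for a.e.\ $\theta$, so integrating yields the identity.

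Next I would convert this into the Orlicz bound. Fix $c>0$. Bounding the integral by its absolute value, viewing $\int_0^{\pi/2}$ as $\tfrac\pi2$ times an average over $\Theta\sim\mathrm{Unif}[0,\pi/2]$, and using that $\psi$ is even, increasing on ${\mathbb R}_+$ and convex,
\begin{align*}
\psi\!\Bigl(\tfrac{|g(Z)-g(Z')|}{c}\Bigr)\le \frac{2}{\pi}\int_0^{\pi/2}\psi\!\Bigl(\tfrac{\pi}{2c}\,|\langle (\nabla g)(Z_\theta),\dot Z_\theta\rangle|\Bigr)\,d\theta .
\end{align*}
Taking ${\mathbb E}_{Z,Z'}$, using Fubini, and computing the inner expectation for fixed $\theta$ — conditionally on $Z_\theta$ the variable $\langle(\nabla g)(Z_\theta),\dot Z_\theta\rangle$ is $N(0,\|(\nabla g)(Z_\theta)\|_{\ell_2}^2)$, so by evenness of $\psi$ and the definition $\psi^{\sharp}(u)={\mathbb E}\psi(u\zeta)$, $\zeta\sim N(0,1)$, its $\psi$-expectation equals ${\mathbb E}\,\psi^{\sharp}\!\bigl(\tfrac{\pi}{2c}\|(\nabla g)(Z_\theta)\|_{\ell_2}\bigr)$, which (since $Z_\theta\overset{d}{=}Z^{(N)}$) does not depend on $\theta$ — one obtains
\begin{align*}
{\mathbb E}_{Z,Z'}\,\psi\!\Bigl(\tfrac{|g(Z)-g(Z')|}{c}\Bigr)\le {\mathbb E}\,\psi^{\sharp}\!\Bigl(\tfrac{\pi}{2c}\,\|(\nabla g)(Z^{(N)})\|_{\ell_2}\Bigr)\le {\mathbb E}\,\psi^{\sharp}\!\Bigl(\tfrac{\pi}{2c}\,(Lg)(Z^{(N)})\Bigr),
\end{align*}
using $\|(\nabla g)(x)\|_{\ell_2}\le (Lg)(x)$ wherever $\nabla g$ exists and monotonicity of $\psi^{\sharp}$ on ${\mathbb R}_+$. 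Now choose $c:=\tfrac\pi2\|(Lg)(Z^{(N)})\|_{L_{\psi^{\sharp}}}$ (if this is infinite there is nothing to prove); then the right-hand side is $\le1$, and since $\psi(u)\ge c'u$ this forces $g(Z)-g(Z')$ to be integrable, hence ${\mathbb E}g(Z^{(N)})$ to be finite. Finally, writing ${\mathbb E}g(Z)={\mathbb E}_{Z'}g(Z')$ and applying conditional Jensen's inequality (convexity of $\psi$) gives ${\mathbb E}\,\psi\bigl(|g(Z^{(N)})-{\mathbb E}g(Z^{(N)})|/c\bigr)\le{\mathbb E}_{Z,Z'}\psi\bigl(|g(Z)-g(Z')|/c\bigr)\le1$, i.e.\ $\|g(Z^{(N)})-{\mathbb E}g(Z^{(N)})\|_{L_\psi}\le c=\tfrac\pi2\|(Lg)(Z^{(N)})\|_{L_{\psi^{\sharp}}}$, as claimed.

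The step I expect to be the only genuinely delicate one is the a.e.\ validity of the chain rule along the interpolating curve: this curve is one-dimensional and could a priori lie inside the Lebesgue-null non-differentiability set of $g$, so the justification must be probabilistic (Fubini in $\theta$, exploiting that $Z_\theta$ has a Lebesgue density for each $\theta\in(0,\pi/2)$). Everything after that is Jensen's inequality and an elementary conditional-Gaussian computation, with the dimension-free constant $\pi/2$ appearing automatically. If one prefers to avoid this point entirely, an alternative is to prove the inequality first for $g\in C^\infty$ (where $(Lg)=\|\nabla g\|_{\ell_2}$ and the computation is literal) and then pass to general locally Lipschitz $g$ by mollification, using the upper semicontinuity of $x\mapsto(Lg)(x)$ — which gives $\sup_{\|y-x\|\le\varepsilon}(Lg)(y)\downarrow(Lg)(x)$ — together with a compact-support cutoff to handle the tails; I would nonetheless present the direct path-wise argument, since it dispenses with any approximation.
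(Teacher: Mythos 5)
Your proof is correct: the paper does not prove Lemma \ref{Maurey_Pisier} itself but refers to Section 7.2 of \cite{Koltchinskii_Zhilova_19}, and your interpolation argument with $Z_\theta=Z\sin\theta+Z'\cos\theta$, Jensen's inequality over the $\theta$-average, the conditional Gaussian computation yielding $\psi^{\sharp}$, and the final conditional Jensen step to replace $g(Z')$ by ${\mathbb E}g(Z)$ is exactly the classical Maurey--Pisier route with the dimension-free constant $\pi/2$. The one genuinely delicate point — the a.e.\ chain rule along the random path for a merely locally Lipschitz $g$ — is handled correctly by your Fubini argument exploiting that $Z_\theta$ has a Lebesgue density for each fixed $\theta$, and the inequality $\|(\nabla g)(x)\|_{\ell_2}\leq (Lg)(x)$ at points of differentiability, together with the integrability remark ($\psi(u)\gtrsim u$ plus $Z,Z'$ i.i.d.), closes the argument.
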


We will also use the following simple lemmas.

\begin{lemma}
\label{lemma_psi_simple_1}	
For all r.v. $\eta_1, \eta_2,$
\begin{align*}
\|\eta_1 \eta_2\|_{L_{\psi}({\mathbb P})}\leq \|\eta_1^2\|_{L_{\psi}({\mathbb P})}^{1/2}
\|\eta_2^2\|_{L_{\psi}({\mathbb P})}^{1/2}.
\end{align*}
\end{lemma}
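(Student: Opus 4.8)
The plan is to reduce the claim to the elementary arithmetic--geometric mean inequality, combined with the monotonicity and convexity of $\psi$. First I would dispose of the degenerate cases. If $\|\eta_1^2\|_{L_\psi({\mathbb P})}=\infty$ or $\|\eta_2^2\|_{L_\psi({\mathbb P})}=\infty$, the right-hand side is infinite and there is nothing to prove. If, say, $\|\eta_1^2\|_{L_\psi({\mathbb P})}=0$, then for every $c>0$ we have ${\mathbb E}\psi(\eta_1^2/c)\le 1$; using $\psi(u)\ge c'u$ this gives $(c'/c){\mathbb E}\eta_1^2\le 1$ for all $c>0$, hence ${\mathbb E}\eta_1^2=0$, so $\eta_1=0$ a.s. and both sides of the asserted inequality vanish. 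Thus I may assume $c_i:=\|\eta_i^2\|_{L_\psi({\mathbb P})}\in(0,\infty)$ for $i=1,2$. Moreover ${\mathbb E}\psi(\eta_i^2/c_i)\le 1$: since $\psi$ is convex and finite it is continuous, and as $c\downarrow c_i$ the quantities $\psi(\eta_i^2/c)$ increase to $\psi(\eta_i^2/c_i)$ (because $\psi$ is nondecreasing on ${\mathbb R}_+$), so the bound at $c=c_i$ follows from monotone convergence; alternatively one may simply run the argument below with $c_i$ replaced by $c_i(1+\varepsilon)$ and let $\varepsilon\downarrow 0$.

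Next I would use, pointwise, the AM--GM bound $\dfrac{|\eta_1\eta_2|}{\sqrt{c_1 c_2}}\le \dfrac12\Bigl(\dfrac{\eta_1^2}{c_1}+\dfrac{\eta_2^2}{c_2}\Bigr)$. Since $\psi$ is nonnegative, even, and increasing on ${\mathbb R}_+$, monotonicity yields $\psi\!\Bigl(\dfrac{|\eta_1\eta_2|}{\sqrt{c_1 c_2}}\Bigr)\le \psi\!\Bigl(\dfrac12\bigl(\dfrac{\eta_1^2}{c_1}+\dfrac{\eta_2^2}{c_2}\bigr)\Bigr)$, and convexity of $\psi$ then gives $\psi\!\Bigl(\dfrac12\bigl(\dfrac{\eta_1^2}{c_1}+\dfrac{\eta_2^2}{c_2}\bigr)\Bigr)\le \dfrac12\psi\!\bigl(\dfrac{\eta_1^2}{c_1}\bigr)+\dfrac12\psi\!\bigl(\dfrac{\eta_2^2}{c_2}\bigr)$. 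Taking expectations, ${\mathbb E}\psi\!\Bigl(\dfrac{|\eta_1\eta_2|}{\sqrt{c_1 c_2}}\Bigr)\le \dfrac12+\dfrac12=1$, which by the definition of the Orlicz norm means precisely that $\|\eta_1\eta_2\|_{L_\psi({\mathbb P})}\le \sqrt{c_1 c_2}=\|\eta_1^2\|_{L_\psi({\mathbb P})}^{1/2}\|\eta_2^2\|_{L_\psi({\mathbb P})}^{1/2}$, as claimed.

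There is no serious obstacle here; the only point that requires a small amount of care is the passage from "$c_i$ exceeds the norm" to "$c_i$ equals the norm", i.e.\ the justification that ${\mathbb E}\psi(\eta_i^2/c_i)\le1$ holds at the value $c_i=\|\eta_i^2\|_{L_\psi({\mathbb P})}$, which is the monotone-convergence remark above (and can be bypassed by the $\varepsilon$-limit). Note also that the proof uses only that $\psi$ is convex, even, nondecreasing on ${\mathbb R}_+$ with $\psi(0)=0$ (and the lower bound $\psi(u)\ge c'u$ merely to make the degenerate case transparent); the upper bound $\psi(u)\le c''\psi_1(u)$ from the definition of the class $\Psi$ plays no role.
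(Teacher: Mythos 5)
Your proof is correct, and since the paper states this as one of its ``simple lemmas'' without giving any argument, your AM--GM plus convexity reasoning is exactly the standard proof that is intended: the pointwise bound $\frac{|\eta_1\eta_2|}{\sqrt{c_1c_2}}\le\frac12\bigl(\frac{\eta_1^2}{c_1}+\frac{\eta_2^2}{c_2}\bigr)$ combined with monotonicity and convexity of $\psi$ yields ${\mathbb E}\psi\bigl(\frac{|\eta_1\eta_2|}{\sqrt{c_1c_2}}\bigr)\le 1$ and hence the claim. Your care with the attainment of the Orlicz norm (monotone convergence, or the $\varepsilon$-perturbation) and with the degenerate cases is appropriate and complete; nothing is missing.
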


\begin{lemma}
\label{lemma_psi_simple_2}	
For all events $E,$
\begin{align*}
\|I_E\|_{L_{\psi}({\mathbb P})}= \tilde\psi({\mathbb P}(E)).
\end{align*}
\end{lemma}

We are now ready to present the proof of Theorem \ref{conc_main}.

\begin{proof}
We will use the following representation:
\begin{align}
\label{main_repr_conc}
&
\nonumber
\tilde f_{\delta,k}(\tilde \theta_{\delta})-f(\theta)
\\
&
\nonumber
=\tilde f_{\delta,k}(\tilde \theta_{\delta})-
{\mathbb E}_{\theta}\tilde f_{\delta,k}(\tilde \theta_{\delta})
+{\mathbb E}_{\theta}\tilde f_{\delta,k}(\tilde \theta_{\delta})- f(\theta)
\\
&
\nonumber
=f(\tilde \theta_{\delta})-
{\mathbb E}_{\theta}f(\tilde \theta_{\delta})
+
(\tilde f_{\delta,k}-f)(\tilde \theta_{\delta})-
{\mathbb E}_{\theta}(\tilde f_{\delta,k}-f)(\tilde \theta_{\delta})
+{\mathbb E}_{\theta}\tilde f_{\delta,k}(\tilde \theta_{\delta})- f(\theta)
\\
&
\nonumber
= n^{-1/2} \langle f'(\theta), \xi_{\delta}(\theta)-{\mathbb E}\xi_{\delta}(\theta)\rangle 
+ 
S_{f}(\theta, n^{-1/2}\xi_{\delta}(\theta))- {\mathbb E} S_{f}(\theta, n^{-1/2}\xi_{\delta}(\theta))
\\
&
\nonumber
\ \ \ + (\tilde f_{\delta,k}-f)(\tilde \theta_{\delta})-
{\mathbb E}_{\theta}(\tilde f_{\delta,k}-f)(\tilde \theta_{\delta})
+
{\mathbb E}_{\theta}\tilde f_{\delta,k}(\tilde \theta_{\delta})- f(\theta)
\\
&
\nonumber
= n^{-1/2} \langle f'(\theta), \xi(\theta)\rangle+ 
n^{-1/2} \langle f'(\theta), \xi_{\delta}(\theta)-\xi(\theta)-{\mathbb E}(\xi_{\delta}(\theta)-\xi(\theta))\rangle
\\
&
\nonumber
\ \ \ +
S_{f}(\theta, n^{-1/2}\xi_{\delta}(\theta))- {\mathbb E} S_{f}(\theta, n^{-1/2}\xi_{\delta}(\theta))
+ (\tilde f_{\delta,k}-f)(\tilde \theta_{\delta})-
{\mathbb E}_{\theta}(\tilde f_{\delta,k}-f)(\tilde \theta_{\delta})
\\
&
\ \ \ + {\mathbb E}_{\theta}\tilde f_{\delta,k}(\tilde \theta_{\delta})- f(\theta),
\end{align}
where, for a differentiable function $g,$ 
\begin{align*}
S_g(\theta,h) := g(\theta+h)-g(\theta)- \langle g'(\theta), h\rangle 
\end{align*} 
denotes the remainder of the first order Taylor expansion of $g.$

Note that, under the assumption ${\frak d}_{\xi}(\Theta_{k\delta};s-1)\leq c_1 n$ with a sufficiently 
small constant $c_1,$ we have 
\begin{align*}
4 s^{s+1} {\mathbb E} \biggl(1+ \frac{\|\xi\|_{C^{s-1}(\Theta_{k\delta})}}{\sqrt{n}}\biggr)^{s-1}
\frac{\|\xi\|_{C^{s-1}(\Theta_{k\delta})}}{\sqrt{n}} \leq 1/2
\end{align*}
and Lemma \ref{bd_on_tilde_f_kdelta_B} could be used. Indeed,
\begin{align}
\label{bd_xi_s-1_kdelta}
&
\nonumber
{\mathbb E} \biggl(1+ \frac{\|\xi\|_{C^{s-1}(\Theta_{k\delta})}}{\sqrt{n}}\biggr)^{s-1}
\frac{\|\xi\|_{C^{s-1}(\Theta_{k\delta})}}{\sqrt{n}}
\leq 
\\
&
\nonumber
{\mathbb E}^{1/2} \biggl(1+ \frac{\|\xi\|_{C^{s-1}(\Theta_{k\delta})}}{\sqrt{n}}\biggr)^{2(s-1)}
\frac{{\mathbb E}^{1/2}\|\xi\|_{C^{s-1}(\Theta_{k\delta})}^2}{\sqrt{n}}
\\
&
\nonumber
\lesssim_s 
\Bigl(1+ \frac{{\mathbb E}^{1/2}\|\xi\|_{C^{s-1}(\Theta_{k\delta})}^{2(s-1)}}{n^{(s-1)/2}}\Bigr)
\frac{{\mathbb E}^{1/2}\|\xi\|_{C^{s-1}(\Theta_{k\delta})}^2}{\sqrt{n}}
\\
&
\lesssim_s \Bigl(1+ \Bigl(\frac{{\frak d}_{\xi}(\Theta_{k\delta};s-1)}{n}\Bigr)^{(s-1)/2}\Bigr)\sqrt{\frac{{\frak d}_{\xi}(\Theta_{k\delta};s-1)}{n}},
\end{align}
which implies the claim. Note that we also used here standard bounds on the norms of Gaussian random vectors (that follow, for instance, from Gaussian concentration). 

{\it Step 1}.  To control the bias ${\mathbb E}_{\theta}\tilde f_{\delta,k}(\tilde \theta_{\delta})- f(\theta),$ 
we will use a slight modification of the bound of Theorem 3.2 in \cite{Koltchinskii_Zhilova_19}
(see also bound 4 of Proposition \ref{prop_summary}). It takes into account that 
$\xi(\theta)$ is assumed to be smooth only in a neighborhood of set $\Theta$
(not in the whole space $E$ and even not in the whole parameter set $T$). 

\begin{lemma}
The following bound holds for all $\theta\in \Theta:$
\begin{align*}
&
|{\mathbb E}_{\theta}\tilde f_{\delta,k}(\tilde \theta_{\delta})- f(\theta)|
\lesssim_s 
\|f\|_{C^s(\Theta_{(k+1)\delta})} 
\Bigl(\Bigl(\sqrt{\frac{{\frak d}_{\xi}(\Theta_{k\delta};s-1)}{n}}\Bigr)^s
\\
&
+
\sqrt{\frac{{\frak d}_{\xi}(\Theta_{k\delta};s-1)}{n}}\Bigr)^k\frac{\|\Sigma(\theta)\|^{1/2}}{n^{1/2}}
{\mathbb P}^{1/2}\{\|\xi\|_{L_{\infty}(E)}\geq \delta\sqrt{n}\}\Bigr).
\end{align*}
\end{lemma}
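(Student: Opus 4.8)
The plan is to recognize that the quantity to be bounded is precisely the residual bias of the order-$k$ bias reduction scheme built from the ``estimator'' $\tilde\theta_\delta=G_\delta(\theta)=\theta+\xi_\delta(\theta)/\sqrt n$. Indeed, $\tilde{\mathcal T}_\delta\tilde f_{\delta,k}=({\mathcal I}+\tilde{\mathcal B}_\delta)\sum_{j=0}^k(-1)^j\tilde{\mathcal B}_\delta^j f=f+(-1)^k\tilde{\mathcal B}_\delta^{k+1}f$, so ${\mathbb E}_\theta\tilde f_{\delta,k}(\tilde\theta_\delta)-f(\theta)=(\tilde{\mathcal T}_\delta\tilde f_{\delta,k})(\theta)-f(\theta)=(-1)^k(\tilde{\mathcal B}_\delta^{k+1}f)(\theta)$, and it suffices to bound $\|\tilde{\mathcal B}_\delta^{k+1}f\|_{L_\infty(\Theta)}$. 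I would do this through the random-homotopy bias bound of Proposition~\ref{prop_summary}(4), applied to the ``estimator'' $\tilde\theta_\delta$ and to the elementary random homotopy $H_\delta(\theta;t):=\theta+t\xi_\delta(\theta)/\sqrt n$, for which $H_\delta(\theta;0)=\theta$, $H_\delta(\theta;1)=\tilde\theta_\delta$ and $\dot H_\delta(\theta;t)=\xi_\delta(\theta)/\sqrt n$.

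\emph{Localization.} The point of working with $\tilde\theta_\delta$ rather than $\tilde\theta$ is that $\xi_\delta(\theta)=\xi(\theta)I(\|\xi\|_{L_\infty(E)}<\delta\sqrt n)$ differs from $\xi(\theta)$ only through a $\theta$-independent scalar indicator, so on the event $\{\|\xi\|_{L_\infty(E)}<\delta\sqrt n\}$ the maps $\theta\mapsto H_\delta(\theta;t)$ inherit the $C^{s-1}$-regularity of $\xi$ on any neighborhood of $\Theta$ (so that \eqref{assume_glavnij} holds), while $\|\xi_\delta\|_{L_\infty(E)}<\delta\sqrt n$ forces the bootstrap chain of $\tilde\theta_\delta$ started at $\theta\in\Theta$ to stay in $\Theta_{j\delta}$ after $j$ steps. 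Hence the Fa\`a di Bruno expansion of $\tilde{\mathcal B}_\delta^{k+1}f$ underlying Proposition~\ref{prop_summary} evaluates $f$ and its derivatives only on $\Theta_{(k+1)\delta}$ and $\xi_\delta$ and its spatial derivatives only on $\Theta_{k\delta}$; rerunning the argument of \cite{Koltchinskii_Zhilova_19} with $T$ replaced by these nested neighborhoods --- the announced ``slight modification'' --- gives
\[
|(\tilde{\mathcal B}_\delta^{k+1}f)(\theta)|\lesssim_s\|f\|_{C^s(\Theta_{(k+1)\delta})}\Bigl({\mathbb E}\bigl(\|H_\delta\|_{C^{0,s-1}(\Theta_{k\delta}\times[0,1])}^{\sim}\bigr)^{s-1}\|\dot H_\delta\|_{C^{s-1}(\Theta_{k\delta}\times[0,1])}^{\sim}\Bigr)^{k}\Bigl(\Bigl\|{\mathbb E}\textstyle\int_0^1\dot H_\delta(\theta;t)\,dt\Bigr\|+{\mathbb E}\|\dot H_\delta\|_{L_\infty(\Theta_{k\delta}\times[0,1])}^{1+\rho}\Bigr).
\]

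\emph{Estimating the three factors.} Since $\|H_\delta\|_{C^{0,s-1}}^{\sim}\le1+n^{-1/2}\|\xi\|_{C^{s-1}(\Theta_{k\delta})}$ and $\|\dot H_\delta\|_{C^{s-1}}^{\sim}\le n^{-1/2}\|\xi\|_{C^{s-1}(\Theta_{k\delta})}$, Cauchy--Schwarz together with the standard Gaussian moment bounds for $\|\xi\|_{C^{s-1}(\Theta_{k\delta})}$ --- exactly the computation \eqref{bd_xi_s-1_kdelta} --- bounds the first expectation by $\lesssim_s\bigl(1+({\frak d}_\xi(\Theta_{k\delta};s-1)/n)^{(s-1)/2}\bigr)\sqrt{{\frak d}_\xi(\Theta_{k\delta};s-1)/n}$, hence by $\lesssim_s\sqrt{{\frak d}_\xi(\Theta_{k\delta};s-1)/n}$ under the hypothesis ${\frak d}_\xi(\Theta_{k\delta};s-1)\le c_1n$; its $k$-th power is $(\sqrt{{\frak d}_\xi(\Theta_{k\delta};s-1)/n})^k$. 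For the last factor, using $\xi\equiv0$ off $\Theta_\delta$, $\|\cdot\|_{L_\infty}\le\|\cdot\|_{C^{s-1}}$ and Jensen's inequality ($1+\rho\le2$), ${\mathbb E}\|\dot H_\delta\|_{L_\infty}^{1+\rho}=n^{-(1+\rho)/2}{\mathbb E}\|\xi_\delta\|_{L_\infty(E)}^{1+\rho}\lesssim({\frak d}_\xi(\Theta_{k\delta};s-1)/n)^{(1+\rho)/2}$; and since ${\mathbb E}\xi(\theta)=0$ we have ${\mathbb E}\xi_\delta(\theta)=-{\mathbb E}[\xi(\theta)I(\|\xi\|_{L_\infty(E)}\ge\delta\sqrt n)]$, so testing against the dual unit ball (each $\ell(\xi(\theta))$ being a centered real Gaussian of variance $\le\|\Sigma(\theta)\|$) and applying Cauchy--Schwarz give $\|{\mathbb E}\xi_\delta(\theta)\|\le\|\Sigma(\theta)\|^{1/2}{\mathbb P}^{1/2}\{\|\xi\|_{L_\infty(E)}\ge\delta\sqrt n\}$, whence $\|{\mathbb E}\int_0^1\dot H_\delta(\theta;t)\,dt\|\le n^{-1/2}\|\Sigma(\theta)\|^{1/2}{\mathbb P}^{1/2}\{\|\xi\|_{L_\infty(E)}\ge\delta\sqrt n\}$. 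Multiplying the three factors and using $k\cdot\tfrac12+\tfrac{1+\rho}{2}=\tfrac s2$, so that the ${\mathbb E}\|\dot H_\delta\|_{L_\infty}^{1+\rho}$ contribution merges with the $k$-th power into $(\sqrt{{\frak d}_\xi(\Theta_{k\delta};s-1)/n})^s$, produces exactly the asserted inequality.

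\emph{Main obstacle and an alternative.} The delicate step is the localization: one must carry the Fa\`a di Bruno / random-homotopy computation of \cite{Koltchinskii_Zhilova_19} through with the nested neighborhoods $\Theta_\delta\subset\cdots\subset\Theta_{(k+1)\delta}$ and check that truncation by the $\theta$-independent indicator does not degrade the smoothness actually used; everything else is bookkeeping of Gaussian moment bounds. A more self-contained route avoiding the homotopy machinery is to write $\tilde{\mathcal B}_\delta^{k+1}f=\tilde{\mathcal B}_\delta(\tilde{\mathcal B}_\delta^k f)$, bound $\|\tilde{\mathcal B}_\delta^k f\|_{C^{1+\rho}(\Theta_\delta)}\lesssim_s(\sqrt{{\frak d}_\xi(\Theta_{k\delta};s-1)/n})^k\|f\|_{C^s(\Theta_{(k+1)\delta})}$ by \eqref{bd_B_delta^j} and \eqref{bd_xi_s-1_kdelta}, and handle the single remaining application of $\tilde{\mathcal B}_\delta$ by a second-order Taylor expansion of $g:=\tilde{\mathcal B}_\delta^k f$ along $\theta\mapsto\theta+\xi_\delta(\theta)/\sqrt n$: splitting $(\tilde{\mathcal B}_\delta g)(\theta)=n^{-1/2}\langle g'(\theta),{\mathbb E}\xi_\delta(\theta)\rangle+{\mathbb E}[g(\theta+n^{-1/2}\xi_\delta(\theta))-g(\theta)-n^{-1/2}\langle g'(\theta),\xi_\delta(\theta)\rangle]$ and bounding the two pieces by $n^{-1/2}\|g\|_{C^{1+\rho}}\|\Sigma(\theta)\|^{1/2}{\mathbb P}^{1/2}\{\|\xi\|_{L_\infty(E)}\ge\delta\sqrt n\}$ and $\|g\|_{C^{1+\rho}}({\frak d}_\xi(\Theta_{k\delta};s-1)/n)^{(1+\rho)/2}$ respectively, as above.
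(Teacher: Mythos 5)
Your proposal is correct, and your ``alternative'' route is in fact precisely the paper's own proof: the bias is identified with $(-1)^k(\tilde{\mathcal B}_{\delta}^{k+1}f)(\theta)=(-1)^k(\tilde{\mathcal B}_{\delta}(\tilde{\mathcal B}_{\delta}^k f))(\theta)$, the single application of $\tilde{\mathcal B}_{\delta}$ is expanded along $H(\theta;t)=\theta+t\xi_{\delta}(\theta)/\sqrt{n}$, the linear term is handled through ${\mathbb E}\xi_{\delta}(\theta)=-{\mathbb E}\bigl[\xi(\theta)I(\|\xi\|_{L_{\infty}(E)}\geq \delta\sqrt{n})\bigr]$ and Cauchy--Schwarz (giving $n^{-1/2}\|\Sigma(\theta)\|^{1/2}{\mathbb P}^{1/2}$), the $\rho$-H\"older remainder gives $({\frak d}_{\xi}/n)^{(1+\rho)/2}$, and $\|\tilde{\mathcal B}_{\delta}^k f\|_{C^{1+\rho}(\Theta_{\delta})}$ is controlled exactly via \eqref{bd_B_delta^j} and \eqref{bd_xi_s-1_kdelta}. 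Your primary route through Proposition \ref{prop_summary}(4) with nested neighborhoods is the same idea in heavier packaging (the paper itself presents the lemma as a localized modification of that bound), so there is no substantive difference in approach.
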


\begin{proof}
Define a random homotopy between $\theta$ and $\tilde \theta_{\delta}$ as follows:
\begin{align*}
H(\theta;t):= \theta+ \frac{t \xi_{\delta}(\theta)}{\sqrt{n}}, \theta \in E, t\in [0,1].
\end{align*}
We have 
\begin{align*}
&
(\tilde {\mathcal B}_{\delta}^{k+1} f)(\theta)= {\mathbb E}_{\theta} (\tilde{\mathcal B}_{\delta}^k f)(\tilde \theta_{\delta})
-
(\tilde{\mathcal B}_{\delta}^k f)(\theta)
\\
&
= {\mathbb E} \Bigl((\tilde{\mathcal B}_{\delta}^k f)(H(\theta;1))- (\tilde{\mathcal B}_{\delta}^k f)(H(\theta;0))\Bigr)
= {\mathbb E} \int_0^1 \Bigl\langle (\tilde{\mathcal B}_{\delta}^k f)'(H(\theta;t)), \dot H(\theta;t)\Bigr\rangle dt
\\
&
=\Bigl\langle (\tilde{\mathcal B}_{\delta}^k f)' (\theta), {\mathbb E}\int_{0}^1 \dot H(\theta;t)dt\Bigr \rangle
+  {\mathbb E} \int_0^1 \Bigl\langle (\tilde{\mathcal B}_{\delta}^k f)' (H(\theta;t))-(\tilde{\mathcal B}_{\delta}^k f)' (\theta), \dot H(\theta;t)\Bigr\rangle dt.
\end{align*}
Note that 
\begin{align*}
{\mathbb E}\int_{0}^1 \dot H(\theta;t)dt= {\mathbb E} \frac{\xi_{\delta}(\theta)}{\sqrt{n}}
= {\mathbb E} \frac{\xi_{\delta}(\theta)-\xi(\theta)}{\sqrt{n}},
\end{align*}
implying that 
\begin{align*}
&
\Bigl\langle (\tilde{\mathcal B}_{\delta}^k f)' (\theta), {\mathbb E}\int_{0}^1 \dot H(\theta;t)dt\Bigr\rangle
=
n^{-1/2}{\mathbb E}\Bigl\langle (\tilde{\mathcal B}_{\delta}^k f)' (\theta), \xi_{\delta}(\theta)-\xi(\theta)\Bigr\rangle
\\
&
= 
-n^{-1/2}{\mathbb E}\langle (\tilde{\mathcal B}_{\delta}^k f)' (\theta), \xi(\theta)\rangle
I(\|\xi\|_{L_{\infty}(E)}\geq \delta\sqrt{n}).
\end{align*}
Therefore,
\begin{align*}
&
\Bigl|\Bigl\langle(\tilde{\mathcal B}_{\delta}^k f)' (\theta), {\mathbb E}\int_{0}^1 \dot H(\theta;t)dt\Bigr\rangle\Bigr|
\leq n^{-1/2} {\mathbb E}^{1/2}\langle (\tilde{\mathcal B}_{\delta}^k f)' (\theta), \xi(\theta)\rangle^2
\ {\mathbb P}^{1/2}\{\|\xi\|_{L_{\infty}(E)}\geq \delta\sqrt{n}\}
\\
&
= n^{-1/2} \langle \Sigma(\theta)(\tilde{\mathcal B}_{\delta}^k f)' (\theta),
(\tilde{\mathcal B}_{\delta}^k f)' (\theta)\rangle^{1/2} 
{\mathbb P}^{1/2}\{\|\xi\|_{L_{\infty}(E)}\geq \delta\sqrt{n}\}
\\
&
\leq \|(\tilde{\mathcal B}_{\delta}^k f)'\|_{L_{\infty}(\Theta)}
\frac{\|\Sigma(\theta)\|^{1/2}}{n^{1/2}}
{\mathbb P}^{1/2}\{\|\xi\|_{L_{\infty}(E)}\geq \delta\sqrt{n}\}.
\end{align*}
On the other hand, using the fact that $H(\theta;t)\in \Theta_{\delta}, \theta\in \Theta, t\in [0,1],$
we get
\begin{align*}
&
 \Bigl|{\mathbb E} \int_0^1 \Bigl\langle (\tilde{\mathcal B}_{\delta}^k f)' (H(\theta;t))-(\tilde{\mathcal B}_{\delta}^k f)' (\theta), \dot H(\theta;t)\Bigr\rangle
 dt\Bigr|
\\
&
\leq \|(\tilde{\mathcal B}_{\delta}^k f)'\|_{{\rm Lip}_{\rho}(\Theta_{\delta})}
{\mathbb E}\int_0^1 \|H(\theta;t)-\theta\|^{\rho} \|\dot H(\theta;t)\|dt
\\
& 
 \leq \|(\tilde{\mathcal B}_{\delta}^k f)'\|_{{\rm Lip}_{\rho}(\Theta_{\delta})}
\frac{{\mathbb E}\|\xi_{\delta}(\theta)\|^{1+\rho}}{n^{(1+\rho)/2}} \int_{0}^1 t^{\rho}dt
\leq \|(\tilde{\mathcal B}_{\delta}^k f)'\|_{{\rm Lip}_{\rho}(\Theta_{\delta})}
\frac{{\mathbb E}\|\xi(\theta)\|^{1+\rho}}{n^{(1+\rho)/2}}. 
\end{align*}
It remains to use bound \eqref{bd_B_delta^j} with $j=k$ to get 
\begin{align*}
&
\|(\tilde{\mathcal B}_{\delta}^k f)'\|_{L_{\infty}(\Theta)} \vee \|(\tilde{\mathcal B}_{\delta}^k f)'\|_{{\rm Lip}_{\rho}(\Theta_{\delta})}
\leq \|\tilde{\mathcal B}_{\delta}^k f\|_{C^{1+\rho}(\Theta_{\delta})}
\\
&
\lesssim_s \biggl({\mathbb E} \biggl(1+ \frac{\|\xi\|_{C^{s-1}(\Theta_{k\delta})}}{\sqrt{n}}\biggr)^{s-1}
\frac{\|\xi\|_{C^{s-1}(\Theta_{k\delta})}}{\sqrt{n}}\biggr)^k \|f\|_{C^s(\Theta_{(k+1)\delta})}
\end{align*}
and to use bound \eqref{bd_xi_s-1_kdelta} and the assumption ${\frak d}_{\xi}(\Theta_{k\delta};s-1)\leq c_1 n$
in order to complete the proof.
\end{proof}

{\it Step 2}. Next, we will bound the term  $\langle f'(\theta), \xi_{\delta}(\theta)-\xi(\theta)-{\mathbb E}(\xi_{\delta}(\theta)-\xi(\theta))\rangle:$
\begin{align*}
&
\Bigl\|n^{-1/2} \langle f'(\theta), \xi_{\delta}(\theta)-\xi(\theta)-{\mathbb E}(\xi_{\delta}(\theta)-\xi(\theta))\rangle\Bigr\|_{L_{\psi}({\mathbb P})}
\\
&
\leq 
n^{-1/2}\Bigl\|\langle f'(\theta), \xi(\theta)\rangle
I(\|\xi\|_{L_{\infty}(E)}\geq \delta\sqrt{n})\Bigr\|_{L_{\psi}({\mathbb P})}
\\
&
+
n^{-1/2}\Bigl\|{\mathbb E}\langle f'(\theta), \xi(\theta)I(\|\xi\|_{L_{\infty}(E)}\geq \delta\sqrt{n})\rangle
\Bigr\|_{L_{\psi}({\mathbb P})}
\\
&
\leq 
n^{-1/2} \|\langle f'(\theta), \xi(\theta)\rangle^2\|_{L_{\psi}({\mathbb P})}^{1/2}
\|I(\|\xi\|_{L_{\infty}(E)}\geq \delta\sqrt{n})\|_{L_{\psi}({\mathbb P})}^{1/2}
\\
&
+
n^{-1/2} \|1\|_{L_{\psi}({\mathbb P})} \|\langle f'(\theta), \xi(\theta)\rangle^2\|_{L_1({\mathbb P})}^{1/2}
\|I(\|\xi\|_{L_{\infty}(E)}\geq \delta\sqrt{n})\|_{L_1({\mathbb P})}^{1/2}
\\
&
\lesssim_{\psi} n^{-1/2} \|\langle f'(\theta), \xi(\theta)\rangle^2\|_{L_{\psi}({\mathbb P})}^{1/2}
\|I(\|\xi\|_{L_{\infty}(E)}\geq \delta\sqrt{n})\|_{L_{\psi}({\mathbb P})}^{1/2}
\\
&
=
n^{-1/2}\|Z^2\|_{L_{\psi}({\mathbb P})}^{1/2} 
\langle \Sigma(\theta) f'(\theta), f'(\theta)\rangle^{1/2}\ \tilde \psi^{1/2} ({\mathbb P}\{\|\xi\|_{L_{\infty}(E)}\geq \delta\sqrt{n}\})
\\
&
\leq \|Z^2\|_{L_{\psi}({\mathbb P})}^{1/2} \|f'(\theta)\|\frac{\|\Sigma(\theta)\|^{1/2}}{n^{1/2}}
\tilde \psi^{1/2} ({\mathbb P}\{\|\xi\|_{L_{\infty}(E)}\geq \delta\sqrt{n}\})
\\
&
\lesssim_{\psi} \|f\|_{C^s(\Theta)} 
\frac{\|\Sigma(\theta)\|^{1/2}}{n^{1/2}}
\tilde \psi^{1/2} ({\mathbb P}\{\|\xi\|_{L_{\infty}(E)}\geq \delta\sqrt{n}\}),
\end{align*}
where $Z\sim N(0,1).$

 A more difficult part of the proof is to use Gaussian concentration inequalities to bound the terms 
$$S_{f}(\theta, n^{-1/2}\xi_{\delta}(\theta))- {\mathbb E} S_{f}(\theta,n^{-1/2}\xi_{\delta}(\theta))\ {\rm and}\ (\tilde f_{\delta,k}-f)(\tilde \theta_{\delta})-
{\mathbb E}_{\theta}(\tilde f_{\delta,k}-f)(\tilde \theta_{\delta}).$$
It will be done in steps 3-5. 

\vskip 1mm

{\it Step 3}. Note that 
\begin{align*}
&
S_{f}(\theta, n^{-1/2}\xi_{\delta}(\theta))
=
S_{f}(\theta, n^{-1/2}\xi(\theta))I(\|\xi\|_{L_{\infty}(E)}< \delta\sqrt{n}).
\end{align*}
We will prove concentration bounds for a ``smoothed version" of the above random 
variable.  
Namely, let $\varphi :{\mathbb R}\mapsto [0,1]$ be a non-increasing function 
such that $\varphi(u):=1, u\leq 1,$ $\varphi(u):=0, u\geq 2$ and 
$\varphi(u):=2-u, u\in (1,2).$ Clearly, $\varphi $ is Lipschitz with constant $1.$  
We can write 
\begin{align}
\label{represent_S_tilde_f} 
\nonumber
S_{f}(\theta, n^{-1/2}\xi_{\delta}(\theta)) 
&= 
S_{f}(\theta, n^{-1/2}\xi(\theta))\varphi\biggl(\frac{\|\xi(\theta)\|}{\delta\sqrt{n}}\biggr)
\\
&
+ S_{f}(\theta, n^{-1/2}\xi_{\delta}(\theta))- S_{f}(\theta, n^{-1/2}\xi(\theta))\varphi\biggl(\frac{\|\xi(\theta)\|}{\delta\sqrt{n}}\biggr).
\end{align} 
with $S_{f}(\theta, n^{-1/2}\xi(\theta))\varphi\biggl(\frac{\|\xi(\theta)\|}{\delta\sqrt{n}}\biggr)$ being a
``smooth part" and the rest being a remainder. We will first control the remainder. 
Observe that if $\|\xi\|_{L_{\infty}(E)}< \delta\sqrt{n},$ then 
\begin{align*}
S_{f}(\theta, n^{-1/2}\xi_{\delta}(\theta))- S_{f}(\theta, n^{-1/2}\xi(\theta))\varphi\biggl(\frac{\|\xi(\theta)\|}{\delta\sqrt{n}}\biggr)=0.
\end{align*}
Therefore, using lemmas \ref{lemma_psi_simple_1} and \ref{lemma_psi_simple_2}, we get 		
\begin{align}
\label{S-Svarphi}
&
\nonumber
\biggl\|S_{f}(\theta, n^{-1/2}\xi_{\delta}(\theta))- S_{f}(\theta, n^{-1/2}\xi(\theta))\varphi\biggl(\frac{\|\xi(\theta)\|}{\delta\sqrt{n}}\biggr)\biggr\|_{L_{\psi}({\mathbb P})}
\\
&
\nonumber
=
\biggl\|S_{f}(\theta, n^{-1/2}\xi(\theta))\varphi\biggl(\frac{\|\xi(\theta)\|}{\delta\sqrt{n}}\biggr) I(\|\xi\|_{L_{\infty}(E)}\geq  \delta\sqrt{n})\biggr\|_{L_{\psi}({\mathbb P})}
\\
&
\nonumber
\leq \biggl\|S_{f}^2(\theta, n^{-1/2}\xi(\theta))\varphi^2\biggl(\frac{\|\xi(\theta)\|}{\delta\sqrt{n}}\biggr)\biggr\|_{L_{\psi}({\mathbb P})}^{1/2}
\Bigl\|I(\|\xi\|_{L_{\infty}(E)}\geq  \delta\sqrt{n})\biggr\|_{L_{\psi}({\mathbb P})}^{1/2}
\\
&
=
\biggl\|S_{f}^2(\theta, n^{-1/2}\xi(\theta))\varphi^2\biggl(\frac{\|\xi(\theta)\|}{\delta\sqrt{n}}\biggr)\biggr\|_{L_{\psi}({\mathbb P})}^{1/2} 
\tilde \psi^{1/2}({\mathbb P}\{\|\xi\|_{L_{\infty}(E)}\geq \delta\sqrt{n}\}).
\end{align}
Note that  $\varphi\biggl(\frac{\|\xi(\theta)\|}{\delta\sqrt{n}}\biggr)=0$ when $\|\xi(\theta)\|>2\delta\sqrt{n}.$
So, if $f$ is a Lipschitz function in $\Theta_{2\delta},$ we have
\begin{align*}
\biggl|S_{f}(\theta, n^{-1/2}\xi(\theta))\varphi\biggl(\frac{\|\xi(\theta)\|}{\delta\sqrt{n}}\biggr)\biggr| 
\leq 2\|f\|_{{\rm Lip}(\Theta_{2\delta})} n^{-1/2}\|\xi(\theta)\|.
\end{align*}
Therefore,
\begin{align*}
&
\biggl\|S_{f}^2(\theta, n^{-1/2}\xi(\theta))\varphi^2\biggl(\frac{\|\xi(\theta)\|}{\delta\sqrt{n}}\biggr)\biggr\|_{L_{\psi}({\mathbb P})}^{1/2} 
\leq  
2\|f\|_{{\rm Lip}(\Theta_{2\delta})}n^{-1/2}\Bigl\|\|\xi(\theta)\|^2\Bigr\|_{L_{\psi}({\mathbb P})}^{1/2}.
\end{align*}
Since $\psi(u)\leq \psi_1(cu), u\geq 0$ for some $c>0,$ we have 
\begin{align*}
\Bigl\|\|\xi(\theta)\|^2\Bigr\|_{L_{\psi}({\mathbb P})}^{1/2}
\lesssim_{\psi} \Bigl\|\|\xi(\theta)\|^2\Bigr\|_{L_{\psi_1}({\mathbb P})}^{1/2}
\leq \Bigl\|\|\xi(\theta)\|\Bigr\|_{L_{\psi_2}({\mathbb P})}.
\end{align*}
By a standard application of Gaussian concentration inequality, 
\begin{align}
\label{psi_2_on_xi}
&
\nonumber
\Bigl\|\|\xi(\theta)\|\Bigr\|_{L_{\psi_2}({\mathbb P})}\leq 
\Bigl\|\|\xi(\theta)\|-{\mathbb E}\|\xi(\theta)\|\Bigr\|_{L_{\psi_2}({\mathbb P})}
+{\mathbb E}\|\xi(\theta)\|\|1\|_{L_{\psi_2}({\mathbb P})}
\\
&
\lesssim \|\Sigma(\theta)\|^{1/2} + {\mathbb E}\|\xi(\theta)\| 
\lesssim {\mathbb E}^{1/2}\|\xi(\theta)\|^2.
\end{align}
As a result, 
we have
\begin{align*}
&
\biggl\|S_{f}^2(\theta, n^{-1/2}\xi(\theta))\varphi^2\biggl(\frac{\|\xi(\theta)\|}{\delta\sqrt{n}}\biggr)\biggr\|_{L_{\psi}({\mathbb P})}^{1/2} 
\lesssim 
\|f\|_{C^s(\Theta_{2\delta})}\frac{{\mathbb E}^{1/2}\|\xi(\theta)\|^2}{n^{1/2}}
\end{align*}
and plugging the last bound into \eqref{S-Svarphi}, we get 
\begin{align*}
&
\biggl\|S_{f}(\theta, n^{-1/2}\xi_{\delta}(\theta))- S_{f}(\theta, n^{-1/2}\xi(\theta))\varphi\biggl(\frac{\|\xi(\theta)\|}{\delta\sqrt{n}}\biggr)\biggr\|_{L_{\psi}({\mathbb P})}
\\
&
\lesssim_{\psi}  
\|f\|_{C^s(\Theta_{2\delta})}\frac{{\mathbb E}^{1/2}\|\xi(\theta)\|^2}{n^{1/2}}
\tilde \psi^{1/2}({\mathbb P}\{\|\xi\|_{L_{\infty}(E)}\geq \delta\sqrt{n}\}\}).
\end{align*}
This also implies that 
\begin{align}
\label{conc_bd_A} 
&
\nonumber
\biggl\|
S_{f}(\theta, n^{-1/2}\xi_{\delta}(\theta))- S_{f}(\theta, n^{-1/2}\xi(\theta))\varphi\biggl(\frac{\|\xi(\theta)\|}{\delta\sqrt{n}}\biggr)
\\
&
\nonumber
-{\mathbb E}\biggl(S_{f}(\theta, n^{-1/2}\xi_{\delta}(\theta))- S_{f}(\theta, n^{-1/2}\xi(\theta))\varphi\biggl(\frac{\|\xi(\theta)\|}{\delta\sqrt{n}}\biggr)\biggr)
\biggr\|_{L_{\psi}({\mathbb P})}
\\
&
\lesssim_{\psi}  
\|f\|_{C^s(\Theta_{2\delta})}\frac{{\mathbb E}^{1/2}\|\xi(\theta)\|^2}{n^{1/2}}  
\tilde \psi^{1/2}({\mathbb P}\{\|\xi\|_{L_{\infty}(E)}\geq \delta\sqrt{n}\}).
\end{align}

{\it Step 4.} Now we are ready to prove a concentration bound for 
r.v. 
$$
S_{f}(\theta, n^{-1/2}\xi_{\delta}(\theta))- 
{\mathbb E} S_{f}(\theta, n^{-1/2}\xi_{\delta}(\theta)).
$$

\begin{lemma}
The following bound holds:
\begin{align*}
&
\Bigl\|S_{f}(\theta, n^{-1/2}\xi_{\delta}(\theta))- 
{\mathbb E} S_{f}(\theta, n^{-1/2}\xi_{\delta}(\theta))\Bigr\|_{L_{\psi}({\mathbb P})}
\\
&
\lesssim_{s,\psi}
\|f\|_{C^s(\Theta_{3\delta})}
\frac{{\mathbb E}^{1/2}\|\xi(\theta)\|^2}{n^{1/2}}
\biggl(\frac{\|\Sigma(\theta)\|^{1/2}}{n^{1/2}}+
\tilde \psi^{1/2}({\mathbb P}\{\|\xi\|_{L_{\infty}(E)}\geq \delta\sqrt{n}\})\biggr).
\end{align*}
\end{lemma}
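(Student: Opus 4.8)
The plan is to combine the decomposition \eqref{represent_S_tilde_f} with the Maurey--Pisier inequality. Write
\[
W(\theta):=S_{\tilde f_{\delta,k}}(\theta, n^{-1/2}\xi(\theta))\,\varphi\Bigl(\tfrac{\|\xi(\theta)\|}{\delta\sqrt{n}}\Bigr),
\]
so that $S_{\tilde f_{\delta,k}}(\theta,n^{-1/2}\xi_\delta(\theta))=W(\theta)+R(\theta)$ with $R(\theta)$ the remainder appearing in \eqref{represent_S_tilde_f}. The centered remainder $R(\theta)-\mathbb E R(\theta)$ has already been estimated in \eqref{conc_bd_A}, and its bound is precisely the second term on the right-hand side of the lemma, after the harmless enlargement $\|f\|_{C^s(\Theta_{(k+2)\delta})}\le\|f\|_{C^s(\Theta_{(k+3)\delta})}$. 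By the triangle inequality in $L_\psi(\mathbb P)$ it therefore suffices to bound $\|W(\theta)-\mathbb E W(\theta)\|_{L_\psi(\mathbb P)}$ by the first term.

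For this I would apply Lemma \ref{Maurey_Pisier}. Using a Gaussian series representation $\xi(\theta)=\sum_j\zeta_j\phi_j(\theta)$ with $\zeta_j$ i.i.d.\ standard normal and $\phi_j(\theta)\in E$, and setting $\Phi(x):=S_{\tilde f_{\delta,k}}(\theta,n^{-1/2}x)\varphi(\|x\|/(\delta\sqrt{n}))$, one has $W(\theta)=\Phi(\xi(\theta))$, and each finite truncation is a locally Lipschitz function $g_N$ of a standard Gaussian vector $Z^{(N)}\in{\mathbb R}^N$. The chain rule in the Gaussian coordinates together with the identity $\sum_j\langle\ell,\phi_j(\theta)\rangle^2=\langle\Sigma(\theta)\ell,\ell\rangle\le\|\Sigma(\theta)\|\,\|\ell\|^2$ gives the pointwise bound $(Lg_N)(Z^{(N)})\le\|\Sigma(\theta)\|^{1/2}(L_E\Phi)(\xi_N(\theta))$, where $(L_E\Phi)$ denotes the local Lipschitz constant of $\Phi$ with respect to the norm of $E$. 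Passing to the limit $N\to\infty$ (legitimate since $\Phi$ is bounded), Lemma \ref{Maurey_Pisier} yields
\[
\|W(\theta)-\mathbb E W(\theta)\|_{L_\psi(\mathbb P)}\le\tfrac{\pi}{2}\,\|\Sigma(\theta)\|^{1/2}\,\bigl\|(L_E\Phi)(\xi(\theta))\bigr\|_{L_{\psi^{\sharp}}(\mathbb P)}.
\]

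Next I would estimate $L_E\Phi$ by the product rule for local Lipschitz constants applied to the factors $\Phi_1(x)=S_{\tilde f_{\delta,k}}(\theta,n^{-1/2}x)$ and $\Phi_2(x)=\varphi(\|x\|/(\delta\sqrt{n}))$. Since $\varphi$ is $1$-Lipschitz and vanishes for $\|x\|\ge2\delta\sqrt{n}$, on its support $\theta+tn^{-1/2}x\in\Theta_{2\delta}$ for all $t\in[0,1]$, whence $|\Phi_1(x)|\le\|\tilde f_{\delta,k}\|_{C^{1+\rho}(\Theta_{2\delta})}(n^{-1/2}\|x\|)^{1+\rho}$ and $(L_E\Phi_1)(x)\le\|\tilde f_{\delta,k}\|_{C^{1+\rho}(\Theta_{2\delta})}n^{-(1+\rho)/2}\|x\|^\rho$, while $|\Phi_2|\le1$ and $(L_E\Phi_2)(x)\le(\delta\sqrt{n})^{-1}$. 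Because $\|x\|\le2\delta\sqrt{n}$ on the support, the contribution $|\Phi_1|(L_E\Phi_2)$ is of the same order $\|\tilde f_{\delta,k}\|_{C^{1+\rho}(\Theta_{2\delta})}n^{-(1+\rho)/2}\|x\|^\rho$, so that $(L_E\Phi)(x)\lesssim\|\tilde f_{\delta,k}\|_{C^{1+\rho}(\Theta_{2\delta})}n^{-(1+\rho)/2}\|x\|^\rho$ for every $x$ (the bound being trivial where $\Phi\equiv0$); in particular the dangerous factor $\delta^{-1}$ has disappeared. By the neighborhood-shifted version of Lemma \ref{bd_on_tilde_f_kdelta_B} (applicable after routine bookkeeping of the sets $\Theta_{m\delta}$, using the hypothesis ${\frak d}_\xi(\Theta_{k\delta};s-1)\le c_1n$ together with the computation \eqref{bd_xi_s-1_kdelta}) one has $\|\tilde f_{\delta,k}\|_{C^{1+\rho}(\Theta_{2\delta})}\le2\|f\|_{C^s(\Theta_{(k+2)\delta})}$. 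Finally, since $\psi\le c''\psi_1$, the function $\psi^{\sharp}(u)=\mathbb E\psi(uZ)$ grows at most like a subgaussian Orlicz function, and Gaussian concentration gives $\bigl\|\|\xi(\theta)\|\bigr\|_{L_{\psi_2}(\mathbb P)}\lesssim\|\Sigma(\theta)\|^{1/2}+\mathbb E\|\xi(\theta)\|\lesssim\bigl(\mathbb E\|\xi(\theta)\|^2\bigr)^{1/2}$, hence $\bigl\|\|\xi(\theta)\|^\rho\bigr\|_{L_{\psi^{\sharp}}(\mathbb P)}\lesssim_\psi\bigl(\mathbb E\|\xi(\theta)\|^2\bigr)^{\rho/2}$. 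Combining the three displays bounds $\|W(\theta)-\mathbb E W(\theta)\|_{L_\psi(\mathbb P)}$ by $\|f\|_{C^s(\Theta_{(k+3)\delta})}\,\|\Sigma(\theta)\|^{1/2}n^{-1/2}(\mathbb E\|\xi(\theta)\|^2)^{\rho/2}n^{-\rho/2}$, which is the first term of the asserted bound; adding the remainder estimate from \eqref{conc_bd_A} completes the proof.

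The main obstacle, I expect, is the transfer step through Maurey--Pisier: one must set up the Gaussian representation of $\xi(\theta)$ so that the $\ell_2$-local Lipschitz constant of $\Phi$ in the Gaussian coordinates is controlled by $\|\Sigma(\theta)\|^{1/2}$ times the $E$-norm local Lipschitz constant of $\Phi$ --- this is the point that converts a statement about Lipschitz functions on the (high-dimensional) space $E$ into a concentration estimate governed only by the weak variance $\|\Sigma(\theta)\|$. The second delicate point is the product-rule computation, where one has to check that differentiating the smoothing cutoff $\varphi$ produces a term no larger on its support than the genuine second-order Taylor remainder, so that no spurious $\delta^{-1}$ survives in the final bound. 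The remaining ingredients --- Gaussian concentration for $\|\xi(\theta)\|$, the Orlicz embedding for $\psi^{\sharp}$, and the bookkeeping of the nested neighborhoods $\Theta_{m\delta}$ --- are routine.
\qed
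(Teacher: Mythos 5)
Your proposal is correct and follows essentially the same route as the paper: the same splitting of $S_{\tilde f_{\delta,k}}(\theta,n^{-1/2}\xi_{\delta}(\theta))$ into the smoothed part and the remainder already handled in \eqref{conc_bd_A}, the same Maurey--Pisier argument through a Gaussian series representation with the transfer $(Lg_N)(Z^{(N)})\leq \|\Sigma(\theta)\|^{1/2}(L_E\Phi)(\xi^{(N)}(\theta)),$ and the same observation that differentiating the cutoff only produces a term of order $\|h\|^{1+\rho}/\delta\lesssim \|h\|^{\rho}$ on its support, so no factor $\delta^{-1}$ survives. The only point where you diverge is the estimate of $\bigl\|\|\xi(\theta)\|^{\rho}\bigr\|_{L_{\psi^{\sharp}}({\mathbb P})}$: you bound it directly via the comparison of $\psi^{\sharp}$ with a subgaussian Orlicz function and Gaussian concentration for $\|\xi(\theta)\|,$ whereas the paper splits $\|\xi^{(N)}(\theta)\|^{\rho}$ into a bounded piece and a Lipschitz piece and applies Maurey--Pisier a second time; both arguments yield the same bound $({\mathbb E}\|\xi(\theta)\|^2)^{\rho/2}.$
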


\begin{proof} 
We need some elementary bounds on the remainder of the first order Taylor expansion
$
S_g(\theta;h)= g(\theta+h)-g(\theta)-g^{\prime}(\theta)(h)
$
that will be used in the proof.

\begin{lemma}
\label{Taylor_remaind}
For any function $g\in C^{1+\rho}(\Theta_{\delta}),$ $\rho\in (0,1],$
the following bounds hold for all $\theta\in \Theta$ and all $h,h'$ with 
$\|h\|<\delta, \|h'\|<\delta:$
\begin{align*}
|S_g(\theta;h)| \lesssim \|g\|_{C^{1+\rho}(\Theta_{\delta})}\|h\|^{1+\rho}
\end{align*}
and 
\begin{align*}
|S_g(\theta;h)-S_g(\theta;h')|\lesssim \|g\|_{C^{1+\rho}(\Theta_{\delta})}(\|h\|^{\rho}\vee \|h'\|^{\rho})\|h-h'\|.
\end{align*}
\end{lemma}

In view of bound \eqref{conc_bd_A}, it is enough to control r.v.
\begin{align*}
S_{f}(\theta, n^{-1/2}\xi(\theta))\varphi\biggl(\frac{\|\xi(\theta)\|}{\delta\sqrt{n}}\biggr)
- {\mathbb E}S_{f}(\theta, n^{-1/2}\xi(\theta))\varphi\biggl(\frac{\|\xi(\theta)\|}{\delta\sqrt{n}}\biggr),
\end{align*}
which will rely on Gaussian concentration.
Denoting
$$
\bar g(h):= S_{f}(\theta, h) \varphi\biggl(\frac{\|h\|}{\delta}\biggr),
$$
we will bound $\|\bar g(n^{-1/2} \xi(\theta))-{\mathbb E}\bar g(n^{-1/2} \xi(\theta))\|_{L_\psi({\mathbb P})}.$
By Lemma \ref{Taylor_remaind} with $\rho=1$ and $\Theta_{2\delta}$ instead of $\Theta_{\delta},$ 
\begin{align*}
|S_{f}(\theta, h)|\lesssim \|f\|_{C^{2}(\Theta_{2\delta})}\|h\|^{2}
\end{align*}
and
\begin{align*}
|S_{f}(\theta, h)-S_{f}(\theta, h')|
\lesssim \|f\|_{C^{2}(\Theta_{2\delta})}(\|h\|\vee \|h'\|) \|h-h'\|
\end{align*}
for all $\theta\in \Theta$ an all $h,h'$ with $\|h\|<2\delta, \|h'\|<2\delta.$
Since, in addition, the function $h\mapsto \varphi\Bigl(\frac{\|h\|}{\delta}\Bigr)$
is Lipschitz with constant $\delta^{-1}$ and bounded by $1,$ we easily get that, for the same $\theta, h, h',$ 
\begin{align*}
&
|\bar g(h)-\bar g(h')|
\\
&
\lesssim \|f\|_{C^{2}(\Theta_{2\delta})}(\|h\|\vee \|h'\|) \|h-h'\| +
\|f\|_{C^{2}(\Theta_{2\delta})}\|h\|^{2}\frac{1}{\delta}\|h-h'\| 
\\
&
\lesssim  \|f\|_{C^{2}(\Theta_{2\delta})}(\|h\|\vee \|h'\|) \|h-h'\|.
\end{align*}
Obviously, the same bound with $\|f\|_{C^{2}(\Theta_{3\delta})}$
instead of $\|f\|_{C^{2}(\Theta_{2\delta})}$ holds for all $h,h'$ with 
$\|h\|<3\delta, \|h'\|<3\delta.$ Recall that $\bar g(h)=0$ for $\|h\|\geq 2\delta,$ so the bound trivially holds for 
all $\|h\|\geq 2\delta, \|h'\|\geq 2\delta.$ 
Thus, it remains to consider the case when $\|h\|<2\delta,$ $\|h'\|\geq 3\delta$ (and, hence, $\|h-h'\|>\delta$).
In this case,
\begin{align*}
&
|\bar g(h)-\bar g(h')| = |\bar g(h)| \lesssim \|f\|_{C^{2}(\Theta_{2\delta})}\|h\|^{2}
\\
&
\lesssim \|f\|_{C^{2}(\Theta_{2\delta})}\|h\| \delta
\lesssim \|f\|_{C^{2}(\Theta_{2\delta})}\|h\|\|h-h'\|.
\end{align*}
This proves the bound 
\begin{align}
\label{Lipschitz_on_barg}
|\bar g(h)-\bar g(h')| \lesssim \|f\|_{C^{2}(\Theta_{3\delta})}(\|h\|\vee \|h'\|) \|h-h'\|
\end{align}
for all $\theta\in \Theta, h,h'\in E.$ Note also that 
\begin{align*} 
\|\bar g\|_{L_{\infty}(E)} \lesssim \|f\|_{C^{2}(\Theta_{2\delta})}\delta^{2}.
\end{align*}

It is well known that Gaussian r.v. $\xi(\theta)$ in a separable Banach space $E$ could be represented as 
$$\xi(\theta)= \sum_{k\geq 1} Z_k x_k(\theta),$$
where the series converges a.s. in $E,$ $Z_k, k\geq 1$ are i.i.d. standard normal random variables, 
$x_k(\theta)\in E, k\geq 1$ and $\sum_{k\geq 1}\|x_k(\theta)\|^2<\infty.$ 
Denote $\xi^{(N)}(\theta)= \sum_{k=1}^N Z_k x_k(\theta).$ Since $\|\xi^{(N)}(\theta)-\xi(\theta)\|\to 0$
as $N\to\infty$ a.s. and $\bar g$ is a uniformly bounded continuous function on $E,$ we have 
\begin{align}
\label{approx_N_inf}
\Bigl\|\bar g(n^{-1/2}\xi(\theta))- {\mathbb E}\bar g(n^{-1/2}\xi(\theta))- (\bar g(n^{-1/2}\xi^{(N)}(\theta))- {\mathbb E}\bar g(n^{-1/2}\xi^{(N)}(\theta)))\Bigr\|_{L_{\psi}({\mathbb P})}\to 0
\end{align}
as $N\to\infty.$ Therefore, to control 
$\|\bar g(n^{-1/2} \xi(\theta))-{\mathbb E}\bar g(n^{-1/2} \xi(\theta))\|_{L_{\psi}({\mathbb P})},$
it would be enough to prove an upper bound 
on 
$$
\|\bar g(n^{-1/2} \xi^{(N)}(\theta))-{\mathbb E}\bar g(n^{-1/2} \xi^{(N)}(\theta))\|_{L_{\psi}({\mathbb P})}
$$
that holds uniformly in $N.$  

To this end, let $Z^{(N)}=(Z_1,\dots, Z_N)$ and denote 
\begin{align}
\label{def_g_z}
g(z):= \bar g\Bigl(n^{-1/2}\sum_{k=1}^N z_k x_k(\theta)\Bigr), z=(z_1,\dots, z_N)\in {\mathbb R}^N,
\end{align}
so that $g(Z^{(N)})= \bar g(n^{-1/2}\xi^{(N)}(\theta)).$
Let 
$$
(Lg)(z) := \inf_{U\ni z} \sup_{z_1,z_2\in U} \frac{|g(z_1)-g(z_2)|}{\|z_1-z_2\|_{\ell_2}},
$$
be the local Lipschitz constant of function $g$
(the infimum is taken over all neighborhoods $U$ of $z\in {\mathbb R}^N$).
It follows from \eqref{Lipschitz_on_barg} that the following bound holds for the local Lipschitz 
constant of function $\bar g:$
\begin{align*}
(L\bar g)(h)
\lesssim \|f\|_{C^{2}(\Theta_{3\delta})}\|h\|, h\in E. 
\end{align*}
Now note that
\begin{align*}
(Lg)(Z^{(N)}) \leq (L\bar g)(n^{-1/2}\xi^{(N)}(\theta)) n^{-1/2}\inf_{U\ni Z^{(N)}} \sup_{z_1,z_2\in U} \frac{\Bigl\|\sum_{k=1}^N (z_{1,k}-z_{2,k}) x_k(\theta)\Bigr\|}{\|z_1-z_2\|_{\ell_2}}
\end{align*}
and
\begin{align*}
&
\Bigl\|\sum_{k=1}^N (z_{1,k}-z_{2,k}) x_k(\theta)\Bigr\|= \sup_{\|u\|\leq 1}
\Bigl\langle \sum_{k=1}^N (z_{1,k}-z_{2,k}) x_k(\theta),u \Bigr\rangle
\\
&
\leq \sup_{\|u\|\leq 1}\biggl(\sum_{k=1}^N \langle x_k(\theta),u\rangle^2\biggr)^{1/2}
\|z_1-z_2\|_{\ell_2} \leq \|\Sigma(\theta)\|^{1/2} \|z_1-z_2\|_{\ell_2}.
\end{align*}
Thus, we get 
\begin{align*}
&
(Lg)(Z^{(N)}) \leq (L\bar g)(n^{-1/2}\xi^{(N)}(\theta))\frac{\|\Sigma(\theta)\|^{1/2}}{n^{1/2}} 
\\
&
\lesssim \|f\|_{C^{2}(\Theta_{3\delta})}\frac{\|\Sigma(\theta)\|^{1/2}}{n^{1/2}}
\frac{\|\xi^{(N)}(\theta)\|}{n^{1/2}}.
\end{align*}
Therefore, by Lemma \ref{Maurey_Pisier},
\begin{align}
\label{conc_for_bar_g}
&
\nonumber
\Bigl\|\bar g(n^{-1/2}\xi^{(N)}(\theta))- {\mathbb E}\bar g(n^{-1/2}\xi^{(N)}(\theta))\Bigr\|_{L_{\psi}({\mathbb P})}=
\|g(Z^{(N)})-{\mathbb E}g(Z^{(N)})\|_{L_{\psi}({\mathbb P})}
\\
&
\lesssim 
\|f\|_{C^2(\Theta_{3\delta})}
\frac{\|\Sigma(\theta)\|^{1/2}}{n^{1/2}}\Bigl\|\frac{\|\xi^{(N)}(\theta)\|}{n^{1/2}}
\Bigr\|_{L_{\psi^{\sharp}}({\mathbb P})}.
\end{align}
To control 
$
\|\|\xi^{(N)}(\theta)\|\|_{L_{\psi^{\sharp}}({\mathbb P})},
$
note that, for $\psi\in \Psi,$ we have 
$
\|\|\xi^{(N)}(\theta)\|\|_{L_{\psi^{\sharp}}({\mathbb P})} \lesssim_{\psi} 
\|\|\xi^{(N)}(\theta)\|\|_{L_{\psi_2}({\mathbb P})}, 
$
and, similarly to bound \eqref{psi_2_on_xi}, we get 
\begin{align*}
\|\|\xi^{(N)}(\theta)\|\|_{L_{\psi_2}({\mathbb P})}\lesssim \|\|\xi^{(N)}(\theta)\|\|_{L_2({\mathbb P})}
\lesssim {\mathbb E} \|\xi^{(N)}(\theta)\|.
\end{align*}
Moreover, 
\begin{align*}
&
{\mathbb E} \|\xi^{(N)}(\theta)\| = {\mathbb E}\Bigl\|\sum_{k=1}^N Z_k x_k(\theta)\Bigr\|
= {\mathbb E}\Bigl\|\sum_{k=1}^N Z_k x_k(\theta)+ {\mathbb E}\sum_{k\geq N+1} Z_k x_k(\theta)\Bigr\|
\\
&
\leq {\mathbb E}\Bigl\|\sum_{k\geq 1}Z_k x_k(\theta)\Bigr\|= {\mathbb E} \|\xi(\theta)\|.
\end{align*}
Therefore,
\begin{align*}
\|\|\xi^{(N)}(\theta)\|\|_{L_{\psi^{\sharp}}({\mathbb P})} \lesssim_{\psi} {\mathbb E} \|\xi(\theta)\|
\leq  {\mathbb E}^{1/2}\|\xi(\theta)\|^2.
\end{align*}
Substituting the last bound in \eqref{conc_for_bar_g} and passing to the limit as $N\to\infty$ yields
\begin{align}
\label{conc_bd_B}
&
\nonumber
\biggl\|S_{f}(\theta, n^{-1/2}\xi(\theta))\varphi\biggl(\frac{\|\xi(\theta)\|}{\delta\sqrt{n}}\biggr)
- {\mathbb E}S_{f}(\theta, n^{-1/2}\xi(\theta))\varphi\biggl(\frac{\|\xi(\theta)\|}{\delta\sqrt{n}}\biggr)\biggr\|_{L_{\psi}({\mathbb P})}
\\
&
\nonumber
= \Bigl\|\bar g(n^{-1/2}\xi(\theta))-{\mathbb E} \bar g(n^{-1/2}\xi(\theta))\Bigr\|_{L_{\psi}({\mathbb P})}
\\
&
\lesssim_{s,\psi} 
\|f\|_{C^s(\Theta_{3\delta})}
\frac{\|\Sigma(\theta)\|^{1/2}}{n^{1/2}}\frac{{\mathbb E}^{1/2}\|\xi(\theta)\|^2}{n^{1/2}}.
\end{align}
It follows from \eqref{represent_S_tilde_f}, \eqref{conc_bd_A} and \eqref{conc_bd_B}
that  
\begin{align*}
&
\Bigl\|S_{f}(\theta, n^{-1/2}\xi_{\delta}(\theta))- 
{\mathbb E} S_{f}(\theta, n^{-1/2}\xi_{\delta}(\theta))\Bigr\|_{L_{\psi}({\mathbb P})}
\\
&
\lesssim_{s,\psi}
\|f\|_{C^s(\Theta_{3\delta})}
\frac{\|\Sigma(\theta)\|^{1/2}}{n^{1/2}}\frac{{\mathbb E}^{1/2}\|\xi(\theta)\|^2}{n^{1/2}}
\\
&
+
\|f\|_{C^s(\Theta_{2\delta})} \frac{{\mathbb E}^{1/2}\|\xi(\theta)\|^2}{n^{1/2}}
\tilde \psi^{1/2}({\mathbb P}\{\|\xi\|_{L_{\infty}(E)}\geq \delta\sqrt{n}\}),
\end{align*}
completing the proof.
\end{proof}

{\it Step 5}. We will now sketch the proof of concentration bound for 
\begin{align*}
&
(\tilde f_{\delta,k}-f)(\tilde \theta_{\delta})-
{\mathbb E}_{\theta}(\tilde f_{\delta,k}-f)(\tilde \theta_{\delta})
\\
&
=
(\tilde f_{\delta,k}-f)(\tilde \theta_{\delta})-(\tilde f_{\delta,k}-f)(\theta)
-{\mathbb E}_{\theta}((\tilde f_{\delta,k}-f)(\tilde \theta_{\delta})- (\tilde f_{\delta,k}-f)(\theta))
\end{align*}
(the argument is similar to concentration bounds of steps 3, 4). 
Note that, under the assumption that ${\frak d}_{\xi}(\Theta_{(k+2)\delta};s-1)\leq c_1 n$ with a sufficiently 
small constant $c_1,$ bounds \eqref{bd_on_tilde_f_kdelta_B_two} with $\Theta_{3\delta}$ instead of $\Theta$ and 
\eqref{bd_xi_s-1_kdelta} with $\Theta_{(k+2)\delta}$ instead of $\Theta_{k \delta}$
imply that 
\begin{align*}
\|\tilde f_{\delta,k}-f\|_{C^{1+\rho}(\Theta_{3\delta})}
&\lesssim_s 
\|f\|_{C^s(\Theta_{(k+3)\delta})}{\mathbb E} \biggl(1+ \frac{\|\xi\|_{C^{s-1}(\Theta_{(k+2)\delta})}}{\sqrt{n}}\biggr)^{s-1}
\frac{\|\xi\|_{C^{s-1}(\Theta_{(k+2)\delta})}}{\sqrt{n}}
\\
&
\lesssim_s \|f\|_{C^s(\Theta_{(k+3)\delta})}\sqrt{\frac{{\frak d}_{\xi}(\Theta_{(k+2)\delta};s-1)}{n}}.
\end{align*}
It follows that 
\begin{align}
\label{L_infty_tilde_f_delata_k}
\|\tilde f_{\delta,k}-f\|_{L_{\infty}(\Theta_{3\delta})} \lesssim_s \|f\|_{C^s(\Theta_{(k+3)\delta})}\sqrt{\frac{{\frak d}_{\xi}(\Theta_{(k+2)\delta};s-1)}{n}}
\end{align}
and 
\begin{align}
\label{Lip_tilde_f_delata_k}
\|\tilde f_{\delta,k}-f\|_{{\rm Lip}(\Theta_{3\delta})} \lesssim_s \|f\|_{C^s(\Theta_{(k+3)\delta})}\sqrt{\frac{{\frak d}_{\xi}(\Theta_{(k+2)\delta};s-1)}{n}}.
\end{align}

Denote 
\begin{align*}
\lambda (h):= (\tilde f_{\delta,k}-f)(\theta+h)- (\tilde f_{\delta,k}-f)(\theta).
\end{align*}
Then, \eqref{L_infty_tilde_f_delata_k} and \eqref{Lip_tilde_f_delata_k} imply 
\begin{align}
\label{L_infty_lambda}
\|\lambda\|_{L_{\infty}(U(3\delta))}
\lesssim_s \|f\|_{C^s(\Theta_{(k+3)\delta})}\sqrt{\frac{{\frak d}_{\xi}(\Theta_{(k+2)\delta};s-1)}{n}}
\end{align}
and 
\begin{align}
\label{Lip_lambda}
\|\lambda\|_{{\rm Lip}(U(3\delta))} \lesssim_s \|f\|_{C^s(\Theta_{(k+3)\delta})}\sqrt{\frac{{\frak d}_{\xi}(\Theta_{(k+2)\delta};s-1)}{n}}.
\end{align}

Similarly to \eqref{represent_S_tilde_f} in Step 3, we can write 
\begin{align}
\label{represent_tilde f_delta,k-f} 
\nonumber
&
(\tilde f_{\delta,k}-f)(\tilde \theta_{\delta})-(\tilde f_{\delta,k}-f)(\theta)
=  \lambda (n^{-1/2}\xi_{\delta}(\theta))
\\
&
=\lambda(n^{-1/2} \xi(\theta))
\varphi\biggl(\frac{\|\xi(\theta)\|}{\delta\sqrt{n}}\biggr)
+\lambda(n^{-1/2} \xi_{\delta}(\theta))- \lambda(n^{-1/2} \xi(\theta))\varphi\biggl(\frac{\|\xi(\theta)\|}{\delta\sqrt{n}}\biggr).
\end{align} 
Using bound \eqref{L_infty_lambda} and arguing as in the proof of \eqref{conc_bd_A} in Step 3, we can show that 
\begin{align}
\label{lambda_conc_A}
&
\nonumber
\biggl\|\lambda(n^{-1/2} \xi_{\delta}(\theta))- \lambda(n^{-1/2} \xi(\theta))\varphi\biggl(\frac{\|\xi(\theta)\|}{\delta\sqrt{n}}\biggr)
\\
&
\nonumber
-{\mathbb E}\Biggl(\lambda(n^{-1/2} \xi_{\delta}(\theta))- \lambda(n^{-1/2} \xi(\theta))\varphi\biggl(\frac{\|\xi(\theta)\|}{\delta\sqrt{n}}\biggr)\biggr)\biggr\|_{L_{\psi}({\mathbb P})}
\\
&
\lesssim_{\psi}
\|f\|_{C^s(\Theta_{(k+3)\delta})}\sqrt{\frac{{\frak d}_{\xi}(\Theta_{(k+2)\delta};s-1)}{n}}
\tilde \psi({\mathbb P}\{\|\xi\|_{L_{\infty}(E)}\geq \delta\sqrt{n}\}).
\end{align}

To control 
$$\lambda(n^{-1/2} \xi(\theta))
\varphi\biggl(\frac{\|\xi(\theta)\|}{\delta\sqrt{n}}\biggr)- {\mathbb E}\lambda(n^{-1/2} \xi(\theta))
\varphi\biggl(\frac{\|\xi(\theta)\|}{\delta\sqrt{n}}\biggr),
$$
we argue as in the proof of \eqref{conc_bd_B} in Step 4. Namely, we define 
$\bar g(h):=\lambda(h)
\varphi\Bigl(\frac{\|h\|}{\delta}\Bigr).$
Using bound \eqref{Lip_lambda}, it is easy to show that 
\begin{align*}
|\bar g(h)-\bar g(h')| \lesssim_s \|f\|_{C^s(\Theta_{(k+3)\delta})}\sqrt{\frac{{\frak d}_{\xi}(\Theta_{(k+2)\delta};s-1)}{n}}\|h-h'\|
\end{align*}
for all $h,h'\in E.$
We then follow the rest of the argument of Step 4, introducing function $g(z)$ defined by \eqref{def_g_z} 
so that $g(Z^{(N)})= \bar g(n^{-1/2}\xi^{(N)}(\theta)).$ This argument yields the following bound on the local Lipschitz 
constant of function $g:$
\begin{align*}
(Lg)(Z^{(N)}) \lesssim_s \|f\|_{C^s(\Theta_{(k+3)\delta})}\sqrt{\frac{{\frak d}_{\xi}(\Theta_{(k+2)\delta};s-1)}{n}}\frac{\|\Sigma(\theta)\|^{1/2}}{n^{1/2}}.
\end{align*}
As a consequence, we get the concentration bound 
\begin{align*}
&
\Bigl\|\bar g(n^{-1/2}\xi^{(N)}(\theta))- {\mathbb E}\bar g(n^{-1/2}\xi^{(N)}(\theta))\Bigr\|_{L_{\psi}({\mathbb P})}=
\|g(Z^{(N)})-{\mathbb E}g(Z^{(N)})\|_{L_{\psi}({\mathbb P})}
\\
&
\lesssim_s \|f\|_{C^s(\Theta_{(k+3)\delta})}\sqrt{\frac{{\frak d}_{\xi}(\Theta_{(k+2)\delta};s-1)}{n}}\frac{\|\Sigma(\theta)\|^{1/2}}{n^{1/2}},
\end{align*}
and, passing to the limit as $N\to\infty,$ 
\begin{align}
\label{lambda_conc_B}
&
\nonumber
\biggl\|\lambda(n^{-1/2} \xi(\theta))
\varphi\biggl(\frac{\|\xi(\theta)\|}{\delta\sqrt{n}}\biggr)- {\mathbb E}\lambda(n^{-1/2} \xi(\theta))
\varphi\biggl(\frac{\|\xi(\theta)\|}{\delta\sqrt{n}}\biggr)
\biggr\|_{L_{\psi}({\mathbb P})} 
\\
&
\lesssim_{s,\psi} 
\|f\|_{C^s(\Theta_{(k+3)\delta})}\sqrt{\frac{{\frak d}_{\xi}(\Theta_{(k+2)\delta};s-1)}{n}}\frac{\|\Sigma(\theta)\|^{1/2}}{n^{1/2}}.
\end{align}
Combining 
\eqref{represent_tilde f_delta,k-f}, \eqref{lambda_conc_A} and \eqref{lambda_conc_B} yields
\begin{align}
\label{lambda_conc_fin}
&
\nonumber
\Bigl\|(\tilde f_{\delta,k}-f)(\tilde \theta_{\delta})-{\mathbb E}_{\theta}(\tilde f_{\delta,k}-f)(\tilde \theta_{\delta})\Bigr\|_{L_{\psi}({\mathbb P})}
= \Bigl\|\lambda (n^{-1/2}\xi_{\delta}(\theta))- {\mathbb E}\lambda (n^{-1/2}\xi_{\delta}(\theta))\Bigr\|_{L_{\psi}({\mathbb P})}
\\
&
\lesssim_{s,\psi} 
\|f\|_{C^s(\Theta_{(k+3)\delta})}\sqrt{\frac{{\frak d}_{\xi}(\Theta_{(k+2)\delta};s-1)}{n}}
\biggl(\frac{\|\Sigma(\theta)\|^{1/2}}{n^{1/2}}+
\tilde \psi({\mathbb P}\{\|\xi\|_{L_{\infty}(E)}\geq \delta\sqrt{n}\})\biggr).
\end{align}

{\it Step 6}. 
It remains to use representation \eqref{main_repr_conc} and 
combine the bounds obtained in steps 1-5 to get 
\begin{align}
\label{conc_konec}
&
\nonumber
\sup_{\theta\in \Theta}
\Bigl\|\tilde f_{\delta,k}(\tilde \theta_{\delta})-f(\theta)-n^{-1/2}\langle f'(\theta), \xi(\theta)\rangle
\Bigr\|_{L_\psi({\mathbb P})}
\\
&
\nonumber
\lesssim_{s,\psi} 
\|f\|_{C^s(\Theta_{(k+1)\delta})} 
\Bigl(\Bigl(\sqrt{\frac{{\frak d}_{\xi}(\Theta_{k\delta};s-1)}{n}}\Bigr)^s
\\
&
\nonumber
+
\Bigl(\sqrt{\frac{{\frak d}_{\xi}(\Theta_{k\delta};s-1)}{n}}\Bigr)^k\frac{\|\Sigma(\theta)\|^{1/2}}{n^{1/2}}
{\mathbb P}^{1/2}\{\|\xi\|_{L_{\infty}(E)}\geq \delta\sqrt{n}\}\Bigr)\Bigr)
\\
&
\nonumber
+\|f\|_{C^s(\Theta)} 
\frac{\|\Sigma(\theta)\|^{1/2}}{n^{1/2}}
\tilde \psi^{1/2} ({\mathbb P}\{\|\xi\|_{L_{\infty}(E)}\geq \delta\sqrt{n}\}\})
\\
&
\nonumber
+\|f\|_{C^s(\Theta_{3\delta})}
\frac{{\mathbb E}^{1/2}\|\xi(\theta)\|^2}{n^{1/2}}
\biggl(\frac{\|\Sigma(\theta)\|^{1/2}}{n^{1/2}}+
\tilde \psi^{1/2}({\mathbb P}\{\|\xi\|_{L_{\infty}(E)}\geq \delta\sqrt{n}\})\biggr)
\\
&
+\|f\|_{C^s(\Theta_{(k+3)\delta})}\sqrt{\frac{{\frak d}_{\xi}(\Theta_{(k+2)\delta};s-1)}{n}}
\biggl(\frac{\|\Sigma(\theta)\|^{1/2}}{n^{1/2}}+
\tilde \psi({\mathbb P}\{\|\xi\|_{L_{\infty}(E)}\geq \delta\sqrt{n}\})\biggr).
\end{align}
It is easy to see that 
\begin{align*}
{\mathbb P}^{1/2}\{\|\xi\|_{L_{\infty}(E)}\geq \delta\sqrt{n}\}\Bigr)\Bigr)
\lesssim \tilde \psi^{1/2}({\mathbb P}\{\|\xi\|_{L_{\infty}(E)}\geq \delta\sqrt{n}\})
\end{align*}
(since $\psi(u)\gtrsim u$) and also that 
\begin{align*}
\tilde \psi({\mathbb P}\{\|\xi\|_{L_{\infty}(E)}\geq \delta\sqrt{n}\})\lesssim \tilde \psi^{1/2}({\mathbb P}\{\|\xi\|_{L_{\infty}(E)}\geq \delta\sqrt{n}\}).
\end{align*}
We also have 
\begin{align*}
\|\Sigma(\theta)\|\leq {\mathbb E}\|\xi(\theta)\|^2 \leq {\frak d}_{\xi}(\Theta_{(k+2)\delta};s-1), \theta\in \Theta.
\end{align*}
Under the assumption that, for a small enough constant $c_1>0,$ 
$${\frak d}_{\xi}(\Theta_{(k+2)\delta};s-1)\leq c_1 n,$$
dropping the terms in bound \eqref{conc_konec} that are dominated by other terms,
we get the following bound:
\begin{align*}
&
\sup_{\theta\in \Theta}
\Bigl\|\tilde f_{\delta,k}(\tilde \theta_{\delta})-f(\theta)-n^{-1/2}\langle f'(\theta), \xi(\theta)\rangle
\Bigr\|_{L_\psi({\mathbb P})}
\\
&
\lesssim_{s, \psi}  
\|f\|_{C^s(\Theta_{(k+3)\delta})}
\biggl[
\biggl(\sqrt{\frac{{\frak d}_{\xi}(\Theta_{(k+2)\delta};s-1)}{n}}\biggr)^s 
+
\frac{\|\Sigma\|_{L_{\infty}(E)}^{1/2}}{n^{1/2}} \sqrt{\frac{{\frak d}_{\xi}(\Theta_{(k+2)\delta};s-1)}{n}}
\\
&
\ \ \ \ \ \ \ \ \ \ \ \ \ \ \ \ \ \ \ \ \ \ \ \ \ \ + \sqrt{\frac{{\frak d}_{\xi}(\Theta_{(k+2)\delta};s-1)}{n}}\tilde \psi^{1/2}({\mathbb P}\{\|\xi\|_{L_{\infty}(E)}\geq \delta \sqrt{n}\})
\biggr].
\end{align*}
This completes the proof of the theorem.
\end{proof}

\section{Proofs  of the main results}
\label{proof_main}

First, we prove Proposition \ref{prop_Delta_H_Delta_F}.

\begin{proof}	
	For $h\in {\mathcal H}:=\{h: \|h\|_{C^s(\Theta_{\delta})}\leq 1\},$ denote 
	$g(x):= h\Bigl(\theta + \frac{x}{\sqrt{n}}\Bigr), x\in E.$ 
	Then 
	\begin{align*}
	\|g\|_{C^{0,s}(U_{\delta\sqrt{n}})} \leq n^{-1/2} \|h\|_{C^{s}(\Theta_{\delta})}\leq n^{-1/2}.	
	\end{align*}	
	Note also that 
	\begin{align*}
	\Bigl|{\mathbb E}_{\theta} h(\hat \theta)- {\mathbb E}_{\theta} h(\tilde \theta)\Bigr|
	=\Bigl|{\mathbb E}_{\theta} g(\sqrt{n}(\hat \theta-\theta))- 
	{\mathbb E}_{\theta} g(\xi(\theta))\Bigr| 
	\end{align*}
	and 
	\begin{align*}
	\Bigl|\|h(\hat \theta)\|_{L_{\psi}({\mathbb P}_{\theta})}- \|h(\tilde \theta)\|_{L_{\psi}({\mathbb P}_{\theta})}\Bigr|
	=\Bigl|\|g(\sqrt{n}(\hat \theta-\theta))\|_{L_{\psi}({\mathbb P}_{\theta})}- 
	\|g(\xi(\theta))\|_{L_{\psi}({\mathbb P}_{\theta})}\Bigr|. 
	\end{align*}
	It easily follows that 
	\begin{align*}
	\Delta_{{\mathcal H},\Theta_{\delta}}(\hat \theta,\tilde \theta)
	\leq 
	\frac{\Delta_{{\mathcal F},\Theta_{\delta}}\Bigl(\sqrt{n}(\hat \theta-\theta), \xi(\theta))\Bigr)}{\sqrt{n}}
	\end{align*}
	and 
	\begin{align*}
	\Delta_{{\mathcal H},\psi, \Theta_{\delta}}(\hat \theta,\tilde \theta)
	\leq 
	\frac{\Delta_{{\mathcal F},\psi, \Theta_{\delta}}\Bigl(\sqrt{n}(\hat \theta-\theta), \xi(\theta))\Bigr)}{\sqrt{n}}.
	\end{align*}
	implying the claims. 
\end{proof}

We will now provide the proof of Theorem \ref{Main_Th_AA}. 
The proofs of theorems \ref{simple_case_sleq2_a} and \ref{simple_case_s<2_B}
are its very simplified versions. 
Note also that the bound of Theorem \ref{risk_efficient_psi} immediately follows 
form bound \eqref{psi_error} of Theorem \ref{Main_Th_AA}.

\begin{proof}
First we will prove bound  \eqref{psi_error}.	
Note that, for some $c_k>0,$ 
\begin{align*}
\|f_k\|_{L_{\infty}(E)} \leq c_k \|f\|_{L_{\infty}(E)}, \ \  
\|\tilde f_{\delta,k}\|_{L_{\infty}(E)} \leq c_k \|f\|_{L_{\infty}(E)}.
\end{align*}
Using the bounds of Theorem \ref{uniform_bd} and Lemma \ref{lemma_psi_simple_2}, we get that, 
for all $\theta\in \Theta,$
\begin{align}	
\label{psi_odin}
&
\nonumber
\|f_k(\hat \theta)- \tilde f_{\delta,k}(\hat \theta)\|_{L_{\psi}({\mathbb P}_{\theta})}
\\
&
\nonumber
\leq \|(f_k(\hat \theta)- \tilde f_{\delta,k}(\hat \theta))I(\hat \theta\in \Theta_{\delta})\|_{L_{\psi}({\mathbb P}_{\theta})}+ 	\|(f_k(\hat \theta)- \tilde f_{\delta,k}(\hat \theta))I(\hat \theta\not\in \Theta_{\delta})\|_{L_{\psi}({\mathbb P}_{\theta})}
\\
&
\nonumber
\leq \|1\|_{L_{\psi}({\mathbb P}_{\theta})}
\|f_k- \tilde f_{\delta,k}\|_{L_{\infty}(\Theta_{\delta})} 
+ 2c_k  \|f\|_{L_{\infty}(E)} \|I(\hat \theta\not\in \Theta_{\delta})\|_{L_{\psi}({\mathbb P}_{\theta})}
\\
&
\nonumber
\lesssim_{\psi, s} 	\|f\|_{C^s(\Theta_{(k+1)\delta})}\biggl(1+ \frac{{\mathbb E}\|\xi\|_{C^s(\Theta_{k\delta})}^s}{n^{s/2}}\biggr)^{k-1}
\Bigl[\Delta_{s,\delta,\Theta_{\delta}}(\hat \theta, \tilde \theta)+\frak{Q}_n(\Theta_{k\delta},\delta)
\Bigr] 
\\
&
\nonumber
+ \|f\|_{L_{\infty}(E)}  \tilde \psi({\mathbb P}_{\theta}\{\hat \theta\not\in \Theta_{\delta}\})
\\
&
\nonumber
\lesssim_{\psi, s} 	\|f\|_{C^s(\Theta_{(k+1)\delta})}\biggl(1+ \frac{{\mathbb E}\|\xi\|_{C^s(\Theta_{k\delta})}^s}{n^{s/2}}\biggr)^{k-1}
\Bigl[\Delta_{s,\delta,\Theta_{\delta}}(\hat \theta, \tilde \theta)+\frak{Q}_n(\Theta_{k\delta},\delta)
\Bigr] 
\\
&
+ \|f\|_{L_{\infty}(E)}  \tilde \psi({\mathbb P}_{\theta}\{\|\hat \theta-\theta\|\geq \delta\}).
\end{align}	
Let $g(x):= \tilde f_{\delta,k}(x)-f(\theta), x\in E.$ Using the bound of Lemma \ref{f_delta_k_first},
we get 
\begin{align*}
\|g\|_{C^s(\Theta_{\delta})} \lesssim_s \biggl(1+ \frac{{\mathbb E}\|\xi\|_{C^s(\Theta_{k\delta})}^s}{n^{s/2}}\biggr)^{k}
\|f\|_{C^s(\Theta_{(k+1)\delta})}.
\end{align*}
Therefore,
\begin{align}
\label{psi_dva}
&
\nonumber
\sup_{\theta\in \Theta} \Bigl|\|\tilde f_{\delta,k}(\hat \theta)-f(\theta)\|_{L_{\psi}({\mathbb P}_{\theta})}  
-\|\tilde f_{\delta,k}(\tilde \theta)-f(\theta)\|_{L_{\psi}({\mathbb P}_{\theta})}
\Bigr|
\\
&
\nonumber
= \sup_{\theta\in \Theta} \Bigl|\|g(\hat \theta)\|_{L_{\psi}({\mathbb P}_{\theta})}
-\|g(\tilde \theta)\|_{L_{\psi}({\mathbb P}_{\theta})}\Bigr|
\\
&
\nonumber
\lesssim_{s}
\biggl(1+ \frac{{\mathbb E}\|\xi\|_{C^s(\Theta_{k\delta})}^s}{n^{s/2}}\biggr)^{k}
\|f\|_{C^s(\Theta_{(k+1)\delta})}
\sup_{\theta \in \Theta} \sup_{\|h\|_{C^s(\Theta_\delta)}\leq 1}\Bigl|\|h(\hat \theta)\|_{L_{\psi}({\mathbb P}_{\theta})}
-\|h(\tilde \theta)\|_{L_{\psi}({\mathbb P}_{\theta})}\Bigr|
\\
&
\lesssim_{s}
\biggl(1+ \frac{{\mathbb E}\|\xi\|_{C^s(\Theta_{k\delta})}^s}{n^{s/2}}\biggr)^{k}
\|f\|_{C^s(\Theta_{(k+1)\delta})}\Delta_{{\mathcal H}, \psi, \Theta}(\hat \theta, \tilde \theta),
\end{align}
where ${\mathcal H}:=\{h: \|h\|_{C^s(\Theta_\delta)}\leq 1\}.$	
In addition, for all $\theta\in \Theta,$ we get, using Lemma \ref{lemma_psi_simple_2}, 
\begin{align}
\label{psi_tri}
&
\nonumber
\|\tilde f_{\delta,k}(\tilde \theta)-\tilde f_{\delta,k}(\tilde \theta_{\delta})\|_{L_{\psi}({\mathbb P}_{\theta})}
=\Bigl \|(\tilde f_{\delta,k}(\tilde \theta)-\tilde f_{\delta,k}(\theta))I(\|\xi\|_{L_{\infty}(E)}\geq \delta \sqrt{n})\Bigr\|_{L_{\psi}({\mathbb P})}
\\
&
\leq 2c_k \|f\|_{L_{\infty}(E)}\|I(\|\xi\|_{L_{\infty}(E)}\geq \delta \sqrt{n})\|_{L_{\psi}({\mathbb P})}
\lesssim_k 
\|f\|_{L_{\infty}(E)} \tilde \psi({\mathbb P}\{\|\xi\|_{L_{\infty}(E)}\geq \delta \sqrt{n}\}).
\end{align}
Finally, by the bound of Theorem \ref{conc_main},
\begin{align}
\label{psi_chetire}
&
\nonumber
\sup_{\theta\in \Theta}
\Bigl\|\tilde f_{\delta,k}(\tilde \theta_{\delta})-f(\theta)-n^{-1/2}\langle f'(\theta), \xi(\theta)\rangle
\Bigr\|_{L_\psi({\mathbb P})}
\\
&
\nonumber
\lesssim_{s, \psi}  
\|f\|_{C^s(\Theta_{(k+3)\delta})}
\biggl[
\biggl(\sqrt{\frac{{\frak d}_{\xi}(\Theta_{(k+2)\delta};s-1)}{n}}\biggr)^s 
+
\frac{\|\Sigma\|_{L_{\infty}(E)}^{1/2}}{n^{1/2}}\sqrt{\frac{{\frak d}_{\xi}(\Theta_{(k+2)\delta};s-1)}{n}}
\\
&
\ \ \ \ \ \ \ \ \ \ \ \ \ \ \ \ \ \ \ \ \ \ \ \ \ \ + \sqrt{\frac{{\frak d}_{\xi}(\Theta_{(k+2)\delta};s-1)}{n}}\tilde \psi^{1/2}({\mathbb P}\{\|\xi\|_{L_{\infty}(E)}\geq \delta \sqrt{n}\})
\biggr].
\end{align}
Recall that, under the condition ${\frak d}_{\xi}(\Theta_{k\delta};s)\leq c_1 n,$  
\begin{align*}
\biggl(1+ \frac{{\mathbb E}\|\xi\|_{C^s(\Theta_{k\delta})}^s}{n^{s/2}}\biggr)^{k}\lesssim_s 1.
\end{align*}
Also, 
\begin{align*}
\frak{Q}_n(\Theta_{k\delta},\delta) \lesssim_{\psi} \tilde \psi\Bigl(\sup_{\theta\in \Theta_{k\delta}}{\mathbb P}_{\theta}\{\|\hat \theta-\theta\|\geq \delta\}\Bigr)
+
\tilde \psi({\mathbb P}\{\|\xi\|_{L_{\infty}(E)}\geq \delta \sqrt{n}\})
\end{align*}
(since $\psi(u)\gtrsim_{\psi} u$). Taking this into account and combining bounds \eqref{psi_odin}, \eqref{psi_dva}, \eqref{psi_tri}
and \eqref{psi_chetire} yields 
\begin{align*}
&
\sup_{\theta\in \Theta}
\Bigl|\|f_k(\hat \theta)-f(\theta)\|_{L_\psi({\mathbb P}_{\theta})}-n^{-1/2}\sigma_f(\theta)\|Z\|_{L_{\psi}({\mathbb P})}\Bigr|
\\
&
\lesssim_{s, \psi}  
\|f\|_{C^s(\Theta_{(k+3)\delta})}
\biggl[
\biggl(\sqrt{\frac{{\frak d}_{\xi}(\Theta_{(k+2)\delta};s-1)}{n}}\biggr)^s 
+
\frac{\|\Sigma\|_{L_{\infty}(E)}^{1/2}}{n^{1/2}}\sqrt{\frac{{\frak d}_{\xi}(\Theta_{(k+2)\delta};s-1)}{n}}
\\
&
\ \ \ \ \ \ \ \ \ \ \ \ \ \ \ \ \ \ \ \ \ \ \ \ \ \ \ +\Delta_{s,\delta,\Theta_{\delta}}(\hat \theta, \tilde \theta)
+\Delta_{{\mathcal H}, \psi, \Theta}(\hat \theta, \tilde \theta)
\\
&
\ \ \ \ \ \ \ \ \ \ \ \ \ \ \ \ \ \ \ \ \ \ \ \ \ \ \ +\tilde \psi\Bigl(\sup_{\theta\in \Theta_{(k+2)\delta}}{\mathbb P}_{\theta}\{\|\hat \theta-\theta\|\geq \delta\}\Bigr) + 
\tilde \psi^{1/2}({\mathbb P}\{\|\xi\|_{L_{\infty}(E)}\geq \delta \sqrt{n}\})
\biggr]
\\
&
\lesssim_{s, \psi}  
\|f\|_{C^s(\Theta_{(k+3)\delta})}
\biggl[
\biggl(\sqrt{\frac{{\frak d}_{\xi}(\Theta_{(k+2)\delta};s-1)}{n}}\biggr)^s 
+
\frac{\|\Sigma\|_{L_{\infty}(E)}^{1/2}}{n^{1/2}}\sqrt{\frac{{\frak d}_{\xi}(\Theta_{(k+2)\delta};s-1)}{n}}
\\
& 
+\Delta_{{\mathcal H},\psi, \Theta_{\delta}}^{+}(\hat \theta,\tilde \theta)
+\tilde \psi\Bigl(\sup_{\theta\in \Theta_{(k+2)\delta}}{\mathbb P}_{\theta}\{\|\hat \theta-\theta\|\geq \delta\}\Bigr) + 
\tilde \psi^{1/2}({\mathbb P}\{\|\xi\|_{L_{\infty}(E)}\geq \delta \sqrt{n}\})
\biggr],
\end{align*}
where ${\mathcal H}:=\{h: \|h\|_{C^s(\Theta_\delta)}\leq 1\}.$		

This completes the proof of bound  \eqref{psi_error} (subject to a change of variable $(k+3)\delta\mapsto \delta$). 

We now need to prove bound \eqref{bd_Delta_s'_sgeq2}.
	We start with bounding the distance 
	$$
	\Delta_{s'}\Bigl(\sqrt{n}(f_k(\hat \theta)-f(\theta)),
	\sqrt{n}(\tilde f_{\delta,k}(\hat \theta)-f(\theta))\Bigr).
	$$
	For all $\varphi$ with $\|\varphi\|_{C^{s'}({\mathbb R})}\leq 1,$ we have 
	\begin{align}
		\label{bd_f_k_f_delta,k_hattheta_AAA}
		&
		\nonumber
		{\mathbb E}_{\theta}|\varphi(\sqrt{n}(f_k(\hat \theta)-f(\theta)))-
		\varphi(\sqrt{n}(\tilde f_{\delta,k}(\hat \theta))-f(\theta))|
		\leq \|\varphi\|_{{\rm Lip}({\mathbb R})}
		\sqrt{n}{\mathbb E}_{\theta}|f_k(\hat \theta)-\tilde f_{\delta,k}(\hat \theta)| 
		\\
		&
		\nonumber
		\leq \sqrt{n}{\mathbb E}_{\theta}|f_k(\hat \theta)-\tilde f_{\delta,k}(\hat \theta)| I(\hat \theta \in \Theta_{\delta})
		+\sqrt{n}{\mathbb E}_{\theta}|f_k(\hat \theta)-\tilde f_{\delta,k}(\hat \theta)| I(\hat \theta \not\in \Theta_{\delta})
		\\
		&
		\nonumber
		\lesssim_{s,k}  
		\|f\|_{C^s(\Theta_{(k+1)\delta})}\biggl(1+ \frac{{\mathbb E}\|\xi\|_{C^s(\Theta_{k\delta})}^s}{n^{s/2}}\biggr)^{k-1}
		\Bigl[\sqrt{n}\Delta_{s,\delta, \Theta_{\delta}}(\hat \theta, \tilde \theta)+\sqrt{n}\frak{Q}_n(\Theta_{k\delta},\delta)
		\Bigr] 
		\\
		&
		\nonumber
		+ \|f\|_{L_{\infty}(E)} \sqrt{n}{\mathbb P}_{\theta}\{\hat \theta\not\in \Theta_{\delta}\}
		\\
		&
		\lesssim_{s,k}  
		\|f\|_{C^s(\Theta_{(k+1)\delta})}\biggl(1+ \frac{{\mathbb E}\|\xi\|_{C^s(\Theta_{k\delta})}^s}{n^{s/2}}\biggr)^{k-1}
		\Bigl[\sqrt{n}\Delta_{s,\delta, \Theta_{\delta}}(\hat \theta, \tilde \theta)+\sqrt{n}\frak{Q}_n(\Theta_{k\delta},\delta)
		\Bigr]. 
	\end{align}

	To obtain an upper bound on 
	$$
	\Delta_{s,\delta, \Theta_{\delta}}(\hat \theta, \tilde \theta)
	= \Delta_{{\mathcal H}, \Theta_{\delta}}(\hat \theta, \tilde \theta),
	$$
	where ${\mathcal H}=\{h: \|h\|_{C^{s}(\Theta_{\delta})}\leq 1\},$
	we use Proposition \ref{prop_Delta_H_Delta_F}.
	Substituting its first bound into bound \eqref{bd_f_k_f_delta,k_hattheta_AAA}, we easily get 
	\begin{align}
		\label{bd_A}
		&
		\nonumber
		\sup_{\theta \in \Theta}\Delta_{s'}\Bigl(\sqrt{n}(f_k(\hat \theta)-f(\theta)),
		\sqrt{n}(\tilde f_{\delta,k}(\hat \theta)-f(\theta))\Bigr)
		\\
		&
		\lesssim_{s,k}  
		\|f\|_{C^s(\Theta_{(k+1)\delta})}\biggl(1+ \frac{{\mathbb E}\|\xi\|_{C^s(\Theta_{k\delta})}^s}{n^{s/2}}\biggr)^{k-1}
		\Bigl[\Delta_{{\mathcal F}_1,\Theta_{\delta}} (\sqrt{n}(\hat \theta-\theta), \xi(\theta)) +\sqrt{n}\frak{Q}_n(\Theta_{k\delta},\delta)
		\Bigr],
	\end{align}
	where $\mathcal F_1:=\{h: \|h\|_{C^{0,s}(U_{2\delta\sqrt{n}})}\leq 1\}.$
	
	The next step is to control the distance 
	$$
	\Delta_{s'}\Bigl(\sqrt{n}(\tilde f_{\delta,k}(\hat \theta)-f(\theta)), 
	\sqrt{n}(\tilde f_{\delta,k}(\tilde \theta_{\delta})-f(\theta))\Bigr).
	$$
	As before, let $\varphi :{\mathbb R}\mapsto {\mathbb  R}$ be a function such that $\|\varphi\|_{C^{s'}({\mathbb  R})}\leq 1.$ 
	Denote 
	\begin{align*}
		g_{\varphi}(x):= \varphi (\sqrt{n} (\tilde f_{\delta,k}(\theta+n^{-1/2}x)-f(\theta))), x\in E.
	\end{align*}
	We can write 
	\begin{align*}
		\varphi (\sqrt{n}(\tilde f_{\delta,k}(\hat \theta)-f(\theta))) = g_{\varphi}(\sqrt{n}(\hat \theta-\theta))
	\end{align*}
	and 
	\begin{align*}
		\varphi (\sqrt{n}(\tilde f_{\delta,k}(\tilde \theta_{\delta})-f(\theta))) = g_{\varphi}(\xi_{\delta}(\theta)).
	\end{align*}
	
	The next lemma will be used (see also Lemma \ref{superposition_h_g} and Remark \ref{proofs_superposition} in Section \ref{Gaussian_approx}).  
	
	\begin{lemma}
		\label{lemma_g_varphi}
		The following bound holds:
		\begin{align*}
			\|g_{\varphi}\|_{C^{s'}(U_{\delta\sqrt{n}})} 
			\lesssim_{s'}
			\|\varphi\|_{C^{s'}({\mathbb  R})}
			\Bigl(1\vee \|\tilde f_{\delta,k}\|_{C^{s'}(\Theta_{\delta})}^{s'}\Bigr).
		\end{align*}
	\end{lemma}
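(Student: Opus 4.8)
The plan is to write $g_{\varphi}=\varphi\circ\Phi$, where $\Phi(x):=\sqrt{n}\bigl(\tilde f_{\delta,k}(\theta+n^{-1/2}x)-f(\theta)\bigr)$, and to estimate this composition directly through the Fa\`a di Bruno formula of Remark \ref{proofs_superposition}, rather than quoting Lemma \ref{superposition_h_g}. The latter cannot be used as a black box: the affine change of variables $\ell(x):=\theta+n^{-1/2}x$ maps $U_{\delta\sqrt{n}}$ into $\Theta_{\delta}$ but stretches it by a factor $\sqrt{n}$, so $\|\Phi\|_{L_{\infty}(U_{\delta\sqrt{n}})}$ is of order $\sqrt{n}$ and Lemma \ref{superposition_h_g} would contribute a useless factor $\|\Phi\|_{C^{s'}(U_{\delta\sqrt{n}})}^{s'}\sim n^{s'/2}$. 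The key point is that $\|\Phi\|_{L_{\infty}}$ never enters the Fa\`a di Bruno formula; only the derivatives $\Phi^{(j)}$ with $j\geq 1$ do, and these are tame: since $\Phi^{(j)}(x)=n^{(1-j)/2}\tilde f_{\delta,k}^{(j)}(\ell(x))$, the prefactor is $\leq 1$ for $j\geq 1$, and a change of variables in the difference quotients (together with $n\geq 1$ and the inequality $\|g^{(j)}\|_{L_{\infty}(U)}\leq\|g^{(j-1)}\|_{{\rm Lip}(U)}$ on open $U$ from the remark after the definition of $C^{s}$) gives, writing $s'=k'+\rho'$ and $M:=1\vee\|\tilde f_{\delta,k}\|_{C^{s'}(\Theta_{\delta})}$:
\[
\|\Phi\|_{{\rm Lip}(U_{\delta\sqrt{n}})}\leq M,\qquad \|\Phi^{(j)}\|_{L_{\infty}(U_{\delta\sqrt{n}})}\leq M\ \ (1\leq j\leq k'),
\]
\[
\|\Phi^{(j)}\|_{{\rm Lip}(U_{\delta\sqrt{n}})}\leq M\ \ (1\leq j\leq k'-1),\qquad \|\Phi^{(k')}\|_{{\rm Lip}_{\rho'}(U_{\delta\sqrt{n}})}\leq M.
\]

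The sup-norm part of $\|g_{\varphi}\|_{C^{s'}(U_{\delta\sqrt{n}})}$ is then free: $\|g_{\varphi}\|_{L_{\infty}}\leq\|\varphi\|_{L_{\infty}(\mathbb{R})}\leq\|\varphi\|_{C^{s'}(\mathbb{R})}$. For $1\leq m\leq k'$, the Fa\`a di Bruno formula writes $D^{m}(\varphi\circ\Phi)(x)$ as a sum of at most $C_{s'}$ terms of the form $(D^{j}\varphi)(\Phi(x))\bigl[(D^{k_{1},\dots,k_{j}}\Phi)(x)[\,\cdot\,]\bigr]$ with $1\leq j\leq m$ and $\sum_{i}k_{i}=m$. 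Here $\|(D^{j}\varphi)(\Phi(x))\|\leq\|\varphi^{(j)}\|_{L_{\infty}(\mathbb{R})}\leq\|\varphi\|_{C^{s'}(\mathbb{R})}$ for $j\leq k'$, and $\|(D^{k_{1},\dots,k_{j}}\Phi)(x)\|\leq\prod_{i=1}^{j}\|\Phi^{(k_{i})}\|_{L_{\infty}(U_{\delta\sqrt{n}})}\leq M^{j}\leq M^{m}$ (since $j\leq\sum_{i}k_{i}=m\leq s'$ and $M\geq 1$), so $\|D^{m}(\varphi\circ\Phi)\|_{L_{\infty}(U_{\delta\sqrt{n}})}\lesssim_{s'}\|\varphi\|_{C^{s'}(\mathbb{R})}M^{s'}$.

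The main work is the H\"older/Lipschitz seminorms. In each Fa\`a di Bruno summand — a product of the factor $(D^{j}\varphi)(\Phi(\cdot))$ and the $j$ factors $(D^{k_{i}}\Phi)(\cdot)$, all bounded on $U_{\delta\sqrt{n}}$ by $\|\varphi\|_{C^{s'}(\mathbb{R})}$ or $M$ respectively — I telescope the difference at $x,x'\in U_{\delta\sqrt{n}}$ over the $j+1$ factors. For $\|D^{m}(\varphi\circ\Phi)\|_{{\rm Lip}(U_{\delta\sqrt{n}})}$ with $m\leq k'-1$ every factor is genuinely Lipschitz on $U_{\delta\sqrt{n}}$ ($\varphi^{(j)}$ for $j\leq k'-1$ and $\Phi^{(k_{i})}$ for $k_{i}\leq k'-1$), so telescoping gives $\lesssim_{s'}\|\varphi\|_{C^{s'}(\mathbb{R})}M^{s'}\|x-x'\|$ directly. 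For $\|D^{k'}(\varphi\circ\Phi)\|_{{\rm Lip}_{\rho'}(U_{\delta\sqrt{n}})}$ at most one factor in a given summand is ${\rm Lip}_{\rho'}$-regular — either $D^{k'}\Phi$ (when $j=1$) or $D^{k'}\varphi$ (when $j=k'$) — and for that factor I use its ${\rm Lip}_{\rho'}$-seminorm directly (via $\|\Phi(x)-\Phi(x')\|\leq M\|x-x'\|$ when it is the $\varphi$-factor); for every other, merely Lipschitz, factor $F$ I use the interpolation
\[
\|F(x)-F(x')\|\leq\bigl(2\|F\|_{L_{\infty}(U_{\delta\sqrt{n}})}\bigr)^{1-\rho'}\bigl(\|F\|_{{\rm Lip}(U_{\delta\sqrt{n}})}\,\|x-x'\|\bigr)^{\rho'},
\]
which turns a Lipschitz bound into a ${\rm Lip}_{\rho'}$ bound \emph{without} a factor $(\delta\sqrt{n})^{1-\rho'}$ — this is the one place where naive use of the bounded-domain diameter would fatally reintroduce a power of $n$. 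Bookkeeping of exponents shows that each summand contributes $\lesssim_{s'}\|\varphi\|_{C^{s'}(\mathbb{R})}M^{s'}$ (a summand with $j$ blocks contributes at most $\|\varphi\|_{C^{s'}(\mathbb{R})}M^{k'}$ or $\|\varphi\|_{C^{s'}(\mathbb{R})}M^{k'-1+\rho'}$, both $\leq\|\varphi\|_{C^{s'}(\mathbb{R})}M^{s'}$ since $M\geq 1$ and $k'\leq s'$), and summing over the finitely many summands yields $\|D^{k'}(\varphi\circ\Phi)\|_{{\rm Lip}_{\rho'}(U_{\delta\sqrt{n}})}\lesssim_{s'}\|\varphi\|_{C^{s'}(\mathbb{R})}M^{s'}$. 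Collecting the pieces proves the lemma, the case $s'=1$ being the one-line estimate $\|g_{\varphi}\|_{C^{1}(U_{\delta\sqrt{n}})}\leq\|\varphi\|_{C^{1}(\mathbb{R})}\bigl(1\vee\|\tilde f_{\delta,k}\|_{C^{1}(\Theta_{\delta})}\bigr)$. The main obstacle is exactly the two points isolated above: one must rerun the proof of Lemma \ref{superposition_h_g} rather than invoke it, exploiting that the large quantity $\|\Phi\|_{L_{\infty}}$ is absent from the relevant derivatives, and replacing the crude diameter conversion in the top H\"older seminorm by the $L_{\infty}$–Lipschitz interpolation.
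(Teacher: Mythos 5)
Your argument is correct and matches the paper's intended proof: Remark \ref{proofs_superposition} says Lemma \ref{lemma_g_varphi} is proved by the same Fa\`a di Bruno computation as Lemma \ref{superposition_h_g}, which is exactly what you carry out, correctly exploiting that after the rescaling $x\mapsto \theta+n^{-1/2}x$ only the derivative seminorms of $\tilde f_{\delta,k}$ enter the formula (with harmless prefactors $n^{(1-j)/2}\le 1$), never the $L_{\infty}$-norm of the inner function, which is indeed of order $\sqrt{n}$ and would ruin a black-box use of Lemma \ref{superposition_h_g}. Your handling of the top H\"older seminorm by $L_{\infty}$--Lipschitz interpolation of each factor, rather than via the diameter of $U_{\delta\sqrt{n}}$, is the right way to keep the bound free of powers of $n$ and is consistent with the paper's sketch.
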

	
	Taking into account that $\|\varphi\|_{C^{s'}({\mathbb  R})}\leq 1,$
	lemmas \ref{f_delta_k_first} and \ref{lemma_g_varphi} imply that, under the assumption ${\frak d}_{\xi}(\Theta_{k\delta};s)\lesssim  n,$
	\begin{align*}
		\|g_{\varphi}\|_{C^{s'}(U_{\delta\sqrt{n}})} 
		\lesssim_{s} 1\vee 
		\|\tilde f_{\delta,k}\|_{C^s(\Theta_{\delta})}^s
		\lesssim_{s} 
		1\vee \|f\|_{C^s(\Theta_{\delta(k+1)})}^s.
	\end{align*}
	Therefore, 
	\begin{align*}
		&
		\Delta_{s'}(\sqrt{n}(\tilde f_{\delta,k}(\hat \theta)-f(\theta)), \sqrt{n}(\tilde f_{\delta,k}(\tilde \theta_{\delta})-f(\theta)))
		\\
		&
		= \sup_{\theta\in \Theta} \sup_{\|\varphi\|_{C^{s'}({\mathbb  R})}\leq 1} 
		\Bigl|{\mathbb E}_{\theta} \varphi (\sqrt{n}(\tilde f_{\delta, k}(\hat \theta)-f(\theta)))- \varphi (\sqrt{n}(\tilde f_{\delta, k}(\tilde \theta_{\delta})-f(\theta)))\Bigr|
		\\
		&
		\leq 
		\sup_{\theta\in \Theta} \sup_{\|\varphi\|_{C^{s'}({\mathbb  R})}\leq 1} 
		\Bigl|{\mathbb E}_{\theta}g_{\varphi}(\sqrt{n}(\hat \theta-\theta)) - {\mathbb  E} g_{\varphi}(\xi_{\delta}(\theta))\Bigr|
		\\
		&
		\leq 
		\sup_{\|\varphi\|_{C^{s'}({\mathbb  R})}\leq 1}\|g_{\varphi}\|_{C_{s'}(U_{\delta\sqrt{n}})}
		\Delta_{{\mathcal F}, \Theta}(\sqrt{n}(\hat \theta-\theta),\xi_{\delta}(\theta))
		\\
		&
		\lesssim_{s} (1\vee \|f\|_{C^s(\Theta_{(k+1)\delta})}^s)\Delta_{{\mathcal F}, \Theta}(\sqrt{n}(\hat \theta-\theta),\xi_{\delta}(\theta)),
	\end{align*}
	where ${\mathcal F}:=\{g: \|g\|_{C^{s'}(U_{\delta\sqrt{n}})}\leq 1\}.$
	
	In addition, for any Lipschitz function $g$ in $E,$
	\begin{align*}
		&
		|{\mathbb E}_{\theta} g(\xi(\theta)) -{\mathbb E}_{\theta} g(\xi_{\delta}(\theta))|
		\leq \|g\|_{{\rm Lip}(E)}{\mathbb E}\|\xi(\theta)-\xi_{\delta}(\theta)\|
		\\
		&
		\leq   \|g\|_{{\rm Lip}(E)} {\mathbb E}^{1/2}\|\xi\|_{L_{\infty}(E)}^2
		{\mathbb P}^{1/2}\{\|\xi\|_{L_{\infty}(E)}\geq \delta\sqrt{n}\}.
	\end{align*}
	This easily implies that 
	\begin{align*}
		&
		\Delta_{{\mathcal F}, \Theta}(\xi(\theta), \xi_{\delta}(\theta))
		\leq \sup_{\theta\in \Theta}{\mathbb E}^{1/2}\|\xi\|_{L_{\infty}(E)}^2 
		\sup_{\theta \in \Theta}{\mathbb P}^{1/2}\{\|\xi\|_{L_{\infty}(E)}\geq \delta\sqrt{n}\}.
	\end{align*}
	Under the assumption ${\frak d}_{\xi}(\Theta_{k\delta};s)\lesssim  n,$
	we also have $\sup_{\theta\in \Theta}{\mathbb E}^{1/2}\|\xi\|_{L_{\infty}(E)}^2\lesssim \sqrt{n},$
	implying  
	\begin{align*}
		&
		\Delta_{{\mathcal F}, \Theta}(\xi(\theta), \xi_{\delta}(\theta))
		\lesssim 
		\sqrt{n}\sup_{\theta \in \Theta}{\mathbb P}^{1/2}\{\|\xi\|_{L_{\infty}(E)}\geq \delta\sqrt{n}\}.
	\end{align*}
	As a consequence,
	\begin{align}
		\label{bd_B}
		&
		\nonumber
		\Delta_{s'}(\sqrt{n}(\tilde f_{\delta,k}(\hat \theta)-f(\theta)), \sqrt{n}(\tilde f_{\delta,k}(\tilde \theta_{\delta})-f(\theta)))
		\\
		&
		\lesssim_{s} (1\vee \|f\|_{C^s(\Theta_{(k+1)\delta})}^s)
		\Bigl(\Delta_{{\mathcal F}, \Theta}(\sqrt{n}(\hat \theta-\theta),\xi(\theta)) + \sqrt{n}{\mathbb P}^{1/2}\{\|\xi\|_{L_{\infty}(E)}\geq \delta\sqrt{n}\}\Bigr).
	\end{align}
	
	Finally, it follows from bound of Theorem \ref{conc_main} applied to $\psi(u)=u^2$ that 
	\begin{align}
		\label{bd_C}
		&
		\nonumber
		\Delta_{s'}(\sqrt{n}(\tilde f_{\delta, k}(\tilde \theta_{\delta})-f(\theta)), \langle f'(\theta), \xi(\theta)\rangle)
		\\
		&
		\nonumber
		=
		\sup_{\theta \in \Theta} \sup_{\|\varphi\|_{C^{s'}({\mathbb R})}\leq 1}
		\Bigl|{\mathbb E}_{\theta} \varphi(\sqrt{n}(\tilde f_{\delta, k}(\tilde \theta_{\delta})-f(\theta))) 
		-{\mathbb E}\varphi(\langle f'(\theta), \xi(\theta)\rangle)
		\Bigr|
		\\
		&
		\nonumber
		\leq \sup_{\theta \in \Theta} {\mathbb E}_{\theta}
		\Bigl|\sqrt{n}(\tilde f_{\delta,k}(\tilde \theta_{\delta})-f(\theta))
		-\langle f'(\theta), \xi(\theta)\rangle\Bigr|
		\\
		&
		\nonumber
		\leq \sup_{\theta\in \Theta}\Bigl\|
		\sqrt{n}(\tilde f_{\delta,k}(\tilde \theta_{\delta})-f(\theta))
		-\langle f'(\theta), \xi(\theta)\rangle
		\Bigr\|_{L_2({\mathbb P})}
		\\
		&
		\nonumber
		\lesssim_s 
		\|f\|_{C^s(\Theta_{(k+3)\delta})}
		\biggl[
		\sqrt{n}\biggl(\sqrt{\frac{{\frak d}_{\xi}(\Theta_{(k+2)\delta};s-1)}{n}}\biggr)^s 
		+ 
		\sqrt{n}{\mathbb P}^{1/4}\{\|\xi\|_{L_{\infty}(E)}\geq \delta \sqrt{n}\}
		\\
		&
		+\|\Sigma\|_{L_{\infty}(E)}^{1/2}\sqrt{\frac{{\frak d}_{\xi}(\Theta_{(k+2)\delta};s-1)}{n}}		
		\biggr].
	\end{align}
Bound \eqref{bd_Delta_s'_sgeq2} follows from bounds \eqref{bd_A}, \eqref{bd_B} and \eqref{bd_C}.

Note that an obvious change of variable $(k+3)\delta\mapsto \delta$ is needed to rewrite 
bounds \eqref{psi_error} and \eqref{bd_Delta_s'_sgeq2} the way they are stated in the theorem.
\end{proof}

We prove corollaries \ref{Main_Th_AA_cor_cor} and \ref{risk_efficient_psi_cor_cor}.

\begin{proof}
First note that all the functions of set ${\mathcal F}$	defined in Theorem \ref{Main_Th_AA} 
are Lipschitz with constant $1.$ This implies that the distance 
$$\Delta_{{\mathcal F}, {\mathbb P}_{\theta}}(\sqrt{n}(\hat \theta-\theta), \xi(\theta))$$
is dominated by the $\zeta_1$-distance between $\sqrt{n}(\hat \theta-\theta)$
and $\xi(\theta),$ which coincides with $W_{1,{\mathbb P}_{\theta}}(\sqrt{n}(\hat \theta-\theta), \xi(\theta)).$ Therefore,
\begin{align*}
\Delta_{{\mathcal F}, \Theta_{\delta}}(\sqrt{n}(\hat \theta-\theta), \xi(\theta))
\leq W_{1,\Theta_{\delta}}(\sqrt{n}(\hat \theta-\theta), \xi(\theta))
\leq W_{2,\Theta_{\delta}}(\sqrt{n}(\hat \theta-\theta), \xi(\theta)).
\end{align*}	
	
Next, we will bound ${\mathbb P}_{\theta}\{\|\hat \theta-\theta\|\geq \delta\}$
in terms of $W_{1,\Theta_{\delta}}(\sqrt{n}(\hat \theta-\theta), \xi(\theta)).$	To this end,  
let $\lambda$ be a function in the real line such that $\lambda (u)=1, u\geq \delta\sqrt{n},$ $\lambda(u)=0, u\leq (\delta/2)\sqrt{n},$
	$\lambda(u)\in [0,1], u\in {\mathbb R}$ and $\|\lambda\|_{{\rm Lip}({\mathbb R})}\leq \frac{2}{\delta\sqrt{n}}.$
	Let $\varphi (x):= \lambda(\|x\|), x\in E.$ Then $\|\varphi\|_{{\rm Lip}(E)}\leq \frac{2}{\delta\sqrt{n}}$
	and the following bound holds
	\begin{align*}
	&
	{\mathbb P}_{\theta}\{\|\hat \theta-\theta\|\geq \delta\}
	\leq {\mathbb E}_{\theta}\varphi (\sqrt{n}(\hat \theta-\theta))
	\\
	&
	\leq |{\mathbb E}_{\theta}\varphi (\sqrt{n}(\hat \theta-\theta))- {\mathbb E}\varphi (\xi(\theta))|
	+ {\mathbb P}\{\|\xi(\theta)\|\geq (\delta/2)\sqrt{n}\}
	\\
	&
	\leq \frac{1}{\delta \sqrt{n}} \Delta_{\mathcal G, {\mathbb P}_{\theta}}\Bigl(\sqrt{n}(\hat \theta -\theta),\xi(\theta)\Bigr)
	+ {\mathbb P}\{\|\xi(\theta)\|\geq (\delta/2)\sqrt{n}\},
	\end{align*}
	where ${\mathcal G}:= \{g: \|g\|_{{\rm Lip}(E)}\leq 1\}.$ This implies that 
\begin{align*}
&
{\mathbb P}_{\theta}\{\|\hat \theta-\theta\|\geq \delta\}
\leq \frac{1}{\delta \sqrt{n}} W_{1,{\mathbb P}_{\theta}}\Bigl(\sqrt{n}(\hat \theta -\theta),\xi(\theta)\Bigr)
+  {\mathbb P}\{\|\xi(\theta)\|\geq (\delta/2)\sqrt{n}\}.	
\end{align*}
Note that, for some constant $c_2>0,$
\begin{align}
\label{bd_on_xi_L_infty}
{\mathbb P}\{\|\xi\|_{L_{\infty}(E)}\geq  \delta\sqrt{n}\}\leq 
\exp\biggl\{-\frac{c_2\delta^2 n}{\|\Sigma\|_{L_{\infty}(E)}}\biggr\}.
\end{align}
The last bound holds under the assumption that 
\begin{align*}
{\mathbb E} \|\xi\|_{L_{\infty}(E)} \leq \frac{\delta \sqrt{n}}{2}
\end{align*}
(that itself holds if ${\frak d}_{\xi}(\Theta_{\delta};s) \leq c_1\delta^2 n$ with small enough $c_1>0$)
and follows from the Gaussian concentration. 
Therefore, we get 
\begin{align}
\label{norm_hat_theta_xi}
&
\sup_{\theta\in \Theta_{\delta}}{\mathbb P}_{\theta}\{\|\hat \theta-\theta\|\geq \delta\}
\leq \frac{1}{\delta \sqrt{n}} W_{1,\Theta_{\delta}}\Bigl(\sqrt{n}(\hat \theta -\theta),\xi(\theta)\Bigr)
+  \exp\biggl\{-\frac{c_2\delta^2 n}{\|\Sigma\|_{L_{\infty}(E)}}\biggr\}.
\end{align}
This allows one to complete the proof of Corollary \ref{Main_Th_AA_cor_cor}.	
	
To prove Corollary \ref{risk_efficient_psi_cor_cor}, it remains to observe 
that, for ${\mathcal H}$ defined in the statement of Theorem  \ref{Main_Th_AA} 
and $\psi(u)=u^2,$
\begin{align*}	
\Delta_{{\mathcal H},\psi, \Theta_{\delta}}(\hat \theta, \tilde \theta)
\leq \frac{W_{2,\Theta_{\delta}}(\sqrt{n}(\hat \theta-\theta), \xi(\theta))}{\sqrt{n}}.	
\end{align*}	
This follows from Proposition \ref{prop_Delta_H_Delta_F} and bounds \eqref{Delta_psi_W_psi_comp} and \eqref{Delta_F_psi_W_psi}. Using again bound \eqref{norm_hat_theta_xi},
we could complete the proof of 	Corollary \ref{risk_efficient_psi_cor_cor}.
\end{proof}

We now prove Corollary \ref{Cor_Main_Th_AA}.

\begin{proof}
The first claim immediately follows bound \eqref{psi_error_W_2}	of Corollary \ref{risk_efficient_psi_cor_cor}.
To prove the second claim, we need two very simple lemmas.
	
	\begin{lemma}
		\label{lem_DDD}
		Let $\xi, \eta$ be random variables. 
		For all $s'\geq 1$ and all $a>0,$
		\begin{align*}
		\Delta_{s'}(a\xi, a\eta) \leq (a^{s'}\vee 1) \Delta_{s'}(\xi,\eta).
		\end{align*}
	\end{lemma}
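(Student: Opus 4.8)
The plan is to reduce the inequality to a change of variables inside the supremum defining $\Delta_{s'}$. Recall that $\Delta_{s'}(\xi,\eta)=\sup_f|{\mathbb E}f(\xi)-{\mathbb E}f(\eta)|$, where the supremum runs over all $f$ with $\|f\|_{C^{s'}(E)}\leq 1$. Fix such an $f$ and set $g(x):=f(ax)$; then $|{\mathbb E}f(a\xi)-{\mathbb E}f(a\eta)|=|{\mathbb E}g(\xi)-{\mathbb E}g(\eta)|$, so it suffices to show $\|g\|_{C^{s'}(E)}\leq a^{s'}\vee 1$. Granting this, $g/(a^{s'}\vee 1)$ lies in the unit ball of $C^{s'}(E)$, hence $|{\mathbb E}g(\xi)-{\mathbb E}g(\eta)|\leq(a^{s'}\vee 1)\Delta_{s'}(\xi,\eta)$, and taking the supremum over $f$ yields the claim.

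To bound $\|g\|_{C^{s'}(E)}$, write $s'=k+\rho$ with $k\geq 0$, $\rho\in(0,1]$. Since $x\mapsto ax$ is linear, $g$ is $k$ times Fr\`echet differentiable with $g^{(j)}(x)=a^j f^{(j)}(ax)$ for $j=0,\dots,k$ (as $j$-linear forms, so that also $\|g^{(j)}(x)\|=a^j\|f^{(j)}(ax)\|$ in the operator norm). Substituting $y=ax$, $y'=ax'$ in the defining suprema gives $\|g\|_{L_\infty(E)}=\|f\|_{L_\infty(E)}$, $\|g^{(j)}\|_{{\rm Lip}(E)}=a^{j+1}\|f^{(j)}\|_{{\rm Lip}(E)}$ for $0\leq j\leq k-1$, and $\|g^{(k)}\|_{{\rm Lip}_\rho(E)}=a^{k+\rho}\|f^{(k)}\|_{{\rm Lip}_\rho(E)}=a^{s'}\|f^{(k)}\|_{{\rm Lip}_\rho(E)}$. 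Since $\|f\|_{C^{s'}(E)}\leq 1$, each of the factors $\|f\|_{L_\infty(E)}$, $\|f^{(j)}\|_{{\rm Lip}(E)}$ (for $j\leq k-1$) and $\|f^{(k)}\|_{{\rm Lip}_\rho(E)}$ is at most $1$, so $\|g\|_{C^{s'}(E)}\leq\max(1,a,a^2,\dots,a^k,a^{s'})$. Because $j+1\leq k<s'$ for every $j\leq k-1$, this maximum equals $a^{s'}$ when $a\geq 1$ and equals $1$ when $a<1$; in both cases it is bounded by $a^{s'}\vee 1$, which is exactly what was needed.

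I do not anticipate a genuine obstacle: the computation is routine. The only point requiring care is the bookkeeping of the scaling exponents — each order of differentiation contributes one factor $a$, while passing from $g^{(k)}$ to its $\rho$-H\"older seminorm contributes an extra $a^{\rho}$ rather than $a$, which is precisely why $\|g\|_{C^{s'}}$ scales like $a^{s'}$ and not $a^{k+1}$ — together with the elementary observation that for $a<1$ it is the constant term $\|g\|_{L_\infty(E)}$, not the scaled seminorms, that dominates, producing the ``$\vee\,1$'' in the bound. The degenerate case $k=0$ (forcing $\rho=1$, $s'=1$) is covered by the same argument with the empty maximum over $j$ omitted.
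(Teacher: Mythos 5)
Your proof is correct and is essentially the paper's own argument: the paper simply declares the lemma ``immediate from the definition of $\Delta_{s'}$,'' and your change of variables $g(x):=f(ax)$ together with the scaling computation $\|g\|_{L_\infty}=\|f\|_{L_\infty}$, $\|g^{(j)}\|_{{\rm Lip}}=a^{j+1}\|f^{(j)}\|_{{\rm Lip}}$, $\|g^{(k)}\|_{{\rm Lip}_\rho}=a^{s'}\|f^{(k)}\|_{{\rm Lip}_\rho}$ is precisely the verification hiding behind that remark. The bookkeeping (all exponents $j+1\leq k<s'$, and the ``$\vee\,1$'' coming from the unscaled $L_\infty$ term when $a<1$) is handled correctly, including the case $s'=1$.
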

	
	\begin{proof}
		Immediate from the definition of $\Delta_{s'}.$
	\end{proof}
	
	\begin{lemma}
		\label{lem_UUU}
		Let $\eta$ be a r.v. and let $Z\sim N(0,1).$
		Then 
		\begin{align*}
		d_K(\eta, Z)
		\lesssim \Delta_{s'}^{1/(1+s')}(\eta,Z).
		\end{align*}
	\end{lemma}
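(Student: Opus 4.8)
The plan is the classical smoothing argument: replace the indicators $I_{(-\infty,x]}$ in the definition of $d_K$ by $C^{s'}$-smooth approximants whose $C^{s'}$-norm is controlled, estimate the resulting gap by $\Delta_{s'}(\eta,Z)$, and then optimize the smoothing scale. Write $s'=k+\rho$ with $k\geq 0$, $\rho\in(0,1]$, and fix once and for all a $C^{\infty}$ profile $\phi:{\mathbb R}\mapsto[0,1]$ with $\phi\equiv 1$ on $(-\infty,0]$ and $\phi\equiv 0$ on $[1,\infty)$. For $x\in{\mathbb R}$ and $h\in(0,1]$ I would set $\phi_{x,h}^{+}(t):=\phi\bigl(\frac{t-x}{h}\bigr)$ and $\phi_{x,h}^{-}(t):=\phi\bigl(\frac{t-x}{h}+1\bigr)$, so that $I_{(-\infty,x]}\leq\phi_{x,h}^{+}\leq I_{(-\infty,x+h]}$ and $I_{(-\infty,x-h]}\leq\phi_{x,h}^{-}\leq I_{(-\infty,x]}$ pointwise. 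Using $(\phi_{x,h}^{\pm})^{(j)}(t)=h^{-j}\phi^{(j)}(\cdot)$ together with the identity $\|g^{(j)}\|_{{\rm Lip}({\mathbb R})}=\|g^{(j+1)}\|_{L_{\infty}({\mathbb R})}$ on the convex set ${\mathbb R}$, one gets $\|\phi_{x,h}^{\pm}\|_{C^{s'}({\mathbb R})}\leq C_{s'}\,h^{-s'}$ for $h\leq 1$, with $C_{s'}$ depending only on $s'$ and the fixed $\phi$.

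Next I would feed these test functions into the distance $\Delta_{s'}$. By homogeneity, $|{\mathbb E}g(\eta)-{\mathbb E}g(Z)|\leq \|g\|_{C^{s'}({\mathbb R})}\Delta_{s'}(\eta,Z)$, and since the density of $Z$ is bounded by $\frac{1}{\sqrt{2\pi}}$ we have ${\mathbb P}\{Z\leq x+h\}-{\mathbb P}\{Z\leq x\}\leq\frac{h}{\sqrt{2\pi}}$. Hence
\[
{\mathbb P}\{\eta\leq x\}\leq{\mathbb E}\phi_{x,h}^{+}(\eta)\leq{\mathbb P}\{Z\leq x\}+\tfrac{h}{\sqrt{2\pi}}+C_{s'}h^{-s'}\Delta_{s'}(\eta,Z),
\]
and the symmetric estimate with $\phi_{x,h}^{-}$ bounds ${\mathbb P}\{Z\leq x\}-{\mathbb P}\{\eta\leq x\}$ in the same way. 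Taking the supremum over $x$ gives, for all $h\in(0,1]$,
\[
d_K(\eta,Z)\ \lesssim\ h+h^{-s'}\Delta_{s'}(\eta,Z),
\]
with the implicit constant depending on $s'$.

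Finally I would optimize in $h$: if $\Delta_{s'}(\eta,Z)\geq 1$ the claim is trivial because $d_K(\eta,Z)\leq 1\leq\Delta_{s'}^{1/(1+s')}(\eta,Z)$, and otherwise the choice $h:=\Delta_{s'}^{1/(1+s')}(\eta,Z)\in(0,1]$ balances the two terms and yields $d_K(\eta,Z)\lesssim_{s'}\Delta_{s'}^{1/(1+s')}(\eta,Z)$. There is no serious obstacle here; the only point needing a little care is verifying that the rescaled bump $\phi_{x,h}^{\pm}$ has $C^{s'}$-norm of the expected order $h^{-s'}$ for the paper's (slightly non-standard) Hölder norm, which is precisely what the equality $\|g^{(j)}\|_{{\rm Lip}}=\|g^{(j+1)}\|_{L_\infty}$ on ${\mathbb R}$ delivers.
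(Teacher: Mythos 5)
Your proposal is correct and follows essentially the same route as the paper: smoothing the indicator $I_{(-\infty,x]}$ by a rescaled $C^{s'}$ cutoff with norm of order $h^{-s'}$, bounding the resulting gap by $h+h^{-s'}\Delta_{s'}(\eta,Z)$ via the Lipschitz property of the Gaussian distribution function, and choosing $h\asymp\Delta_{s'}^{1/(1+s')}\wedge 1$. The only cosmetic difference is that you make the trivial case $\Delta_{s'}\geq 1$ explicit, whereas the paper handles it through the truncation $\eps=\Delta_{s'}^{1/(1+s')}\wedge 1$ together with $\Delta_{s'}\leq 2$.
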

	
	\begin{proof}
		Note that, by the definition, $\Delta_{s'}(\xi,\eta)\leq 2.$
		Clearly, there exists a function $\varphi \in C^{s'}({\mathbb R})$ such that $\varphi (t)=1, t\leq 0,$
		$\varphi (t)=1, t\geq 1$ and $\varphi (t)\in [0,1], t\in [0,1].$ For $x\in {\mathbb R},$ $\eps\in (0,1),$
		define 
		\begin{align*}
		\varphi_{x,\eps}(y):= \varphi(\eps^{-1}(y-x)), y\in {\mathbb R}.
		\end{align*}
		Since $I_{(-\infty, x]}\leq \varphi_{x,\eps}\leq I_{(-\infty, x+\eps]},$
		and $\|\varphi_{x,\eps}\|_{C^{s'}}\lesssim \eps^{-s'},$
		we have, with some constant $c'>0,$ 
		\begin{align*}
		&
		{\mathbb P}\{\eta \leq x\} \leq {\mathbb E}\varphi_{x,\eps}(\xi)
		\leq {\mathbb E}\varphi_{x,\eps}(Z) + c' \eps^{-s'} \Delta_{s'}(\xi,Z)
		\\
		&
		\leq {\mathbb P}\{Z\leq x+\eps\} + c' \eps^{-s'} \Delta_{s'}(\xi,Z)
		\leq {\mathbb P}\{Z\leq x\} +\eps + c' \eps^{-s'} \Delta_{s'}(\xi,Z)
		\end{align*}
		and, similarly, 
		\begin{align*}
		&
		{\mathbb P}\{\eta \leq x\} \geq 
		{\mathbb P}\{Z\leq x\} -\eps - c' \eps^{-s'} \Delta_{s'}(\xi,Z).
		\end{align*}
		It remains to set $\eps := \Delta_{s'}^{1/(1+s')}(\xi,Z)\wedge 1$
		to complete the proof.
	\end{proof}

	Using bound \eqref{bd_Delta_s'_sgeq2} of Theorem \ref{Main_Th_AA}, Lemma \ref{lem_DDD} and bound \eqref{bd_on_xi_L_infty},
	we get under the conditions of the corollary that 
	\begin{align*}
	&
	\sup_{\|f\|_{C^s(\Theta_{\delta})}\leq 1}
	\sup_{\theta \in \Theta,\sigma_f(\theta)\geq \sigma_0}
	\Delta_{s'}\Bigl(\frac{\sqrt{n}(f_{k}(\hat \theta)-f(\theta))}{\sigma_{f}(\theta)}, Z\Bigr)
	\\
	&
	\lesssim 
	(1\vee \sigma_0^{-s'})\sup_{\|f\|_{C^s(\Theta_{\delta})}\leq 1}
	\sup_{\theta\in \Theta}\Delta_{s'}\Bigl(\sqrt{n}(f_{k}(\hat \theta)-f(\theta)), \sigma_{f}(\theta)Z\Bigr)
	\to 0\ {\rm as}\ n\to\infty.
	\end{align*}
	The result now follows from the bound of Lemma \ref{lem_UUU}.
\end{proof}

Our next goal is to prove Corollary \ref{indep_hat_theta_j}.

\begin{proof}
We will use the bounds of corollaries Corollary \ref{Main_Th_AA_cor_cor} and \ref{risk_efficient_psi_cor_cor} in the case of $\Theta=T=E$ (see bounds 
\eqref{Delta_s'_T=E}, \eqref{psi_error_bdd'''} and \eqref{psi_error'''}). We start with providing an upper bound on ${\frak d}_{\xi}(s)={\mathbb E}\|\xi\|_{C^s(E)}^2.$

\begin{lemma}
For all $s\in (0,1],$
\begin{align*}
{\mathbb E}\|\xi\|_{C^s(E)}^2\leq 
2 \sum_{i=1}^d {\mathbb E}\|\xi_i\|_{C^s(E_i)}^2
\end{align*}
and, for all $s>1,$
\begin{align*}
{\mathbb E}\|\xi\|_{C^s(E)}^2\lesssim
\sum_{i=1}^d {\mathbb E}\|\xi_i\|_{C^1(E_i)}^2 + \max_{1\leq i\leq d} {\mathbb E}  \|\xi_i\|_{C^{1,s}(E_i)}^2 \log(2d).
\end{align*}
\end{lemma}

\begin{proof}
Since $\xi(\theta)= (\xi_1(\theta_1), \dots, \xi_d(\theta_d)), \theta=(\theta_1,\dots, \theta_d),$ 
we have 
\begin{align*}
\|\xi\|_{L_{\infty}(E)}
=\sup_{\theta\in E}\Bigl(\sum_{i=1}^d \|\xi_i(\theta_i)\|^2\Bigr)^{1/2}
\leq \Bigl(\sum_{i=1}^d \|\xi_i\|^2_{L_{\infty}(E_i)}\Bigr)^{1/2}.
\end{align*}
In addition,
\begin{align*}
\|\xi(\theta)-\xi(\theta')\| = \Bigl(\sum_{i=1}^d \|\xi_i(\theta_i)-\xi_i(\theta_i')\|^2\Bigr)^{1/2},
\end{align*}
implying that, for all $\beta\in (0,1],$
\begin{align*}
\|\xi\|_{{\rm Lip}_{\beta}(E)} \leq \Bigl(\sum_{i=1}^d \|\xi_i\|_{{\rm Lip}_{\beta}(E_i)}^2\Bigr)^{1/2}.
\end{align*}
The $j$-th derivative of $\xi(\theta)$ is  
\begin{align*}
\xi^{(j)}(\theta)[u_1,\dots, u_j]
= \Bigl(\xi_1^{(j)}(\theta_1)[u_{1,1}, \dots, u_{j,1}], \dots, \xi_d^{(j)}(\theta_d)[u_{1,d}, \dots, u_{j,d}]\Bigr),
\end{align*}
where $u_{l}=(u_{l,1}, \dots, u_{l,d})\in E_1\times \dots \times E_d,\ l=1,\dots, j.$
Therefore, for all $j\geq 1,$
\begin{align*}
&
\|\xi^{(j)}(\theta)[u_1,\dots, u_j]-\xi^{(j)}(\theta')[u_1,\dots, u_j]\|^2 
\\
&
\leq \sum_{i=1}^d \|\xi^{(j)}_i(\theta_i)-\xi^{(j)}_i(\theta_i')\|^2 \|u_{1,i}\|^2 \dots \|u_{j,i}\|^2
\\
&
\leq 
\max_{1\leq i\leq d}\|\xi^{(j)}_i(\theta_i)-\xi^{(j)}_i(\theta_i')\|^2 \sum_{i=1}^d \|u_{1,i}\|^2 \dots \|u_{j,i}\|^2
\\
&
\leq 
\max_{1\leq i\leq d}\|\xi^{(j)}_i(\theta_i)-\xi^{(j)}_i(\theta_i')\|^2 \sum_{i=1}^d \|u_{1,i}\|^2 \dots \sum_{i=1}^d \|u_{j,i}\|^2
\\
&
=
\max_{1\leq i\leq d}\|\xi^{(j)}_i(\theta_i)-\xi^{(j)}_i(\theta_i')\|^2 \|u_1\|^2\dots \|u_d\|^2,
\end{align*}
implying that
\begin{align*}
\|\xi^{(j)}(\theta)-\xi^{(j)}(\theta')\| 
\leq \max_{1\leq i\leq d}\|\xi^{(j)}_i(\theta_i)-\xi^{(j)}_i(\theta_i')\|, \theta, \theta'\in E.
\end{align*}
It easily follows from the last bound that, for all $j\geq 1$ and all $\beta\in (0,1],$ 
\begin{align*}
\|\xi^{(j)}\|_{{\rm Lip}_{\beta}(E)} \leq  \max_{1\leq i\leq d} \|\xi_i^{(j)}\|_{{\rm Lip}_{\beta}(E_i)}.
\end{align*}
Thus, it is easy to conclude that, for $s=m+\rho, m\geq 1, \rho\in (0,1],$
we have 
\begin{align*}
\|\xi\|_{C^s(E)} &\leq \Bigl(\sum_{i=1}^d \|\xi_i\|^2_{L_{\infty}(E_i)}\Bigr)^{1/2}
\vee 
\Bigl(\sum_{i=1}^d \|\xi_i\|_{{\rm Lip}(E_i)}^2\Bigr)^{1/2}
\\
&
\vee \max_{1\leq j\leq m-1}  \max_{1\leq i\leq d} \|\xi_i^{(j)}\|_{{\rm Lip}(E_i)}
\vee \max_{1\leq i\leq d} \|\xi_i^{(m)}\|_{{\rm Lip}_{\rho}(E_i)}
\\
&
=
\Bigl(\sum_{i=1}^d \|\xi_i\|^2_{L_{\infty}(E_i)}\Bigr)^{1/2}
\vee 
\Bigl(\sum_{i=1}^d \|\xi_i\|_{{\rm Lip}(E_i)}^2\Bigr)^{1/2}
\vee \max_{1\leq i\leq d}\|\xi_i\|_{C^{1,s}(E_i)}
\end{align*}
and, for $s\in (0,1],$ we have 
\begin{align*}
&
\|\xi\|_{C^s(E)} \leq \Bigl(\sum_{i=1}^d \|\xi_i\|^2_{L_{\infty}(E_i)}\Bigr)^{1/2}
\vee 
\Bigl(\sum_{i=1}^d \|\xi_i\|_{{\rm Lip}_s(E_i)}^2\Bigr)^{1/2}.
\end{align*}
As a consequence, for $s\in (0,1],$
\begin{align}
\label{repk_repk_1}
{\mathbb E}\|\xi\|_{C^s(E)}^2 \leq \sum_{i=1}^d {\mathbb E}\Bigl[\|\xi_i\|_{L_{\infty}(E_i)}^2 + \|\xi_i\|_{{\rm Lip}_s(E_i)}^2\Bigr]
\leq 2 \sum_{i=1}^d {\mathbb E}\|\xi_i\|_{C^s(E_i)}^2
\end{align}
and for $s=m+\rho>1,$ we have 
\begin{align}
\label{repk_repk_2}
{\mathbb E}\|\xi\|_{C^s(E)}^2\leq 
2 \sum_{i=1}^d {\mathbb E}\|\xi_i\|_{C^1(E_i)}^2 + {\mathbb E}  \max_{1\leq i\leq d} \|\xi_i\|_{C^{1,s}(E_i)}^2.
\end{align}

We will show that 
\begin{align}
\label{repka}
{\mathbb E}  \max_{1\leq i\leq d} \|\xi_i\|_{C^{1,s}(E_i)}^2
\lesssim \max_{1\leq i\leq d} {\mathbb E}  \|\xi_i\|_{C^{1,s}(E_i)}^2 \log(2d).
\end{align}
Let $\eta_i :=\|\xi_i\|_{C^{1,s}(E_i)}.$ It easily follows from the Gaussian concentration inequality, 
that $\|\eta_i\|_{\psi_2} \lesssim {\mathbb E}^{1/2}  \|\xi_i\|_{C^{1,s}(E_i)}^2.$ Denote $\sigma:= \max_{1\leq i\leq d}\|\eta_i\|_{\psi_2}.$
Then, by Jensen's inequality, 
\begin{align*}
\exp\Bigl\{{\mathbb E} \max_{1\leq i\leq d} \frac{\eta_i^2}{\sigma^2}\Bigr\}
\leq  {\mathbb E}\exp\Bigl\{\max_{1\leq i\leq d} \frac{\eta_i^2}{\sigma^2}\Bigr\}
\leq \sum_{i=1}^d {\mathbb E}\exp\Bigl\{\frac{\eta_i^2}{\sigma^2}\Bigr\}\leq 2d,
\end{align*}
implying 
\begin{align*}
{\mathbb E}\max_{1\leq i\leq d}\eta_i^2  \leq \sigma^2 \log(2d),
\end{align*}
and bound \eqref{repka} follows.

Bounds \eqref{repk_repk_1}, \eqref{repk_repk_2} and \eqref{repka} imply the claim of the lemma.
\end{proof}

It follows from \eqref{cov_Sigma_Sigma_j} that 
\begin{align*}
\|\Sigma(\theta)\|
&=\sup_{\|u\|, \|v\|\leq 1}|\langle\Sigma(\theta)u,v\rangle| \leq  
\sup_{\|u\|, \|v\|\leq 1}\sum_{j=1}^d |\langle \Sigma_j(\theta_j) u_j,v_j\rangle|
\\
&
\leq \sup_{\|u\|, \|v\|\leq 1}\sum_{j=1}^d \|\Sigma_j(\theta_j)\| \|u_j\|\|v_j\|
\\
&
\leq \max_{1\leq j\leq d}\|\Sigma_j(\theta_j)\| \sup_{\|u\|\leq 1}\Bigl(\sum_{j=1}^d \|u_j\|^2\Bigr)^{1/2}
\sup_{\|v\|\leq 1}\Bigl(\sum_{j=1}^d \|v_j\|^2\Bigr)^{1/2}
\\
&= \max_{1\leq j\leq d}\|\Sigma_j(\theta_j)\|.
\end{align*}
This implies that 
\begin{align*}
\|\Sigma\|_{L_{\infty}(E)}\leq \max_{1\leq j\leq d}\|\Sigma_j\|_{L_{\infty}(E_j)}.
\end{align*} 

Finally, we need to bound $W_{2,E}(\sqrt{n}(\hat \theta-\theta), \xi(\theta)).$ To this end, choose $\eta_j(\theta_j), \zeta_j(\theta_j), j=1,\dots, d$ so that 
$\eta_j(\theta_j)\overset{d}{=}\sqrt{n}(\hat \theta_j-\theta_j),$ $\zeta_j(\theta_j)\overset{d}{=} \xi_j(\theta_j)$ and 
$$W_{2,{\mathbb P}_{\theta_j}}(\sqrt{n}(\hat \theta_j-\theta_j), \xi_j(\theta_j))= {\mathbb E}^{1/2}\|\eta_j(\theta_j)-\zeta_j(\theta_j)\|^2.$$
Moreover, we can assume that $\eta_j(\theta_j), j=1,\dots, d$ are independent r.v. and so are $\zeta_j(\theta_j), j=1,\dots, d.$
Define $\eta(\theta):=(\eta_1(\theta_1), \dots, \eta_d(\theta_d))$ and $\zeta(\theta):=(\zeta_1(\theta_1), \dots, \zeta_d(\theta_d)).$
Clearly, $\eta(\theta)\overset{d}{=} \sqrt{n}(\hat \theta-\theta)$ and $\zeta(\theta)\overset{d}{=}\xi(\theta).$
Then, we have 
\begin{align*}
&
W_{2,{\mathbb P}_{\theta}} (\sqrt{n}(\hat \theta-\theta), \xi(\theta))\leq  {\mathbb E}^{1/2}\|\eta(\theta)-\zeta(\theta)\|^2
\leq \Bigl({\mathbb E}\sum_{j=1}^d \|\eta_j(\theta_j)-\zeta_j(\theta_j)\|^2\Bigr)^{1/2}
\\
&
=\Bigl(\sum_{j=1}^d {\mathbb E}\|\eta_j(\theta_j)-\zeta_j(\theta_j)\|^2\Bigr)^{1/2} =\Bigl(\sum_{j=1}^d W_{2, {\mathbb P}_{\theta_j}}^2(\sqrt{n}(\hat \theta_j-\theta_j), \xi_j(\theta_j))\Bigr)^{1/2}.
\end{align*}
The last bound easily implies that 
\begin{align*}
W_{2,E} (\sqrt{n}(\hat \theta-\theta), \xi(\theta))\leq \Bigl(\sum_{j=1}^d W_{2, E_j}^2(\sqrt{n}(\hat \theta_j-\theta_j), \xi_j(\theta_j))\Bigr)^{1/2}.
\end{align*}
It remains to substitute the above bounds into bounds \eqref{Delta_s'_T=E}, \eqref{psi_error_bdd'''} 
and \eqref{psi_error'''} to complete the proof.
\end{proof}

We turn now to the proof of Proposition \ref{prop_ind_comp_00}.

\begin{proof}
The proof will follows from corollaries \ref{Main_Th_AA_cor_cor}
and \ref{risk_efficient_psi_cor_cor}.
To use these corollaries, we just need
to control the distance 
\begin{align*}
W_{2,E}(\sqrt{n}(\hat \theta-\theta), \xi(\theta))
=W_{2,E}\biggl(A(\theta)\sum_{j=1}^d V_j x_j,A(\theta)\sum_{j=1}^d W_j x_j\biggr),  	
\end{align*} 
where 
\begin{align*}
	V_j := n^{-1/2} \sum_{i=1}^n \eta_{i,j},\ W_j:=n^{-1/2}\sum_{i=1}^n \zeta_{i,j}, 
\end{align*}
$\{\eta_{i,j}: i=1,\dots, n\}$ being i.i.d. copies of $\eta_j$
and $\{\zeta_{i,j}: i=1,\dots, n\}$ being i.i.d. copies of $\zeta_j.$
If we choose $V_j', W_j'$ to be the copies of $V_j, W_j$ such that
\begin{align*}
W_2(V_j,W_j)= {\mathbb E}^{1/2}(V_j'-W_j')^2
\end{align*}
and $(V_j', W_j'), j=1,\dots, d$ are independent, we get 
\begin{align*}
&
W_{2,E}\biggl(A(\theta)\sum_{j=1}^d V_j x_j,A(\theta)\sum_{j=1}^d W_j x_j\biggr)
\leq \sup_{\theta\in E} {\mathbb E}^{1/2} \Bigl\|A(\theta)\sum_{j=1}^d(V_j'-W_j')x_j\Bigr\|^2
\\
&
=\sup_{\theta\in E} {\mathbb E}^{1/2} 
\sup_{\|u\|\leq 1} \Bigl(\sum_{j=1}^d (V_j'-W_j')\langle A(\theta)x_j,u \rangle\Bigr)^2
\\
&
\leq \sup_{\theta\in E}	\sup_{\|u\|\leq 1} 
\biggl(\sum_{j=1}^d \langle A(\theta) x_j, u\rangle^2 {\mathbb E}\sum_{j=1}^d (V_j'-W_j')^2\biggr)^{1/2}
\\
&
=\sup_{\theta\in E}	\sup_{\|u\|\leq 1}\Bigl(\sum_{j=1}^d \langle A(\theta) x_j, u\rangle^2\Bigr)^{1/2}
\Bigl(\sum_{j=1}^d {\mathbb E}(V_j'-W_j')^2\Bigr)^{1/2}
\\
&
= \sup_{\theta\in E}	\sup_{\|u\|\leq 1}\langle \Sigma(\theta)u,u\rangle^{1/2} 
\Bigl(\sum_{j=1}^d W_2^2(V_j,W_j)\Bigr)^{1/2}
\leq \|\Sigma\|_{L_{\infty}(E)}^{1/2} \max_{1\leq j\leq d}W_2(V_j,W_j) \sqrt{d}.
\end{align*}
The following bound follows from the results of \cite{Rio} (see Theorem 4.1):
\begin{align*}
\max_{1\leq j\leq d}W_2(V_j,W_j)\lesssim \frac{\beta_4^{1/2}}{\sqrt{n}}.
\end{align*}
Therefore, 
\begin{align}
\label{W_2,E}	
W_{2,E}(\sqrt{n}(\hat \theta-\theta), \xi(\theta))
\lesssim \beta_4^{1/2}\|\Sigma\|_{L_{\infty}(E)}^{1/2}\sqrt{\frac{d}{n}}.
\end{align}
To complete 
the proof of Proposition \ref{prop_ind_comp_00},
it remains to observe that 
\begin{align}
\label{frak_frak_d}	
{\frak d}_{\xi}(s)\leq  \|A\|_{C^s(E)}^2 {\frak d}_d
\end{align}
and to substitute bound \eqref{W_2,E} in the bounds of corollaries \ref{Main_Th_AA_cor_cor}
and \ref{risk_efficient_psi_cor_cor} (in the case when $\Theta=T=E$). 

\end{proof}

Next we prove Proposition \ref{prop_ind_comp}. 

\begin{proof} 
We will deduce the result from the bounds of theorems \ref{risk_efficient_psi} and \ref{Main_Th_AA} 
(in the case of $\Theta=T=E$). To control the distance $\Delta_{{\mathcal F}, E}(\sqrt{n}(\hat \theta-\theta), \xi(\theta)),$ where ${\mathcal F}= \{g: \|g\|_{C^{0,s'}(E)}\leq 1\},$ note that for $\hat \theta =\bar X$ and a function $g\in {\mathcal F},$
\begin{align*}
{\mathbb E} g(\sqrt{n}(\bar X-\theta))- {\mathbb E} g(\xi(\theta))
=
{\mathbb E} \bar g(V_1,\dots, V_d) - {\mathbb E} \bar g(W_1,\dots, W_d),
\end{align*}
where 
\begin{align*}
	\bar g(u_1,\dots, u_d):= g\Bigl(\theta + A(\theta)\sum_{j=1}^d u_j x_j\Bigr), u=(u_1,\dots, u_d)\in {\mathbb R}^{d}
\end{align*}
and $V_j, W_j, j=1,\dots, d$ are the random variables defined in the proof of  Proposition \ref{prop_ind_comp_00}. 

For $h: {\mathbb R}^d \mapsto {\mathbb R},$ $j=1,\dots, d$
and $s'=m+\gamma, \gamma\in (0,1],$ define 
\begin{align*}
&
\|h\|_{s',j}:= 
\\
&
\sup_{u_i\in {\mathbb R}, i\neq j}\sup_{u_j', u_j''\in {\mathbb R}, u_j'\neq u_j''} 
\frac{\Bigl|\frac{\partial^m h}{\partial u_j^m}(\dots, u_{j-1}, u_j', u_{j+1}, \dots)- 
\frac{\partial^m h}{\partial u_j^m}(\dots, u_{j-1}, u_j'', u_{j+1}, \dots )\Bigr|}
{|u_j'-u_j''|^{\gamma}}.
\end{align*}
Note that, for $g\in {\mathcal F},$
\begin{align*}
\|\bar g\|_{s',j} \leq \|A(\theta)x_j\|^{s'}\leq \|A(\theta)\|^{s'}\|x_j\|^{s'}, j=1,\dots, d.
\end{align*}
Therefore, for $V=(V_1,\dots, V_d), W=(W_1,\dots, W_d),$ we have 
\begin{align*}
|{\mathbb E} \bar g(V_1,\dots, V_d) - {\mathbb E} \bar g(W_1,\dots, W_d)|\leq 
\|A(\theta)\|^{s'}\max_{1\leq j\leq d}\|x_j\|^{s'}\Delta_{{\mathcal G}}(V, W),
\end{align*}
where ${\mathcal G}:=\{h:{\mathbb R}^d\mapsto {\mathbb R}: \max_{1\leq j\leq d}\|h\|_{s',j}\leq 1\}.$
This also implies that 
\begin{align*}
\Delta_{{\mathcal F}, {\mathbb P}_{\theta}}(\sqrt{n}(\hat \theta-\theta), \xi(\theta))
\leq \|A(\theta)\|^{s'}\max_{1\leq j\leq d}\|x_j\|^{s'} \Delta_{{\mathcal G}}(V, W).
\end{align*}

We need the following elementary lemma.

\begin{lemma}
If $V=(V_1,\dots, V_d)$ and $W=(W_1,\dots, W_d)$ are random vectors with independent 
components, then 
\begin{align*}
\Delta_{{\mathcal G}}(V, W)\leq \sum_{j=1}^d \zeta_{s'}(V_j, W_j).
\end{align*}	
\end{lemma}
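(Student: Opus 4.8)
The plan is a Lindeberg-type hybrid argument: starting from $W$, replace its coordinates by those of $V$ one at a time, bounding each single-coordinate swap by the one-dimensional Zolotarev distance $\zeta_{s'}$ and exploiting the product structure of the laws of $V$ and $W$. Since both $\Delta_{{\mathcal G}}$ and $\zeta_{s'}$ depend only on the marginal laws of $V$ and $W$, we may and do assume in addition that the vectors $V$ and $W$ are independent of one another.

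Fix $h\in{\mathcal G}$ and, for $j=0,1,\dots,d$, set $U^{(j)}:=(V_1,\dots,V_j,W_{j+1},\dots,W_d)$, so that $U^{(0)}=W$, $U^{(d)}=V$, and $U^{(j)}$ and $U^{(j-1)}$ differ only in the $j$-th coordinate. Telescoping gives
\[
{\mathbb E}h(V)-{\mathbb E}h(W)=\sum_{j=1}^d\bigl({\mathbb E}h(U^{(j)})-{\mathbb E}h(U^{(j-1)})\bigr).
\]
For each $j$, let $R_j:=(V_1,\dots,V_{j-1},W_{j+1},\dots,W_d)$ be the vector of the $d-1$ coordinates shared by $U^{(j)}$ and $U^{(j-1)}$; by the assumed independences, $R_j$ is independent of $V_j$ and of $W_j$, and the conditional laws of $V_j$ and $W_j$ given $R_j$ are their respective marginals. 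Writing $s'=m+\gamma$ with $\gamma\in(0,1]$, and letting $\phi_{j,r}(t)$ denote the function obtained from $h$ by fixing the coordinates at all positions other than $j$ equal to $r$ and inserting $t$ at position $j$, the membership $h\in{\mathcal G}$ says precisely that $\phi_{j,r}$ is $m$-times differentiable with $\|\phi_{j,r}^{(m)}\|_{{\rm Lip}_{\gamma}({\mathbb R})}\le\|h\|_{s',j}\le 1$ for every $r$; that is, $\phi_{j,r}$ belongs to the class of test functions defining $\zeta_{s'}$ on ${\mathbb R}$.

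Hence, for each value $r$ of $R_j$,
\[
\bigl|{\mathbb E}\phi_{j,r}(V_j)-{\mathbb E}\phi_{j,r}(W_j)\bigr|\le \zeta_{s'}(V_j,W_j),
\]
and averaging over the law of $R_j$ and using the conditional independence gives $|{\mathbb E}h(U^{(j)})-{\mathbb E}h(U^{(j-1)})|\le\zeta_{s'}(V_j,W_j)$. Summing over $j$ and taking the supremum over $h\in{\mathcal G}$ (the resulting bound being independent of $h$) yields $\Delta_{{\mathcal G}}(V,W)\le\sum_{j=1}^d\zeta_{s'}(V_j,W_j)$, as required.

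The combinatorial content is the classical Lindeberg swap and is routine; I expect the only mild obstacle to be integrability bookkeeping, needed to justify the conditioning and the interchange of expectation and averaging. This is harmless: one may assume $\zeta_{s'}(V_j,W_j)<\infty$ for every $j$ (otherwise the claimed inequality is vacuous), and the $\gamma$-H\"older bound on $\phi_{j,r}^{(m)}$ forces the growth estimate $|\phi_{j,r}(t)|\lesssim_{r}1+|t|^{s'}$, so that finiteness of the $s'$-th absolute moments of $V_j$ and $W_j$ — which holds in each application of the lemma — makes every expectation occurring above finite.
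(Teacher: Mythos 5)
Your proof is correct and is essentially the paper's own argument: the paper also replaces coordinates one at a time (written out for $d=2$ "for simplicity"), conditions on the unchanged coordinates, and bounds each slice by $\zeta_{s'}(V_j,W_j)$ using $\|h\|_{s',j}\le 1$. Your version just writes the general-$d$ Lindeberg telescoping and the integrability remark explicitly, which the paper leaves implicit.
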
	

\begin{proof}
	For simplicity, assume that $d=2.$ Let $h:{\mathbb R}^2\mapsto {\mathbb R}$
	be a function such that $\max_{j}\|h\|_{s',j}\leq 1.$ 
	Denote 
	\begin{align*}
		h_{y_1, \cdot}(y_2):= h(y_1,y_2), \ 
		h_{\cdot, y_2}(y_1) :=h(y_1,y_2).
	\end{align*} 
	Then 
		\begin{align*}
		&
		|{\mathbb E}h(V_1,V_2) - {\mathbb E}h(W_1, W_2)| 
		\\
		&
		\leq {\mathbb E}_{V_2} |{\mathbb E}_{V_1}h_{\cdot, V_2}(V_1) - 
		{\mathbb E}_{W_1}h_{\cdot, V_2}(W_1)|+
		{\mathbb E}_{W_1} |{\mathbb E}_{V_2}h_{W_1,\cdot}(V_2) - 
		{\mathbb E}_{W_2}h_{W_1, \cdot}(W_2)|
		\\
		&
		\leq \zeta_{s'}(V_1,W_1)+ \zeta_{s'}(V_2,W_2),
	\end{align*}
	which implies the claim.
\end{proof}

Using this lemma for $s'=3$ yields the bound
\begin{align*}
\Delta_{{\mathcal F}, E}(\sqrt{n}(\hat \theta-\theta), \xi(\theta))
\leq \|A\|_{L_{\infty}(E)}^{3}U^3 \sum_{j=1}^d \zeta_{s'}(V_j, W_j).
\end{align*}
It remains to apply Lindeberg's trick to control the distances $\zeta_{3}(V_j, W_j)$ as follows:
$
\zeta_{3}(V_j, W_j)\lesssim \frac{\beta_{3}}{n^{1/2}}.
$
As a result, we get 
\begin{align}
\label{Delta_FE_hat}	
	\Delta_{{\mathcal F}, E}(\sqrt{n}(\hat \theta-\theta), \xi(\theta))
	\lesssim \frac{\|A\|_{L_{\infty}(E)}^{3}U^3\beta_3 d}{n^{1/2}}.
\end{align}
By bound \eqref{bd_Delta_s'_sgeq2} of Theorem \ref{Main_Th_AA}, this implies the first bound of Proposition \ref{prop_ind_comp}. 

To prove the second bound, we use bound \eqref{psi_error} of Theorem \ref{Main_Th_AA}.
To this end, we need to control the distances $\Delta_{{\mathcal H}, E}(\hat \theta, \tilde \theta)$
and $\Delta_{{\mathcal H},\psi, E}(\hat \theta, \tilde \theta)$ for $\psi(u)=u^2$ (their sum is $\Delta_{{\mathcal H},\psi, E}^+(\hat \theta, \tilde \theta)$), where 
${\mathcal H}:=\{g: \|g\|_{C^{s}(E)}\leq 1\}.$ Similarly to \eqref{Delta_FE_hat}, we have (due to a different scaling of r.v. in the sums $V_j$
and $W_j$) that, for $s\geq 3,$
\begin{align}
	\label{Delta_HE_hat}	
	\Delta_{{\mathcal H}, E}(\hat \theta, \tilde \theta)
	\lesssim \frac{\|A\|_{L_{\infty}(E)}^{3}U^3 \beta_{3}d}{n^2}.
\end{align}
To control the distance $\Delta_{{\mathcal H},\psi, E}(\hat \theta, \tilde \theta)$ for $\psi(u)=u^2,$
note that 
\begin{align*}
&
|\|g(\hat \theta)\|_{L_2({\mathbb P}_{\theta})}- \|g(\tilde \theta)\|_{L_2({\mathbb P}_{\theta})}|
\\
&
\leq \sqrt{|\|g(\hat \theta)\|_{L_2({\mathbb P}_{\theta})}^2- \|g(\tilde \theta)\|_{L_2({\mathbb P}_{\theta})}^2|}	
=\sqrt{|{\mathbb E}_{\theta}g^2(\hat \theta)- {\mathbb E}_{\theta}g^2(\tilde \theta)|}.
\end{align*}
Note also that for all $g\in {\mathcal H},$ $\|g^2\|_{C^s(E)}\lesssim_s 1.$ 
Therefore, denoting ${\mathcal H}^2 := \{g^2: g\in {\mathcal H}\},$ we have 
\begin{align*}
\Delta_{{\mathcal H},\psi,{\mathbb P}_{\theta}}(\hat \theta, \tilde \theta)\leq 
\sqrt{\Delta_{{\mathcal H}^2, {\mathbb P}_{\theta}}(\hat \theta, \tilde \theta)}
\lesssim_s \sqrt{\Delta_{{\mathcal H}, {\mathbb P}_{\theta}}(\hat \theta, \tilde \theta)}.
\end{align*}
This implies that 
\begin{align*}
\Delta_{{\mathcal H},\psi, E}(\hat \theta, \tilde \theta)\lesssim_s 
\sqrt{\Delta_{{\mathcal H}, E}(\hat \theta, \tilde \theta)}
\lesssim_s \frac{\|A\|_{L_{\infty}(E)}^{3/2}U^{3/2} \beta_{3}^{1/2}\sqrt{d}}{n}
\end{align*}
and 
\begin{align*}
	\Delta_{{\mathcal H},\psi, E}^{+}(\hat \theta, \tilde \theta)
	&
	= \Delta_{{\mathcal H},E}(\hat \theta, \tilde \theta)+
	\Delta_{{\mathcal H},\psi, E}(\hat \theta, \tilde \theta)
	\lesssim_s 
	\Delta_{{\mathcal H}, E}(\hat \theta, \tilde \theta)+
	\sqrt{\Delta_{{\mathcal H}, E}(\hat \theta, \tilde \theta)}
\\
&
	\lesssim_s \frac{\|A\|_{L_{\infty}(E)}^{3/2}U^{3/2} \beta_{3}^{1/2}\sqrt{d}}{n}
	+\frac{\|A\|_{L_{\infty}(E)}^{3}U^3 \beta_{3}d}{n^2}.
\end{align*}
It remains to substitute the last bound in bound \eqref{psi_error} of Theorem \ref{Main_Th_AA} 
(for $\Theta=T=E$) and to use bound \eqref{frak_frak_d} to complete the proof.
\end{proof}

We will now prove Proposition \ref{Cor_Main_Th_DD}.

\begin{proof}
The proof will be given in the case when 
$\Psi(\theta)=\theta.$ In this case, $X=\theta +\eta,$ where $\eta$ is a mean zero noise sampled 
from some distribution $\mu_{\theta}$ in ${\mathbb R}^d$ for which $C_P(\mu_{\theta})=C_P(P_{\theta}).$
The general case could be reduced to this special case by the change of parameter 
$\vartheta= \Psi(\theta), \theta\in T$ and considering the problem of estimation of 
the functional $(f\circ \Psi^{-1})(\vartheta), \vartheta\in \Psi(T),$ 
for which we could use the estimator 
$$
(f\circ \Psi^{-1})_k(\hat \vartheta)= (f_k\circ \Psi^{-1})(\hat \vartheta)= f_k(\hat \theta).
$$

We will use a recent result \cite{Fathi} (see also \eqref{bd_Fathi_1}) on the accuracy of normal approximation 
for sums of i.i.d. random variables sampled from a distribution satisfying Poincar\'e 
inequality. It is easy to deduce from this result that if $Y_1,\dots, Y_n$ i.i.d. $\sim \mu,$ where $\mu$ is a distribution with mean $0$ and nonsingular covariance 
$\Sigma,$ and $Z\sim N(0,\Sigma),$ then 
\begin{align*}
W_2\Bigl(\frac{Y_1+\dots+Y_n}{\sqrt{n}}, Z\Bigr) \leq \sqrt{C_P(\mu)-1}\|\Sigma\|^{1/2}\|\Sigma^{-1}\|^{1/2} \sqrt{\frac{d}{n}}.
\end{align*}

Let $\xi(\theta)= \Sigma^{1/2}(\theta)W,$ $W\sim N(0,I_d).$
Recall that $\|\Sigma\|_{C^s(\Theta_{\delta})}<\infty$ and 
the spectrum of operators $\Sigma(\theta), \theta\in \Theta_{\delta}$
is uniformly bounded from above and bounded away from zero (see conditions \eqref{cond_on_Sigma}).
Thus, there exists an interval $(a,b)$ with $0<a<b<\infty$ that contains the spectrum of all the operators 
$\Sigma(\theta), \theta\in \Theta_{\delta}.$ Therefore,  
$\Sigma^{1/2}(\theta)=\psi(\Sigma(\theta)), \theta \in \Theta_{\delta},$
where $\psi$ is a $C^{\infty}$ function in ${\mathbb R},$ $\psi(u)=\sqrt{u}, u\in (a,b)$
and $\psi(u)=0, u\not\in (a/2,2b).$ This easily implies that 
\begin{align*}
\|\Sigma^{1/2}\|_{C^s(\Theta_{\delta})}\lesssim \|\Sigma\|_{C^s(\Theta_{\delta})}.
\end{align*}
with a constant that depends on the upper bound on $\|\Sigma\|_{L_{\infty}(\Theta_{\delta})}\vee
\|\Sigma^{-1}\|_{L_{\infty}(\Theta_{\delta})}.$
Thus, we have 
\begin{align*}
{\frak d}_{\xi}(\theta, \delta) = {\mathbb E}\|\Sigma^{1/2}(\cdot)W\|^2_{C^s(\Theta_{\delta})}
\lesssim \|\Sigma^{1/2}\|_{C^s(\Theta_{\delta})}^2 {\mathbb E}\|W\|^2 
\lesssim \|\Sigma\|_{C^s(\Theta_{\delta})}^2d \lesssim \|\Sigma\|_{C^s(\Theta_{\delta})}^2 n^{\alpha}.
\end{align*}

It is well known that, for all $\theta$ 
\begin{align*}
\sup_{\|u\|\leq 1}\|\langle \eta, u\rangle\|_{\psi_1} \lesssim \sqrt{C_P(\mu_{\theta})}, \eta\sim \mu_{\theta}.
\end{align*}
By a standard application of Bernstein inequality for sub-exponential r.v. and 
a discretization argument (see, e.g., \cite{Vershynin}, Chapter 4), we get the following proposition.

\begin{proposition}
\label{Berstein_for_hattheta}
There exists constants $c>0, c_1>0$ such that, for all 
\begin{align*}
\delta\geq c_1\sqrt{\sup_{\theta\in \Theta}C_P(\mu_{\theta})}\sqrt{\frac{d}{n}},
\end{align*}
\begin{align*}
\sup_{\theta \in \Theta} {\mathbb P}_{\theta}\{\|\bar X-\theta\|\geq \delta\}
\leq \exp\Bigl\{-c\Bigl(\frac{\delta^2 n}{\sup_{\theta\in \Theta}C_P(\mu_{\theta})}\wedge 
\frac{\delta n}{\sqrt{\sup_{\theta\in \Theta}C_P(\mu_{\theta})}}\Bigr)\Bigr\}.
\end{align*}
\end{proposition}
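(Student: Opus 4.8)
The plan is to reduce the multivariate deviation estimate to a one-dimensional Bernstein inequality through the standard discretization of the Euclidean unit sphere (cf.\ \cite{Vershynin}, Chapter~4). Recall that in the situation at hand $X=\theta+\eta$ with $\eta\sim\mu_\theta$ of mean zero, so that $\bar X-\theta=\tfrac{1}{n}\sum_{i=1}^n\eta_i$; write $\kappa:=\sup_{\theta\in\Theta}C_P(\mu_\theta)$ for brevity.

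First I would fix a unit vector $u\in\mathbb{R}^d$ and observe that $\langle\bar X-\theta,u\rangle=\tfrac{1}{n}\sum_{i=1}^n\langle\eta_i,u\rangle$ is a normalized sum of i.i.d.\ mean zero random variables whose $\psi_1$-norms are all bounded by $\|\langle\eta,u\rangle\|_{\psi_1}\lesssim\sqrt{C_P(\mu_\theta)}\le\sqrt\kappa$, as already recorded above. Bernstein's inequality for sums of sub-exponential random variables then yields, with an absolute constant $c_0>0$, uniformly over $\theta\in\Theta$, over $\|u\|\le1$ and over $t>0$,
\begin{align*}
\mathbb{P}_\theta\bigl\{|\langle\bar X-\theta,u\rangle|\ge t\bigr\}
\le 2\exp\Bigl\{-c_0\,n\Bigl(\tfrac{t^2}{\kappa}\wedge\tfrac{t}{\sqrt\kappa}\Bigr)\Bigr\}.
\end{align*}
Next I would pass from a fixed direction to the norm. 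Taking a $\tfrac12$-net $N$ of $S^{d-1}$ with ${\rm card}(N)\le5^d$, one has $\|\bar X-\theta\|\le2\max_{u\in N}\langle\bar X-\theta,u\rangle$, so a union bound gives
\begin{align*}
\sup_{\theta\in\Theta}\mathbb{P}_\theta\bigl\{\|\bar X-\theta\|\ge\delta\bigr\}
\le 2\cdot5^d\exp\Bigl\{-c_0\,n\Bigl(\tfrac{\delta^2}{4\kappa}\wedge\tfrac{\delta}{2\sqrt\kappa}\Bigr)\Bigr\}.
\end{align*}

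It remains to absorb the entropy factor $5^d=e^{d\log5}$ into the exponent, and this is where the hypothesis on $\delta$ enters. From $\delta\ge c_1\sqrt\kappa\sqrt{d/n}$ together with $d\le n$ (valid in our application since $d\lesssim n^\alpha$ with $\alpha\in(0,1)$) one gets $\tfrac{\delta^2 n}{\kappa}\ge c_1^2 d$ and $\tfrac{\delta n}{\sqrt\kappa}\ge c_1\sqrt{dn}\ge c_1 d$, so that for a large enough absolute constant $c_1$ the right-hand exponent dominates $2(d\log5+\log2)$, and the bound collapses to $\exp\{-\tfrac12 c_0 n(\tfrac{\delta^2}{4\kappa}\wedge\tfrac{\delta}{2\sqrt\kappa})\}\le\exp\{-\tfrac{c_0}{8}n(\tfrac{\delta^2}{\kappa}\wedge\tfrac{\delta}{\sqrt\kappa})\}$; taking $c$ to be an absolute constant proportional to $c_0$ yields the asserted inequality (after replacing $\kappa$ back by $\sup_{\theta\in\Theta}C_P(\mu_\theta)$).

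The only point requiring care is precisely this last bookkeeping: one must verify that the single scaling requirement $\delta\gtrsim\sqrt\kappa\sqrt{d/n}$ controls the dimension $d$ simultaneously in the sub-Gaussian regime term $\delta^2 n/\kappa$ and in the sub-exponential regime term $\delta n/\sqrt\kappa$, so that, whichever of the two is active, half of the corresponding exponent already beats the net cardinality. Everything else --- the scalar Bernstein bound, the existence of the $\tfrac12$-net, and the passage from directions to norm --- is entirely routine.
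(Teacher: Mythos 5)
Your argument is exactly the route the paper intends: the bound $\sup_{\|u\|\le 1}\|\langle \eta,u\rangle\|_{\psi_1}\lesssim \sqrt{C_P(\mu_\theta)}$, scalar Bernstein for sub-exponential sums, a $\tfrac12$-net of the sphere with cardinality $5^d$, and absorption of the entropy factor using $\delta\gtrsim \sqrt{\kappa d/n}$ (the paper simply cites ``Bernstein plus discretization, see Vershynin, Chapter 4''). The bookkeeping you flag is handled correctly, and the implicit requirement $d\lesssim n$ needed to absorb $5^d$ in the sub-exponential regime is harmless here since the proposition is applied under $d\lesssim n^{\alpha}$ with $\alpha\in(0,1)$.
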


If $\Theta_{\delta}\subset T,$ then 
\begin{align*}
&
\sup_{\theta \in \Theta} {\mathbb P}_{\theta}\{\|\hat \theta-\theta\|\geq \delta\}
\leq 
\sup_{\theta \in \Theta} {\mathbb P}_{\theta}\{\|\bar X-\theta\|\geq \delta\}
+ \sup_{\theta \in \Theta} {\mathbb P}_{\theta}\{\bar X\not\in T\} 
\\
&
\leq 
2\sup_{\theta \in \Theta} {\mathbb P}_{\theta}\{\|\bar X-\theta\|\geq \delta\}
\leq 2\exp\Bigl\{-c\Bigl(\frac{\delta^2 n}{\sup_{\theta\in \Theta}C_P(\mu_{\theta})}\wedge 
\frac{\delta n}{\sqrt{\sup_{\theta\in \Theta}C_P(\mu_{\theta})}}\Bigr)\Bigr\}.
\end{align*}
This implies that, under the assumptions $\Theta_{2\delta}\subset T$ and 
$
\sup_{\theta\in \Theta_{\delta}}C_P(\mu_{\theta}) = o(n^{1-\alpha}),
$
we get 
\begin{align*}
\sqrt{n}\sup_{\theta\in \Theta_{\delta}} {\mathbb P}_{\theta}\{\|\hat \theta-\theta\|\geq \delta\}
\to 0\ {\rm as}\ n\to\infty.
\end{align*}

Note also that 
\begin{align*}
&
\sup_{\theta\in \Theta_{\delta}}W_{2,{\mathbb P}_{\theta}}\Bigl(\sqrt{n}(\bar X-\theta), \xi(\theta)\Bigr)  
\\
&
\leq \sqrt{\sup_{\theta\in \Theta_{\delta}}C_P(\mu_{\theta})-1}
\|\Sigma\|_{L_{\infty}(\Theta_{\delta})}^{1/2}\|\Sigma^{-1}\|_{L_{\infty}(\Theta_{\delta})}^{1/2}
\sqrt{\frac{d}{n}}.
\end{align*}
Since $d\lesssim n^{\alpha}$ for some $\alpha\in (0,1),$
$
\sup_{\theta\in \Theta_{\delta}}C_P(\mu_{\theta}) = o(n^{1-\alpha})\ {\rm as}\ n\to\infty
$
and 
$
\|\Sigma\|_{L_{\infty}(\Theta_{\delta})}\vee \|\Sigma^{-1}\|_{L_{\infty}(\Theta_{\delta})}
\lesssim 1,
$
we get 
\begin{align*}
&
\sup_{\theta \in \Theta_{\delta}}W_{2,{\mathbb P}_{\theta}}\Bigl(\sqrt{n}(\bar X-\theta), \xi(\theta)\Bigr)
\to 0 \ {\rm as}\ n\to\infty.
\end{align*}
In addition, 
\begin{align*}
&
\sup_{\theta \in \Theta_{\delta}}
W_{2,{\mathbb P}_{\theta}}\Bigl(\sqrt{n}(\bar X-\theta), \sqrt{n}(\hat \theta-\theta)\Bigr)
\leq 
\sqrt{n}
\sup_{\theta \in \Theta_{\delta}}W_{2,{\mathbb P}_{\theta}}(\bar X-\theta, \tilde \theta-\theta)
\\
&
\leq 
\sqrt{n}\sup_{\theta \in \Theta_{\delta}}
{\mathbb E}_{\theta}^{1/2}\|\bar X-\hat \theta\|^2= 
\sqrt{n}\sup_{\theta \in \Theta_{\delta}}
{\mathbb E}_{\theta}^{1/2}\|\bar X-\theta_0\|^2I(\|\bar X-\theta\|\geq\delta)
\\
&
\leq \sqrt{n}\sup_{\theta \in \Theta_{\delta}}
{\mathbb E}_{\theta}^{1/4}\|\bar X-\theta_0\|^4 \ 
{\mathbb P}_{\theta}^{1/2}\{\|\bar X-\theta\|\geq \delta\}
\\
&
\leq 2^{3/4}\sqrt{n}\sup_{\theta \in \Theta_{\delta}}
{\mathbb E}_{\theta}^{1/4}\|\bar X-\theta\|^4 
\sup_{\theta \in \Theta_{\delta}}{\mathbb P}_{\theta}^{1/2}\{\|\bar X-\theta\|\geq \delta\}
\\
&
+ 
2^{3/4}\sqrt{n}\sup_{\theta \in \Theta_{\delta}}\|\theta-\theta_0\|\sup_{\theta \in \Theta_{\delta}} {\mathbb P}_{\theta}\{\|\bar X-\theta\|\geq \delta\}
\\
&
\leq 2^{3/4}\sqrt{n}\Bigl(\|\Sigma\|_{L_{\infty}(\Theta_{\delta})}^{1/2}\sqrt{\sup_{\theta\in \Theta_{\delta}}C_P(\mu_{\theta})}\sqrt{\frac{d}{n}}+\sup_{\theta \in \Theta_{\delta}}\|\theta-\theta_0\|\Bigr)
\sup_{\theta \in \Theta_{\delta}} {\mathbb P}_{\theta}^{1/2}\{\|\bar X-\theta\|\geq \delta\}
\to 0
\end{align*}
as $n\to\infty,$ where we used the bound of Proposition \ref{Berstein_for_hattheta}
and the conditions $d\lesssim n^{\alpha},$ $\sup_{\theta\in \Theta_{\delta}}C_P(\mu_{\theta})=o(n^{1-\alpha})$
and ${\rm Diam}(\Theta) \lesssim n^{A}.$ 
Therefore, we get
\begin{align*}
&
\sup_{\theta \in \Theta_{\delta}}W_{2,{\mathbb P}_{\theta}}\Bigl(\sqrt{n}(\hat \theta-\theta), \xi(\theta)\Bigr)
\to 0 \ {\rm as}\ n\to\infty.
\end{align*}

Both claims of the proposition now  follow from Corollary \ref{Cor_Main_Th_AA}.
\end{proof}

{\bf Acknowledgment}. The author is very thankful to Cl\'ement Deslandes for careful reading of the manuscript and suggesting a number of corrections
and to the referees for helpful comments.

\begingroup

\endgroup

\end{document}